\documentclass[twoside]{amsart}

\usepackage{amsfonts,amsmath,amssymb,amsthm}
\usepackage[bookmarksnumbered,plainpages,hypertex]{hyperref}
\usepackage{srcltx}
\usepackage{graphicx}
\usepackage[all]{xy}

\newtheorem{theorem}{\sc Theorem}[section]
\newtheorem{proposition}[theorem]{\sc Proposition}
\newtheorem{notation}[theorem]{\sc Notation}

\newtheorem{lemma}[theorem]{\sc Lemma}
\newtheorem{corollary}[theorem]{\sc Corollary}
\theoremstyle{definition}
\newtheorem{definition}[theorem]{\sc Definition}

\newtheorem{example}[theorem]{\sc Example}

\theoremstyle{remark}
\newtheorem{remark}[theorem]{\sc Remark}

\newtheorem{claim}[theorem]{}

\setlength{\textheight}{225mm} \setlength{\topmargin}{0.46cm}
\setlength{\textwidth}{150mm} \setlength{\oddsidemargin}{0.46cm}
\setlength{\evensidemargin}{0.46cm}
\allowdisplaybreaks

\newcommand{\diagMonFun}{\xymatrix@R=35pt@C=63pt{(F(U)\otimes'F(V))\otimes'F(W)\ar[d]|{a'_{F(U),F(V),F(W)}}\ar[r]^{\phi_{2}(U,V)\otimes'F(W)} & F(U\otimes
V)\otimes'F(W)\ar[r]^{\phi_{2}(U\otimes V,W)} & F((U\otimes V)\otimes
W)\ar[d]|{F(a_{U,V,W})}\\
F(U)\otimes'(F(V)\otimes'F(W))\ar[r]^{F(U)\otimes'\phi_{2}(V,W)} &
F(U)\otimes'F(V\otimes W)\ar[r]^{\phi_{2}(U,V\otimes W)} &
F(U\otimes(V\otimes W))}}
\newcommand{\balxi}{\xymatrix@C=2cm{\left( F\left( U\right) \otimes H\right) \otimes M \ar@<.5ex>[rr]^{\mu _{F\left(
U\right) }^{r}\otimes M} \ar@<-.5ex>[rr]_{\left( F\left( U\right) \otimes \mu
_{M}^{l}\right) \circ {^Ha^H_{F\left( U\right) ,H,M}}}&&F\left( U\right) \otimes M}}

\begin{document}
\title{Bosonization for dual quasi-bialgebras and preantipode}
\author{Alessandro Ardizzoni}
\address{University of Padova, Department of Pure and Applied Mathematics,
Via Trieste 63, Padova, I-35121, \indent Italy}
\email{ardizzoni@math.unipd.it}
\urladdr{http://www.unife.it/utenti/alessandro.ardizzoni}
\author{Alice Pavarin}
\address{University of Padova, Department of Pure and Applied Mathematics,
Via Trieste 63, Padova, I-35121, \indent Italy}
\email{apavarin@math.unipd.it}
\subjclass{Primary 16W30; Secondary 16S40}
\thanks{This paper was written while the authors were members of GNSAGA}

\begin{abstract}
In this paper, we associate a dual quasi-bialgebra, called bosonization, to every dual quasi-bialgebra $H$ and every bialgebra $R$ in the category of Yetter-Drinfeld modules over $H$. Then, using the fundamental theorem, we characterize  as bosonizations the dual quasi-bialgebras with a projection onto a dual quasi-bialgebra with a
preantipode. As an application we investigate the structure of the graded coalgebra $\mathrm{gr}A$ associated to a dual quasi-bialgebra $A$ with the dual Chevalley property (e.g. $A$ is pointed).
\end{abstract}

\keywords{Dual quasi-bialgebras, preantipode, Yetter-Drinfeld modules, bosonization, projections.}
\maketitle
\tableofcontents

\pagestyle{headings}

\section{Introduction}

Let $H$ be a bialgebra. Consider the functor $T:=(-)\otimes H:\mathfrak{M}%
\rightarrow \mathfrak{M}_{H}^{H}$ from the category of vector spaces to the
category of right Hopf modules. It is well-known that $T$ determines an
equivalence if and only if $H$ has an antipode i.e. it is a Hopf algebra.
The fact that $T$ is an equivalence is the so-called fundamental (or
structure) theorem for Hopf modules, which is due, in the finite-dimensional
case, to Larson and Sweedler, see \cite[Proposition 1, page 82]{LS}. This
result is crucial in characterizing the structure of bialgebras with a
projection as Radford-Majid bosonizations (see \cite{Radford}). Recall that a
bialgebra $A$ has a projection onto a Hopf algebra $H$ if there exist
bialgebra maps $\sigma :H\rightarrow A$ and $\pi :A\rightarrow H$ such that $%
\pi \circ \sigma =\mathrm{Id}_{H}$. Essentially using the fundamental
theorem, one proves that $A$ is isomorphic, as a vector
space, to the tensor product $R\otimes H$ where $R$ is some bialgebra in the
category ${_{H}^{H}\mathcal{YD}}$ of Yetter-Drinfeld modules over $H$. This
way $R\otimes H$ inherits, from $A$, a bialgebra structure which is called
the \emph{Radford-Majid bosonization of }$\emph{R}$\emph{\ by }$\emph{H}$
and denoted by $R\#H$. It is remarkable that the graded coalgebra $\mathrm{gr%
}A$ associated to a pointed Hopf algebra $A$ (here "pointed" means that all
simple subcoalgebras of $A$ are one-dimensional) always admits a projection
onto its coradical. This is the main ingredient in the so-called lifting
method for the classification of finite dimensional pointed Hopf algebras,
see \cite{AS-Lifting}.

In 1989 Drinfeld introduced the concept of quasi-bialgebra in connection
with the Knizhnik-Zamolodchikov system of partial differential equations.
The axioms defining a quasi-bialgebra are a translation of monoidality of
its representation category with respect to the diagonal tensor product. In 
\cite{Drinfeld}, the antipode for a quasi-bialgebra (whence the concept of
quasi-Hopf algebra) is introduced in order to make the category of its flat
right modules rigid. If we draw our attention to the category of
co-representations of $H$, we get the concepts of dual quasi-bialgebra and
of dual quasi-Hopf algebra. These notions have been introduced in \cite{Majid-Tannaka} in order to prove a Tannaka-Krein type Theorem for quasi-Hopf algebras.

A fundamental theorem for dual quasi-Hopf algebras was proved by Schauenburg
in \cite{Sch-TwoChar} but dual quasi-Hopf algebras do not exhaust the class
of dual quasi-bialgebras satisfying the fundamental theorem. It is
remarkable that the functor $T$ giving the fundamental theorem in the case
of ordinary Hopf algebras must be substituted, in the \textquotedblleft
quasi\textquotedblleft\ case, by the functor $F:=(-)\otimes H$ between the
category $^{H}\mathfrak{M}$ of left $H$-comodules and the category $^{H}%
\mathfrak{M}_{H}^{H}$ of right dual quasi-Hopf $H$-bicomodules (essentially
this is due to the fact that, unlike the classical case, a dual
quasi-bialgebra $H$ is not an algebra in the category of right $H$-comodules
but it is still an algebra in the category of $H$-bicomodules). In \cite[%
Theorem 3.9]{Ardi-Pava}, we showed that, for a dual quasi-bialgebra $H,$ the
functor $F$ is an equivalence if and only if there exists a suitable map $%
S:H\rightarrow H$ that we called a preantipode for $H$. Moreover for any
dual quasi-bialgebra with antipode (i.e. a dual quasi-Hopf algebra) we
constructed a specific preantipode, see \cite[Theorem 3.10]{Ardi-Pava}.\medskip

The main aim of this paper is to introduce and investigate the notion of
bosonization in the setting of dual quasi-bialgebras. Explicitly, we
associate a dual quasi-bialgebra $R\#H$ (that we call bosonization of $R$\
by $H$) to every dual quasi-bialgebra $H$ and bialgebra $R$ in $_{H}^{H}%
\mathcal{YD}$. Then, using the fundamental theorem, we characterize as bosonizations the dual
quasi-bialgebras with a projection onto a dual quasi-bialgebra with a
preantipode. As an application, for any dual quasi-bialgebra $A$ with the dual Chevalley property (i.e. such that the
coradical of $A$ is a dual quasi-subbialgebra of $A$), under the further hypothesis that the coradical $H$ of $A$ has a preantipode, we prove that there
is a bialgebra $R$ in ${_{H}^{H}\mathcal{YD}}$ such that $\mathrm{gr}A$ is
isomorphic to $R\#H$ as a dual quasi-bialgebra. In particular, if $A$ is a
pointed dual quasi-Hopf algebra, then $\mathrm{gr}A$ comes out to be
isomorphic to $R\#\Bbbk \mathbb{G}\left( A\right) $ as dual quasi-bialgebra
where $R$ is the diagram of $A$ and $\mathbb{G}\left( A\right) $ is the set
of grouplike elements in $A$. We point out that the results in this paper are obtained without assuming that the dual quasi-bialgebra considered are finite-dimensional.

\medskip

The paper is organized as follows.

Section \ref{C1} contains preliminary results needed in the next
sections. Moreover in Theorem \ref{teo:cocom}, we investigate cocommutative
dual quasi-bialgebras with a preantipode and in Corollary \ref{coro:cartan},
we provide a Cartier-Gabriel-Kostant type theorem for dual quasi-bialgebras
with a preantipode. In the connected case such a result was achieved in \cite%
[Theorem 4.3]{Huang-QuiverAppr}.

Section \ref{C2}, is devoted to the study of the category ${_{H}^{H}\mathcal{%
YD}}$ of Yetter-Drinfeld modules over a dual quasi-bialgebra $H.$
Explicitly, we consider the pre-braided monoidal category $\left( {_{H}^{H}%
\mathcal{YD}},\otimes ,\Bbbk \right) $ of Yetter-Drinfeld modules over a
dual quasi-bialgebra $H$ and we prove that the functor $F$, as above, induces a functor $F:{_{H}^{H}%
\mathcal{YD}}\rightarrow {_{H}^{H}\mathfrak{M}_{H}^{H}}$ (that is an
equivalence in case $H$ has a preantipode, see Proposition \ref{restr equi}).

In Section \ref{C3}, we prove that the equivalence between the categories ${%
_{H}^{H}\mathfrak{M}_{H}^{H}}$ and ${_{H}^{H}\mathcal{YD}}$ becomes monoidal
if we equip ${_{H}^{H}\mathfrak{M}_{H}^{H}}$ with the tensor product $\otimes _{H}$
(or ${{\square _{H}}}$) and unit $H$ (see Lemma \ref{lem: equiv-m4h-tens}
and Lemma \ref{lem: F monoidal cot}). As a by-product, in Lemma \ref{equi
tensH, cot}, we produce a monoidal equivalence between $({{{_{H}^{H}%
\mathfrak{M}_{H}^{H}}}},\otimes _{H},H)$ and $({_{H}^{H}\mathfrak{M}%
_{H}^{H},\square }_{H},H).$

Section \ref{C4} contains the main results of the paper. In Theorem \ref%
{teo: boso}, to every dual quasi-bialgebra $H$ and bialgebra $R$ in $_{H}^{H}%
\mathcal{YD}$ we associate a dual quasi-bialgebra structure on the tensor
product $R\otimes H$ that we call the \emph{bosonization of }$R$\emph{\ by }$%
H$ and denote by $R\#H.$ Now, let $(A,H,\sigma ,\pi )$ be a dual
quasi-bialgebra with projection and assume that $H$ has a preantipode $S$.
In Lemma \ref{A in M tre H}, we prove that such an $A$ is an object in the
category ${{{_{H}^{H}\mathfrak{M}_{H}^{H}}}}$. Therefore the fundamental
theorem describes $A$ as the tensor product $R\otimes H$ of some vector
space $R$ by $H$. Indeed, in Theorem \ref{teo: projection}, we prove that
the dual quasi-bialgebra structure inherited by $R\otimes H$ through the
claimed isomorphism is exactly the bosonization of $R$ by $H$. The analogous
of this result for quasi-Hopf algebras, anything but trivial, has been
established by Bulacu and Nauwelaerts in \cite{bulacu2}, but their proof can
not be adapted to dual quasi-bialgebras with a preantipode.

In Section \ref{C5} we collect some applications of our results. Let $A$ be
a dual quasi-bialgebra with the dual Chevalley property and coradical $H$. Since $A$ is an
ordinary coalgebra, we can consider the associated graded coalgebra $\mathrm{%
gr}A$. In Proposition \ref{pro: grA}, we prove that $\mathrm{gr}A$ fits into
a dual quasi-bialgebra with projection onto $H$. As a consequence, in
Corollary \ref{coro: grA}, under the further assumption that $H$ has a
preantipode, we show that there is a bialgebra $R$ in ${_{H}^{H}\mathcal{YD}}
$ such that $\mathrm{gr}A$ is isomorphic to $R\#H$ as a dual quasi-bialgebra.
When $A$ is a pointed dual quasi-Hopf algebra it is in particular a dual
quasi-bialgebra with the dual Chevalley property and its coradical has a
preantipode. Using this fact, in Theorem \ref{teo:grpointed} we obtain that $%
\mathrm{gr}A$ is of the form $R\#\Bbbk \mathbb{G}\left( A\right) $ as dual
quasi-bialgebra, where $R$ is the so-called diagram of $A$.

\section{Preliminaries\label{C1}}

In this section we recall the definitions and results that will be needed in
the paper.

\begin{notation}
Throughout this paper $\Bbbk $ will denote a field. All vector spaces will
be defined over $\Bbbk $. The unadorned tensor product $\otimes $ will
denote the tensor product over $\Bbbk $ if not stated otherwise.
\end{notation}

\begin{claim}
\textbf{Monoidal Categories.} Recall that (see \cite[Chap. XI]{Ka}) a \emph{%
monoidal category}\textbf{\ }is a category $\mathcal{M}$ endowed with an
object $\mathbf{1}\in \mathcal{M}$ (called \emph{unit}), a
functor $\otimes :\mathcal{M}\times \mathcal{M}\rightarrow \mathcal{M}$
(called \emph{tensor product}), and functorial isomorphisms $%
a_{X,Y,Z}:(X\otimes Y)\otimes Z\rightarrow $ $X\otimes (Y\otimes Z)$, $l_{X}:%
\mathbf{1}\otimes X\rightarrow X,$ $r_{X}:X\otimes \mathbf{1}\rightarrow X,$
for every $X,Y,Z$ in $\mathcal{M}$. The functorial morphism $a$ is called
the \emph{associativity constraint }and\emph{\ }satisfies the \emph{Pentagon
Axiom, }that is the equality 
\begin{equation*}
(U\otimes a_{V,W,X})\circ a_{U,V\otimes W,X}\circ (a_{U,V,W}\otimes
X)=a_{U,V,W\otimes X}\circ a_{U\otimes V,W,X}
\end{equation*}%
holds true, for every $U,V,W,X$ in $\mathcal{M}.$ The morphisms $l$ and $r$
are called the \emph{unit constraints} and they obey the \emph{Triangle
Axiom, }that is $(V\otimes l_{W})\circ a_{V,\mathbf{1},W}=r_{V}\otimes W$,
for every $V,W$ in $\mathcal{M}$.
\end{claim}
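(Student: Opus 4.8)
The closing environment is not a proposition but a recollection, following \cite[Chap.~XI]{Ka}, of the definition of a monoidal category: it merely fixes the data $(\mathcal{M},\otimes,\mathbf{1},a,l,r)$ and records that the associativity constraint $a$ obeys the Pentagon Axiom while the unit constraints $l,r$ obey the Triangle Axiom. There is therefore no assertion to demonstrate in the usual sense; the only thing to verify is that the two displayed identities reproduce verbatim the axioms of \cite{Ka}, so that the coherence results quoted there may later be invoked without change. Being purely definitional, this ``claim'' serves only to fix the notation and terminology for the several monoidal structures that the rest of the paper manipulates.

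If one nonetheless wished to attach a genuine theorem to this definition, the natural candidate is Mac Lane's coherence theorem: every formal diagram assembled from $a$, $l$, $r$ and their inverses commutes. The plan would be to first derive the auxiliary normalization identities (notably $l_{\mathbf{1}}=r_{\mathbf{1}}$ and the compatibility of $l,r$ with $a$), and then to reduce an arbitrary bracketed word to a canonical strictly associated form while checking that the reduction is confluent. The Pentagon Axiom is exactly what forces any two rebracketing sequences sharing the same endpoints to agree, and managing that confluence bookkeeping would be the main obstacle. None of this is required here, however, since the excerpt only recalls the definition and defers all coherence to \cite{Ka}.
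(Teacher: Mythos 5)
You are right: this \texttt{claim} environment is a definitional recollection of the notion of monoidal category from \cite[Chap.~XI]{Ka}, and the paper supplies no proof for it, exactly as you observe. Your identification of it as notation-fixing (with coherence deferred to the cited reference) matches the paper's treatment, so nothing further is required.
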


The notions of algebra, module over an algebra, coalgebra and comodule over
a coalgebra can be introduced in the general setting of monoidal categories.
Given an algebra $A$ in $\mathcal{M}$ one can define the categories $_{A}%
\mathcal{M}$, $\mathcal{M}_{A}$ and $_{A}\mathcal{M}_{A}$ of left, right and
two-sided modules over $A$ respectively. Similarly, given a coalgebra $C$ in 
$\mathcal{M}$, one can define the categories of $C$-comodules $^{C}\mathcal{M%
},\mathcal{M}^{C},{^{C}\mathcal{M}^{C}}$. For more details, the reader is
refereed to \cite{AMS-Hoch}.

Let $\mathcal{M}$ be a monoidal category. Assume that $\mathcal{M}$ is
abelian and both the functors $X\otimes (-):\mathcal{M}\rightarrow \mathcal{M%
}$ and $(-)\otimes X:\mathcal{M}\rightarrow \mathcal{M}$ are additive and
right exact, for any $X\in \mathcal{M}.$ Given an algebra $A$ in $\mathcal{M}
$, there exist a suitable functor $\otimes _{A}:{_{A}\mathcal{M}_{A}}\times {%
_{A}\mathcal{M}_{A}}\rightarrow {_{A}\mathcal{M}}_{A}$ and constraints that
make the category $({_{A}\mathcal{M}}_{A},\otimes _{A},A)$ monoidal, see 
\cite[1.11]{AMS-Hoch}. The tensor product over $A$ in $\mathcal{M}$ of a
right $A$-module $(V,\mu^r _{V})$ and a left $A$-module $(W,\mu^l _{W})$ is
defined to be the coequalizer: 
\begin{equation*}
\xymatrix{ (V\otimes A)\otimes W \ar[rr]^{\mu^r_V\otimes W}
\ar[rd]_{a_{V,A,W}}&& V\otimes W \ar[rr]^{_{A}\chi _{V,W}} && V\otimes _{A}W
\ar[r] & 0\\ &V\otimes(A\otimes W)\ar[ru]_{V\otimes \mu^l_W} }
\end{equation*}%
Note that, since $\otimes $ preserves coequalizers, then $V\otimes _{A}W$ is
also an $A$-bimodule, whenever $V$ and $W$ are $A$-bimodules.\medskip 
\newline
Dually, given a coalgebra $(C,\Delta ,\varepsilon )$ in a monoidal category $%
\mathcal{M}$, abelian and with additive and left exact tensor functors,
there exist a suitable functor $\square _{C}:{^{C}\mathcal{M}^{C}}\times {%
^{C}\mathcal{M}^{C}}\rightarrow {^{C}\mathcal{M}^{C}}$ and constraints that
make the category $({^{C}\mathcal{M}^{C}},\square _{C},C)$ monoidal. The cotensor product over $C$ in $\mathcal{M}$ of a right $C$%
-comodule $(V,\rho^r _{V})$ and a left $C$-comodule $(W,\rho^l _{W})$ is
defined to be the equalizer: 
\begin{equation*}
\xymatrix{ 0 \ar[r] & V\square_{C}W \ar[rr]^{_C\varsigma_{V,W}} && V\otimes
W \ar[rr]^{V\otimes \rho^l_W} \ar[rd]_{\rho^r_V\otimes W}
&&V\otimes(C\otimes W)\\ &&&&(V\otimes C)\otimes W\ar[ru]_{a_{V,C,W}} }
\end{equation*}%
Note that, since $\otimes $ preserves equalizers, then $V\square _{C}W$ is
also a $C$-bicomodule, whenever $V$ and $W$ are $C$-bicomodules.

\begin{definition}
A dual quasi-bialgebra is a datum $(H,m,u,\Delta ,\varepsilon ,\omega )$
where

\begin{itemize}
\item $(H,\Delta ,\varepsilon )$ is a coassociative coalgebra;

\item $m:H\otimes H\rightarrow H$ and $u:\Bbbk \rightarrow H$ are coalgebra
maps called multiplication and unit respectively; we set $1_{H}:=u(1_{\Bbbk
})$;

\item $\omega :H\otimes H\otimes H\rightarrow \Bbbk $ is a unital $3$%
-cocycle i.e. it is convolution invertible and satisfies%
\begin{eqnarray}
\omega \left( H\otimes H\otimes m\right) \ast \omega \left( m\otimes
H\otimes H\right) &=&m_{\Bbbk }\left( \varepsilon \otimes \omega \right)
\ast \omega \left( H\otimes m\otimes H\right) \ast m_{\Bbbk }\left( \omega
\otimes \varepsilon \right)  \label{eq:3-cocycle} \\
\text{and}\quad \omega \left( h\otimes k\otimes l\right) &=&\varepsilon
\left( h\right) \varepsilon \left( k\right) \varepsilon \left( l\right)
\qquad \text{whenever}\qquad 1_{H}\in \{h,k,l\};
\label{eq:qusi-unitairity cocycle}
\end{eqnarray}

\item $m$ is quasi-associative and unitary i.e. it satisfies%
\begin{gather}
m\left( H\otimes m\right) \ast \omega =\omega \ast m\left( m\otimes H\right)
,  \label{eq:quasi-associativity} \\
m\left( 1_{H}\otimes h\right) =h,\text{ for all }h\in H,
\label{eq:quasi-leftunitarirty} \\
m\left( h\otimes 1_{H}\right) =h,\text{ for all }h\in H.
\label{eq:quasi-rightunitarity}
\end{gather}%
$\omega $ is called \textit{the reassociator} of the dual quasi-bialgebra.
\end{itemize}

A morphism of dual quasi-bialgebras (see e.g. \cite[Section 2]{Schauenburg}) 
\begin{equation*}
\alpha :\left( H,m,u,\Delta ,\varepsilon ,\omega \right) \rightarrow \left(
H^{\prime },m^{\prime },u^{\prime },\Delta ^{\prime },\varepsilon ^{\prime
},\omega ^{\prime }\right)
\end{equation*}
is a coalgebra homomorphism $\alpha :\left( H,\Delta ,\varepsilon \right)
\rightarrow \left( H^{\prime },\Delta ^{\prime },\varepsilon ^{\prime
}\right) $ such that 
\begin{equation*}
m^{\prime }(\alpha \otimes \alpha )=\alpha m,\qquad \alpha u=u^{\prime
},\qquad \omega ^{\prime }\left( \alpha \otimes \alpha \otimes \alpha
\right) =\omega .
\end{equation*}%
It is an isomorphism of quasi-bialgebras if, in addition, it is invertible.

A dual quasi-subbialgebra of a dual quasi-bialgebra $\left( H^{\prime
},m^{\prime },u^{\prime },\Delta ^{\prime },\varepsilon ^{\prime },\omega
^{\prime }\right) $ is a quasi-bialgebra $\left( H,m,u,\Delta ,\varepsilon
,\omega \right) $ such that $H$ is a vector subspace of $H^{\prime }$ and
the canonical inclusion $\alpha :H\rightarrow H^{\prime }$ yields a morphism
of dual quasi-bialgebras.
\end{definition}

\subsection{The category of bicomodules for a dual quasi-bialgebras}

Let $(H,m,u,\Delta ,\varepsilon ,\omega )$ be a dual quasi-bialgebra. It is
well-known that the category $\mathfrak{M}^{H}$ of right $H$-comodules
becomes a monoidal category as follows. Given a right $H$-comodule $V$, we
denote by $\rho =\rho _{V}^{r}:V\rightarrow V\otimes H,\rho (v)=v_{0}\otimes
v_{1}$, its right $H$-coaction. The tensor product of two right $H$%
-comodules $V$ and $W$ is a comodule via diagonal coaction i.e. $\rho \left(
v\otimes w\right) =v_{0}\otimes w_{0}\otimes v_{1}w_{1}.$ The unit is $\Bbbk
,$ which is regarded as a right $H$-comodule via the trivial coaction i.e. $%
\rho \left( k\right) =k\otimes 1_{H}$. The associativity and unit
constraints are defined, for all $U,V,W\in \mathfrak{M}^{H}$ and $u\in
U,v\in V,w\in W,k\in \Bbbk ,$ by%
\begin{gather*}
a_{U,V,W}^{H}((u\otimes v)\otimes w):=u_{0}\otimes (v_{0}\otimes w_{0})\omega
(u_{1}\otimes v_{1}\otimes w_{1}), \\
l_{U}(k\otimes u):=ku\qquad \text{and}\qquad r_{U}(u\otimes k):=uk.
\end{gather*}%
The monoidal category we have just described will be denoted by $(\mathfrak{M%
}^{H},\otimes ,\Bbbk ,a^{H},l,r).$

Similarly, the monoidal categories $({^{H}\mathfrak{M}},\otimes ,\Bbbk ,{^{H}%
}a,l,r)$ and $({^{H}\mathfrak{M}^{H}},\otimes ,\Bbbk ,{^{H}}a{^{H}},l,r)$
are introduced. We just point out that 
\begin{gather*}
{^{H}}a_{U,V,W}((u\otimes v)\otimes w):=\omega ^{-1}(u_{-1}\otimes
v_{-1}\otimes w_{-1})u_{0}\otimes (v_{0}\otimes w_{0}), \\
{^{H}}a{^{H}}_{U,V,W}((u\otimes v)\otimes w):=\omega ^{-1}(u_{-1}\otimes
v_{-1}\otimes w_{-1})u_{0}\otimes (v_{0}\otimes w_{0})\omega (u_{1}\otimes
v_{1}\otimes w_{1}).
\end{gather*}

\begin{remark}
\label{rem: H alg}We know that, if $(H,m,u,\Delta ,\varepsilon ,\omega )$ is
a dual quasi-bialgebra, we cannot construct the category $\mathfrak{M}_{H}$,
because $H$ is not an algebra. Moreover $H$ is not an algebra in $\mathfrak{M%
}^{H}$ or in $^{H}\mathfrak{M}.$ On the other hand $((H,\rho _{H}^{l},\rho
_{H}^{r}),m,u)$ is an algebra in the monoidal category $({^{H}\mathfrak{M}%
^{H}},\otimes ,\Bbbk ,{^{H}}a{^{H}},l,r)$ with $\rho _{H}^{l}=\rho
_{H}^{r}=\Delta $. Thus, the only way to construct the category ${^{H}%
\mathfrak{M}_{H}^{H}}$ is to consider the right $H$-modules in $^{H}%
\mathfrak{M}^{H}$. Hence, we can set 
\begin{equation*}
{^{H}\mathfrak{M}_{H}^{H}}:=(^{H}\mathfrak{M}^{H})_{H}.
\end{equation*}%
The category ${^{H}\mathfrak{M}_{H}^{H}}$ is the so-called category of \emph{%
right dual quasi-Hopf $H$-bicomodules} \cite[Remark 2.3]{bulacu}.
\end{remark}

\begin{remark}
\label{FuntT}\cite[Example 1.5(a)]{AMS-Hoch} Let $(A,m,u)$ be an algebra in
a given monoidal category $(\mathcal{M},\otimes ,1,a,l,r)$. Then the
assignments $M\longmapsto (M\otimes A,(M\otimes m)\circ a_{A,A,A})$ and $%
f\longmapsto f\otimes A$ define a functor $T:\mathcal{M}\rightarrow \mathcal{%
M}_{A}.$ Moreover the forgetful functor $U:\mathcal{M}_{A}\rightarrow 
\mathcal{M}$ is a right adjoint of $T$.
\end{remark}

\subsection{An adjunction between ${^{H}\mathfrak{M}_{H}^{H}}$ and ${^{H}%
\mathfrak{M}}$}

We are going to construct an adjunction between ${^{H}\mathfrak{M}_{H}^{H}}$
and ${{^{H}\mathfrak{M}}}$ that will be crucial afterwards.

\begin{claim}
\label{claim: adjunctios}Consider the functor $L:{^{H}\mathfrak{M}}%
\rightarrow {^{H}\mathfrak{M}^{H}}$ defined on objects by $L({^{\bullet }}%
V):={^{\bullet }}V^{\circ }$ where the upper empty dot denotes the trivial
right coaction while the upper full dot denotes the given left $H$-coaction
of $V.$ The functor $L$ has a right adjoint $R:{^{H}\mathfrak{M}%
^{H}\rightarrow {^{H}\mathfrak{M}}}$ defined on objects by $R({^{\bullet }}%
M^{\bullet }):={^{\bullet }}M^{coH},$ where $M^{coH}:=\{m\in M\mid
m_{0}\otimes m_{1}=m\otimes 1_{H}\}$ is the space of right $H$-coinvariant
elements in $M$.

By Remark \ref{FuntT}, the forgetful functor $U:{^{H}\mathfrak{M}_{H}^{H}}%
\rightarrow {^{H}\mathfrak{M}^{H},U}\left( {^{\bullet }}M_{\bullet
}^{\bullet }\right) :={^{\bullet }}M^{\bullet }$ has a right adjoint, namely
the functor $T:{^{H}\mathfrak{M}^{H}}\rightarrow {^{H}\mathfrak{M}_{H}^{H}}%
,T\left( {^{\bullet }}M^{\bullet }\right) :={^{\bullet }}M^{\bullet }\otimes 
{^{\bullet }}H_{\bullet }^{\bullet }$. Here the upper dots indicate on which
tensor factors we have a codiagonal coaction and the lower dot indicates
where the action takes place. Explicitly, the structure of $T\left( {%
^{\bullet }}M^{\bullet }\right) $ is given as follows: 
\begin{eqnarray*}
\rho _{M\otimes H}^{l}(m\otimes h) &:&=m_{-1}h_{1}\otimes (m_{0}\otimes
h_{2}), \\
\rho _{M\otimes H}^{r}(m\otimes h) &:&=(m_{0}\otimes h_{1})\otimes
m_{1}h_{2}, \\
\mu _{M\otimes H}^{r}\left[ (m\otimes h)\otimes l\right] &=&(m\otimes
h)l:=\omega ^{-1}(m_{-1}\otimes h_{1}\otimes l_{1})m_{0}\otimes
h_{2}l_{2}\omega (m_{1}\otimes h_{3}\otimes l_{3}).
\end{eqnarray*}%
Define the functors $F:=TL:{^{H}\mathfrak{M}}\rightarrow {^{H}\mathfrak{M}%
_{H}^{H}}$ and $G:=RU:{^{H}\mathfrak{M}_{H}^{H}}\rightarrow {{^{H}\mathfrak{%
M.}}}$ Explicitly $G\left( {^{\bullet }}M_{\bullet }^{\bullet }\right) ={%
^{\bullet }}M^{coH}$ and $F({^{\bullet }}V):={^{\bullet }}V^{\circ }\otimes {%
^{\bullet }}H_{\bullet }^{\bullet }$ so that, for every $v\in V,h,l\in H,$ 
\begin{align*}
\rho _{V\otimes H}^{l}(v\otimes h)& =v_{-1}h_{1}\otimes (v_{0}\otimes h_{2}),
\\
\rho _{V\otimes H}^{r}(v\otimes h)& =(v\otimes h_{1})\otimes h_{2}, \\
\mu _{V\otimes H}^{r}\left[ (v\otimes h)\otimes l\right] & =(v\otimes
h)l=\omega ^{-1}(v_{-1}\otimes h_{1}\otimes l_{1})v_{0}\otimes h_{2}l_{2}.
\end{align*}
\end{claim}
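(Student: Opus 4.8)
The plan is to prove the genuinely new adjunction $L\dashv R$ by hand and then obtain the functors $F$ and $G$ (together with the adjunction $F\dashv G$) simply by composing it with the adjunction $T\dashv U$ already recorded in Remark \ref{FuntT}. The first thing to settle is that $R$ is well defined, i.e. that for an $H$-bicomodule ${^{\bullet}}M^{\bullet}$ the subspace $M^{coH}$ of right coinvariants is stable under the left coaction. Writing $\rho^{r}_{M}(m)=m_{0}\otimes m_{1}$ and $\rho^{l}_{M}(m)=m_{-1}\otimes m_{0}$, the bicomodule compatibility $(\rho^{l}_{M}\otimes H)\circ\rho^{r}_{M}=(H\otimes\rho^{r}_{M})\circ\rho^{l}_{M}$, evaluated at $m\in M^{coH}$ and using $\rho^{r}_{M}(m)=m\otimes 1_{H}$, gives $(H\otimes\rho^{r}_{M})\rho^{l}_{M}(m)=\rho^{l}_{M}(m)\otimes 1_{H}$; since $H\otimes(-)$ is exact and hence preserves the equalizer $M^{coH}=\{x\mid \rho^{r}_{M}(x)=x\otimes 1_{H}\}$, this says precisely that $\rho^{l}_{M}(m)\in H\otimes M^{coH}$. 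Thus $R({^{\bullet}}M^{\bullet})={^{\bullet}}M^{coH}$ is a left $H$-comodule. (That $L$ is well defined is the dual, trivial check: the trivial right coaction is automatically compatible with the given left one.)

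Next I would establish $L\dashv R$. The key observation is that $L({^{\bullet}}V)={^{\bullet}}V^{\circ}$ carries the \emph{trivial} right coaction $v\mapsto v\otimes 1_{H}$, so any morphism $f\colon LV\to M$ in ${^{H}\mathfrak{M}^{H}}$ is forced by its right $H$-colinearity to satisfy $\rho^{r}_{M}(f(v))=f(v)\otimes 1_{H}$, i.e. $f(V)\subseteq M^{coH}$. Corestriction therefore identifies $\mathrm{Hom}_{{^{H}\mathfrak{M}^{H}}}(LV,M)$ with $\mathrm{Hom}_{{^{H}\mathfrak{M}}}(V,RM)$, the inverse being composition with the inclusion $M^{coH}\hookrightarrow M$, and both assignments are visibly natural in $V$ and $M$. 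Equivalently one records the unit and counit: since $(V^{\circ})^{coH}=V$, the unit $\eta_{V}\colon V\to RLV$ is the identity (so $RL=\mathrm{Id}$ and $L$ is fully faithful), while the counit $\epsilon_{M}\colon LRM=(M^{coH})^{\circ}\to M$ is the inclusion, which is a bicomodule map (left colinearity is the subcomodule property just proved, right colinearity is exactly coinvariance), and the two triangle identities then hold trivially.

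Finally, composing $L\dashv R$ with the adjunction $T\dashv U$ of Remark \ref{FuntT}, where $T=(-)\otimes{^{\bullet}}H_{\bullet}^{\bullet}$ is the free-module functor for the algebra $H$ in ${^{H}\mathfrak{M}^{H}}$, yields at once $F=TL\dashv RU=G$. The stated descriptions are then read off by unwinding the definitions: $G({^{\bullet}}M_{\bullet}^{\bullet})=R(U({^{\bullet}}M_{\bullet}^{\bullet}))={^{\bullet}}M^{coH}$, and $F({^{\bullet}}V)=T({^{\bullet}}V^{\circ})={^{\bullet}}V^{\circ}\otimes{^{\bullet}}H_{\bullet}^{\bullet}$, whose structure maps are obtained from the formulas for $T$ recalled above by substituting the trivial right coaction $m_{0}\otimes m_{1}=v\otimes 1_{H}$ (so that $m_{-1}\otimes m_{0}\otimes m_{1}=v_{-1}\otimes v_{0}\otimes 1_{H}$). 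The coactions $\rho^{l}_{V\otimes H}$ and $\rho^{r}_{V\otimes H}$ come out immediately, and in the action the only non-automatic simplification is the factor $\omega(m_{1}\otimes h_{3}\otimes l_{3})=\omega(1_{H}\otimes h_{3}\otimes l_{3})=\varepsilon(h_{3})\varepsilon(l_{3})$, which collapses by the counital normalization \eqref{eq:qusi-unitairity cocycle} of the cocycle and leaves $\mu^{r}_{V\otimes H}[(v\otimes h)\otimes l]=\omega^{-1}(v_{-1}\otimes h_{1}\otimes l_{1})v_{0}\otimes h_{2}l_{2}$, as asserted. I expect the only step needing real care to be the well-definedness of $R$ in the first paragraph, the rest being formal adjunction bookkeeping and the one cocycle normalization.
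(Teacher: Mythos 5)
Your proposal is correct and follows exactly the decomposition that the paper itself sets up but leaves unproved: you establish $L\dashv R$ by the corestriction argument (with the subcomodule property of $M^{coH}$ checked via exactness of $H\otimes(-)$), invoke the free--forgetful adjunction of Remark \ref{FuntT}, and compose to get $F=TL\dashv RU=G$, recovering precisely the unit $n\mapsto n\otimes 1_{H}$ and counit $x\otimes h\mapsto xh$ that the paper records in Remark \ref{adjoint} by citing Schauenburg, as well as the explicit structure maps of $F(V)$ via the cocycle normalization \eqref{eq:qusi-unitairity cocycle}. Note also that you silently use the correct adjunction direction: the paper's phrase ``the forgetful functor $U$ \dots has a right adjoint, namely the functor $T$'' is backwards relative both to Remark \ref{FuntT} and to what the composition $TL\dashv RU$ requires, namely that $U$ is the \emph{right} adjoint of $T$, i.e. $T\dashv U$, which is what your argument employs.
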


\begin{remark}
\label{adjoint} By the right-hand version of \cite[Lemma 2.1]{Sch-TwoChar},
the functor $F:{^{H}\mathfrak{M}}\rightarrow {^{H}\mathfrak{M}_{H}^{H}}$ is
a left adjoint of the functor $G$, where the counit and the unit of the
adjunction are given respectively by $\epsilon _{M}:FG(M)\rightarrow
M,\epsilon _{M}(x\otimes h):=xh$ and by $\eta _{N}:N\rightarrow GF(N),\eta
_{N}\left( n\right) :=n\otimes 1_{H},$ for every $M\in $ $^{H}\mathfrak{M}%
_{H}^{H},N\in {^{H}\mathfrak{M}}.$ Moreover $\eta _{N}$ is an isomorphism
for any $N\in {^{H}\mathfrak{M}}.$ In particular the functor $F$ is fully
faithful.
\end{remark}

\subsection{The notion of preantipode}

Next result characterizes when the adjunction $(F,G)$ is an equivalence of
categories in term of the existence of a suitable map $\tau $.

\begin{proposition}
\cite[Proposition 3.3]{Ardi-Pava}\label{pro: tau} Let $(H,m,u,\Delta
,\varepsilon ,\omega )$ be a dual quasi-bialgebra. The following assertions
are equivalent.

\begin{enumerate}
\item[$(i)$] The adjunction $(F,G)$ is an equivalence.

\item[$(ii)$] For each $M\in {^{H}\mathfrak{M}_{H}^{H}},$ there exists a $%
\Bbbk $-linear map $\tau :M\rightarrow M^{coH}$ such that: 
\begin{eqnarray}
\tau (mh) &=&\omega ^{-1}[\tau (m_{0})_{-1}\otimes m_{1}\otimes h]\tau
(m_{0})_{0},\text{ for all }h\in H,m\in M,  \label{Tau mh} \\
m_{-1}\otimes \tau (m_{0}) &=&\tau (m_{0})_{-1}m_{1}\otimes \tau (m_{0})_{0},%
\text{ for all }m\in M,  \label{col sx eps} \\
\tau (m_{0})m_{1} &=&m\text{ }\forall m\in M.  \label{inv eps}
\end{eqnarray}

\item[$(iii)$] For each $M\in {^{H}\mathfrak{M}_{H}^{H}},$ there exists a $%
\Bbbk $-linear map $\tau :M\rightarrow M^{coH}$ such that (\ref{inv eps})
holds and 
\begin{equation}
\tau (mh)=m\varepsilon (h),\text{ for all }h\in H,m\in M^{coH}\text{.}
\label{Tau mh simple}
\end{equation}
\end{enumerate}
\end{proposition}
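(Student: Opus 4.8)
The plan is to first reduce assertion $(i)$ to the bijectivity of the counit. By Remark~\ref{adjoint} the unit $\eta$ of the adjunction $(F,G)$ is always an isomorphism, so $F$ is fully faithful; hence, by the standard criterion for adjoint equivalences, $(i)$ holds if and only if the counit $\epsilon _{M}:FG(M)=M^{coH}\otimes H\rightarrow M$, $\epsilon _{M}(x\otimes h)=xh$, is bijective for every $M\in {^{H}\mathfrak{M}_{H}^{H}}$. Since ${^{H}\mathfrak{M}_{H}^{H}}$ is a category of modules and comodules over $\Bbbk $, a bijective morphism there is automatically an isomorphism (its $\Bbbk $-linear inverse is again action- and coaction-compatible), so $(i)$ is equivalent to the bijectivity of each $\epsilon _{M}$. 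I would then prove the cycle $(i)\Rightarrow (ii)\Rightarrow (iii)\Rightarrow (i)$.

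For $(i)\Rightarrow (ii)$, assume $\epsilon _{M}$ invertible and set $\tau :=(M^{coH}\otimes \varepsilon )\circ \epsilon _{M}^{-1}:M\rightarrow M^{coH}$. The key point is that $\epsilon _{M}^{-1}$ is a morphism in ${^{H}\mathfrak{M}_{H}^{H}}$, i.e. it is right $H$-linear and $H$-bicolinear for the explicit structures of $F(M^{coH})=M^{coH}\otimes H$ recorded in Claim~\ref{claim: adjunctios}. Using that $\epsilon _{M}^{-1}$ is right $H$-colinear and then collapsing the trailing $H$-factor with $\varepsilon $ first identifies $\epsilon _{M}^{-1}(m)=\tau (m_{0})\otimes m_{1}$; feeding this into $\epsilon _{M}\epsilon _{M}^{-1}=\mathrm{Id}$ then gives \eqref{inv eps}. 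Next, left $H$-colinearity of $\epsilon _{M}^{-1}$, after counit-collapsing the extra $H$-leg with $H\otimes M^{coH}\otimes \varepsilon $, produces \eqref{col sx eps}; and right $H$-linearity of $\epsilon _{M}^{-1}$, using the explicit action $(x\otimes h)l=\omega ^{-1}(x_{-1}\otimes h_{1}\otimes l_{1})x_{0}\otimes h_{2}l_{2}$ on $F(M^{coH})$ and again collapsing with $\varepsilon $, produces \eqref{Tau mh}.

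The implication $(ii)\Rightarrow (iii)$ is short. First, \eqref{inv eps} applied to $m\in M^{coH}$ gives $\tau (m)=\tau (m)1_{H}=m$ by unitality of the $H$-action. Then, for $m\in M^{coH}$ one has $m_{0}\otimes m_{1}=m\otimes 1_{H}$, so \eqref{Tau mh} becomes $\tau (mh)=\omega ^{-1}(\tau (m)_{-1}\otimes 1_{H}\otimes h)\tau (m)_{0}$, and the counital normalization of the reassociator \eqref{eq:qusi-unitairity cocycle}, which holds for $\omega ^{-1}$ as well and forces the middle-slot $1_{H}$ to trivialize, collapses the factor to $\varepsilon (\tau (m)_{-1})\varepsilon (h)$; by the counit property this yields $\tau (mh)=\tau (m)\varepsilon (h)=m\varepsilon (h)$, which is \eqref{Tau mh simple}.

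For $(iii)\Rightarrow (i)$, define $\kappa _{M}:M\rightarrow M^{coH}\otimes H$ by $\kappa _{M}(m):=\tau (m_{0})\otimes m_{1}$, which lands in $M^{coH}\otimes H$ since $\tau $ takes values in $M^{coH}$. Then $\epsilon _{M}\kappa _{M}(m)=\tau (m_{0})m_{1}=m$ by \eqref{inv eps}. Conversely, for $x\in M^{coH}$ and $h\in H$ the right coaction collapses by coinvariance to $\rho ^{r}(xh)=xh_{1}\otimes h_{2}$, so $\kappa _{M}(xh)=\tau (xh_{1})\otimes h_{2}$, which by \eqref{Tau mh simple} equals $x\varepsilon (h_{1})\otimes h_{2}=x\otimes h$. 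Hence $\kappa _{M}$ is a two-sided inverse of $\epsilon _{M}$, so $\epsilon _{M}$ is bijective, hence an isomorphism in ${^{H}\mathfrak{M}_{H}^{H}}$, and $(i)$ follows. I expect the main obstacle to be bookkeeping: carefully using bicomodule compatibility to keep the left and right coaction legs straight and invoking the cocycle normalization at the right moments. Once the explicit structures of $F(M^{coH})$ from Claim~\ref{claim: adjunctios} are in hand, each of the three identities reduces to applying a single morphism property of $\epsilon _{M}^{-1}$ (or of $\kappa _{M}$) and then collapsing a redundant $H$-factor with $\varepsilon $.
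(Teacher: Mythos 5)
Your proof is correct, and it is essentially the argument behind the cited source: since the unit is always an isomorphism (Remark \ref{adjoint}), the adjunction is an equivalence precisely when each counit $\epsilon_{M}$ is bijective, and the three conditions in $(ii)$ are exactly the right $H$-colinearity, left $H$-colinearity and right $H$-linearity of the forced inverse $m\mapsto \tau(m_{0})\otimes m_{1}$, while the passage between $(ii)$ and $(iii)$ is the content of Remark \ref{rem: tau}. Note that the paper itself does not reprove this proposition (it is quoted from \cite[Proposition 3.3]{Ardi-Pava}); your cyclic scheme $(i)\Rightarrow(ii)\Rightarrow(iii)\Rightarrow(i)$ is a clean way to close the argument without having to establish the harder converse direction of Remark \ref{rem: tau}, i.e.\ that (\ref{inv eps}) and (\ref{Tau mh simple}) imply (\ref{Tau mh}) and (\ref{col sx eps}) directly.
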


\begin{remark}
\label{rem: tau}Let $\tau :M\rightarrow M^{coH}$ be a $\Bbbk $-linear map
such that (\ref{inv eps}) holds. By \cite[Remark 3.4]{Ardi-Pava}, the map $%
\tau $ fulfills (\ref{Tau mh simple}) if and only if it fulfills (\ref{Tau mh}%
) and (\ref{col sx eps}).
\end{remark}

\begin{definition}
\label{preantipode}\ Following \cite[Definition 3.6]{Ardi-Pava} we will say
that a preantipode for a dual quasi-bialgebra $(H,m,u,\Delta ,\varepsilon
,\omega )$ is a $\Bbbk $-linear map $S:H\rightarrow H$ such that, for all $%
h\in H$,%
\begin{gather}
S(h_{1})_{1}h_{2}\otimes S(h_{1})_{2}=1_{H}\otimes S(h),  \label{col 1 S} \\
S(h_{2})_{1}\otimes h_{1}S(h_{2})_{2}=S(h)\otimes 1_{H},  \label{col 2 S} \\
\omega (h_{1}\otimes S(h_{2})\otimes h_{3})=\varepsilon (h).  \label{fond S}
\end{gather}
\end{definition}

\begin{remark}
\label{Lempreant}\cite[Remark 3.7]{Ardi-Pava} Let $(H,m,u,\Delta
,\varepsilon ,\omega ,S)$ be a dual quasi-bialgebra with a preantipode. Then
the following equalities hold%
\begin{equation}
h_{1}S(h_{2})=\varepsilon S(h)1_{H}=S(h_{1})h_{2}\text{ for all }h\in H.
\label{3S}
\end{equation}
\end{remark}

\begin{lemma}
\label{lem: tau}\cite[Lemma 3.8]{Ardi-Pava} Let $(H,m,u,\Delta ,\varepsilon
,\omega ,S)$ be a dual quasi-bialgebra with a preantipode. For any $M\in {%
^{H}\mathfrak{M}_{H}^{H}}$ and $m\in M$, set 
\begin{equation}
\tau (m):=\omega \lbrack m_{-1}\otimes S(m_{1})_{1}\otimes
m_{2}]m_{0}S(m_{1})_{2}.  \label{deftau}
\end{equation}%
Then (\ref{deftau}) defines a map $\tau :M\rightarrow M^{coH}$ which
fulfills (\ref{Tau mh}), (\ref{col sx eps}) and (\ref{inv eps}).
\end{lemma}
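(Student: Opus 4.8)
The plan is to verify directly the three required identities, organizing the work so that only three genuine computations are needed. The logical backbone is Remark \ref{rem: tau}: once I know that \eqref{deftau} lands in $M^{coH}$ and that \eqref{inv eps} holds, it suffices to establish the single identity \eqref{Tau mh simple}, and then \eqref{Tau mh} and \eqref{col sx eps} follow at once. So the real content reduces to (i) well-definedness, i.e. $\tau(M)\subseteq M^{coH}$; (ii) the identity \eqref{inv eps}; and (iii) the identity \eqref{Tau mh simple}.

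Before computing I would record the only structural features of $M$ that enter. Since $M$ is a right $H$-module in $({^{H}\mathfrak{M}^{H}},\otimes,\Bbbk,{^{H}}a^{H},l,r)$ (see Remark \ref{rem: H alg} and Claim \ref{claim: adjunctios}), the action $\mu^{r}_{M}$ is a morphism of $H$-bicomodules, whence
\begin{equation*}
\rho^{r}_{M}(m\cdot h)=m_{0}\cdot h_{1}\otimes m_{1}h_{2}\qquad\text{and}\qquad \rho^{l}_{M}(m\cdot h)=m_{-1}h_{1}\otimes m_{0}\cdot h_{2};
\end{equation*}
moreover the module axiom expressed through ${^{H}}a^{H}$ gives the quasi-associativity
\begin{equation*}
(m\cdot h)\cdot k=\omega^{-1}(m_{-1}\otimes h_{1}\otimes k_{1})\,m_{0}\cdot(h_{2}k_{2})\,\omega(m_{1}\otimes h_{3}\otimes k_{3}).
\end{equation*}
Together with coassociativity of the two coactions and their commutation, these are all I need about $M$; everything else is extracted from the preantipode axioms \eqref{col 1 S}, \eqref{col 2 S}, \eqref{fond S}, the derived identity \eqref{3S} of Remark \ref{Lempreant}, and the cocycle condition \eqref{eq:3-cocycle} with its normalization \eqref{eq:qusi-unitairity cocycle}.

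For (i) I would apply $\rho^{r}_{M}$ to \eqref{deftau}, push it through the action by the first displayed formula, and then observe (via coassociativity of the right coaction) that the two consecutive $H$-legs on which $S$ acts are really a single comultiplied leg; the resulting $H$-component then contracts to $1_{H}$ exactly by the coinvariance-type axiom \eqref{col 1 S}. For (ii) I would expand $\tau(m_{0})m_{1}$, recognize by coassociativity that it equals $\omega[m_{-1}\otimes S(m_{1})_{1}\otimes m_{2}]\,(m_{0}\cdot S(m_{1})_{2})\cdot m_{3}$, rewrite the double action with the quasi-associativity formula above, merge the three resulting factors of $\omega^{\pm1}$ by means of the $3$-cocycle identity \eqref{eq:3-cocycle}, and finish with \eqref{fond S} together with \eqref{3S}. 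For (iii) I would substitute $m\in M^{coH}$ (so $m_{0}\otimes m_{1}=m\otimes1_{H}$), simplify the iterated coactions of $m\cdot h$, and again use quasi-associativity, \eqref{3S} and the normalization \eqref{eq:qusi-unitairity cocycle} to collapse $\tau(m\cdot h)$ to $m\varepsilon(h)$. The main obstacle throughout is the reassociator bookkeeping: each step produces several nested factors of $\omega$ and $\omega^{-1}$ carrying coaction legs, and the delicate point is to align their arguments so that \eqref{eq:3-cocycle} applies and the preantipode conditions can then be invoked. In particular \eqref{col 1 S} and \eqref{col 2 S} involve the comultiplication of $S$, so one must track precisely which Sweedler legs of $S(m_{1})$ are acted upon versus fed into $\omega$; getting this alignment right, rather than any single identity, is where the difficulty lies.
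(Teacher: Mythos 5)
The first thing to note is that the paper contains no proof of this lemma at all: it is imported verbatim from \cite[Lemma 3.8]{Ardi-Pava}, so your proposal can only be measured against the axioms themselves. At the level of architecture your plan is sound, and it is the route the preliminaries are designed for: by Remark \ref{rem: tau}, once one knows $\tau (M)\subseteq M^{coH}$ and \eqref{inv eps}, the single identity \eqref{Tau mh simple} yields \eqref{Tau mh} and \eqref{col sx eps}; and the structural facts you isolate about $M$ (that $\mu _{M}^{r}$ is a map of bicomodules, and its quasi-associativity through ${^{H}}a^{H}$) are correct and are indeed all that is needed from $M$.

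The gap lies in the axiom assignments, which are the only technical commitments your sketch makes. Every one of the three computations must at some point strip the comultiplication off $S(\cdot )$, because in \eqref{deftau} (and in every expression derived from it) $S$ enters comultiplied, with one Sweedler leg inside $\omega $ and another acting on $M$; the only axioms capable of doing this are \eqref{col 1 S} and \eqref{col 2 S}, and your sketch misassigns them in all three steps. In step (i), colinearity of the action and coassociativity give
\begin{equation*}
\rho _{M}^{r}(\tau (m))=\omega \left[ m_{-1}\otimes S\bigl((m_{1})_{2}\bigr)_{1}\otimes m_{2}\right] \,m_{0}\cdot S\bigl((m_{1})_{2}\bigr)_{2}\otimes (m_{1})_{1}\,S\bigl((m_{1})_{2}\bigr)_{3},
\end{equation*}
so the right-coaction leg has the shape $h_{1}S(h_{2})_{\mathrm{last}}$. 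This is exactly the pattern of \eqref{col 2 S}: comultiplying its first factor gives $S(h_{2})_{1}\otimes S(h_{2})_{2}\otimes h_{1}S(h_{2})_{3}=S(h)_{1}\otimes S(h)_{2}\otimes 1_{H}$, which with $h=m_{1}$ collapses the display to $\tau (m)\otimes 1_{H}$. It is \emph{not} the pattern of \eqref{col 1 S}, whose product $S(h_{1})_{1}h_{2}$ ($S$ of the \emph{first} leg times the \emph{second} leg) never occurs here; invoked literally, \eqref{col 1 S} does not apply. In steps (ii) and (iii) the defect is the dual one: you propose to finish with \eqref{3S}, but \eqref{3S} only concerns an un-comultiplied $S$ and is powerless on the expressions that actually arise. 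For instance, in step (iii), after coinvariance of $m$ and \eqref{eq:qusi-unitairity cocycle} kill the trailing reassociator of the quasi-associativity formula, one is left with
\begin{equation*}
\tau (mh)=\omega \left[ m_{-2}h_{1}\otimes S(h_{4})_{1}\otimes h_{5}\right] \,\omega ^{-1}\!\left( m_{-1}\otimes h_{2}\otimes S(h_{4})_{2}\right) \,m_{0}\cdot \bigl(h_{3}S(h_{4})_{3}\bigr),
\end{equation*}
in which $S(h_{4})$ is split three ways; the factor $h_{3}S(h_{4})_{3}$ can only be removed by the comultiplied form of \eqref{col 2 S}, and the two surviving $\omega $-factors then need \eqref{eq:3-cocycle} and \eqref{fond S} --- none of which appear in your list for this step. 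The analogous point holds in step (ii), where after cancelling a pair $\omega \ast \omega ^{-1}$ one needs \eqref{col 1 S} (to collapse $S(m_{2})_{1}m_{3}\otimes S(m_{2})_{2}$ to $1_{H}\otimes S(m_{2})$) followed by \eqref{fond S}; again \eqref{3S} cannot substitute. These defects are locally repairable, since pattern matching forces the correct axioms; but your proposal itself (rightly) locates the entire difficulty of this lemma in getting this alignment right, and that is precisely what is wrong as written.
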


\begin{theorem}
\cite[Theorem 3.9]{Ardi-Pava}\label{Teoequidual} For a dual quasi-bialgebra $%
(H,m,u,\Delta ,\varepsilon ,\omega )$ the following are equivalent.

\begin{enumerate}
\item[$(i)$] The adjunction $(F,G)$ of Remark \ref{adjoint} is an
equivalence of categories.

\item[$(ii)$] There exists a preantipode.
\end{enumerate}
\end{theorem}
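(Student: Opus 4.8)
The plan is to prove Theorem~\ref{Teoequidual} by combining Proposition~\ref{pro: tau} with Lemma~\ref{lem: tau}, so that the real content reduces to producing, from an equivalence, a preantipode. The implication $(ii)\Rightarrow(i)$ is essentially free: if $H$ admits a preantipode $S$, then Lemma~\ref{lem: tau} furnishes, for every $M\in{^{H}\mathfrak{M}_{H}^{H}}$, a $\Bbbk$-linear map $\tau:M\rightarrow M^{coH}$ satisfying \eqref{Tau mh}, \eqref{col sx eps} and \eqref{inv eps}. These are exactly the three conditions appearing in part $(ii)$ of Proposition~\ref{pro: tau}, so that proposition immediately yields that the adjunction $(F,G)$ is an equivalence. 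Thus $(ii)\Rightarrow(i)$ is a one-line deduction once the two cited results are in hand.

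The substantive direction is $(i)\Rightarrow(ii)$. First I would invoke Proposition~\ref{pro: tau} to translate the hypothesis: since $(F,G)$ is an equivalence, for each $M\in{^{H}\mathfrak{M}_{H}^{H}}$ there is a map $\tau_M:M\rightarrow M^{coH}$ satisfying \eqref{Tau mh}--\eqref{inv eps}. The natural strategy is to apply this to a cleverly chosen $M$ and then \emph{define} $S$ from $\tau_M$. The canonical candidate is the free object $M:=F({^{\bullet}}H)=H\otimes H$ with its ${^{H}\mathfrak{M}_{H}^{H}}$-structure given in \ref{claim: adjunctios} (taking the left comodule ${^{\bullet}}H$ to be $H$ with coaction $\Delta$), because $F$ is where the module and bicomodule structure are manufactured and its coinvariants are controlled by Remark~\ref{adjoint}. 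Concretely, I expect to set something like $S(h):=\tau(1_H\otimes h)$ followed by a projection, i.e. to read off $S$ by post-composing $\tau$ evaluated on suitable elements $1_H\otimes h$ with the counit or the identification $(H\otimes H)^{coH}\cong H$ coming from $\eta$ being an isomorphism (Remark~\ref{adjoint}).

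The heart of the proof is then the verification that this $S$ satisfies the three defining identities \eqref{col 1 S}, \eqref{col 2 S} and \eqref{fond S} of a preantipode. Here I would feed the three properties of $\tau$ into the explicit formulas for the coactions $\rho^l_{H\otimes H}$, $\rho^r_{H\otimes H}$ and the action $\mu^r_{H\otimes H}$ recorded in \ref{claim: adjunctios}. Property \eqref{col sx eps} (the left-comodule compatibility of $\tau$) should produce the coaction identities \eqref{col 1 S} and \eqref{col 2 S} after expanding $\rho^l_{H\otimes H}(h\otimes k)=h_{-1}k_1\otimes(h_0\otimes k_2)$ and using that $\tau$ lands in coinvariants; property \eqref{inv eps}, the counit-type relation $\tau(m_0)m_1=m$, should collapse to the cocycle condition \eqref{fond S} after unwinding the twisted multiplication $\mu^r$ with its $\omega^{\pm1}$ factors; and property \eqref{Tau mh} governs the interaction with the $\omega$-twist. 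I expect the main obstacle to be precisely this bookkeeping: disentangling the $\omega$ and $\omega^{-1}$ reassociators that decorate both the action and the two coactions, and matching Sweedler indices on $H\otimes H$ against the single-variable indices in the preantipode axioms. A secondary subtlety is checking that the chosen formula for $S$ is well-defined and independent of the auxiliary identifications (so that naturality of $\tau$ in $M$, guaranteed by Proposition~\ref{pro: tau}, is genuinely being used rather than an ad hoc choice). Once the twisting is handled, equations \eqref{col 1 S}--\eqref{fond S} should fall out directly, completing $(i)\Rightarrow(ii)$ and hence the theorem.
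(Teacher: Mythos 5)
Your reduction of $(ii)\Rightarrow(i)$ to Lemma \ref{lem: tau} plus Proposition \ref{pro: tau} is correct and is exactly the intended argument (the present paper only cites \cite[Theorem 3.9]{Ardi-Pava}, but that is how the cited proof goes). The gap is in $(i)\Rightarrow(ii)$: your strategy (apply Proposition \ref{pro: tau} to one well-chosen $M$ and read off $S$) is the right one, but the test object you choose, $M:=F({^{\bullet}}H)=H\otimes H$ with the structure of \ref{claim: adjunctios}, can never produce a preantipode, because on any object in the image of $F$ the map $\tau$ of Proposition \ref{pro: tau} is forced to be trivial. Indeed $(F(V))^{coH}=V\otimes 1_{H}$, and by quasi-unitarity \eqref{eq:qusi-unitairity cocycle} of $\omega$ (hence of $\omega^{-1}$),
\[
(v\otimes 1_{H})h=\omega ^{-1}(v_{-1}\otimes 1_{H}\otimes h_{1})\,v_{0}\otimes h_{2}=v\otimes h,
\]
so every element of $F(V)$ has the form $mh$ with $m\in M^{coH}$. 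Since the $\tau$ of Proposition \ref{pro: tau} also satisfies \eqref{Tau mh simple} (Remark \ref{rem: tau}), this forces $\tau(v\otimes h)=\varepsilon(h)\,v\otimes 1_{H}$. Your candidate $S(h)$, namely $\tau(1_{H}\otimes h)$ followed by the identification $(H\otimes H)^{coH}\cong H$, therefore collapses to $S(h)=\varepsilon(h)1_{H}$, which violates \eqref{col 1 S}: its left-hand side becomes $h\otimes 1_{H}$ while the right-hand side is $\varepsilon(h)1_{H}\otimes 1_{H}$, so this $S$ is a preantipode only when $H=\Bbbk$. Morally, free objects $F(V)$ satisfy the fundamental theorem for \emph{every} dual quasi-bialgebra, so they carry no information about the existence of a preantipode.

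The cited proof uses instead $M:=T({^{\bullet}}H^{\bullet})$, the image under the functor $T$ of \ref{claim: adjunctios} of the \emph{regular bicomodule} $H$ (both coactions equal to $\Delta$). As a vector space this is again $H\otimes H$, but now the right coaction is codiagonal, $\rho^{r}(h\otimes k)=(h_{1}\otimes k_{1})\otimes h_{2}k_{2}$, and the action is twisted on both sides, $(h\otimes k)l=\omega ^{-1}(h_{1}\otimes k_{1}\otimes l_{1})\,h_{2}\otimes k_{2}l_{2}\,\omega (h_{3}\otimes k_{3}\otimes l_{3})$. Here $h\otimes 1_{H}$ is \emph{not} coinvariant, $\tau$ is genuinely nontrivial, and one defines $S(h):=(\varepsilon \otimes H)\tau(h\otimes 1_{H})$; in the classical Hopf case $\tau(h\otimes 1_{H})=h_{1}\otimes s(h_{2})$, so this recovers the antipode, which is a useful sanity check. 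With this $M$, the axioms \eqref{col 1 S}, \eqref{col 2 S}, \eqref{fond S} do follow from \eqref{Tau mh}, \eqref{col sx eps}, \eqref{inv eps} by the kind of Sweedler/reassociator bookkeeping you describe. So the proposal needs its test object replaced before the verification step can even begin.
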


We include here some new results that will be needed later on in the paper.

\begin{lemma}
\label{lem:omeg-1S}Let $(H,m,u,\Delta ,\varepsilon ,\omega ,S)$ be a dual
quasi-bialgebra with a preantipode. Then
\begin{equation}
\omega ^{-1}\left[ S\left( h_{1}\right) \otimes h_{2}\otimes S\left(
h_{3}\right) \right] =\varepsilon S\left( h\right), \text{ for all }h\in H .  \label{form: omegasbis}
\end{equation}
\end{lemma}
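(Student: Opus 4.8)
The plan is to prove the identity \eqref{form: omegasbis} by starting from the fundamental cocycle property \eqref{fond S} of the preantipode and manipulating it using the coassociativity of the coalgebra together with the comultiplicativity of the map $S$ (recall $S$ need not be a coalgebra map, but its relevant coproducts are controlled by the collinearity relations \eqref{col 1 S} and \eqref{col 2 S}). The target identity replaces $h_{1}\otimes S(h_{2})\otimes h_{3}$ from \eqref{fond S} by the pattern $S(h_{1})\otimes h_{2}\otimes S(h_{3})$ and uses $\omega^{-1}$ in place of $\omega$; so the natural strategy is to show that after suitable substitutions the two sides agree up to the convolution inverse.

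First I would exploit the collinearity relations \eqref{col 1 S} and \eqref{col 2 S}, which give precise expressions for $S(h_1)_1 h_2 \otimes S(h_1)_2$ and $S(h_2)_1 \otimes h_1 S(h_2)_2$. These encode how the comultiplication interacts with $S$ and with the multiplication $m$; they are the substitute for the usual antipode axiom. Combined with the quasi-associativity \eqref{eq:quasi-associativity} rewritten in terms of $\omega^{-1}$, and with Remark \ref{Lempreant} giving $S(h_1)h_2 = \varepsilon S(h) 1_H = h_1 S(h_2)$, I would aim to build the string $\omega^{-1}[S(h_1)\otimes h_2 \otimes S(h_3)]$ out of an application of \eqref{fond S} to a cleverly chosen element. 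A clean route is to apply the $3$-cocycle condition \eqref{eq:3-cocycle} to the triple involving $S(h_1), h_2, S(h_3)$ and a unit factor, so that the unital normalization \eqref{eq:qusi-unitairity cocycle} collapses several terms and \eqref{fond S} produces the remaining $\varepsilon S$.

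The key steps in order would be: (1) insert a copy of $1_H = S(h)_1 S(\cdots)$ or use \eqref{3S} to create room for a cocycle argument; (2) apply \eqref{eq:3-cocycle} (the $3$-cocycle identity) to convert a product of two $\omega$-type factors into the desired single $\omega^{-1}$ factor, using that $m$ applied to the relevant tensor factors is controlled by \eqref{col 1 S}–\eqref{col 2 S}; (3) use the unital cocycle condition \eqref{eq:qusi-unitairity cocycle} to kill any factor containing $1_H$; and (4) recognize the surviving expression as $\varepsilon S(h)$ via \eqref{fond S} and \eqref{3S}. Throughout, coassociativity of $\Delta$ lets me reindex the Sweedler legs freely.

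The hard part will be bookkeeping: choosing the correct triple to feed into \eqref{eq:3-cocycle} so that exactly the pattern $S(h_1)\otimes h_2 \otimes S(h_3)$ emerges with the inverse reassociator, while the auxiliary factors simplify via \eqref{col 1 S}, \eqref{col 2 S}, and \eqref{3S} to unit elements on which $\omega$ normalizes to $\varepsilon$. The main obstacle is that $S$ is only a preantipode and not a genuine antipode, so I cannot use multiplicativity of $S$ directly; instead every interaction of $S$ with $m$ must be routed through the collinearity relations, and I expect the delicate point to be verifying that the middle leg $h_2$ survives unmolested while the outer $S$-legs assemble correctly under the cocycle substitution.
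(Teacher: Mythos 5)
Your toolkit is the right one---the paper's proof also runs entirely on the $3$-cocycle condition \eqref{eq:3-cocycle}, the colinearity relations \eqref{col 1 S}--\eqref{col 2 S}, the unitality \eqref{eq:qusi-unitairity cocycle} of $\omega$, and \eqref{fond S}---but the one concrete step your plan commits to is precisely the step that fails. You propose to evaluate \eqref{eq:3-cocycle} on the four-fold tensor built from $S(h_{1})$, $h_{2}$, $S(h_{3})$ \emph{and a unit factor}, expecting \eqref{eq:qusi-unitairity cocycle} to collapse terms. But whenever one of the four entries is $1_{H}$, both sides of \eqref{eq:3-cocycle} reduce, using only \eqref{eq:qusi-unitairity cocycle} and unitality of $m$, to one and the same expression (for instance, at $a\otimes b\otimes c\otimes 1_{H}$ both sides equal $\omega\left( a\otimes b\otimes c\right) $), so the cocycle condition is a tautology there and yields no information; in particular it can never manufacture the factor $\omega ^{-1}\left[ S(h_{1})\otimes h_{2}\otimes S(h_{3})\right] $ appearing in \eqref{form: omegasbis}. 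Your auxiliary step of ``inserting a copy of $1_{H}$'' via \eqref{3S} is likewise unavailable: \eqref{3S} trades $h_{1}S(h_{2})$ for $\varepsilon S(h)1_{H}$, and $\varepsilon S$ need not be convolution invertible (it plays the role of the functional $\beta $ of a dual quasi-Hopf algebra, which carries no invertibility requirement), so the scalar it produces cannot be removed afterwards.

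The missing idea is that the fourth entry must be a genuine Sweedler leg of $h$, not a unit. The paper rewrites \eqref{eq:3-cocycle} as an equality $\alpha =\beta $ of the convolution products $\alpha :=\omega \left( H\otimes H\otimes m\right) \ast \omega \left( m\otimes H\otimes H\right) \ast m_{\Bbbk }\left( \omega ^{-1}\otimes \varepsilon \right) $ and $\beta :=m_{\Bbbk }\left( \varepsilon \otimes \omega \right) \ast \omega \left( H\otimes m\otimes H\right) $, and evaluates both at $S\left( h_{1}\right) \otimes h_{2}\otimes S\left( h_{3}\right) \otimes h_{4}$. The units then appear \emph{inside} the arguments of $\omega $, produced by the colinearity relations rather than inserted by hand: on the $\alpha $ side, \eqref{col 1 S} turns the products of an $S$-leg's coproduct against the adjacent $h$-leg into $1_{H}\otimes S(\cdot )$, and two applications of it together with \eqref{eq:qusi-unitairity cocycle} collapse $\alpha $ to $\omega ^{-1}\left[ S(h_{1})\otimes h_{2}\otimes S(h_{3})\right] $; on the $\beta $ side, \eqref{col 2 S} followed by \eqref{fond S} collapses $\beta $ to $\varepsilon S\left( h\right) $. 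Without the extra leg $h_{4}$ to multiply against the $S$-legs, neither \eqref{col 1 S} nor \eqref{col 2 S} can fire, which is why your version stalls before producing either side of the identity.
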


\begin{proof}
Set $\alpha :=\omega \left( H\otimes H\otimes m\right) \ast \omega \left(
m\otimes H\otimes H\right) \ast m_{\Bbbk }\left( \omega ^{-1}\otimes
\varepsilon \right) $ and $\beta =m_{\Bbbk }\left( \varepsilon \otimes
\omega \right) \ast \omega \left( H\otimes m\otimes H\right) .$ Fix $h\in H$%
. We have%
\begin{eqnarray*}
&&\alpha \left( S\left( h_{1}\right) \otimes h_{2}\otimes S\left(
h_{3}\right) \otimes h_{4}\right) \\
&=&\omega \left[ S\left( h_{1}\right) _{1}\otimes h_{2}\otimes S\left(
h_{5}\right) _{\left( 1\right) }h_{6}\right] \omega \left[ S\left(
h_{1}\right) _{2}h_{3}\otimes S\left( h_{5}\right) _{\left( 2\right)
}\otimes h_{7}\right] \omega ^{-1}\left[ S\left( h_{1}\right) _{3}\otimes
h_{4}\otimes S\left( h_{5}\right) _{\left( 3\right) }\right] \\
&\overset{(\ref{col 1 S})}{=}&\omega \left[ S\left( h_{1}\right) _{1}\otimes
h_{2}\otimes 1_{H}\right] \omega \left[ S\left( h_{1}\right)
_{2}h_{3}\otimes S\left( h_{5}\right) _{\left( 1\right) }\otimes h_{6}\right]
\omega ^{-1}\left[ S\left( h_{1}\right) _{3}\otimes h_{4}\otimes S\left(
h_{5}\right) _{\left( 2\right) }\right] \\
&=&\omega \left[ S\left( h_{1}\right) _{1}h_{2}\otimes S\left( h_{4}\right)
_{\left( 1\right) }\otimes h_{5}\right] \omega ^{-1}\left[ S\left(
h_{1}\right) _{2}\otimes h_{3}\otimes S\left( h_{4}\right) _{\left( 2\right)
}\right] \\
&\overset{(\ref{col 1 S})}{=}&\omega \left[ 1_{H}\otimes S\left(
h_{4}\right) _{\left( 1\right) }\otimes h_{5}\right] \omega ^{-1}\left[
S\left( h_{1}\right) \otimes h_{3}\otimes S\left( h_{4}\right) _{\left(
2\right) }\right] \\
&=&\omega ^{-1}\left[ S\left( h_{1}\right) \otimes h_{2}\otimes S\left(
h_{3}\right) \right] ,
\end{eqnarray*}%
and%
\begin{eqnarray*}
&&\beta \left( S\left( h_{1}\right) \otimes h_{2}\otimes S\left(
h_{3}\right) \otimes h_{4}\right) \\
&=&\omega \left[ h_{2}\otimes S\left( h_{4}\right) _{\left( 1\right)
}\otimes h_{5}\right] \omega \left[ S\left( h_{1}\right) \otimes
h_{3}S\left( h_{4}\right) _{\left( 2\right) }\otimes h_{6}\right] \\
&\overset{(\ref{col 2 S})}{=}&\omega \left[ h_{2}\otimes S\left(
h_{3}\right) \otimes h_{4}\right] \omega \left[ S\left( h_{1}\right) \otimes
1_{H}\otimes h_{5}\right] \\
&=&\omega \left[ h_{2}\otimes S\left( h_{3}\right) \otimes h_{4}\right]
\varepsilon S\left( h_{1}\right) \\
&\overset{(\ref{fond S})}{=}&\varepsilon S\left( h\right) .
\end{eqnarray*}%
By the cocycle condition we have $\alpha =\beta $.
\end{proof}

\begin{definition}
\label{Bulant}\cite[page 66]{Maj2} A dual quasi-Hopf algebra $(H,m,u,\Delta
,\varepsilon ,\omega ,s,\alpha ,\beta )$ is a dual quasi-bialgebra $%
(H,m,u,\Delta ,\varepsilon ,\omega )$ endowed with a coalgebra
anti-homomorphism 
\begin{equation*}
s:H\rightarrow H
\end{equation*}%
and two maps $\alpha ,\beta $ in $H^{\ast }$, such that, for all $h\in H$:%
\begin{eqnarray}
h_{1}\beta (h_{2})s(h_{3}) &=&\beta (h)1_{H},  \label{ant 1} \\
s(h_{1})\alpha (h_{2})h_{3} &=&\alpha (h)1_{H},  \label{ant 2} \\
\omega (h_{1}\otimes \beta (h_{2})s(h_{3})\alpha (h_{4})\otimes h_{5})
&=&\varepsilon (h)=\omega ^{-1}(s(h_{1})\otimes \alpha (h_{2})h_{3}\beta
(h_{4})\otimes s(h_{5})).  \label{ant 3}
\end{eqnarray}
\end{definition}

In \cite[Theorem 3.10]{Ardi-Pava}, we proved that any dual quasi-Hopf
algebra has a preantipode. The following result proves that the converse
holds true whenever $H$ is also cocommutative.

\begin{theorem}
\label{teo:cocom}Let $(H,m,u,\Delta ,\varepsilon ,\omega ,S)$ be a dual
quasi-bialgebra with a preantipode. If $H$ is cocommutative, then $%
(H,m,u,\Delta ,\varepsilon ,s)$ is an ordinary Hopf algebra, where, for all $%
h\in H$, 
\begin{equation*}
s\left( h\right) :=S\left( h_{3}\right) _{1}\omega \left[ h_{1}\otimes
S\left( h_{3}\right) _{2}\otimes h_{2}\right] .
\end{equation*}%
Furthermore $(H,m,u,\Delta ,\varepsilon,\omega,\alpha ,\beta ,s)$ is a dual
quasi-Hopf algebra, where $\alpha :=\varepsilon $ and $\beta :=\varepsilon S$%
. Moreover one has $S=\beta \ast s.$
\end{theorem}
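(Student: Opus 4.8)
The plan is to deduce the three assertions from the preantipode identities \eqref{col 1 S}, \eqref{col 2 S}, \eqref{fond S}, from \eqref{3S} and from Lemma \ref{lem:omeg-1S}, using cocommutativity as the only new ingredient. First I would show that cocommutativity forces $m$ to be associative. Reading quasi-associativity \eqref{eq:quasi-associativity} as the identity $A\ast\omega=\omega\ast B$ in $\mathrm{Hom}(H^{\otimes3},H)$, with $A:=m(H\otimes m)$, $B:=m(m\otimes H)$ and $\omega\in\mathrm{Hom}(H^{\otimes3},\Bbbk)$, I note that $H^{\otimes3}$ is cocommutative and that $\omega$ is scalar-valued, whence $\omega(t_1)B(t_2)=\omega(t_2)B(t_1)=B(t_1)\omega(t_2)$ for every $t\in H^{\otimes3}$; thus $\omega\ast B=B\ast\omega$ and, $\omega$ being convolution invertible, $A=\omega\ast B\ast\omega^{-1}=B$. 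Therefore $(H,m,u,\Delta,\varepsilon)$ is an ordinary bialgebra, the remaining compatibilities being exactly the assumption that $m,u$ are coalgebra maps.

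Next I would check that the stated $s$ is a two-sided antipode. Expanding and using coassociativity, $s(h_1)h_2=S(h_3)_1h_4\,\omega[h_1\otimes S(h_3)_2\otimes h_2]$; grouping the consecutive legs $h_3,h_4$ as a single comultiplication, \eqref{col 1 S} collapses $S(h_3)_1h_4\otimes S(h_3)_2$ to $1_H\otimes S(\cdot)$, and what survives is $\omega[h_1\otimes S(h_2)\otimes h_3]=\varepsilon(h)$ by \eqref{fond S} after transposing the middle comultiplicand via cocommutativity. For the opposite side $h_1s(h_2)=h_1S(h_4)_1\,\omega[h_2\otimes S(h_4)_2\otimes h_3]$ I would first transpose the two tensorands of $\Delta S(h_4)$ (cocommutativity of $H$), producing the factor $h_1S(h_4)_2$, and then apply \eqref{col 2 S} in the same fashion, again closing with \eqref{fond S}. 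This proves that $(H,m,u,\Delta,\varepsilon,s)$ is an ordinary Hopf algebra, $s$ being automatically a coalgebra anti-homomorphism.

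With $s=\mathrm{id}_H^{-1}$ at hand I would obtain $S=\beta\ast s$ by a convolution cancellation. On one side, \eqref{3S} gives $\mathrm{id}_H\ast S=u\varepsilon S$. On the other, $\bigl(\mathrm{id}_H\ast(\beta\ast s)\bigr)(h)=h_1\beta(h_2)s(h_3)$; moving the scalar $\beta(h_2)$ aside, transposing the last two legs by cocommutativity and collapsing $h_1s(h_2)=\varepsilon(\cdot)1_H$, this equals $\beta(h)1_H=u\varepsilon S(h)$. Hence $\mathrm{id}_H\ast S=\mathrm{id}_H\ast(\beta\ast s)$, and convolving on the left by $s$ yields $S=\beta\ast s$. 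I would then verify Definition \ref{Bulant} with $\alpha=\varepsilon$ and $\beta=\varepsilon S$: axiom \eqref{ant 2} is the already proved $s(h_1)h_2=\varepsilon(h)1_H$; axiom \eqref{ant 1}, namely $h_1\varepsilon S(h_2)s(h_3)=\varepsilon S(h)1_H$, follows from $h_1s(h_2)=\varepsilon(h)1_H$ after a cocommutative transposition; and the first half of \eqref{ant 3} becomes, using $\beta(h_2)s(h_3)=S(\cdot)$ from $S=\beta\ast s$, exactly $\omega[h_1\otimes S(h_2)\otimes h_3]=\varepsilon(h)$, i.e. \eqref{fond S}.

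The main obstacle is the second half of \eqref{ant 3}. Collapsing $\varepsilon S(h_4)s(h_5)$ into $S$ by $S=\beta\ast s$, it reduces to the single identity $\omega^{-1}[s(h_1)\otimes h_2\otimes S(h_3)]=\varepsilon(h)$. I expect to establish this in the spirit of the proof of Lemma \ref{lem:omeg-1S}: substitute the definition of $s$, transpose the legs of $\Delta S(\cdot)$ by cocommutativity, and evaluate both sides of the $3$-cocycle condition \eqref{eq:3-cocycle} on the string $s(h_1)\otimes h_2\otimes S(h_3)\otimes h_4$, using the antipode relations on the $s$-leg and \eqref{col 1 S}, \eqref{col 2 S}, \eqref{fond S} on the $S$-legs. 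The genuine difficulty is the Sweedler bookkeeping that threads the two factors $\omega$ and $\omega^{-1}$ together through the cocycle; once this identity is secured, all three assertions are proved.
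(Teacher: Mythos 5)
Most of your proof is correct and runs parallel to the paper's: your convolution argument for associativity (the scalar-valued $\omega$ commutes with everything under convolution because $H^{\otimes 3}$ is cocommutative) is a clean justification of what the paper only asserts; your verification that $s$ is a two-sided antipode uses \eqref{col 1 S}, \eqref{col 2 S}, \eqref{fond S} and cocommutativity exactly as the paper does; and your derivation of $S=\beta\ast s$ by cancelling $s$ against $\mathrm{id}_H$ in the convolution algebra is a legitimate variant of the paper's direct computation $S(h)=S(h_1)\left[h_2s(h_3)\right]=\left[S(h_1)h_2\right]s(h_3)=\varepsilon S(h_1)s(h_2)$. Axioms \eqref{ant 1}, \eqref{ant 2} and the first half of \eqref{ant 3} are also correctly disposed of, and your reduction of the second half of \eqref{ant 3} to the single identity $\omega^{-1}\left[s(h_1)\otimes h_2\otimes S(h_3)\right]=\varepsilon(h)$ is right.

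But that identity is exactly where your proof stops being a proof: you say you \emph{expect} to establish it by evaluating the $3$-cocycle condition on $s(h_1)\otimes h_2\otimes S(h_3)\otimes h_4$, and you yourself flag the entangled Sweedler bookkeeping as the genuine difficulty. This is the crux of the theorem, it is left unproved, and it is not evident that re-running a cocycle evaluation with $s$ in the first slot threads through, since the proof of Lemma \ref{lem:omeg-1S} exploits \eqref{col 1 S} and \eqref{col 2 S} on $S$-legs in both end slots. The paper's way around this re-runs no cocycle computation at all. First one records the coproduct formula $S(h)_1\otimes S(h)_2=S(h_1)\otimes s(h_2)$, which follows from $S=\beta\ast s$, the anti-coalgebra property of $s$ (available since $s$ is an ordinary antipode) and cocommutativity. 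Then one substitutes the definition of $s$:
\begin{equation*}
\omega^{-1}\left[s(h_1)\otimes h_2\otimes S(h_3)\right]=\omega^{-1}\left[S(h_3)_1\otimes h_4\otimes S(h_5)\right]\omega\left[h_1\otimes S(h_3)_2\otimes h_2\right],
\end{equation*}
and uses the coproduct formula to turn $S(h_3)_1\otimes S(h_3)_2$ into $S(h_3)\otimes s(h_4)$. The two factors now decouple: the $\omega^{-1}$ factor acquires the exact shape $\omega^{-1}\left[S(\cdot)\otimes \cdot\otimes S(\cdot)\right]$ of Lemma \ref{lem:omeg-1S}, which is applied as a black box and yields the scalar $\varepsilon S(\cdot)$; this scalar is reabsorbed into the remaining factor through $\varepsilon S(h_2)s(h_3)=S(h_2)$, giving $\omega\left[h_1\otimes S(h_2)\otimes h_3\right]=\varepsilon(h)$ by \eqref{fond S}. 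That coproduct formula for $S$ is the missing idea that disentangles the two $\omega$'s; without it your plan stalls precisely where you say it does.
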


\begin{proof}
By (\ref{eq:quasi-associativity}), cocommutativity and convolution
invertibility of $\omega $, we get that $(hk)l=h(kl)$ for all $h,k,l\in H$. %
Therefore $m$ is associative and hence $(H,m,u,\Delta ,\varepsilon )$ is an
ordinary bialgebra. Let us check that $s$ is an antipode for $H.$ Using
cocommutativity, (\ref{col 1 S}) and (\ref{fond S}) one proves that $s\left(
h_{1}\right) h_{2}=1_{H}\varepsilon \left( h\right) $ for all $h\in H$.
Similarly one gets $h_{1}s(h_{2})=1_{H}\varepsilon \left( h\right) \ $for
all $h\in H$. 
Hence $(H,m,u,\Delta ,\varepsilon ,s)$ is an ordinary Hopf algebra. Note
that, for all $h\in H$, 
\begin{equation}
S(h)=S(h_{1})\left[ h_{2}s(h_{3})\right] =\left[ S(h_{1})h_{2}\right]
s(h_{3})\overset{(\ref{3S})}{=}\varepsilon S(h_{1})s(h_{2})=\beta \left(
h_{1}\right) s(h_{2}).  \label{form:Sfroms}
\end{equation}%
Let us check that $(H,m,u,\Delta ,\varepsilon ,\omega ,\alpha ,\beta ,s)$ is
a dual quasi-Hopf algebra. For all $h\in H$,%
\begin{gather*}
h_{1}\beta \left( h_{2}\right) s\left( h_{3}\right) \overset{(\ref%
{form:Sfroms})}{=}h_{1}S(h_{2})\overset{(\ref{3S})}{=}1_{H}\varepsilon S(h),
\\
s(h_{1})\alpha (h_{2})h_{3}=s(h_{1})h_{2}=1_{H}\varepsilon \left( h\right)
=1_{H}\alpha \left( h\right) , \\
\omega \left[ h_{1}\otimes \beta \left( h_{2}\right) s\left( h_{3}\right)
\alpha \left( h_{4}\right) \otimes h_{5}\right] \overset{(\ref{form:Sfroms})}%
{=}\omega \left[ h_{1}\otimes S\left( h_{2}\right) \otimes h_{3}\right] 
\overset{(\ref{fond S})}{=}1_{H}\varepsilon \left( h\right) .
\end{gather*}%
Now, since $(H,m,u,\Delta ,\varepsilon ,s)$ is an ordinary Hopf algebra, we
have that $s$ is an anti-coalgebra map. Thus%
\begin{gather*}
S\left( h\right) _{1}\otimes S\left( h\right) _{2}\overset{(\ref{form:Sfroms}%
)}{=}\beta \left( h_{1}\right) s\left( h_{2}\right) _{1}\otimes s\left(
h_{2}\right) _{2}=\beta \left( h_{1}\right) s\left( h_{3}\right) \otimes
s\left( h_{2}\right) \\
\overset{\text{cocom.}}{=}\beta \left( h_{1}\right) s\left( h_{2}\right)
\otimes s\left( h_{3}\right) \overset{(\ref{form:Sfroms})}{=}S(h_{1})\otimes
s\left( h_{2}\right)
\end{gather*}%
so that%
\begin{eqnarray*}
&&\omega ^{-1}\left[ s\left( h_{1}\right) \otimes \alpha \left( h_{2}\right)
h_{3}\beta \left( h_{4}\right) \otimes s\left( h_{5}\right) \right] \\
&\overset{(\ref{form:Sfroms})}{=}&\omega ^{-1}\left[ s\left( h_{1}\right)
\otimes h_{2}\otimes S\left( h_{3}\right) \right] \\
&=&\omega ^{-1}\left[ S\left( h_{3}\right) _{1}\otimes h_{4}\otimes S\left(
h_{5}\right) \right] \omega \left[ h_{1}\otimes S\left( h_{3}\right)
_{2}\otimes h_{2}\right] \\
&=&\omega ^{-1}\left[ S\left( h_{3}\right) \otimes h_{5}\otimes S\left(
h_{6}\right) \right] \omega \left[ h_{1}\otimes s\left( h_{4}\right) \otimes
h_{2}\right] \\
&\overset{\text{cocom.}}{=}&\omega ^{-1}\left[ S\left( h_{2}\right) \otimes
h_{3}\otimes S\left( h_{4}\right) \right] \omega \left[ h_{1}\otimes s\left(
h_{5}\right) \otimes h_{6}\right] \\
&\overset{\eqref{form: omegasbis}}{=}&\varepsilon S\left( h_{2}\right)
\omega \left[ h_{1}\otimes s\left( h_{3}\right) \otimes h_{4}\right] \\
&\overset{(\ref{form:Sfroms})}{=}&\omega \left[ h_{1}\otimes S\left(
h_{2}\right) \otimes h_{3}\right] \overset{(\ref{fond S})}{=}%
1_{H}\varepsilon \left( h\right) .
\end{eqnarray*}
\end{proof}

\begin{definition}
A dual quasi-bialgebra $(A,m,u,\Delta ,\varepsilon ,\omega )$ is called
pointed if the underlying coalgebra is pointed, i.e. all its simple
subcoalgebras are one dimensional.
\end{definition}

\begin{definition}
Let $(A,m,u,\Delta ,\varepsilon ,\omega )$ be a dual quasi-bialgebra. The set%
\begin{equation*}
\mathbb{G}\left( A\right) =\left\{ a\in A\mid \Delta (a)=a\otimes a\text{ and }%
\varepsilon (a)=1\right\}
\end{equation*}%
is called the set of the grouplike elements of $A.$
\end{definition}

\begin{remark}
\label{rem: pointed}Let $A$ be a pointed dual quasi-bialgebra. We know that
the $1$-dimensional subcoalgebras of $A$ are exactly those of the form $%
\Bbbk g$ for $g\in G$ (\cite[page 57]{Sweedler}). Thus the coradical of $A$
is $A_{0}=\sum_{g\in G}\Bbbk g=\Bbbk \mathbb{G}\left( A\right) .$
\end{remark}

The following results extends the so-called Cartier-Gabriel-Kostant to dual
quasi-bialgebras with a preantipode. In the connected case such a result was
achieved in \cite[Theorem 4.3]{Huang-QuiverAppr}.

\begin{corollary}
\label{coro:cartan}Let $H$ be a dual quasi-bialgebra with a preantipode over
a field $\Bbbk $ of characteristic zero. If $H$ is cocommutative and
pointed, then $H$ is an ordinary Hopf algebra isomorphic to the biproduct $%
U\left( P\left( H\right) \right) \#\Bbbk \mathbb{G}\left( H\right) ,$ where $%
P\left( H\right) $ denotes the Lie algebra of primitive elements in $H$.
\end{corollary}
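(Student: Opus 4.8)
The plan is to reduce the statement to the classical Cartier--Gabriel--Kostant structure theorem, using Theorem \ref{teo:cocom} to strip away the ``quasi'' part of the hypothesis. First I would apply Theorem \ref{teo:cocom}: since $H$ is a cocommutative dual quasi-bialgebra with a preantipode, that result guarantees that $(H,m,u,\Delta ,\varepsilon ,s)$ is an \emph{ordinary} Hopf algebra, with $s$ the antipode constructed there. In particular the multiplication $m$ becomes associative and the reassociator $\omega$ plays no further role, so from this point on $H$ lives entirely inside the classical theory of cocommutative Hopf algebras. This is the step that consumes all the dual quasi-bialgebra content of the hypotheses.

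Next, since $H$ is assumed pointed, Remark \ref{rem: pointed} identifies its coradical as $H_{0}=\Bbbk \mathbb{G}\left( H\right)$, the group algebra on the set of grouplike elements. Thus $H$ is an ordinary cocommutative \emph{pointed} Hopf algebra over a field of characteristic zero, and I would now invoke the Cartier--Gabriel--Kostant theorem in its pointed form (see \cite{Sweedler}, and Kostant; cf.\ the smash-product formulation in Montgomery's book). It yields a decomposition of $H$ as the Radford biproduct $U\left( P\left( H\right) \right) \#\Bbbk \mathbb{G}\left( H\right)$, where $P(H)$ is the Lie algebra of primitives, $U(P(H))$ its universal enveloping algebra, and $\mathbb{G}(H)$ acts by conjugation; cocommutativity forces the accompanying coaction to be trivial, so the biproduct is here just the ordinary smash product. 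This is exactly the asserted isomorphism.

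I do not anticipate a genuine obstacle, because once Theorem \ref{teo:cocom} has converted $H$ into an ordinary Hopf algebra the corollary is immediate from the classical result. The only points deserving care are the hypotheses of that classical theorem: characteristic zero is what makes $U(P(H))$ the full irreducible component of $1$ (by the Milnor--Moore/Cartier description of irreducible cocommutative Hopf algebras), while pointedness is the precise substitute for algebraic closure---over an algebraically closed field a cocommutative Hopf algebra is automatically pointed, so assuming pointedness directly is what lets us drop that closure assumption. Hence one must simply be sure to cite the pointed version of Cartier--Gabriel--Kostant rather than the algebraically closed one.
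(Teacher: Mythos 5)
Your proposal is correct and follows essentially the same route as the paper: apply Theorem \ref{teo:cocom} to conclude that $H$ is an ordinary Hopf algebra, then invoke the classical Cartier--Gabriel--Kostant theorem for pointed cocommutative Hopf algebras in characteristic zero (the paper cites \cite[Section 13.1, page 279]{Sweedler} and \cite[page 79]{Montgomery}). Your additional remarks on why pointedness substitutes for algebraic closure are a sound elaboration of what the paper leaves implicit.
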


\begin{proof}
By Theorem \ref{teo:cocom}, $H$ is an ordinary Hopf algebra. By \cite[%
Section 13.1, page 279]{Sweedler}, we conclude (see also \cite[page 79]%
{Montgomery}).
\end{proof}

\section{Yetter-Drinfeld modules over a dual quasi-bialgebra \label{C2}}

The main aim of this section is to restrict the equivalence between ${^{H}%
\mathfrak{M}_{H}^{H}}$ and ${^{H}\mathfrak{M}}$ of Theorem \ref{Teoequidual}%
, to an equivalence between ${_{H}^{H}\mathfrak{M}_{H}^{H}}$ and ${_{H}^{H}%
\mathcal{YD}}$ (the category of Yetter-Drinfeld modules over $H$) for any
dual quasi-bialgebra $H$ with a preantipode.

\begin{definition}
\label{def: YD}Let $(H,m,u,\Delta ,\varepsilon ,\omega )$ be a dual
quasi-bialgebra. The category ${_{H}^{H}\mathcal{YD}}$ of Yetter-Drinfeld
modules over $H$, is defined as follows. An object in ${_{H}^{H}\mathcal{YD}}
$ is a tern $\left( V,\rho _{V},\vartriangleright \right) ,$ where

\begin{itemize}
\item $(V,\rho )$ is an object in ${^{H}\mathfrak{M}}$

\item $\vartriangleright :H\otimes V\rightarrow V$ is a $\Bbbk $-linear map
such that, for all $h,l\in L$ and $v\in V$ 
\begin{equation}
\left( hl\right) \vartriangleright v=\left[ 
\begin{array}{c}
\omega ^{-1}\left( h_{1}\otimes l_{1}\otimes v_{-1}\right) \omega \left(
h_{2}\otimes \left( l_{2}\vartriangleright v_{0}\right) _{-1}\otimes
l_{3}\right) \\ 
\omega ^{-1}\left( (h_{3}\vartriangleright (l_{2}\vartriangleright
v_{0})_{0}\right) _{-1}\otimes h_{4}\otimes l_{4})\left(
h_{3}\vartriangleright \left( l_{2}\vartriangleright v_{0}\right)
_{0}\right) _{0}%
\end{array}%
\right] ,  \label{ass YD}
\end{equation}%
\begin{equation}
1_{H}\vartriangleright v=v \qquad \text{and}  \label{unitYD}
\end{equation}
\begin{equation}
\left( h_{1}\vartriangleright v\right) _{-1}h_{2}\otimes \left(
h_{1}\vartriangleright v\right) _{0}=h_{1}v_{-1}\otimes \left(
h_{2}\vartriangleright v_{0}\right)  \label{Comp YD}
\end{equation}
\end{itemize}

A morphism $f:(V,\rho ,\vartriangleright )\rightarrow (V^{\prime },\rho
^{\prime },\vartriangleright ^{\prime })$ in $_{H}^{H}\mathcal{YD}$ is a
morphism $f:(V,\rho )\rightarrow (V^{\prime },\rho ^{\prime })$ in $^{H}%
\mathfrak{M}$ such that $f(h\vartriangleright v)=h\vartriangleright ^{\prime
}f(v)$.
\end{definition}

\begin{claim}
The category ${_{H}^{H}\mathcal{YD}}$ is isomorphic to the weak right center
of ${^{H}\mathfrak{M}}$ (regarded as a monoidal category as in Section \ref%
{C1}), see Theorem \ref{teo:WeakCenter}. As a consequence ${_{H}^{H}\mathcal{%
YD}}$ has a pre-braided monoidal structure given as follows. The unit is $%
\Bbbk $ regarded as an object in ${_{H}^{H}\mathcal{YD}}$ via trivial
structures i.e. $\rho _{\Bbbk }\left( k\right) =1_{H}\otimes k$ and $%
h\vartriangleright k=\varepsilon \left( h\right) k.$ The tensor product is
defined by 
\begin{equation*}
\left( V,\rho _{V},\vartriangleright \right) \otimes \left( W,\rho
_{W},\vartriangleright \right) =\left( V\otimes W,\rho _{V\otimes
W},\vartriangleright \right)
\end{equation*}%
where $\rho _{V\otimes W}\left( v\otimes w\right) =v_{-1}w_{-1}\otimes
v_{0}\otimes w_{0}$ and%
\begin{equation}
h\vartriangleright \left( v\otimes w\right) =\left[ 
\begin{array}{c}
\omega \left( h_{1}\otimes v_{-1}\otimes w_{-2}\right) \omega ^{-1}\left(
\left( h_{2}\vartriangleright v_{0}\right) _{-2}\otimes h_{3}\otimes
w_{-1}\right) \\ 
\omega \left( \left( h_{2}\vartriangleright v_{0}\right) _{-1}\otimes \left(
h_{4}\vartriangleright w_{0}\right) _{-1}\otimes h_{5}\right) \left(
h_{2}\vartriangleright v_{0}\right) _{0}\otimes \left(
h_{4}\vartriangleright w_{0}\right) _{0}%
\end{array}%
\right] .  \label{form:YDtens}
\end{equation}%
The constraints are the same of ${^{H}\mathfrak{M}}$ viewed as morphisms in $%
{_{H}^{H}\mathcal{YD}}$. The braiding $c_{V,W}:V\otimes W\rightarrow
W\otimes V$ is given by 
\begin{equation}
c_{V,W}\left( v\otimes w\right) =\left( v_{-1}\vartriangleright w\right)
\otimes v_{0}.  \label{braiding YD}
\end{equation}
\end{claim}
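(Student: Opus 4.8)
The plan is to derive the entire pre-braided monoidal structure from two inputs: the isomorphism of categories ${_H^H\mathcal{YD}}\cong\mathcal{W}^r({^H\mathfrak{M}})$ furnished by Theorem~\ref{teo:WeakCenter}, and the standard fact that the weak right center of any monoidal category is canonically pre-braided monoidal. Recall that an object of $\mathcal{W}^r({^H\mathfrak{M}})$ is a pair $(V,\sigma)$ with $V$ a left comodule and $\sigma_X\colon X\otimes V\to V\otimes X$ a natural transformation (the half-braiding) compatible with the associator ${^H}a$; under the isomorphism of Theorem~\ref{teo:WeakCenter} the half-braiding is tied to the Yetter--Drinfeld action by $\sigma_X(x\otimes v)=(x_{-1}\vartriangleright v)\otimes x_0$, and the action is recovered from $\sigma$ by evaluating at the regular comodule $X=H$ and applying $\varepsilon$ to the output $H$-leg, since $(V\otimes\varepsilon)\sigma_H(h\otimes v)=(h_1\vartriangleright v)\varepsilon(h_2)=h\vartriangleright v$. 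First I would record, under this dictionary, that the three defining conditions of a Yetter--Drinfeld module are exactly the three axioms for a half-braiding: \eqref{Comp YD} says each $\sigma_X$ is a morphism in ${^H\mathfrak{M}}$, \eqref{ass YD} is the hexagon for $\sigma$ against ${^H}a$ (whence the $\omega^{\pm1}$), and \eqref{unitYD} is the normalization $\sigma_\Bbbk=\mathrm{id}$.

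Next I would transport the canonical pre-braided structure along the isomorphism and read off each piece. Because the forgetful functor $\mathcal{W}^r({^H\mathfrak{M}})\to{^H\mathfrak{M}}$ is strict monoidal, the unit is $\Bbbk$ with its trivial comodule structure and trivial half-braiding, which under the dictionary gives $\rho_\Bbbk(k)=1_H\otimes k$ and $h\vartriangleright k=\varepsilon(h)k$; moreover the coaction on a tensor product is the codiagonal one $v_{-1}w_{-1}\otimes v_0\otimes w_0$ and the associativity and unit constraints are those of ${^H\mathfrak{M}}$ regarded in ${_H^H\mathcal{YD}}$. For the braiding, the center's braiding $c_{(V,\sigma),(W,\tau)}\colon V\otimes W\to W\otimes V$ is the half-braiding $\tau_V$ of the second factor evaluated at the underlying comodule of the first; substituting the dictionary gives $\tau_V(v\otimes w)=(v_{-1}\vartriangleright w)\otimes v_0$, which is precisely \eqref{braiding YD}.

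The main computation, and the step I expect to be the chief obstacle, is the tensor-product half-braiding, which must reproduce \eqref{form:YDtens}. In the weak center the half-braiding of $(V,\sigma)\otimes(W,\tau)$ at an object $X$ is the composite
\begin{equation*}
{^H}a^{-1}_{V,W,X}\circ(V\otimes\tau_X)\circ{^H}a_{V,X,W}\circ(\sigma_X\otimes W)\circ{^H}a^{-1}_{X,V,W}.
\end{equation*}
I would evaluate this at $X=H$ on $h\otimes(v\otimes w)$ and apply $\varepsilon$ to the output $H$-leg to extract the action. The three instances of ${^H}a^{\pm1}$ each contribute one factor $\omega^{\pm1}$ on the appropriate legs, while $\sigma_H$ and $\tau_H$ produce the iterated actions $h_2\vartriangleright v_0$ and $h_4\vartriangleright w_0$; matching the resulting three cocycle factors against $\omega(h_1\otimes v_{-1}\otimes w_{-2})$, $\omega^{-1}((h_2\vartriangleright v_0)_{-2}\otimes h_3\otimes w_{-1})$ and $\omega((h_2\vartriangleright v_0)_{-1}\otimes(h_4\vartriangleright w_0)_{-1}\otimes h_5)$ is a bookkeeping exercise in the coassociativity of the coactions and the placement of $h_1,\dots,h_5$. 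The delicate point is keeping every leg of each $\omega$ aligned with \eqref{form:YDtens}, since a single misplaced comultiplication index alters the cocycle arguments; once this identification is secured, functoriality is automatic, and the structure is only pre-braided because the half-braidings $\sigma_X$ need not be invertible.
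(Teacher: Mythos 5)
Your proposal follows the paper's own route exactly: in the paper, too, this claim is obtained by transporting the canonical pre-braided monoidal structure of the weak right center $\mathcal{W}_{r}\left({^{H}\mathfrak{M}}\right)$ along the isomorphism of Theorem \ref{teo:WeakCenter} (whose proof is deferred to the appendix and modelled on Balan's result), with the explicit formulas \eqref{form:YDtens} and \eqref{braiding YD} read off by evaluating the center's structure maps at the regular comodule and collapsing the output $H$-leg with $\varepsilon$, precisely as you describe. Your bookkeeping for the tensor-product half-braiding (the three instances of ${^{H}}a^{\pm 1}$ producing the three $\omega ^{\pm 1}$ factors, and coassociativity realigning the comodule indices) is sound, so the proposal is correct and coincides with the paper's argument.
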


\begin{remark}
It is easily checked that condition (\ref{ass YD}) holds for all $h,l\in L$
and $v\in V$ if and only if 
\begin{equation*}
c_{H\otimes H,V}={^{H}}a_{V,H,H}\circ \left( c_{H,V}\otimes H\right) \circ {%
^{H}}a_{H,V,H}^{-1}\circ \left( H\otimes c_{H,V}\right) \circ {^{H}}%
a_{H,H,V},
\end{equation*}%
where ${^{H}}a$ is the associativity constraint in ${^{H}\mathfrak{M}}$. 
Now, the displayed equality above, can be written as%
\begin{equation*}
{^{H}}a_{V,H,H}^{-1}\circ c_{H\otimes H,V}\circ {^{H}}a_{H,H,V}^{-1}=\left(
c_{H,V}\otimes H\right) \circ {^{H}}a_{H,V,H}^{-1}\circ \left( H\otimes
c_{H,V}\right) .
\end{equation*}%
One easily checks that this is equivalent to ask that 
\begin{eqnarray*}
&&\omega \left( h_{1}\otimes l_{1}\otimes v_{-1}\right) \omega \left( \left(
\left( h_{2}l_{2}\right) \vartriangleright v_{0}\right) _{-1}\otimes
h_{3}\otimes l_{3}\right) \left( \left( h_{2}l_{2}\right) \vartriangleright
v_{0}\right) _{0} \\
&=&\omega (h_{1}\otimes (l_{1}\vartriangleright v)_{-1}\otimes
l_{2})h_{3}\vartriangleright (l_{1}\vartriangleright v)_{0}
\end{eqnarray*}%
holds for all $h,l\in L$ and $v\in V.$ This equation is the left-handed
version of \cite[(3.3)]{Balan}. In conclusion, the axioms defining the
category ${_{H}^{H}\mathcal{YD}}$ are the left-handed version of the ones
appearing in \cite[Definition 3.1]{Balan}.
\end{remark}

\subsection{The restriction of the equivalence (F,G)}

Let $H$ be a dual quasi-bialgebra. From Theorem \ref{Teoequidual}, we know
that the adjunction $(F,G)$ of Remark \ref{adjoint} is an equivalence of
categories when $H$ has a preantipode. Next aim is to prove that 
$(F,G)$ restricts to an equivalence between the categories $_{H}^{H}\mathcal{%
YD}$ and $_{H}^{H}\mathfrak{M}_{H}^{H}$.

Inspired by \cite[page 541]{Sch-TwoChar} we get the following result.

\begin{lemma}
\label{lem: xiScha} Let $(H,m,u,\Delta ,\varepsilon ,\omega )$ be a dual
quasi-bialgebra. For all $U\in {^{H}{\mathfrak{M}}}$ and $M\in {{_{H}^{H}%
\mathfrak{M}_{H}^{H}}}${,} we have a map 
\begin{eqnarray*}
\xi _{U,M} &:&F\left( U\right) \otimes _{H}M\rightarrow U\otimes M, \\
\xi _{U,M}\left( \left( u\otimes h\right) \otimes _{H}m\right) &=&\omega
^{-1}\left( u_{-1}\otimes h_{1}\otimes m_{-1}\right) u_{0}\otimes h_{2}m_{0}
\end{eqnarray*}%
which is a $\Bbbk $-linear natural isomorphism with inverse given by $\xi
_{U,M}^{-1}\left( u\otimes m\right) =\left( u\otimes 1_{H}\right) \otimes
_{H}m.$ Moreover:

\begin{itemize}
\item[1)] the map $\xi _{U,M}$ is a natural isomorphism in ${{^{H}\mathfrak{M%
}_{H}^{H}}}$ where $U\otimes M$ has the following structures:%
\begin{eqnarray*}
{\rho _{U\otimes M}^{l}}\left( u\otimes m\right) &=&u_{-1}m_{-1}\otimes
(u_{0}\otimes m_{0}), \\
\rho _{U\otimes M}^{r}\left( u\otimes m\right) &=&(u\otimes m_{0})\otimes
m_{1}, \\
\mu _{U\otimes M}^{r}(\left( u\otimes m\right) \otimes h) &=&\omega
^{-1}(u_{-1}\otimes m_{-1}\otimes h_{1})u_{0}\otimes m_{0}h_{2};
\end{eqnarray*}

\item[2)] if $U\in {_{H}^{H}\mathcal{YD}}$, the map $\xi _{U,M}$ is a
natural isomorphism in ${{_{H}^{H}\mathfrak{M}_{H}^{H}}}$ where $U\otimes M$
has the structures above along with the following left module structure: 
\begin{equation*}
{\mu ^{l}}_{U\otimes M}(h\otimes \left( u\otimes m\right) )=\omega \left(
h_{1}\otimes u_{-1}\otimes m_{-2}\right) \omega ^{-1}\left( \left(
h_{2}\vartriangleright u_{0}\right) _{-1}\otimes h_{3}\otimes m_{-1}\right)
\left( h_{2}\vartriangleright u_{0}\right) _{0}\otimes h_{4}m_{0}.
\end{equation*}
\end{itemize}
\end{lemma}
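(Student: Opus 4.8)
The plan is to identify $\xi_{U,M}$ with the canonical isomorphism that cancels a free $H$-factor. Writing $U^{\circ}=L(U)$ for $U$ endowed with the trivial right $H$-coaction, we have $F(U)=U^{\circ}\otimes H=T(U^{\circ})$, where $T$ is the free right $H$-module functor of Remark~\ref{FuntT} for the algebra $H$ in ${}^{H}\mathfrak{M}^{H}$; hence $F(U)\otimes_{H}M=(U^{\circ}\otimes H)\otimes_{H}M$ and $\xi_{U,M}$ is nothing but the composite of the associativity constraint ${}^{H}a^{H}$ with the canonical isomorphism $H\otimes_{H}M\cong M$ of the left $H$-module $M$. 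Being assembled from the constraints of $({}^{H}\mathfrak{M}^{H},\otimes,\Bbbk)$, it is automatically a $\Bbbk$-linear natural isomorphism. For a self-contained argument I would instead verify everything by hand: first that the raw assignment $(u\otimes h)\otimes m\mapsto \omega^{-1}(u_{-1}\otimes h_{1}\otimes m_{-1})u_{0}\otimes h_{2}m_{0}$ coequalizes the pair defining $F(U)\otimes_{H}M$ — which, after inserting the right action $\mu^{r}_{F(U)}$ and the constraint ${}^{H}a^{H}$, is exactly the $3$-cocycle identity~\eqref{eq:3-cocycle} — so that $\xi_{U,M}$ is well defined. Then I would check that the stated $\xi_{U,M}^{-1}$ is a two-sided inverse: $\xi_{U,M}\circ\xi_{U,M}^{-1}=\mathrm{id}$ is immediate from~\eqref{eq:qusi-unitairity cocycle} and~\eqref{eq:quasi-leftunitarirty}, while for the other composite one observes, using $(u\otimes 1_{H})k=u\otimes k$ in $F(U)$ and the defining relation of $\otimes_{H}$, that every generator $(u\otimes h)\otimes_{H}m$ is a combination of elements $(u'\otimes 1_{H})\otimes_{H}m'$; thus $\xi_{U,M}^{-1}$ is surjective, and together with $\xi_{U,M}\circ\xi_{U,M}^{-1}=\mathrm{id}$ this forces it to be bijective with inverse $\xi_{U,M}$. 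Naturality in $U$ and $M$ is clear from the formula.

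For part~1), $F(U)$ is a right $H$-module and $M$ is an $H$-bimodule in $({}^{H}\mathfrak{M}^{H},\otimes,\Bbbk)$, so $F(U)\otimes_{H}M$ is again an object of ${}^{H}\mathfrak{M}_{H}^{H}$, with codiagonal left and right coactions and with right $H$-action induced by that of $M$. Under $\xi_{U,M}$ these transported structures are exactly the ones carried by the object $U^{\circ}\otimes M$ (which equals $U\otimes M$ as a vector space) in the monoidal category $({}^{H}\mathfrak{M}^{H},\otimes,\Bbbk)$, equipped with the right action $(U^{\circ}\otimes\mu^{r}_{M})\circ {}^{H}a^{H}_{U^{\circ},M,H}$. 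I would therefore just compute ${}^{H}a^{H}_{U^{\circ},M,H}$ and the codiagonal coactions explicitly, recalling that $U^{\circ}$ has trivial right coaction and the left coaction of $U$: the right coaction becomes $(u\otimes m_{0})\otimes m_{1}$, the left coaction becomes $u_{-1}m_{-1}\otimes(u_{0}\otimes m_{0})$, and the right action becomes $\omega^{-1}(u_{-1}\otimes m_{-1}\otimes h_{1})u_{0}\otimes m_{0}h_{2}$, which are precisely the three displayed formulas. Since structures transport along the $\Bbbk$-linear isomorphism $\xi_{U,M}$, this simultaneously shows that $U\otimes M$ with these structures lies in ${}^{H}\mathfrak{M}_{H}^{H}$ and that $\xi_{U,M}$ is an isomorphism there, naturally in $U$ and $M$.

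For part~2), suppose $U\in{}_{H}^{H}\mathcal{YD}$. Then $F(U)=U^{\circ}\otimes H$ carries the left $H$-action making it an object of ${}_{H}^{H}\mathfrak{M}_{H}^{H}$, so $F(U)\otimes_{H}M$ inherits from $F(U)$ a left $H$-action and becomes an $H$-bimodule in ${}^{H}\mathfrak{M}^{H}$, i.e. an object of ${}_{H}^{H}\mathfrak{M}_{H}^{H}$; it then remains to transport this left action along $\xi_{U,M}$ and to match it with the displayed $\mu^{l}_{U\otimes M}$. This last identification is the main obstacle, since the left action on $F(U)$ is itself a reassociator-twisted expression in the Yetter-Drinfeld action $\vartriangleright$, and conjugating it by $\xi_{U,M}$ and $\xi_{U,M}^{-1}$ produces several stray copies of $\omega$ and $\omega^{-1}$ that must be reorganized. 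I expect the calculation to hinge on the compatibility~\eqref{Comp YD} (to commute the coaction past $\vartriangleright$) and on~\eqref{ass YD} together with the cocycle identity~\eqref{eq:3-cocycle} (to collapse the intermediate reassociators), after which the surviving terms assemble into $\omega(h_{1}\otimes u_{-1}\otimes m_{-2})\,\omega^{-1}((h_{2}\vartriangleright u_{0})_{-1}\otimes h_{3}\otimes m_{-1})$ and the plain left multiplication $h_{4}m_{0}$ on $M$. Note that, in contrast to the genuine tensor-product action~\eqref{form:YDtens}, no $\vartriangleright$ survives on the $M$-leg, reflecting that the $H$-factor of $F(U)$ has been absorbed into $M$; as in part~1), transport along $\xi_{U,M}$ then yields both that $U\otimes M$ lies in ${}_{H}^{H}\mathfrak{M}_{H}^{H}$ and that $\xi_{U,M}$ is an isomorphism there.
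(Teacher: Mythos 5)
Your handling of the main statement and of part 1) is correct and is in substance the paper's own argument: well-definedness of $\xi_{U,M}$ amounts to the balancing condition, which reduces to the cocycle identity \eqref{eq:3-cocycle}; invertibility follows from the relation $(u\otimes h)\otimes_{H}m=\omega^{-1}(u_{-1}\otimes h_{1}\otimes m_{-1})\,(u_{0}\otimes 1_{H})\otimes_{H}h_{2}m_{0}$, forced by the coequalizer defining $\otimes_{H}$ (your ``surjectivity of $\xi_{U,M}^{-1}$ plus one-sided inverse'' phrasing is just a repackaging of the direct check that $\xi_{U,M}^{-1}\circ\xi_{U,M}=\mathrm{id}$); and matching the transported structures with the displayed ones on $U\otimes M$ is exactly the verification, via \eqref{eq:quasi-associativity} and \eqref{eq:3-cocycle}, that $\xi_{U,M}$ is right $H$-linear and bicolinear.

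Part 2), however, has a genuine gap, for two reasons. First, your argument starts from ``$F(U)$ carries the left $H$-action making it an object of ${_{H}^{H}\mathfrak{M}_{H}^{H}}$'', but you never say what this action is. The explicit formula, namely \eqref{strutt mod left}, i.e. $l\cdot(u\otimes h)=\omega(l_{1}\otimes u_{-1}\otimes h_{1})\,\omega^{-1}\left((l_{2}\vartriangleright u_{0})_{-1}\otimes l_{3}\otimes h_{2}\right)(l_{2}\vartriangleright u_{0})_{0}\otimes l_{4}h_{3}$, is only constructed in Lemma \ref{Restr F} --- out of the braiding $c_{H,U}$ and the map $\alpha_{H,U}$ of Lemma \ref{lem:a4H} --- and the proof of Lemma \ref{Restr F} itself relies on part 1) of the present lemma; so quoting that result here requires untangling the order of the material, which you do not address, and without the explicit formula the ``transport along $\xi_{U,M}$'' you propose cannot even be set up. Second, and more importantly, the identification of the transported action with the displayed $\mu^{l}_{U\otimes M}$ is the real content of part 2) --- in the paper it occupies a page-long chain of equalities --- and you only forecast it (``I expect the calculation to hinge on\dots''). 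Your forecast is moreover inaccurate: the computation runs on \eqref{eq:quasi-associativity}, \eqref{eq:3-cocycle} and \eqref{Comp YD}, while condition \eqref{ass YD} plays no role at this point (it enters only later, in Lemma \ref{Restr F}, to show that \eqref{strutt mod left} satisfies the associativity axiom of a module). Announcing which identities should make the stray $\omega$-factors collapse is not the same as collapsing them; as it stands, part 2) is unproven.
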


\begin{proof}
Clearly $U\otimes M\in {{^{H}\mathfrak{M}^{H}}}$ via ${\rho _{U\otimes M}^{l}%
}$ and $\rho _{U\otimes M}^{r}$. Let $\xi _{U,M}^{\prime }:F\left( U\right)
\otimes M\rightarrow U\otimes M$ be defined by $\xi _{U,M}^{\prime }\left(
\left( u\otimes h\right) \otimes m\right) =\omega ^{-1}\left( u_{-1}\otimes
h_{1}\otimes m_{-1}\right) u_{0}\otimes h_{2}m_{0}.$

Using the quasi-associativity condition \eqref{eq:quasi-associativity}, one
easily checks that $\xi _{U,M}^{\prime }$ is in ${{^{H}\mathfrak{M}^{H}}}$.

Let us check that $\xi _{U,M}^{\prime }$ is balanced in ${{^{H}\mathfrak{M}%
^{H}}}$ i.e. that it equalizes the maps%
\begin{equation*}
\balxi%
\end{equation*}%
%
%
%
%
%
%
%
%
%
%
%
%
%
%
%
%
%
%
%
%
%
%
%
%
%
%
%
%
We have 
\begin{eqnarray*}
&&\xi _{U,M}^{\prime }\left( \mu _{F\left( U\right) }^{r}\otimes M\right)
\left( (\left( u\otimes h\right) \otimes l)\otimes m\right) \\
&=&\omega ^{-1}\left( u_{-1}\otimes h_{1}\otimes l_{1}\right) \xi
_{U,M}^{\prime }\left( \left( u_{0}\otimes h_{2}l_{2}\right) \otimes m\right)
\\
&=&\omega ^{-1}\left( u_{-2}\otimes h_{1}\otimes l_{1}\right) \omega
^{-1}\left( u_{-1}\otimes h_{2}l_{2}\otimes m_{-1}\right) u_{0}\otimes
\left( h_{3}l_{3}\right) m_{0} \\
&=&\left[ 
\begin{array}{c}
\omega ^{-1}\left( u_{-2}\otimes h_{1}\otimes l_{1}\right) \omega
^{-1}\left( u_{-1}\otimes h_{2}l_{2}\otimes m_{-2}\right) \omega ^{-1}\left(
h_{3}\otimes l_{3}\otimes m_{-1}\right) \\ 
u_{0}\otimes h_{4}\left( l_{4}m_{0}\right) \omega \left( h_{5}\otimes
l_{5}\otimes m_{1}\right)%
\end{array}%
\right] \\
&\overset{(\ref{eq:3-cocycle})}{=}&\omega ^{-1}\left( u_{-2}h_{1}\otimes
l_{1}\otimes m_{-2}\right) \omega ^{-1}\left( u_{-1}\otimes h_{2}\otimes
l_{2}m_{-1}\right) u_{0}\otimes h_{3}\left( l_{3}m_{0}\right) \omega \left(
h_{4}\otimes l_{4}\otimes m_{1}\right) \\
&=&\omega ^{-1}\left( u_{-1}h_{1}\otimes l_{1}\otimes m_{-2}\right) \xi
_{U,M}^{\prime }\left( F\left( U\right) \otimes \mu _{M}^{l}\right) \left(
(u_{0}\otimes h_{2})\otimes (l_{2}\otimes m_{0})\right) \omega \left(
h_{3}\otimes l_{3}\otimes m_{1}\right) \\
&=&\xi _{U,M}^{\prime }\left( F\left( U\right) \otimes \mu _{M}^{l}\right) {{%
^{H}}}a_{F\left( U\right) ,H,M}^{H}\left( (\left( u\otimes h\right) \otimes
l)\otimes m\right) .
\end{eqnarray*}%
Hence there exists a unique morphism $\xi _{U,M}:F\left( U\right) \otimes
_{H}M\rightarrow U\otimes M$ in ${{^{H}\mathfrak{M}^{H}}}$ such that $\xi
_{U,M}\left( \left( u\otimes h\right) \otimes _{H}m\right) =\xi
_{U,M}^{\prime }\left( \left( u\otimes h\right) \otimes m\right) .$ This
proves that $\xi _{U,M}$ is well-defined.

We now check that $\xi _{U,M}$ is invertible. Define%
\begin{equation*}
\overline{\xi }_{U,M}:U\otimes M\rightarrow F\left( U\right) \otimes
_{H}M,\qquad \overline{\xi }_{U,M}\left( u\otimes m\right) =\left( u\otimes
1_{H}\right) \otimes _{H}m.
\end{equation*}%
We have $\xi _{U,M}\circ \overline{\xi }_{U,M}=\mathrm{Id}_{U\otimes M}$ 
and%
\begin{eqnarray*}
&&\overline{\xi }_{U,M}\xi _{U,M}\left( \left( u\otimes h\right) \otimes
_{H}m\right) \\
&=&\omega ^{-1}\left( u_{-1}\otimes h_{1}\otimes m_{-1}\right) \left(
u_{0}\otimes 1_{H}\right) \otimes _{H}h_{2}m_{0} \\
&\overset{\text{def }\otimes _{H}}{=}&\left[ 
\begin{array}{c}
\omega ^{-1}\left( u_{-1}\otimes h_{1}\otimes m_{-2}\right) \omega \left(
\left( u_{0}\otimes 1_{H}\right) _{-1}\otimes h_{2}\otimes m_{-1}\right) \\ 
\left( u_{0}\otimes 1_{H}\right) _{0}h_{3}\otimes _{H}m_{0}\omega
^{-1}\left( \left( u_{0}\otimes 1_{H}\right) _{1}\otimes h_{2}\otimes
m_{1}\right)%
\end{array}%
\right] \\
&=&\left[ 
\begin{array}{c}
\omega ^{-1}\left( u_{-2}\otimes h_{1}\otimes m_{-2}\right) \omega \left(
u_{-1}\otimes h_{2}\otimes m_{-1}\right) \\ 
\left( u_{0}\otimes 1_{H}\right) h_{3}\otimes _{H}m_{0}\omega ^{-1}\left(
1_{H}\otimes h_{4}\otimes m_{1}\right)%
\end{array}%
\right] \\
&=&\left( u\otimes 1_{H}\right) h\otimes _{H}m=\left( u\otimes h\right)
\otimes _{H}m.
\end{eqnarray*}%
The proof that $\xi _{U,M}^{-1}:=\overline{\xi }_{U,M}$ is natural in $U$
and $M$ is straightforward.


1) In order to have that $\xi _{U,M}$ is in ${^{H}\mathfrak{M}_{H}^{H}}$, it
suffices to prove that $\xi _{U,M}^{\prime }$ is in ${^{H}\mathfrak{M}%
_{H}^{H}}$. Clearly, $\xi _{U,M}^{\prime }$ is in $^H\mathfrak{M}^H$ being an inverse of $\xi _{U,M}$.

The map $\xi _{U,M}^{\prime }$ is right $H$-linear in ${{^{H}\mathfrak{M}^{H}%
}}$:%
\begin{eqnarray*}
&&\xi _{U,M}^{\prime }\left[ \left( \left( u\otimes h\right) \otimes
m\right) l\right] \\
&=&\omega ^{-1}\left( \left( u\otimes h\right) _{-1}\otimes m_{-1}\otimes
l_{1}\right) \xi _{U,M}^{\prime }\left[ \left( u\otimes h\right) _{0}\otimes
m_{0}l_{2}\right] \omega \left( \left( u\otimes h\right) _{1}\otimes
m_{1}\otimes l_{3}\right) \\
&=&\omega ^{-1}\left( u_{-1}h_{1}\otimes m_{-1}\otimes l_{1}\right) \xi
_{U,M}^{\prime }\left[ \left( u_{0}\otimes h_{2}\right) \otimes m_{0}l_{2}%
\right] \omega \left( h_{3}\otimes m_{1}\otimes l_{3}\right) \\
&=&\left[ 
\begin{array}{c}
\omega ^{-1}\left( u_{-2}h_{1}\otimes m_{-2}\otimes l_{1}\right) \omega
^{-1}\left( u_{-1}\otimes h_{2}\otimes m_{-1}l_{2}\right) \\ 
u_{0}\otimes h_{3}\left( m_{0}l_{3}\right) \omega \left( h_{4}\otimes
m_{1}\otimes l_{4}\right)%
\end{array}%
\right] \\
&\overset{\eqref{eq:quasi-associativity}}{=}&\left[ 
\begin{array}{c}
\omega ^{-1}\left( u_{-2}h_{1}\otimes m_{-3}\otimes l_{1}\right) \omega
^{-1}\left( u_{-1}\otimes h_{2}\otimes m_{-2}l_{2}\right) \omega \left(
h_{3}\otimes m_{-1}\otimes l_{3}\right) \\ 
u_{0}\otimes (h_{4}m_{0})l_{4}%
\end{array}%
\right] \\
&\overset{(\ref{eq:3-cocycle})}{=}&\omega ^{-1}\left( u_{-2}\otimes
h_{1}\otimes m_{-2}\right) \omega ^{-1}\left( u_{-1}\otimes
h_{2}m_{-1}\otimes l_{1}\right) u_{0}\otimes (h_{3}m_{0})l_{2} \\
&=&\omega ^{-1}\left( u_{-1}\otimes h_{1}\otimes m_{-1}\right) \left(
u_{0}\otimes h_{2}m_{0}\right) l \\
&=&\xi _{U,M}^{\prime }\left( \left( u\otimes h\right) \otimes m\right) l
\end{eqnarray*}%
2) $\xi _{U,M}^{\prime }$ is left $H$-linear in ${{^{H}\mathfrak{M}^{H}}}$:%
\begin{eqnarray*}
&&\xi _{U,M}^{\prime }\left[ l\left( \left( u\otimes h\right) \otimes
m\right) \right] \\
&=&\omega \left( l_{1}\otimes \left( u\otimes h\right) _{-1}\otimes
m_{-1}\right) \xi _{U,M}^{\prime }\left[ l_{2}\left( u\otimes h\right)
_{0}\otimes m_{0}\right] \omega ^{-1}\left( l_{3}\otimes \left( u\otimes
h\right) _{1}\otimes m_{1}\right) \\
&=&\omega \left( l_{1}\otimes u_{-1}h_{1}\otimes m_{-1}\right) \xi
_{U,M}^{\prime }\left[ l_{2}\left( u_{0}\otimes h_{2}\right) \otimes m_{0}%
\right] \omega ^{-1}\left( l_{3}\otimes h_{3}\otimes m_{1}\right) \\
&=&\left[ 
\begin{array}{c}
\omega \left( l_{1}\otimes u_{-2}h_{1}\otimes m_{-1}\right) \omega
(l_{2}\otimes u_{-1}\otimes h_{2})\omega ^{-1}((l_{3}\vartriangleright
u_{0})_{-1}\otimes l_{4}\otimes h_{3}) \\ 
\xi _{U,M}^{\prime }\left[ \left\{ (l_{3}\vartriangleright u_{0})_{0}\otimes
l_{5}h_{4}\right\} \otimes m_{0}\right] \omega ^{-1}\left( l_{6}\otimes
h_{5}\otimes m_{1}\right)%
\end{array}%
\right] \\
&=&\left[ 
\begin{array}{c}
\omega \left( l_{1}\otimes u_{-2}h_{1}\otimes m_{-2}\right) \omega
(l_{2}\otimes u_{-1}\otimes h_{2})\omega ^{-1}((l_{3}\vartriangleright
u_{0})_{-2}\otimes l_{4}\otimes h_{3}) \\ 
\omega ^{-1}\left( (l_{3}\vartriangleright u_{0})_{-1}\otimes
l_{5}h_{4}\otimes m_{-1}\right) (l_{3}\vartriangleright u_{0})_{0}\otimes
\left( l_{6}h_{5}\right) m_{0}\omega ^{-1}\left( l_{7}\otimes h_{6}\otimes
m_{1}\right)%
\end{array}%
\right] \\
&\overset{\eqref{eq:quasi-associativity}}{=}&\left[ 
\begin{array}{c}
\omega \left( l_{1}\otimes u_{-2}h_{1}\otimes m_{-3}\right) \omega
(l_{2}\otimes u_{-1}\otimes h_{2})\omega ^{-1}((l_{3}\vartriangleright
u_{0})_{-2}\otimes l_{4}\otimes h_{3}) \\ 
\omega ^{-1}\left( (l_{3}\vartriangleright u_{0})_{-1}\otimes
l_{5}h_{4}\otimes m_{-2}\right) \omega ^{-1}\left( l_{6}\otimes h_{5}\otimes
m_{-1}\right) (l_{3}\vartriangleright u_{0})_{0}\otimes l_{7}(h_{6}m_{0})%
\end{array}%
\right] \\
&\overset{(\ref{eq:3-cocycle})}{=}&\left[ 
\begin{array}{c}
\omega \left( l_{1}\otimes u_{-2}h_{1}\otimes m_{-3}\right) \omega
(l_{2}\otimes u_{-1}\otimes h_{2})\omega ^{-1}\left( (l_{3}\vartriangleright
u_{0})_{-2}l_{4}\otimes h_{3}\otimes m_{-2}\right) \\ 
\omega ^{-1}\left( (l_{3}\vartriangleright u_{0})_{-1}\otimes l_{5}\otimes
h_{4}m_{-1}\right) (l_{3}\vartriangleright u_{0})_{0}\otimes
l_{6}(h_{5}m_{0})%
\end{array}%
\right] \\
&\overset{(\ref{Comp YD})}{=}&\left[ 
\begin{array}{c}
\omega \left( l_{1}\otimes u_{-3}h_{1}\otimes m_{-3}\right) \omega
(l_{2}\otimes u_{-2}\otimes h_{2})\omega ^{-1}\left( l_{3}u_{-1}\otimes
h_{3}\otimes m_{-2}\right) \\ 
\omega ^{-1}\left( \left( l_{4}\vartriangleright u_{0}\right) _{-1}\otimes
l_{5}\otimes h_{4}m_{-1}\right) \left( l_{4}\vartriangleright u_{0}\right)
_{0}\otimes l_{6}(h_{5}m_{0})%
\end{array}%
\right] \\
&\overset{(\ref{eq:3-cocycle})}{=}&\left[ 
\begin{array}{c}
\omega ^{-1}\left( u_{-2}\otimes h_{1}\otimes m_{-3}\right) \omega \left(
l_{1}\otimes u_{-1}\otimes h_{2}m_{-2}\right) \\ 
\omega ^{-1}\left( \left( l_{2}\vartriangleright u_{0}\right) _{-1}\otimes
l_{3}\otimes h_{3}m_{-1}\right) \left( l_{2}\vartriangleright u_{0}\right)
_{0}\otimes l_{4}(h_{4}m_{0})%
\end{array}%
\right] \\
&=&\omega ^{-1}\left( u_{-1}\otimes h_{1}\otimes m_{-1}\right) l\left[
u_{0}\otimes h_{2}m_{0}\right] =l\xi _{U,M}^{\prime }\left( \left( u\otimes
h\right) \otimes m\right) .
\end{eqnarray*}
\end{proof}

\begin{lemma}
\label{lem:a4H} Let $(H,m,u,\Delta ,\varepsilon ,\omega )$ be a dual
quasi-bialgebra. For all $U,V\in {^{H}}\mathfrak{M}${,} consider the map%
\begin{eqnarray*}
\alpha _{U,V} &:&U\otimes \left( V\otimes H\right) \rightarrow \left(
U\otimes V\right) \otimes H \\
\alpha _{U,V}\left( u\otimes \left( v\otimes k\right) \right) &=&\omega
\left( u_{-1}\otimes v_{-1}\otimes k_{1}\right) \left( u_{0}\otimes
v_{0}\right) \otimes k_{2}.
\end{eqnarray*}

\begin{itemize}
\item[1)] The map $\alpha _{U,V}:U\otimes F\left( V\right) \rightarrow
F\left( U\otimes V\right) $ is a natural isomorphism in ${^{H}\mathfrak{M}%
_{H}^{H}}$, where $U\otimes F\left( V\right) $ has the structure described
in Lemma \ref{lem: xiScha} for $M=F\left( V\right) .$

\item[2)] If $U,V\in {_{H}^{H}\mathcal{YD}}$, then $\alpha _{U,V}:U\otimes
F\left( V\right) \rightarrow F\left( U\otimes V\right) $ is a natural
isomorphism in ${_{H}^{H}\mathfrak{M}_{H}^{H}}$, where $U\otimes F\left(
V\right) $ has the structure described in Lemma \ref{lem: xiScha} for $%
M=F\left( V\right) .$
\end{itemize}
\end{lemma}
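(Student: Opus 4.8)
The plan is to recognize $\alpha_{U,V}$ as the inverse of the associativity constraint of ${}^{H}\mathfrak{M}$ and then to verify the remaining module and comodule compatibilities by hand. Regard $H$ as a left $H$-comodule via $\Delta$, so that ${}^{H}a_{U,V,H}((u\otimes v)\otimes k)=\omega^{-1}(u_{-1}\otimes v_{-1}\otimes k_{1})u_{0}\otimes(v_{0}\otimes k_{2})$. Unwinding the structures of Lemma \ref{lem: xiScha} for $M=F(V)$, the left $H$-coactions on $U\otimes F(V)$ and on $F(U\otimes V)$ are precisely the diagonal ones, namely $u\otimes(v\otimes k)\mapsto u_{-1}v_{-1}k_{1}\otimes(u_{0}\otimes(v_{0}\otimes k_{2}))$ and $(u\otimes v)\otimes k\mapsto u_{-1}v_{-1}k_{1}\otimes((u_{0}\otimes v_{0})\otimes k_{2})$. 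A direct comparison of Sweedler indices, together with $\omega\ast\omega^{-1}=\varepsilon\otimes\varepsilon\otimes\varepsilon$, shows that $\alpha_{U,V}=\bigl({}^{H}a_{U,V,H}\bigr)^{-1}$. This identity yields at once that $\alpha_{U,V}$ is a $\Bbbk$-linear isomorphism, with inverse $\bigl((u\otimes v)\otimes k\bigr)\mapsto\omega^{-1}(u_{-1}\otimes v_{-1}\otimes k_{1})u_{0}\otimes(v_{0}\otimes k_{2})$, and that it is a morphism of left $H$-comodules, being the inverse of the constraint, which is left colinear.

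It remains, for part 1), to check that $\alpha_{U,V}$ is right $H$-colinear and right $H$-linear. Right colinearity is immediate: both right coactions are concentrated on the last tensorand via $\Delta$ (they send $u\otimes(v\otimes k)\mapsto(u\otimes(v\otimes k_{1}))\otimes k_{2}$ and $(u\otimes v)\otimes k\mapsto((u\otimes v)\otimes k_{1})\otimes k_{2}$), so the claim reduces to coassociativity of $\Delta$. Right linearity is the first genuine computation: using the right actions of Lemma \ref{lem: xiScha} for $M=F(V)$ and of $F(U\otimes V)$, one expands $\alpha_{U,V}\bigl((u\otimes(v\otimes k))h\bigr)$ and $\bigl(\alpha_{U,V}(u\otimes(v\otimes k))\bigr)h$ and reconciles the two bunches of reassociators by an application of the $3$-cocycle identity \eqref{eq:3-cocycle}, with an auxiliary use of quasi-associativity \eqref{eq:quasi-associativity} to move $\omega$ across the product of the $H$-factors. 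Naturality of $\alpha$ in $U$ and $V$ is routine, since every map in sight is built from the coactions, the constraints and $\omega$, which are themselves natural. This settles part 1).

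For part 2) the comodule and right-module compatibilities are already covered by part 1), so the only additional thing to verify is that $\alpha_{U,V}$ is left $H$-linear with respect to the left action on $U\otimes F(V)$ displayed in Lemma \ref{lem: xiScha}(2) and the left action on $F(U\otimes V)$ coming from the Yetter-Drinfeld structure of $U\otimes V$. I expect this to be the main obstacle. The strategy is to compute $h\vartriangleright\bigl(\alpha_{U,V}(u\otimes(v\otimes k))\bigr)$, where the action on $F(U\otimes V)$ unfolds the tensor-product Yetter-Drinfeld action \eqref{form:YDtens} on $U\otimes V$, and to match it with $\alpha_{U,V}\bigl(h\vartriangleright(u\otimes(v\otimes k))\bigr)$. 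The two sides carry several layers of reassociators, arising from \eqref{form:YDtens}, from the left action of Lemma \ref{lem: xiScha}(2), and from $\alpha$ itself; they are brought together by repeated use of the $3$-cocycle condition \eqref{eq:3-cocycle}, quasi-associativity \eqref{eq:quasi-associativity} and the Yetter-Drinfeld compatibilities \eqref{ass YD} and \eqref{Comp YD}, exactly as in the proof of the left-linearity of $\xi_{U,M}'$ in Lemma \ref{lem: xiScha}. Since $\alpha_{U,V}$ and both actions are already known to be colinear, the entire bookkeeping can be carried out at the level of the reassociator cocycle, which is what renders the otherwise lengthy computation tractable.
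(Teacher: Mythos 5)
Your proposal follows the paper's own proof essentially step for step: the key observation that $\alpha_{U,V}=\left({}^{H}a_{U,V,H}\right)^{-1}$ (which gives invertibility, left $H$-colinearity and naturality in $U,V$ for free), the easy right $H$-colinearity, right $H$-linearity from the $3$-cocycle condition \eqref{eq:3-cocycle}, and then the long left $H$-linearity verification in part 2 carried out by expanding the action on $F(U\otimes V)$ through \eqref{form:YDtens} and reconciling reassociators via \eqref{eq:3-cocycle} and \eqref{Comp YD}. The only cosmetic discrepancies are that the paper's part-2 computation never needs \eqref{ass YD} (only a single element of $H$ ever acts, so the module axiom for products is not invoked) and that right $H$-linearity follows from the cocycle condition alone without quasi-associativity; these over-provisions do not affect the correctness of your plan.
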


\begin{proof}
Note that $\alpha _{U,V}=\left( {^{H}}a_{U,V,H}\right) ^{-1}$ so that $%
\alpha _{U,V}\in {^{H}}\mathfrak{M}$ and it is invertible.

1) Let us check that $\alpha _{U,V}:U\otimes F\left( V\right) \rightarrow
F\left( U\otimes V\right) $ is a morphism in ${^{H}\mathfrak{M}_{H}^{H}}$,
where $U\otimes F\left( V\right) $ has the structure described in Lemma \ref%
{lem: xiScha} for $M=F\left( V\right) .$

It is easy to check that $\alpha _{U,V}$ is right $H$-colinear. 
Moreover the $3$-cocycle condition \eqref{eq:3-cocycle} yields that $\alpha
_{U,V}$ is right $H$-linear in ${^{H}\mathfrak{M}^{H}}$, 
i.e. that $\alpha _{U,V}$ is a morphism in ${^{H}\mathfrak{M}_{H}^{H}}$.

2) Let us check that $\alpha _{U,V}$ is left $H$-linear in ${^{H}\mathfrak{M}%
^{H}}$. On the one hand we have 
\begin{eqnarray*}
&&\alpha _{U,V}\left[ h\left( u\otimes \left( v\otimes k\right) \right) %
\right] \\
&=&\omega \left( h_{1}\otimes u_{-1}\otimes \left( v\otimes k\right)
_{-2}\right) \omega ^{-1}\left( \left( h_{2}\vartriangleright u_{0}\right)
_{-1}\otimes h_{3}\otimes \left( v\otimes k\right) _{-1}\right) \alpha
_{U,V} \left[ \left( h_{2}\vartriangleright u_{0}\right) _{0}\otimes
h_{4}\left( v\otimes k\right) _{0}\right] \\
&=&\omega \left( h_{1}\otimes u_{-1}\otimes v_{-2}k_{1}\right) \omega
^{-1}\left( \left( h_{2}\vartriangleright u_{0}\right) _{-1}\otimes
h_{3}\otimes v_{-1}k_{2}\right) \alpha _{U,V}\left[ \left(
h_{2}\vartriangleright u_{0}\right) _{0}\otimes h_{4}\left( v_{0}\otimes
k_{3}\right) \right] \\
&=&\left[ 
\begin{array}{c}
\omega \left( h_{1}\otimes u_{-1}\otimes v_{-3}k_{1}\right) \omega
^{-1}\left( \left( h_{2}\vartriangleright u_{0}\right) _{-1}\otimes
h_{3}\otimes v_{-2}k_{2}\right) \omega (h_{4}\otimes v_{-1}\otimes k_{3}) \\ 
\omega ^{-1}((h_{5}\vartriangleright v_{0})_{-1}\otimes h_{6}\otimes
k_{4})\alpha _{U,V}\left[ \left( h_{2}\vartriangleright u_{0}\right)
_{0}\otimes \left[ (h_{5}\vartriangleright v_{0})_{0}\otimes h_{7}k_{5}%
\right] \right]%
\end{array}%
\right] \\
&=&\left[ 
\begin{array}{c}
\omega \left( h_{1}\otimes u_{-1}\otimes v_{-3}k_{1}\right) \omega
^{-1}\left( \left( h_{2}\vartriangleright u_{0}\right) _{-2}\otimes
h_{3}\otimes v_{-2}k_{2}\right) \omega (h_{4}\otimes v_{-1}\otimes k_{3}) \\ 
\omega ^{-1}((h_{5}\vartriangleright v_{0})_{-2}\otimes h_{6}\otimes
k_{4})\omega \left( \left( h_{2}\vartriangleright u_{0}\right) _{-1}\otimes
(h_{5}\vartriangleright v_{0})_{-1}\otimes h_{7}k_{5}\right) \\ 
\left[ \left( h_{2}\vartriangleright u_{0}\right) _{0}\otimes
(h_{5}\vartriangleright v_{0})_{0}\right] \otimes h_{8}k_{6}%
\end{array}%
\right]
\end{eqnarray*}%
On the other hand%
\begin{eqnarray*}
&&h\alpha _{U,V}\left( u\otimes \left( v\otimes k\right) \right) =\omega
\left( u_{-1}\otimes v_{-1}\otimes k_{1}\right) h\left[ \left( u_{0}\otimes
v_{0}\right) \otimes k_{2}\right] \\
&=&\left[ 
\begin{array}{c}
\omega \left( u_{-1}\otimes v_{-1}\otimes k_{1}\right) \omega (h_{1}\otimes
\left( u_{0}\otimes v_{0}\right) _{-1}\otimes k_{2}) \\ 
\omega ^{-1}((h_{2}\vartriangleright \left( u_{0}\otimes v_{0}\right)
_{0})_{-1}\otimes h_{3}\otimes k_{3})(h_{2}\vartriangleright \left(
u_{0}\otimes v_{0}\right) _{0})_{0}\otimes h_{4}k_{4}%
\end{array}%
\right] \\
&=&\left[ 
\begin{array}{c}
\omega \left( u_{-2}\otimes v_{-2}\otimes k_{1}\right) \omega (h_{1}\otimes
u_{-1}v_{-1}\otimes k_{2}) \\ 
\omega ^{-1}((h_{2}\vartriangleright \left( u_{0}\otimes v_{0}\right)
)_{-1}\otimes h_{3}\otimes k_{3})(h_{2}\vartriangleright \left( u_{0}\otimes
v_{0}\right) )_{0}\otimes h_{4}k_{4}%
\end{array}%
\right] \\
&\overset{(\ref{form:YDtens})}{=}&\left[ 
\begin{array}{c}
\omega \left( u_{-2}\otimes v_{-2}\otimes k_{1}\right) \omega (h_{1}\otimes
u_{-1}v_{-1}\otimes k_{2}) \\ 
\omega \left( \left( h_{2}\right) _{1}\otimes \left( u_{0}\right)
_{-1}\otimes \left( v_{0}\right) _{-2}\right) \omega ^{-1}\left( \left(
\left( h_{2}\right) _{2}\vartriangleright \left( u_{0}\right) _{0}\right)
_{-2}\otimes \left( h_{2}\right) _{3}\otimes \left( v_{0}\right) _{-1}\right)
\\ 
\omega \left( \left( \left( h_{2}\right) _{2}\vartriangleright \left(
u_{0}\right) _{0}\right) _{-1}\otimes \left( \left( h_{2}\right)
_{4}\vartriangleright \left( v_{0}\right) _{0}\right) _{-1}\otimes \left(
h_{2}\right) _{5}\right) \\ 
\omega ^{-1}([(\left( h_{2}\right) _{2}\vartriangleright \left( u_{0}\right)
_{0})_{0}\otimes \left( \left( h_{2}\right) _{4}\vartriangleright \left(
v_{0}\right) _{0}\right) _{0}]_{-1}\otimes h_{3}\otimes k_{3}) \\ 
\left[ \left( \left( h_{2}\right) _{2}\vartriangleright \left( u_{0}\right)
_{0}\right) _{0}\otimes \left( \left( h_{2}\right) _{4}\vartriangleright
\left( v_{0}\right) _{0}\right) _{0}\right] _{0}\otimes h_{4}k_{4}%
\end{array}%
\right] \\
&=&\left[ 
\begin{array}{c}
\omega \left( u_{-3}\otimes v_{-4}\otimes k_{1}\right) \omega (h_{1}\otimes
u_{-2}v_{-3}\otimes k_{2})\omega \left( h_{2}\otimes u_{-1}\otimes
v_{-2}\right) \\ 
\omega ^{-1}\left( \left( h_{3}\vartriangleright u_{0}\right) _{-2}\otimes
h_{4}\otimes v_{-1}\right) \omega \left( \left( h_{3}\vartriangleright
u_{0}\right) _{-1}\otimes \left( h_{5}\vartriangleright v_{0}\right)
_{-1}\otimes h_{6}\right) \\ 
\omega ^{-1}((\left( h_{3}\vartriangleright u_{0}\right) _{0}\otimes \left(
h_{5}\vartriangleright v_{0}\right) _{0})_{-1}\otimes h_{7}\otimes k_{3}) \\ 
(\left( h_{3}\vartriangleright u_{0}\right) _{0}\otimes \left(
h_{5}\vartriangleright v_{0}\right) _{0})_{0}\otimes h_{8}k_{4}%
\end{array}%
\right] \\
&\overset{(\ref{eq:3-cocycle})}{=}&\left[ 
\begin{array}{c}
\omega (h_{1}\otimes u_{-2}\otimes v_{-3}k_{1})\omega (h_{2}u_{-1}\otimes
v_{-2}\otimes k_{2})\omega ^{-1}\left( \left( h_{3}\vartriangleright
u_{0}\right) _{-2}\otimes h_{4}\otimes v_{-1}\right) \\ 
\omega \left( \left( h_{3}\vartriangleright u_{0}\right) _{-1}\otimes \left(
h_{5}\vartriangleright v_{0}\right) _{-1}\otimes h_{6}\right) \omega
^{-1}((\left( h_{3}\vartriangleright u_{0}\right) _{0}\otimes \left(
h_{5}\vartriangleright v_{0}\right) _{0})_{-1}\otimes h_{7}\otimes k_{3}) \\ 
(\left( h_{3}\vartriangleright u_{0}\right) _{0}\otimes \left(
h_{5}\vartriangleright v_{0}\right) _{0})_{0}\otimes h_{8}k_{4}%
\end{array}%
\right] \\
&=&\left[ 
\begin{array}{c}
\omega (h_{1}\otimes u_{-2}\otimes v_{-3}k_{1})\omega (h_{2}u_{-1}\otimes
v_{-2}\otimes k_{2})\omega ^{-1}\left( \left( h_{3}\vartriangleright
u_{0}\right) _{-3}\otimes h_{4}\otimes v_{-1}\right) \\ 
\omega \left( \left( h_{3}\vartriangleright u_{0}\right) _{-2}\otimes \left(
h_{5}\vartriangleright v_{0}\right) _{-2}\otimes h_{6}\right) \omega
^{-1}(\left( h_{3}\vartriangleright u_{0}\right) _{-1}\left(
h_{5}\vartriangleright v_{0}\right) _{-1}\otimes h_{7}\otimes k_{3}) \\ 
(\left( h_{3}\vartriangleright u_{0}\right) _{0}\otimes \left(
h_{5}\vartriangleright v_{0}\right) _{0})\otimes h_{8}k_{4}%
\end{array}%
\right] \\
&\overset{(\ref{eq:3-cocycle})}{=}&\left[ 
\begin{array}{c}
\omega (h_{1}\otimes u_{-2}\otimes v_{-3}k_{1})\omega (h_{2}u_{-1}\otimes
v_{-2}\otimes k_{2})\omega ^{-1}\left( \left( h_{3}\vartriangleright
u_{0}\right) _{-3}\otimes h_{4}\otimes v_{-1}\right) \\ 
\omega ^{-1}(\left( h_{3}\vartriangleright u_{0}\right) _{-2}\otimes \left(
h_{5}\vartriangleright v_{0}\right) _{-3}h_{6}\otimes k_{3})\omega
^{-1}(\left( h_{5}\vartriangleright v_{0}\right) _{-2}\otimes h_{7}\otimes
k_{4}) \\ 
\omega (\left( h_{3}\vartriangleright u_{0}\right) _{-1}\otimes \left(
h_{5}\vartriangleright v_{0}\right) _{-1}\otimes h_{8}k_{5})(\left(
h_{3}\vartriangleright u_{0}\right) _{0}\otimes \left(
h_{5}\vartriangleright v_{0}\right) _{0})\otimes h_{9}k_{6}%
\end{array}%
\right] \\
&\overset{(\ref{Comp YD})}{=}&\left[ 
\begin{array}{c}
\omega (h_{1}\otimes u_{-2}\otimes v_{-4}k_{1})\omega (h_{2}u_{-1}\otimes
v_{-3}\otimes k_{2})\omega ^{-1}\left( \left( h_{3}\vartriangleright
u_{0}\right) _{-3}\otimes h_{4}\otimes v_{-2}\right) \\ 
\omega ^{-1}(\left( h_{3}\vartriangleright u_{0}\right) _{-2}\otimes
h_{5}v_{-1}\otimes k_{3})\omega ^{-1}(\left( h_{6}\vartriangleright
v_{0}\right) _{-2}\otimes h_{7}\otimes k_{4}) \\ 
\omega (\left( h_{3}\vartriangleright u_{0}\right) _{-1}\otimes \left(
h_{6}\vartriangleright v_{0}\right) _{-1}\otimes h_{8}k_{5})(\left(
h_{3}\vartriangleright u_{0}\right) _{0}\otimes \left(
h_{6}\vartriangleright v_{0}\right) _{0})\otimes h_{9}k_{6}%
\end{array}%
\right] \\
&\overset{(\ref{eq:3-cocycle})}{=}&\left[ 
\begin{array}{c}
\omega (h_{1}\otimes u_{-2}\otimes v_{-5}k_{1})\omega (h_{2}u_{-1}\otimes
v_{-4}\otimes k_{2})\omega ^{-1}(\left( h_{3}\vartriangleright u_{0}\right)
_{-3}h_{4}\otimes v_{-3}\otimes k_{3}) \\ 
\omega ^{-1}(\left( h_{3}\vartriangleright u_{0}\right) _{-2}\otimes
h_{5}\otimes v_{-2}k_{4})\omega (h_{6}\otimes v_{-1}\otimes k_{5})\omega
^{-1}(\left( h_{7}\vartriangleright v_{0}\right) _{-2}\otimes h_{8}\otimes
k_{6}) \\ 
\omega (\left( h_{3}\vartriangleright u_{0}\right) _{-1}\otimes \left(
h_{7}\vartriangleright v_{0}\right) _{-1}\otimes h_{9}k_{7})(\left(
h_{3}\vartriangleright u_{0}\right) _{0}\otimes \left(
h_{7}\vartriangleright v_{0}\right) _{0})\otimes h_{10}k_{8}%
\end{array}%
\right] \\
&\overset{(\ref{Comp YD})}{=}&\left[ 
\begin{array}{c}
\omega (h_{1}\otimes u_{-3}\otimes v_{-5}k_{1})\omega (h_{2}u_{-2}\otimes
v_{-4}\otimes k_{2})\omega ^{-1}(h_{3}u_{-1}\otimes v_{-3}\otimes k_{3}) \\ 
\omega ^{-1}(\left( h_{4}\vartriangleright u_{0}\right) _{-2}\otimes
h_{5}\otimes v_{-2}k_{4})\omega (h_{6}\otimes v_{-1}\otimes k_{5})\omega
^{-1}(\left( h_{7}\vartriangleright v_{0}\right) _{-2}\otimes h_{8}\otimes
k_{6}) \\ 
\omega (\left( h_{4}\vartriangleright u_{0}\right) _{-1}\otimes \left(
h_{7}\vartriangleright v_{0}\right) _{-1}\otimes h_{9}k_{7})(\left(
h_{4}\vartriangleright u_{0}\right) _{0}\otimes \left(
h_{7}\vartriangleright v_{0}\right) _{0})\otimes h_{10}k_{8}%
\end{array}%
\right] \\
&=&\left[ 
\begin{array}{c}
\omega (h_{1}\otimes u_{-1}\otimes v_{-3}k_{1})\omega ^{-1}(\left(
h_{2}\vartriangleright u_{0}\right) _{-2}\otimes h_{3}\otimes
v_{-2}k_{2})\omega (h_{4}\otimes v_{-1}\otimes k_{3}) \\ 
\omega ^{-1}(\left( h_{5}\vartriangleright v_{0}\right) _{-2}\otimes
h_{6}\otimes k_{4})\omega (\left( h_{2}\vartriangleright u_{0}\right)
_{-1}\otimes \left( h_{5}\vartriangleright v_{0}\right) _{-1}\otimes
h_{7}k_{5}) \\ 
(\left( h_{2}\vartriangleright u_{0}\right) _{0}\otimes \left(
h_{5}\vartriangleright v_{0}\right) _{0})\otimes h_{8}k_{6}%
\end{array}%
\right] .
\end{eqnarray*}

Summing up, we have proved that $\alpha _{U,V}:U\otimes F\left( V\right)
\rightarrow F\left( U\otimes V\right) $ is an isomorphism in ${_{H}^{H}%
\mathfrak{M}_{H}^{H}.}$ Now, since $\alpha _{U,V}=\left( {^{H}}%
a_{U,V,H}\right) ^{-1},$ we have that $\alpha _{U,V}$ is natural in $U,V$
for all morphisms in ${^{H}}\mathfrak{M}$ (in particular in ${_{H}^{H}%
\mathcal{YD}}$).
\end{proof}

\begin{lemma}
\label{Restr F} Let $(H,m,u,\Delta ,\varepsilon ,\omega )$ be a dual
quasi-bialgebra. The functor $F:\left( -\right) \otimes H:{^{H}\mathfrak{M}}%
\rightarrow {^{H}\mathfrak{M}_{H}^{H}}$ of \ref{claim: adjunctios}. induces a functor $F:{_{H}^{H}%
\mathcal{YD}}\rightarrow {_{H}^{H}\mathfrak{M}_{H}^{H}}$. Explicitly $%
F\left( M\right) \in {_{H}^{H}\mathfrak{M}_{H}^{H}}$ with the following
structures, for all $m\in M,h,l\in H$,%
\begin{eqnarray}
\mu _{M\otimes H}^{l}\left[ l\otimes (m\otimes h)\right] &:&=l\cdot
(m\otimes h):=\omega (l_{1}\otimes m_{-1}\otimes
h_{1})(l_{2}\vartriangleright m_{0}\otimes l_{3})\cdot h_{2}
\label{strutt mod left} \\
&=&\omega (l_{1}\otimes m_{-1}\otimes h_{1})\omega
^{-1}((l_{2}\vartriangleright m_{0})_{-1}\otimes l_{3}\otimes
h_{2})(l_{2}\vartriangleright m_{0})_{0}\otimes l_{4}h_{3}  \notag \\
\mu _{M\otimes H}^{r}\left[ (m\otimes h)\otimes l\right] &:&=(m\otimes
h)\cdot l:=\omega ^{-1}(m_{-1}\otimes h_{1}\otimes l_{1})m_{0}\otimes
h_{2}l_{2},  \label{strutt mod right} \\
\rho _{M\otimes H}^{l}\left( m\otimes h\right) &:&=m_{-1}h_{1}\otimes
(m_{0}\otimes h_{2}),  \notag \\
\rho _{M\otimes H}^{r}\left( m\otimes h\right) &:&=(m\otimes h_{1})\otimes
h_{2},  \notag
\end{eqnarray}
\end{lemma}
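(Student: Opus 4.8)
The plan is to obtain the statement as a specialization of Lemma~\ref{lem: xiScha}(2), with the object $H$ in the role of the bicomodule $M$ appearing there. First recall from Remark~\ref{rem: H alg} that $H$ is an algebra in $({^{H}\mathfrak{M}^{H}},\otimes,\Bbbk,{^{H}}a{^{H}},l,r)$; hence it is a bimodule over itself via $m$ and a bicomodule via $\rho_{H}^{l}=\rho_{H}^{r}=\Delta$, so that $H\in{_{H}^{H}\mathfrak{M}_{H}^{H}}$. Thus, given $M\in{_{H}^{H}\mathcal{YD}}$, Lemma~\ref{lem: xiScha}(2), applied with $M$ in the role of $U$ and with $H$ in the role of the object denoted $M$ there, produces an isomorphism $\xi_{M,H}\colon F(M)\otimes_{H}H\rightarrow M\otimes H$ in ${_{H}^{H}\mathfrak{M}_{H}^{H}}$; in particular $M\otimes H$, equipped with the structures displayed in Lemma~\ref{lem: xiScha}, is an object of ${_{H}^{H}\mathfrak{M}_{H}^{H}}$.

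Next I would check that these structures are exactly the ones asserted here. Substituting $\rho_{H}^{l}=\rho_{H}^{r}=\Delta$ into the formulas of Lemma~\ref{lem: xiScha} --- so that the iterated coactions of $h\in H$ become $h_{1}\otimes h_{2}$ and $h_{1}\otimes h_{2}\otimes h_{3}$, while the module structures of $H$ are given by $m$ --- one recovers verbatim the coactions $\rho_{M\otimes H}^{l}$ and $\rho_{M\otimes H}^{r}$, the right action $\mu_{M\otimes H}^{r}$ of \eqref{strutt mod right}, and the left action $\mu_{M\otimes H}^{l}$ of \eqref{strutt mod left}. Hence the object $M\otimes H$ furnished by Lemma~\ref{lem: xiScha} is precisely $F(M)$ carrying the claimed structures, and therefore $F(M)\in{_{H}^{H}\mathfrak{M}_{H}^{H}}$.

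It then remains to treat morphisms. If $f\colon M\rightarrow M'$ is a morphism in ${_{H}^{H}\mathcal{YD}}$, then $F(f)=f\otimes H$ is already a morphism in ${^{H}\mathfrak{M}_{H}^{H}}$, since $F\colon{^{H}\mathfrak{M}}\rightarrow{^{H}\mathfrak{M}_{H}^{H}}$ is a functor; one only has to verify that $f\otimes H$ is left $H$-linear, and this is immediate from \eqref{strutt mod left}, because that formula involves $f$ solely through $\rho_{M}^{l}$, through $\vartriangleright$ and through the multiplication of $H$, all of which are respected by $f$. Hence $F$ restricts to a functor ${_{H}^{H}\mathcal{YD}}\rightarrow{_{H}^{H}\mathfrak{M}_{H}^{H}}$, as claimed.

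With this route all the genuine content is absorbed into Lemma~\ref{lem: xiScha}, and the only delicate point left is the clerical one of matching the $M=H$ specialization of its formulas with \eqref{strutt mod left}--\eqref{strutt mod right}. Were one to insist on a self-contained verification, the main obstacle would instead be the quasi-associativity of $\mu_{M\otimes H}^{l}$ as an action of the algebra $H$ in ${^{H}\mathfrak{M}^{H}}$ --- the module axiom phrased through the associativity constraint ${^{H}}a{^{H}}$, hence through $\omega$ --- whose proof demands the full Yetter-Drinfeld compatibility \eqref{ass YD} together with several applications of the $3$-cocycle identity \eqref{eq:3-cocycle}, precisely the computation already performed in the proof of Lemma~\ref{lem: xiScha}(2).
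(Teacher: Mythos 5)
Your reduction to Lemma \ref{lem: xiScha}(2) is circular, and the circle closes exactly over the point that carries all the content of Lemma \ref{Restr F}. For $\xi_{M,H}$ to be ``an isomorphism in ${_{H}^{H}\mathfrak{M}_{H}^{H}}$'' both its source and its target must first be objects of that category: the target is $F(M)$ with the action \eqref{strutt mod left}, and the source $F(M)\otimes_{H}H$ carries a left $H$-action only because $F(M)$ does. So both sides presuppose precisely what is to be proved, namely that \eqref{strutt mod left} satisfies the axioms of a left module over the algebra $H$ in ${^{H}\mathfrak{M}^{H}}$. Lemma \ref{lem: xiScha}(2) does not deliver this: its statement takes the formula for $\mu^{l}_{U\otimes M}$ as given data, and its proof verifies only that $\xi'_{U,M}$ \emph{intertwines} the two left actions, i.e.\ $H$-linearity of a map, never associativity or unitality of an action. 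Hence ``in particular $M\otimes H$ \dots\ is an object of ${_{H}^{H}\mathfrak{M}_{H}^{H}}$'' is not a consequence of that lemma, and treating it as one assumes the conclusion.

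The tell is in which identities get used. The proof of Lemma \ref{lem: xiScha}(2) invokes only \eqref{Comp YD}, \eqref{eq:quasi-associativity} and \eqref{eq:3-cocycle}; it never uses \eqref{ass YD}. But the quasi-associativity of \eqref{strutt mod left}, i.e.\ the identity \eqref{star}, is essentially equivalent to \eqref{ass YD}: after reducing by right $H$-linearity to elements of the form $\left( h\otimes k\right) \otimes (m\otimes 1_{H})$, the two sides of \eqref{star} coincide exactly by \eqref{ass YD}. This is what the paper's own proof of Lemma \ref{Restr F} supplies: it first realizes $\mu _{M\otimes H}^{l}$ as the composite $\mu _{F\left( M\right) }^{r}\circ T\left( c_{H,M}\right) \circ \alpha _{H,M}$ (using Lemma \ref{lem:a4H} and colinearity of $c_{H,M}$ via \eqref{Comp YD}), which yields that it is a morphism in ${^{H}\mathfrak{M}^{H}}$ compatible with $\mu^{r}$ as in \eqref{eq bimodule}, and only then proves \eqref{star} by the reduction above and an application of \eqref{ass YD}. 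So your closing paragraph has it backwards: the delicate point is not the clerical matching of formulas (that part of your argument is fine, as is your treatment of morphisms), and the required computation is \emph{not} ``already performed in the proof of Lemma \ref{lem: xiScha}(2)''; it is exactly the computation that the paper performs in the proof of Lemma \ref{Restr F}, and your proposal omits it.
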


\begin{proof}
Let $M\in $ ${_{H}^{H}\mathcal{YD}}$. Consider $H\otimes M$ as an object in $%
{^{H}\mathfrak{M}^{H}}$ via%
\begin{eqnarray*}
{\rho }_{H\otimes M}^{r}\left( h\otimes m\right) &:&=\left( h_{1}\otimes
m\right) \otimes h_{2}, \\
{\rho }_{H\otimes M}^{l}\left( h\otimes m\right) &:&=h_{1}m_{-1}\otimes
\left( h_{2}\otimes m_{0}\right) .
\end{eqnarray*}%
Since $(H\otimes M,\rho^l_{H\otimes M})\in{^H\mathfrak{M}}$, by Lemma \ref%
{lem:a4H}, the map $\alpha _{H,M}:H\otimes F\left( M\right) \rightarrow
F\left( H\otimes M\right) $ is a natural isomorphism in ${^{H}\mathfrak{M}%
_{H}^{H}}$, where $H\otimes F\left( M\right) $ has the structure described
in Lemma \ref{lem: xiScha} for $"M"=F\left( M\right) $, i.e. for all $h\in
H,x\in M\otimes H$%
\begin{eqnarray*}
{\rho }_{H\otimes F\left( M\right) }^{l}\left( h\otimes x\right)
&=&h_{1}x_{-1}\otimes (h_{2}\otimes x_{0}), \\
\rho _{H\otimes F\left( M\right) }^{r}\left( h\otimes x\right) &=&(h\otimes
x_{0})\otimes x_{1}, \\
\mu _{H\otimes F\left( M\right) }^{r}(\left( h\otimes x\right) \otimes k)
&=&\omega ^{-1}(h_{1}\otimes x_{-1}\otimes k_{1})h_{2}\otimes x_{0}k_{2}.
\end{eqnarray*}%
In particular, we have%
\begin{equation*}
\rho _{T\left( H\otimes M\right) }^{l}\alpha _{H,M}=\rho _{F\left( H\otimes
M\right) }^{l}\alpha _{H,M}=\left( H\otimes \alpha _{H,M}\right) \rho
_{H\otimes F\left( M\right) }^{l}
\end{equation*}%
where $T:{^{H}\mathfrak{M}^{H}}\rightarrow {^{H}\mathfrak{M}_{H}^{H}}%
,T\left( {^{\bullet }}M^{\bullet }\right) :={^{\bullet }}M^{\bullet }\otimes 
{^{\bullet }}H_{\bullet }^{\bullet }$ is the functor of \ref{claim:
adjunctios}. Now, consider on $H\otimes F\left( M\right) $ the following new
structures 
\begin{eqnarray*}
\widetilde{{\rho }}_{H\otimes F\left( M\right) }^{l}\left( h\otimes x\right)
&=&h_{1}x_{-1}\otimes (h_{2}\otimes x_{0}), \\
\widetilde{{\rho }}_{H\otimes F\left( M\right) }^{r}\left( h\otimes x\right)
&=&(h_{1}\otimes x_{0})\otimes h_{2}x_{1}, \\
\widetilde{\mu }_{H\otimes F\left( M\right) }^{r}(\left( h\otimes x\right)
\otimes k) &=&\omega ^{-1}(h_{1}\otimes x_{-1}\otimes k_{1})h_{2}\otimes
x_{0}k_{2}\omega (h_{3}\otimes x_{1}\otimes k_{3}),
\end{eqnarray*}%
note that $\widetilde{\mu }_{H\otimes F\left( M\right) }^{r}=\left( H\otimes
\mu _{F\left( M\right) }^{r}\right) \circ {^{H}}a{_{H,F\left( M\right)
,H}^{H}.}$ Moreover one gets 
\begin{equation*}
{\rho }_{T\left( H\otimes M\right) }^{r}\alpha _{H,M}=\left( \alpha
_{H,M}\otimes H\right) \widetilde{{\rho }}_{H\otimes F\left( M\right) }^{r}
\end{equation*}
and 
\begin{eqnarray*}
&&\mu _{T\left( H\otimes M\right) }^{r}\left( \alpha _{H,M}\otimes H\right)
\left( \left[ h\otimes \left( m\otimes k\right) \right] \otimes l\right) \\
&=& \omega \left( h_{1}\otimes m_{-2}\otimes k_{1}\right) \omega ^{-1}\left(
h_{2}m_{-1}\otimes k_{2}\otimes l_{1}\right) \left( h_{3}\otimes
m_{0}\right) \otimes k_{3}l_{2}\omega \left( h_{4}\otimes k_{4}\otimes
l_{3}\right) \\
&=&\mu _{F\left( H\otimes M\right) }^{r}\left( \alpha _{H,M}\otimes H\right) 
\left[ \left( h_{1}\otimes (m\otimes k_{1})_{0}\right) \otimes l_{1}\right]
\omega (h_{2}\otimes (m\otimes k_{1})_{1}\otimes l_{2}) \\
&=&\alpha _{H,M}\mu _{H\otimes F\left( M\right) }^{r}\left[ \left(
h_{1}\otimes (m\otimes k_{1})_{0}\right) \otimes l_{1}\right] \omega
(h_{2}\otimes (m\otimes k_{1})_{1}\otimes l_{2}) \\
&=&\omega ^{-1}(h_{1}\otimes \left( m\otimes k\right) _{-1}\otimes
l_{1})\alpha _{H,M}\left[ h_{2}\otimes (m\otimes k_{1})_{0}\cdot l_{2}\right]
\omega (h_{3}\otimes (m\otimes k_{1})_{1}\otimes l_{3}) \\
&=&\alpha _{H,M}\widetilde{\mu }_{H\otimes F\left( M\right) }^{r}\left( %
\left[ h\otimes \left( m\otimes k\right) \right] \otimes l\right)
\end{eqnarray*}%
We have so proved that $\alpha _{H,M}$ can be regarded as a morphism in ${%
^{H}\mathfrak{M}_{H}^{H}}$ from $H\otimes F\left( M\right) $ to $T\left(
H\otimes M\right) $, where $H\otimes F\left( M\right) $ has structures $%
\widetilde{{\rho }}_{H\otimes F\left( M\right) }^{l},\widetilde{{\rho }}%
_{H\otimes F\left( M\right) }^{r}$ and $\widetilde{\mu }_{H\otimes F\left(
M\right) }^{r}$.

Consider the map $c_{H,M}:H\otimes M\rightarrow M\otimes H,$ as in (\ref%
{braiding YD}) i.e. $c_{H,M}\left( h\otimes m\right) =\left(
h_{1}\vartriangleright m\right) \otimes h_{2}.$ 

Using \eqref{Comp YD} one can prove that $c_{H,M}:H\otimes M\rightarrow
F\left( M\right) $ is a morphism in ${{^{H}\mathfrak{M}^{H}}}$ (where $%
H\otimes M$ is regarded as an object in $^H\mathfrak{M}^H$ as at the
beginning of this proof) whence $T\left( c_{H,M}\right) $ is in ${{^{H}%
\mathfrak{M}_{H}^{H}}}$ (note that we do not know that $H$ is in ${_{H}^{H}%
\mathcal{YD}}$ so that we cannot say that $c_{H,M}$ is in ${_{H}^{H}%
\mathcal{YD}}$ directly).

Now, consider the morphism $\mu _{F\left( M\right) }^{r}:F\left( M\right)
\otimes H\rightarrow F\left( M\right) $. Clearly $\mu _{F\left( M\right)
}^{r}$ can be regarded as a morphism in ${{^{H}\mathfrak{M}_{H}^{H}}}$ from $%
TF\left( M\right) $ to $F\left( M\right) $. Summing up we can consider in ${{%
^{H}\mathfrak{M}_{H}^{H}}}$ the composition%
\begin{equation*}
\mu _{M\otimes H}^{l}:=\left( H\otimes F\left( M\right) \overset{\alpha
_{H,M}}{\longrightarrow }T\left( H\otimes M\right) \overset{T\left(
c_{H,M}\right) }{\longrightarrow }TF\left( M\right) \overset{\mu _{F\left(
M\right) }^{r}}{\longrightarrow }F\left( M\right) \right)
\end{equation*}%
where $H\otimes F\left( M\right) $ has structures $\widetilde{{\rho }}%
_{H\otimes F\left( M\right) }^{l},\widetilde{{\rho }}_{H\otimes F\left(
M\right) }^{r}$ and $\widetilde{\mu }_{H\otimes F\left( M\right) }^{r}$.
Thus $\mu _{M\otimes H}^{l}$ is a morphism in ${{^{H}\mathfrak{M}^{H}}}$
such that%
\begin{equation}
\mu _{M\otimes H}^{r}\circ \left( \mu _{M\otimes H}^{l}\otimes H\right) =\mu
_{M\otimes H}^{l}\circ \left( H\otimes \mu _{M\otimes H}^{r}\right) \circ {{%
^{H}}}a{{_{H,M\otimes H,H}^{H}.}}  \label{eq bimodule}
\end{equation}%
It remains to prove that $\left( M\otimes H,\mu _{M\otimes H}^{l}\right) $
is a left $H$-module in ${{^{H}\mathfrak{M}^{H}}}$. Let us prove that 
\begin{equation}
\mu _{M\otimes H}^{l}\circ \left( H\otimes \mu _{M\otimes H}^{l}\right)
\circ {{^{H}}}a{{_{H,H,M\otimes H}^{H}}}=\mu _{M\otimes H}^{l}\circ \left[
m\otimes \left( M\otimes H\right) \right] .  \label{star}
\end{equation}

First note that, using \eqref{eq bimodule} and \eqref{eq:3-cocycle} one
checks that 
\begin{eqnarray*}
&&\mu _{M\otimes H}^{l}\left( H\otimes \mu _{M\otimes H}^{l}\right) {{^{H}}}a%
{{_{H,H,M\otimes H}^{H}}}\left[ \left( h\otimes k\right) \otimes (m\otimes l)%
\right] \\
&=&\omega (h_{1}k_{1}\otimes m_{-1}\otimes l_{1})\left[ \mu _{M\otimes
H}^{l}\left( H\otimes \mu _{M\otimes H}^{l}\right) {{^{H}}}a{{_{H,H,M\otimes
H}^{H}}}\left[ \left( h_{2}\otimes k_{2}\right) \otimes (m_{0}\otimes 1_{H})%
\right] \right] l_{2}
\end{eqnarray*}%
and 
\begin{eqnarray*}
&&\mu _{M\otimes H}^{l}\left[ m\otimes \left( M\otimes H\right) \right] %
\left[ \left( h\otimes k\right) \otimes (m\otimes l)\right] \\
&\overset{(\ref{eq bimodule})}{=}&\omega (h_{1}k_{1}\otimes m_{-1}\otimes
l_{1})\left[ \left( h_{2}k_{2}\right) (m_{0}\otimes 1_{H})\right] l_{2} \\
&=&\omega (h_{1}k_{1}\otimes m_{-1}\otimes l_{1})\mu _{M\otimes H}^{l}\left[
m\otimes \left( M\otimes H\right) \right] \left[ \left( h_{2}\otimes
k_{2}\right) \otimes (m_{0}\otimes 1_{H})\right] l_{2}
\end{eqnarray*}%
Thus we have to prove that 
\eqref{star} holds on elements of the form $\left( h\otimes k\right) \otimes
(m\otimes 1_{H}).$

We have%
\begin{eqnarray*}
&&\mu _{M\otimes H}^{l}\left( H\otimes \mu _{M\otimes H}^{l}\right) {{^{H}}}a%
{{_{H,H,M\otimes H}^{H}}}\left[ \left( h\otimes k\right) \otimes (m\otimes
1_{H})\right] \\
&=&\left[ 
\begin{array}{c}
\omega ^{-1}(h_{1}\otimes k_{1}\otimes m_{-1})\omega (h_{2}\otimes
(k_{2}\vartriangleright m_{0})_{-1}\otimes k_{3}) \\ 
\lbrack h_{3}\vartriangleright (k_{2}\vartriangleright m_{0})_{0}\otimes
h_{4}]k_{4}%
\end{array}%
\right] \\
&=&\left[ 
\begin{array}{c}
\omega ^{-1}(h_{1}\otimes k_{1}\otimes m_{-1})\omega (h_{2}\otimes
(k_{2}\vartriangleright m_{0})_{-1}\otimes k_{3}) \\ 
\omega ^{-1}((h_{3}\vartriangleright (k_{2}\vartriangleright
m_{0})_{0})_{-1}\otimes h_{4}\otimes k_{4})(h_{3}\vartriangleright
(k_{2}\vartriangleright m_{0})_{0})_{0}\otimes h_{5}k_{5}%
\end{array}%
\right] \\
&\overset{\eqref{ass YD}}{=}&h_{1}k_{1}\vartriangleright m\otimes
h_{2}k_{2}=\left( hk\right) (m\otimes 1_{H})=\mu _{M\otimes H}^{l}\left[
m\otimes \left( M\otimes H\right) \right] \left[ \left( h\otimes k\right)
\otimes (m\otimes 1_{H})\right] .
\end{eqnarray*}

Finally one checks that, for each morphism $f:M\rightarrow N$ in $_{H}^{H}%
\mathcal{YD}$, we have $F(f):=f\otimes H\in {_{H}^{H}\mathfrak{M}_{H}^{H}}.$

\end{proof}

\begin{lemma}
Let $(H,m,u,\Delta ,\varepsilon ,\omega ,S)$ be a dual quasi-bialgebra with
a preantipode. The functor $G:\left( -\right) ^{coH}:{^{H}\mathfrak{M}%
_{H}^{H}}\rightarrow {^{H}\mathfrak{M}}$ of \ref{claim: adjunctios} induces a functor $G:{_{H}^{H}%
\mathfrak{M}_{H}^{H}}\rightarrow {_{H}^{H}\mathcal{YD}}$. Explicitly $%
G\left( M\right) \in {_{H}^{H}\mathcal{YD}}$ with the following structures,
for all $m\in M^{coH},h\in H$,%
\begin{eqnarray*}
\rho _{M^{coH}}^{l}\left( m\right) &:&=\rho _{M}^{l}(m), \\
\mu _{M^{coH}}^{l}\left( h\otimes m\right) &:&=h\vartriangleright m:=\tau
(hm)=\omega \lbrack h_{1}m_{-1}\otimes S(h_{3})_{1}\otimes h_{4}]\left(
h_{2}m_{0}\right) S(h_{3})_{2}.
\end{eqnarray*}
\end{lemma}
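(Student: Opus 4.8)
The plan is to exploit the map $\tau$ attached to the preantipode. Since $H$ has a preantipode $S$, Lemma~\ref{lem: tau} provides, for every $M\in{^H\mathfrak{M}_H^H}$, a $\Bbbk$-linear map $\tau\colon M\to M^{coH}$ given by \eqref{deftau} and satisfying \eqref{Tau mh}, \eqref{col sx eps}, \eqref{inv eps}; by Remark~\ref{rem: tau} it also fulfils \eqref{Tau mh simple}. First I would record that $G(M)=M^{coH}$ is a left $H$-subcomodule of $M$: since the left and right coactions of the bicomodule $M$ commute, for $m\in M^{coH}$ one gets $(H\otimes\rho^r)\rho^l(m)=(\rho^l\otimes H)\rho^r(m)=m_{-1}\otimes m_0\otimes 1_H$, so $\rho^l(m)\in H\otimes M^{coH}$ (here $\Bbbk$ is a field). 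Hence $(M^{coH},\rho^l)\in{^H\mathfrak{M}}$. The action $\vartriangleright$ is well defined because $\tau$ takes values in $M^{coH}$, and its explicit shape is obtained by feeding into \eqref{deftau} the coactions $\rho^r(h\cdot m)=(h_1\cdot m)\otimes h_2$ and $\rho^l(h\cdot m)=h_1m_{-1}\otimes(h_2\cdot m_0)$, both valid for $m\in M^{coH}$ since $\mu^l$ is a morphism in ${^H\mathfrak{M}^H}$.

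Next the easy axioms. The unit axiom \eqref{unitYD} is immediate: $1_H\vartriangleright m=\tau(1_H\cdot m)=\tau(m)=m$, where the last equality is \eqref{inv eps} together with $m\in M^{coH}$ (so that $\tau(m_0)m_1=\tau(m)1_H=\tau(m)$). The comodule compatibility \eqref{Comp YD} is precisely \eqref{col sx eps} evaluated at $m=h\cdot v$ with $v\in M^{coH}$: the joint bicomodule decomposition reads $(h\cdot v)_{-1}\otimes(h\cdot v)_0\otimes(h\cdot v)_1=h_1v_{-1}\otimes(h_2\cdot v_0)\otimes h_3$, so the right-hand side of \eqref{col sx eps} becomes $(h_1\vartriangleright v)_{-1}h_2\otimes(h_1\vartriangleright v)_0$ while the left-hand side becomes $h_1v_{-1}\otimes(h_2\vartriangleright v_0)$, which is exactly \eqref{Comp YD}.

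The real work is the associativity axiom \eqref{ass YD}, and this is the step I expect to be the main obstacle. Here I would start from $(hl)\vartriangleright v=\tau\bigl((hl)\cdot v\bigr)$ and expand $(hl)\cdot v$ using the associativity constraint of the left $H$-module $M$ in the monoidal category $({^H\mathfrak{M}^H},\otimes,\Bbbk,{^H}a^H)$; for $v\in M^{coH}$ the factor coming from the right coaction collapses by \eqref{eq:qusi-unitairity cocycle}, leaving $(hl)\cdot v=\omega^{-1}(h_1\otimes l_1\otimes v_{-1})\,h_2\cdot(l_2\cdot v_0)$. The task is then to rewrite $\tau\bigl(h\cdot(l\cdot v_0)\bigr)$ in terms of the iterated action $h\vartriangleright(l\vartriangleright v_0)=\tau\bigl(h\cdot\tau(l\cdot v_0)\bigr)$, recalling that $v_0\in M^{coH}$ by the subcomodule fact above. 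To this end I would use \eqref{inv eps} to replace $l\cdot v_0$ by $\tau\bigl((l\cdot v_0)_0\bigr)\cdot(l\cdot v_0)_1$, push the outer left action past the right action by the bimodule relation \eqref{eq bimodule}, and finally collapse the resulting right-action tail through $\tau$ by means of \eqref{Tau mh}. Matching the accumulated $\omega^{\pm1}$ factors with the double cocycle pattern on the right-hand side of \eqref{ass YD} would be carried out with repeated use of the cocycle identity \eqref{eq:3-cocycle} and of the already established compatibility \eqref{Comp YD}; the bookkeeping of the many Sweedler components of $h$, $l$ and of the coactions is precisely where the difficulty lies.

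Finally, functoriality is light. A morphism $f\colon M\to N$ in ${_H^H\mathfrak{M}_H^H}$ is in particular a bicomodule and right $H$-linear map, hence $f(M^{coH})\subseteq N^{coH}$ and, directly from \eqref{deftau}, $f\circ\tau_M=\tau_N\circ f$. Since $f$ is also left $H$-linear, $f(h\vartriangleright m)=f(\tau_M(h\cdot m))=\tau_N(f(h\cdot m))=\tau_N(h\cdot f(m))=h\vartriangleright f(m)$, so $G(f):=f|_{M^{coH}}$ is a morphism in ${_H^H\mathcal{YD}}$, and $G$ is a functor. As a conceptual cross-check, one may observe that \eqref{ass YD} is equivalent to the weak-centre condition displayed in the Remark following the definition of ${_H^H\mathcal{YD}}$, so that the hard computation above amounts to realising $(M^{coH},\rho^l,\vartriangleright)$ as an object of the weak right centre of ${^H\mathfrak{M}}$.
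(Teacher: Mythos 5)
Your proposal is correct and, for the crucial axiom \eqref{ass YD}, takes a more direct route than the paper. The paper never verifies \eqref{ass YD} inside $M$ itself: it uses the preantipode to invert $\epsilon_{M}\colon FG(M)\to M$, transports the left $H$-action of $M$ to $FG(M)=M^{coH}\otimes H$ (obtaining $l\cdot(m\otimes h)=l_{1}\vartriangleright(mh_{1})\otimes l_{2}h_{2}$, equation \eqref{form: RYD}), writes down associativity of this transported action at $h=1_{H}$, and recovers \eqref{ass YD} by applying $M\otimes\varepsilon_{H}$ to both sides. You instead stay inside $M$ and derive \eqref{ass YD} from the quasi-module and bimodule axioms of $M$ together with \eqref{inv eps} and \eqref{Tau mh}. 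The computational core turns out to be the same three moves in both cases; what the paper's packaging buys is reuse of the already established isomorphism $\epsilon_{M}$, what yours buys is that you only ever need the map $\tau$ of Lemma \ref{lem: tau}, never the inverse of the counit of the adjunction. Your treatment of \eqref{unitYD}, of \eqref{Comp YD} (as \eqref{col sx eps} evaluated at $hv$) and of functoriality agrees with the paper's. One minor slip: what you cite as \eqref{eq bimodule} is, in the paper, a formula for the particular object $M\otimes H$; what you actually need is the bimodule compatibility $\mu^{r}\circ(\mu^{l}\otimes H)=\mu^{l}\circ(H\otimes\mu^{r})\circ{^{H}}a^{H}_{H,M,H}$ that every object of ${_{H}^{H}\mathfrak{M}_{H}^{H}}$ satisfies by definition.

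The one thing you did not do is carry out the computation you outlined, and you flag it as the expected obstacle; in fact it closes with no further tools, and in particular neither \eqref{eq:3-cocycle} nor \eqref{Comp YD} is needed. For $v\in M^{coH}$ note first that $v_{-1}\otimes v_{0}\otimes v_{1}=v_{-1}\otimes v_{0}\otimes 1_{H}$, so $M^{coH}$ is stable under $\rho^{l}$ and every $x\in M^{coH}$ kills, via \eqref{eq:qusi-unitairity cocycle}, the rightmost $\omega$-factor of the constraints ${^{H}}a^{H}$. Then: (i) module associativity gives $(hl)v=\omega^{-1}(h_{1}\otimes l_{1}\otimes v_{-1})\,h_{2}(l_{2}v_{0})$; (ii) right colinearity of $\mu^{l}$ plus \eqref{inv eps} give $l_{2}v_{0}=(l_{2}\vartriangleright v_{0})l_{3}$; (iii) the bimodule axiom, read for $x:=l_{2}\vartriangleright v_{0}\in M^{coH}$, gives $h_{2}(xl_{3})=\omega(h_{2}\otimes x_{-1}\otimes l_{3})\,(h_{3}x_{0})l_{4}$; (iv) since $\rho^{r}(h_{3}x_{0})=h_{3}x_{0}\otimes h_{4}$ and $\tau(h_{3}x_{0})=h_{3}\vartriangleright x_{0}$, equation \eqref{Tau mh} gives $\tau\bigl((h_{3}x_{0})l_{4}\bigr)=\omega^{-1}\bigl((h_{3}\vartriangleright x_{0})_{-1}\otimes h_{4}\otimes l_{4}\bigr)(h_{3}\vartriangleright x_{0})_{0}$. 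Applying $\tau$ to (i) and substituting (ii)--(iv) produces exactly the right-hand side of \eqref{ass YD}, with every $\omega^{\pm 1}$-factor already in place. So your outline, once written out, is a complete and correct proof, and the bookkeeping you feared does not materialize.
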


\begin{proof}
Let $M\in {_{H}^{H}\mathfrak{M}_{H}^{H}}$. We already know that $G(M)\in {%
^{H}\mathfrak{M.}}$ In order to prove that $G(M)$ is in ${_{H}^{H}\mathcal{YD%
}}$, we consider the canonical isomorphism $\epsilon _{M}:FG\left( M\right)
\rightarrow M$ of Remark \ref{adjoint}. A priori, this is a morphism in ${%
^{H}\mathfrak{M}_{H}^{H}.}$ Since $M$ is in ${_{H}^{H}\mathfrak{M}_{H}^{H}}$%
, we can endow $FG\left( M\right) $ with a left $H$-module structure as
follows%
\begin{eqnarray*}
l\cdot (m\otimes h) &:=&\epsilon _{M}^{-1}\left( l\epsilon _{M}(m\otimes
h)\right) =\epsilon _{M}^{-1}\left( l(mh)\right) =\tau \left[
l_{1}(m_{0}h_{1})\right] \otimes l_{2}(m_{1}h_{2}) \\
&=&\tau \left[ l_{1}(mh_{1})\right] \otimes
l_{2}h_{2}=l_{1}\vartriangleright (mh_{1})\otimes l_{2}h_{2}
\end{eqnarray*}%
so that 
\begin{equation}
l\cdot (m\otimes h)=l_{1}\vartriangleright (mh_{1})\otimes l_{2}h_{2},\text{
for all }m\in M^{\mathrm{co}H},h\in H.  \label{form: RYD}
\end{equation}%
By associativity we have%
\begin{equation*}
\left( lk\right) \cdot (m\otimes h)=\omega ^{-1}\left( l_{1}\otimes
k_{1}\otimes m_{-1}h_{1}\right) l_{2}\left( k_{2}\left( m_{0}\otimes
h_{2}\right) \right) \omega \left( l_{3}\otimes k_{3}\otimes h_{3}\right)
\end{equation*}%
i.e., for $h=1_{H},$%
\begin{equation*}
\left( lk\right) \cdot (m\otimes 1_{H})=\omega ^{-1}\left( l_{1}\otimes
k_{1}\otimes m_{-1}\right) l_{2}\left( k_{2}\left( m_{0}\otimes 1_{H}\right)
\right) .
\end{equation*}%
The first term is%
\begin{equation*}
\left( lk\right) \cdot (m\otimes 1_{H})\overset{(\ref{form: RYD})}{=}\left(
l_{1}k_{1}\right) \vartriangleright m\otimes l_{2}k_{2}.
\end{equation*}%
The second term is%
\begin{eqnarray*}
&&\omega ^{-1}\left( l_{1}\otimes k_{1}\otimes m_{-1}\right) l_{2}\left(
k_{2}\left( m_{0}\otimes 1_{H}\right) \right) \overset{(\ref{form: RYD})}{=}%
\omega ^{-1}\left( l_{1}\otimes k_{1}\otimes m_{-1}\right) l_{2}\left(
k_{2}\vartriangleright m_{0}\otimes k_{3}\right) \\
&\overset{(\ref{form: RYD})}{=}&\omega ^{-1}\left( l_{1}\otimes k_{1}\otimes
m_{-1}\right) l_{2}\vartriangleright (\left( k_{2}\vartriangleright
m_{0}\right) k_{3})\otimes l_{3}k_{4} \\
&=&\omega ^{-1}\left( l_{1}\otimes k_{1}\otimes m_{-1}\right) \tau \left[
l_{2}(\left( k_{2}\vartriangleright m_{0}\right) k_{3})\right] \otimes
l_{3}k_{4} \\
&=&\omega ^{-1}\left( l_{1}\otimes k_{1}\otimes m_{-1}\right) \omega \left(
l_{2}\otimes \left( k_{2}\vartriangleright m_{0}\right) _{-1}\otimes
k_{3}\right) \tau \left[ \left( l_{3}\left( k_{2}\vartriangleright
m_{0}\right) _{0}\right) k_{4}\right] \otimes l_{4}k_{5} \\
&=&\left[ 
\begin{array}{c}
\omega ^{-1}\left( l_{1}\otimes k_{1}\otimes m_{-1}\right) \omega \left(
l_{2}\otimes \left( k_{2}\vartriangleright m_{0}\right) _{-1}\otimes
k_{3}\right) \\ 
\omega ^{-1}\left( (l_{3}\vartriangleright (k_{2}\vartriangleright
m_{0})_{0}\right) _{-1}\otimes l_{4}\otimes k_{4})\left(
l_{3}\vartriangleright \left( k_{2}\vartriangleright m_{0}\right)
_{0}\right) _{0}\otimes l_{5}k_{5}%
\end{array}%
\right]
\end{eqnarray*}%
Hence, we obtain%
\begin{equation*}
\left( l_{1}k_{1}\right) \vartriangleright m\otimes l_{2}k_{2}=\left[ 
\begin{array}{c}
\omega ^{-1}\left( l_{1}\otimes k_{1}\otimes m_{-1}\right) \omega \left(
l_{2}\otimes \left( k_{2}\vartriangleright m_{0}\right) _{-1}\otimes
k_{3}\right) \\ 
\omega ^{-1}\left( (l_{3}\vartriangleright (k_{2}\vartriangleright
m_{0})_{0}\right) _{-1}\otimes l_{4}\otimes k_{4})\left(
l_{3}\vartriangleright \left( k_{2}\vartriangleright m_{0}\right)
_{0}\right) _{0}\otimes l_{5}k_{5}%
\end{array}%
\right] .
\end{equation*}%
By applying $M\otimes \varepsilon _{H}$ on both sides, we arrive at %
\eqref{ass YD}. 
Moreover, by \eqref{Tau mh simple}, we have $1_{H}\vartriangleright m=\tau
\left( m\right) =m$ and 
\begin{eqnarray*}
&&\left( h_{1}\vartriangleright m\right) _{-1}h_{2}\otimes \left(
h_{1}\vartriangleright m\right) _{0}=\tau \left( h_{1}m\right)
_{-1}h_{2}\otimes \tau \left( h_{1}m\right) _{0}=\tau \left( \left(
hm\right) _{0}\right) _{-1}\left( hm\right) _{1}\otimes \tau \left( \left(
hm\right) _{0}\right) _{0} \\
\overset{(\ref{col sx eps})}{=} &&\left( hm\right) _{-1}\otimes \tau \left(
\left( hm\right) _{0}\right) =h_{1}m_{-1}\otimes \tau \left(
h_{2}m_{0}\right) =h_{1}m_{-1}\otimes \left( h_{2}\vartriangleright
m_{0}\right) .
\end{eqnarray*}%
We have so proved that $G(M)\in {{_{H}^{H}\mathcal{YD}}}$. Now it is easy to
verify that for every $g:M\rightarrow N\in {_{H}^{H}\mathfrak{M}_{H}^{H}},$
we have that $G(g):M^{coH}\rightarrow N^{coH}\in {_{H}^{H}\mathcal{YD}}$.%
\end{proof}

\begin{proposition}
\label{restr equi} Let $(H,m,u,\Delta ,\varepsilon ,\omega ,S)$ be a dual
quasi-bialgebra with a preantipode. $(F,G)$ is an equivalence between $%
_{H}^{H}\mathfrak{M}_{H}^{H}$ and $_{H}^{H}\mathcal{YD}$, i.e. the morphisms 
$\varepsilon _{M}$ and $\eta _{N}$ of Remark \ref{adjoint} are in $_{H}^{H}%
\mathfrak{M}_{H}^{H}$ and in ${{_{H}^{H}\mathfrak{{\mathcal{YD}}}}}$
respectively, for each $M\in {{_{H}^{H}\mathfrak{M}_{H}^{H}}},N\in {{_{H}^{H}%
\mathfrak{{\mathcal{YD}}}}}$.
\end{proposition}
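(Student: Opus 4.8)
The plan is to upgrade the equivalence $(F,G)$ of Theorem \ref{Teoequidual} from the underlying categories to the enriched ones. By that theorem the counit $\varepsilon _{M}\colon FG(M)\to M$ and the unit $\eta _{N}\colon N\to GF(N)$ of Remark \ref{adjoint} are already isomorphisms in ${^{H}\mathfrak{M}_{H}^{H}}$ and in ${^{H}\mathfrak{M}}$ respectively, and by Lemma \ref{Restr F} and the lemma defining $G$ the functors $F$ and $G$ carry ${_{H}^{H}\mathcal{YD}}$ to ${_{H}^{H}\mathfrak{M}_{H}^{H}}$ and back. Since a morphism in ${_{H}^{H}\mathfrak{M}_{H}^{H}}$ (resp. in ${_{H}^{H}\mathcal{YD}}$) is a morphism in ${^{H}\mathfrak{M}_{H}^{H}}$ (resp. in ${^{H}\mathfrak{M}}$) which is in addition left $H$-linear, everything reduces to checking that $\varepsilon _{M}$ is left $H$-linear and that $\eta _{N}$ commutes with the Yetter--Drinfeld action $\vartriangleright $. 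As both maps are already bijective, these two checks will promote them to isomorphisms in the enriched categories, whence $(F,G)$ is an equivalence.

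For $\eta _{N}(n)=n\otimes 1_{H}$ I would argue as follows. Since $n\otimes 1_{H}\in (N\otimes H)^{coH}$, the action on $GF(N)$ is $h\vartriangleright (n\otimes 1_{H})=\tau \bigl(h\cdot (n\otimes 1_{H})\bigr)$, where $\cdot $ is the left action \eqref{strutt mod left} that $F$ puts on $F(N)$. Using the unitality of the reassociator \eqref{eq:qusi-unitairity cocycle} together with \eqref{strutt mod left} and \eqref{strutt mod right}, one first simplifies $h\cdot (n\otimes 1_{H})=(h_{1}\vartriangleright n)\otimes h_{2}=\bigl((h_{1}\vartriangleright n)\otimes 1_{H}\bigr)\cdot h_{2}$, where $(h_{1}\vartriangleright n)\otimes 1_{H}=\eta _{N}(h_{1}\vartriangleright n)$ is coinvariant. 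Hence property \eqref{Tau mh simple} applies and yields $\tau \bigl(\eta _{N}(h_{1}\vartriangleright n)\cdot h_{2}\bigr)=\eta _{N}(h_{1}\vartriangleright n)\varepsilon (h_{2})=(h\vartriangleright n)\otimes 1_{H}=\eta _{N}(h\vartriangleright n)$, so $\eta _{N}$ is $\vartriangleright $-linear; it is colinear already because it lives in ${^{H}\mathfrak{M}}$.

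The delicate point is the left $H$-linearity of $\varepsilon _{M}$, namely $\varepsilon _{M}\bigl(l\cdot (x\otimes h)\bigr)=l\,\varepsilon _{M}(x\otimes h)=l(xh)$ for $x\in M^{coH}$, where $\cdot $ is the left action \eqref{strutt mod left} that $F$ puts on $FG(M)=F(M^{coH})$ while on the target $l(xh)$ denotes the genuine left action of $M$ as an object of ${_{H}^{H}\mathfrak{M}_{H}^{H}}$. I would push $\varepsilon _{M}$ through the right action, using that $\varepsilon _{M}$ is already right $H$-linear, to obtain $\varepsilon _{M}\bigl(l\cdot (x\otimes h)\bigr)=\omega (l_{1}\otimes x_{-1}\otimes h_{1})\bigl(\tau (l_{2}x_{0})\,l_{3}\bigr)h_{2}$, where $x_{-1}\otimes x_{0}$ is the left coaction of $x$ and $l_{2}x_{0}$ the left action in $M$. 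The key simplification is that $x_{0}$ is again right coinvariant, so \eqref{inv eps} collapses $\tau (l_{2}x_{0})\,l_{3}=l_{2}x_{0}$, leaving $\omega (l_{1}\otimes x_{-1}\otimes h_{1})(l_{2}x_{0})h_{2}$. Expanding $(l_{2}x_{0})h_{2}$ through the bimodule associativity of $M$ in ${^{H}\mathfrak{M}^{H}}$ produces a compensating $\omega ^{-1}$, and the resulting $\omega \ast \omega ^{-1}$ telescopes to $\varepsilon $ by convolution invertibility of $\omega $, leaving exactly $l(xh)$. I expect this computation---reconciling the Yetter--Drinfeld formula \eqref{strutt mod left} with the genuine bimodule action of $M$---to be the main obstacle, since it is precisely where the preantipode (through $\tau $ and \eqref{inv eps}) and the cocycle condition \eqref{eq:3-cocycle} must conspire. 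Once both linearity statements are in place, $\varepsilon _{M}$ and $\eta _{N}$ are isomorphisms in ${_{H}^{H}\mathfrak{M}_{H}^{H}}$ and in ${_{H}^{H}\mathcal{YD}}$ respectively, and the proof is complete.
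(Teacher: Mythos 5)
Your proposal is correct and takes essentially the same route as the paper's own proof: both reduce the statement to checking left $H$-linearity of $\varepsilon _{M}$ and $\vartriangleright $-compatibility of $\eta _{N}$, handle $\eta _{N}$ by simplifying $h\cdot (n\otimes 1_{H})$ to a coinvariant element acted on by $h_{2}$ and invoking \eqref{Tau mh simple}, and handle $\varepsilon _{M}$ by the chain \eqref{strutt mod left}, right $H$-linearity, the collapse $\tau (l_{2}x_{0})l_{3}=l_{2}x_{0}$ coming from \eqref{inv eps} and coinvariance, and finally quasi-associativity together with coinvariance to absorb the remaining reassociator factors. The only difference is cosmetic: where you cancel $\omega \ast \omega ^{-1}$ by convolution invertibility, the paper moves the $\omega $ to the other side of \eqref{eq:quasi-associativity}; the computations are equivalent.
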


\begin{proof}
We already know that $\varepsilon _{M}\in $ $^{H}\mathfrak{M}_{H}^{H}.$ Let
us check that $\varepsilon _{M}$ is left $H$-linear.

\begin{gather*}
\varepsilon _{M}\mu _{M^{coH}\otimes H}(h\otimes m\otimes k)=\varepsilon
_{M}(h\cdot (m\otimes k))\overset{(\ref{strutt mod left})}{=}\varepsilon
_{M}[\omega (h_{1}\otimes m_{-1}\otimes k_{1})(h_{2}\vartriangleright
m_{0}\otimes h_{3})k_{2}] \\
\overset{\varepsilon _{M}\text{ right lin}}{=}\omega (h_{1}\otimes
m_{-1}\otimes k_{1})\varepsilon _{M}[(h_{2}\vartriangleright m_{0}\otimes
h_{3})]k_{2}=\omega (h_{1}\otimes m_{-1}\otimes
k_{1})[(h_{2}\vartriangleright m_{0})h_{3}]k_{2} \\
=\omega (h_{1}\otimes m_{-1}\otimes k_{1})[\tau
(h_{2}m_{0})h_{3}]k_{2}=\omega (h_{1}\otimes m_{-1}\otimes k_{1})[\tau
(h_{2}m_{0})(h_{3}m_{1})]k_{2} \\
\overset{(\ref{inv eps})}{=}\omega (h_{1}\otimes m_{-1}\otimes
k_{1})(h_{2}m_{0})k_{2}\overset{\eqref{eq:quasi-associativity}}{=}%
h_{1}(m_{0}k_{1})\omega (h_{2}\otimes m_{1}\otimes k_{2})=h(mk)=\mu
_{M}(H\otimes \varepsilon _{M})(h\otimes m\otimes k).
\end{gather*}%
Now let us check the compatibility of $\eta $ with $\vartriangleright .$ For 
$N\in {_{H}^{H}\mathcal{YD}}$ and $n\in N,$ 
\begin{gather*}
\left[ \mu _{\left( N\otimes H\right) ^{coH}}^{l}\circ {^{H}}a_{H,N,H}\circ
(H\otimes \eta _{N})\right] (h\otimes n)=\left[ \mu _{\left( N\otimes
H\right) ^{coH}}^{l}\circ {^{H}}a_{H,N,H}\right] (h\otimes \left( n\otimes
1_{H}\right) ) \\
=\omega ^{-1}(h_{1}\otimes n_{-1}\otimes 1_{H})\mu _{\left( N\otimes
H\right) ^{coH}}^{l}(h_{2}\otimes \left( n_{0}\otimes 1_{H}\right) )=\mu
_{\left( N\otimes H\right) ^{coH}}^{l}(h\otimes \left( n\otimes 1_{H}\right)
)=\tau (h\left( n\otimes 1_{H}\right) ) \\
\overset{(\ref{strutt mod left})}{=}\tau (h_{1}\vartriangleright n\otimes
h_{2})\overset{(\ref{strutt mod right})}{=}\tau (\left(
h_{1}\vartriangleright n\otimes 1_{H}\right) h_{2})=\tau (\eta
_{N}(h_{1}\vartriangleright n)h_{2})\overset{(\ref{Tau mh simple})}{=}\eta
_{N}(h_{1}\vartriangleright n)\varepsilon _{H}\left( h_{2}\right) =\eta
_{N}(h\vartriangleright n).
\end{gather*}%
So $\eta _{N}\in $ $_{H}^{H}\mathcal{YD}$, for each $N\in $ $_{H}^{H}%
\mathcal{YD}$.
\end{proof}

\section{Monoidal equivalences\label{C3}}

In this section we prove that the equivalence between the categories ${%
_{H}^{H}\mathfrak{M}_{H}^{H}}$ and ${_{H}^{H}\mathcal{YD}}$ becomes monoidal
if we equip ${_{H}^{H}\mathfrak{M}_{H}^{H}}$ with the tensor product $\otimes _{H}$
(or ${{\square _{H}}}$) and unit $H.$ As a by-product we produce a monoidal
equivalence between $({{{_{H}^{H}\mathfrak{M}_{H}^{H}}}},\otimes _{H},H)$
and $({_{H}^{H}\mathfrak{M}_{H}^{H},\square }_{H},H).$

\begin{lemma}
\label{lem:m4hmonoidal} Let $(H,m,u,\Delta ,\varepsilon ,\omega )$ be a dual
quasi-bialgebra. The category $(_{H}^{H}\mathfrak{M}_{H}^{H},\otimes _{H},H)$
is monoidal with respect to the following constraints:%
\begin{eqnarray*}
a_{U,V,W}((u\otimes _{H}v)\otimes _{H}w) &=&\omega ^{-1}(u_{-1}\otimes
v_{-1}\otimes w_{-1})u_{0}\otimes _{H}(v_{0}\otimes _{H}w_{0})\omega
(u_{1}\otimes v_{1}\otimes w_{1}) \\
l_{U}(h\otimes _{H}u) &=&hu \\
r_{U}(u\otimes _{H}h) &=&uh
\end{eqnarray*}
\end{lemma}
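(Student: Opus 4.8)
The plan is to obtain the monoidal structure on $({_{H}^{H}\mathfrak{M}_{H}^{H}},\otimes_{H},H)$ as the specialization of the general construction of \cite[1.11]{AMS-Hoch} to the algebra $H$ inside $\mathcal{M}:=({^{H}\mathfrak{M}^{H}},\otimes,\Bbbk,{^{H}}a{^{H}},l,r)$, and then to check that the abstract constraints it produces are exactly the three displayed maps. Recall from Remark \ref{rem: H alg} that $H$ is an algebra in $\mathcal{M}$ and that ${_{H}^{H}\mathfrak{M}_{H}^{H}}=({^{H}\mathfrak{M}^{H}})_{H}$. Since $\mathcal{M}$ is abelian and its tensor functors are $\otimes_{\Bbbk}$, hence additive and exact on both sides, the hypotheses of \cite[1.11]{AMS-Hoch} hold and the module category over $H$ in $\mathcal{M}$ is monoidal, with tensor product the coequalizer $\otimes_{H}$ recalled in Section \ref{C1}, unit the regular object $H$, and associativity and unit constraints induced from ${^{H}}a{^{H}}$, $l$ and $r$.

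First I would verify that $\otimes_{H}$ does not leave the category: because $\otimes_{\Bbbk}$ preserves coequalizers, for $U,V\in{_{H}^{H}\mathfrak{M}_{H}^{H}}$ the space $U\otimes_{H}V$ inherits from $U\otimes V$ an $H$-bicomodule structure together with a right $H$-action, exactly as in the two remarks following the definition of $\otimes_{A}$ in Section \ref{C1}. Next I would make the induced constraints explicit. The key step is that ${^{H}}a{^{H}}$ passes to the coequalizers: one checks that the $\Bbbk$-linear map ${^{H}}a{^{H}}_{U,V,W}$ is compatible with the balancing relations defining $(U\otimes_{H}V)\otimes_{H}W$ and $U\otimes_{H}(V\otimes_{H}W)$, so that it descends to a well-defined isomorphism $a_{U,V,W}$ whose value on $(u\otimes_{H}v)\otimes_{H}w$ is $\omega^{-1}(u_{-1}\otimes v_{-1}\otimes w_{-1})\,u_{0}\otimes_{H}(v_{0}\otimes_{H}w_{0})\,\omega(u_{1}\otimes v_{1}\otimes w_{1})$. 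Similarly the unit constraints of $\mathcal{M}$ descend to $l_{U}(h\otimes_{H}u)=hu$ and $r_{U}(u\otimes_{H}h)=uh$, i.e. to the canonical isomorphisms $H\otimes_{H}U\cong U\cong U\otimes_{H}H$ furnished by the $H$-action.

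With the constraints identified, the Pentagon and Triangle Axioms for $(a,l,r)$ follow formally from the ones already valid for $({^{H}}a{^{H}},l,r)$ in $\mathcal{M}$: each constraint is the image of the corresponding constraint of $\mathcal{M}$ under the coequalizer quotients, and since $\otimes_{\Bbbk}$ preserves coequalizers the commuting diagrams for $\otimes_{H}$ are obtained by applying these quotients to the already commuting diagrams for $\otimes$. Naturality and invertibility of $a$, $l$, $r$ are inherited in the same manner.

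The main obstacle is precisely the well-definedness on the coequalizers together with the verification that $a$, $l$, $r$ are morphisms in ${_{H}^{H}\mathfrak{M}_{H}^{H}}$ (bicolinear and right $H$-linear). This is the computational heart of the proof and is of the same nature as the checks carried out in Lemma \ref{lem: xiScha} and Lemma \ref{lem:a4H}: it rests on the $3$-cocycle condition \eqref{eq:3-cocycle}, used to show that ${^{H}}a{^{H}}$ respects both the $\otimes_{H}$-relations and the module actions, and on the quasi-associativity \eqref{eq:quasi-associativity} of $m$. Once these compatibilities are in place, matching the induced maps with the displayed formulas is a routine computation in Sweedler notation.
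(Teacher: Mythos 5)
Your proposal is correct and follows essentially the same route as the paper, whose entire proof is the citation ``See e.g.\ \cite[Theorem 1.12]{AMS-Hoch}'': you specialize that general bimodule-category construction to the algebra $H$ in $({^{H}\mathfrak{M}^{H}},\otimes,\Bbbk,{^{H}}a{^{H}},l,r)$ and identify the induced constraints with the displayed formulas. One small slip: Remark \ref{rem: H alg} defines ${^{H}\mathfrak{M}_{H}^{H}}=({^{H}\mathfrak{M}^{H}})_{H}$ (right modules), whereas the category in the Lemma is ${_{H}^{H}\mathfrak{M}_{H}^{H}}={_{H}({^{H}\mathfrak{M}^{H}})_{H}}$ ($H$-bimodules in ${^{H}\mathfrak{M}^{H}}$), which is exactly the setting in which \cite{AMS-Hoch} applies, so your argument goes through once this is stated correctly.
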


\begin{proof}
See e.g. \cite[Theorem 1.12]{AMS-Hoch}.
\end{proof}

\begin{lemma}
\label{strutt mod lat} Let $(H,m,u,\Delta ,\varepsilon ,\omega )$ be a dual
quasi-bialgebra. Let $U\in {^{H}\mathfrak{M}^{H}},V\in {^{H}\mathfrak{M}%
_{H}^{H}}.$ Then $(U\otimes V,\rho ^{l},\rho ^{r},\mu )\in {^{H}\mathfrak{M}%
_{H}^{H}}$ with the following structures:%
\begin{eqnarray*}
\rho _{U\otimes V}^{l}(u\otimes v) &=&u_{-1}v_{-1}\otimes (u_{0}\otimes v_{0})
\\
\rho _{U\otimes V}^{r}(u\otimes v) &=& (u_{0}\otimes v_{0})\otimes u_{1}v_{1} \\
\mu _{U\otimes V}^{r}((u\otimes v)\otimes h) &=&\omega ^{-1}(u_{-1}\otimes
v_{-1}\otimes h_{1})u_{0}\otimes v_{0}h_{2}\omega (u_{1}\otimes v_{1}\otimes
h_{3}).
\end{eqnarray*}
\end{lemma}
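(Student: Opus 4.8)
The point is that the two coactions $\rho^l_{U\otimes V}$ and $\rho^r_{U\otimes V}$ displayed in the statement are nothing but the codiagonal coactions defining the tensor product of $U$ and $V$ in the monoidal category $({^{H}\mathfrak{M}^{H}},\otimes,\Bbbk,{^{H}}a{^{H}},l,r)$; hence $U\otimes V$ already lies in ${^{H}\mathfrak{M}^{H}}$ and nothing is to be checked for the bicomodule structure. By Remark~\ref{rem: H alg}, $H$ is an algebra in ${^{H}\mathfrak{M}^{H}}$, and by the very definition ${^{H}\mathfrak{M}_{H}^{H}}=({^{H}\mathfrak{M}^{H}})_H$ the object $V$ is a right $H$-module in ${^{H}\mathfrak{M}^{H}}$ with action $\mu^r_V$. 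The plan is therefore to endow $U\otimes V$ with the right $H$-action obtained by tensoring a module on the left by a fixed object, namely
\[
\widetilde\mu:=(U\otimes\mu^r_V)\circ {^{H}}a{^{H}}_{U,V,H}\colon (U\otimes V)\otimes H\longrightarrow U\otimes V,
\]
and then to show that $\widetilde\mu$ coincides with the stated $\mu^r_{U\otimes V}$ and satisfies the right $H$-module axioms.

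First I would compute $\widetilde\mu$ explicitly. Regarding $H$ as an object of ${^{H}\mathfrak{M}^{H}}$ via its regular coactions $\rho^l_H=\rho^r_H=\Delta$ and using the explicit form of the associativity constraint recalled before Remark~\ref{rem: H alg}, one has
\[
{^{H}}a{^{H}}_{U,V,H}((u\otimes v)\otimes h)=\omega^{-1}(u_{-1}\otimes v_{-1}\otimes h_1)\,u_0\otimes(v_0\otimes h_2)\,\omega(u_1\otimes v_1\otimes h_3),
\]
and applying $U\otimes\mu^r_V$ (which sends $(v_0\otimes h_2)$ to $v_0h_2$) reproduces exactly the formula for $\mu^r_{U\otimes V}$ in the statement. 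Moreover $\widetilde\mu$ is automatically a morphism in ${^{H}\mathfrak{M}^{H}}$, being the composite of the structural isomorphism ${^{H}}a{^{H}}_{U,V,H}$ with $U\otimes\mu^r_V$, the latter being the tensor of $\mathrm{Id}_U$ with the bicomodule map $\mu^r_V$; this takes care of the bicolinearity of the action.

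It remains to verify the module axioms for $\widetilde\mu$, and here I would argue purely monoidally so that the argument is formal and category-independent. For associativity one expands $\widetilde\mu\circ(\widetilde\mu\otimes H)$, moves the inner constraint ${^{H}}a{^{H}}_{U,V,H}$ past $(U\otimes\mu^r_V)\otimes H$ by naturality of the associativity constraint, rewrites $\mu^r_V\circ(\mu^r_V\otimes H)$ via the associativity of $V$, collapses the resulting triple of constraints $(U\otimes {^{H}}a{^{H}}_{V,H,H})\circ {^{H}}a{^{H}}_{U,V\otimes H,H}\circ({^{H}}a{^{H}}_{U,V,H}\otimes H)$ into ${^{H}}a{^{H}}_{U,V,H\otimes H}\circ {^{H}}a{^{H}}_{U\otimes V,H,H}$ by a single application of the Pentagon Axiom, and finally uses naturality against $m$ to obtain $\widetilde\mu\circ((U\otimes V)\otimes m)\circ {^{H}}a{^{H}}_{U\otimes V,H,H}$, which is the right-hand side of the module-associativity law. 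Unitality is analogous: naturality of ${^{H}}a{^{H}}_{U,V,-}$ against the unit $u$, the unit axiom $\mu^r_V\circ(V\otimes u)=r_V$ for $V$, and the standard consequence $r_{U\otimes V}=(U\otimes r_V)\circ {^{H}}a{^{H}}_{U,V,\Bbbk}$ of the Triangle Axiom yield $\widetilde\mu\circ((U\otimes V)\otimes u)=r_{U\otimes V}$.

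The only real obstacle is the bookkeeping in the associativity check, namely matching the three associators occurring in $\widetilde\mu\circ(\widetilde\mu\otimes H)$ to the single instance of the Pentagon Axiom and keeping straight which tensor factor each constraint acts on; everything else is formal. Indeed, the whole statement is the instance, for the monoidal category ${^{H}\mathfrak{M}^{H}}$ and the algebra $H$, of the standard fact that left tensoring by a fixed object carries right $A$-modules to right $A$-modules, so one may alternatively invoke this fact (cf.\ \cite[1.11]{AMS-Hoch}) and reduce the proof to the explicit identification of $\widetilde\mu$ with $\mu^r_{U\otimes V}$ performed above.
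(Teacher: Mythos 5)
Your proposal is correct, and in fact the paper offers no argument to compare it with: its ``proof'' of this lemma is literally \emph{It is left to the reader}. Your structural reading is exactly the intended one --- the two coactions are the codiagonal ones of the tensor product in $({^{H}\mathfrak{M}^{H}},\otimes ,\Bbbk ,{^{H}}a{^{H}},l,r)$, and the action is $\widetilde\mu =(U\otimes \mu _{V}^{r})\circ {^{H}}a{^{H}}_{U,V,H}$, which visibly reproduces the stated formula once one uses $\rho _{H}^{l}=\rho _{H}^{r}=\Delta $; bicolinearity of $\widetilde\mu $ is then automatic, and the module axioms follow by the naturality--Pentagon--naturality chain you describe (this is the standard fact that left tensoring by a fixed object sends right $H$-modules in ${^{H}\mathfrak{M}^{H}}$ to right $H$-modules, consistently with Remark \ref{rem: H alg} and the framework of \cite{AMS-Hoch}). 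The details you flag as bookkeeping do check out: $a_{U,V,H}\circ ((U\otimes \mu _{V}^{r})\otimes H)=(U\otimes (\mu _{V}^{r}\otimes H))\circ a_{U,V\otimes H,H}$ by naturality in the middle slot, the Pentagon collapses the three resulting constraints, and naturality against $m$ (a morphism in ${^{H}\mathfrak{M}^{H}}$ since $H$ is an algebra there) and against $u$ finishes associativity and unitality.
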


\begin{proof}
It is left to the reader. 
\end{proof}

\begin{definition}
We recall that a \emph{lax monoidal functor} 
\begin{equation*}
(F,\phi _{0},\phi _{2}):(\mathcal{M},\otimes ,\mathbf{1},a,l,r\mathbf{%
)\rightarrow (}\mathcal{M}^{\prime }\mathfrak{,}\otimes ^{\prime },\mathbf{%
1^{\prime }},a^{\prime },l^{\prime },r^{\prime }\mathbf{)}
\end{equation*}%
between two monoidal categories consists of

\begin{itemize}
\item a functor $F:\mathcal{M}\rightarrow \mathcal{M}^{\prime },$

\item a natural transformation $\phi _{2}(U,V):F(U)\otimes ^{\prime
}F(V)\rightarrow F(U\otimes V),$ with $U,V\in \mathcal{M}$, and

\item a natural transformation $\phi _{0}:\mathbf{1^{\prime }}\rightarrow F(%
\mathbf{1})$ such that the diagram 
\begin{equation}  \label{mon funct1}
\diagMonFun%
\end{equation}%
commutes and the following conditions are satisfied: 
\begin{eqnarray}
F(l_{U})\circ \phi _{2}(\mathbf{1},U)\circ (\phi _{0}\otimes F(U))
&=&l_{F(U)}^{\prime },  \label{mon funct2} \\
F(r_{U})\circ \phi _{2}(U,\mathbf{1})\circ \left( F(U)\otimes \phi
_{0}\right) &=&r_{F(U)}^{\prime }.  \label{mon funct3}
\end{eqnarray}
\end{itemize}

The morphisms $\phi _{2}(U,V)$ and $\phi _{0}$ are called \emph{structure
morphisms}.

\emph{Colax monoidal functors} are defined similarly but with the directions
of the structure morphisms reversed. A strong monoidal functor or simply a 
\emph{monoidal functor} is a lax monoidal functor with invertible structure
morphisms.
\end{definition}

\begin{lemma}
\label{lem: equiv-m4h-tens}Let $(H,m,u,\Delta ,\varepsilon ,\omega )$ be a
dual quasi-bialgebra. The functor $F:{_{H}^{H}\mathcal{YD}}\rightarrow {%
_{H}^{H}\mathfrak{M}_{H}^{H}}$ defines a monoidal functor $F:({_{H}^{H}%
\mathcal{YD}},\otimes ,\Bbbk )\rightarrow ({_{H}^{H}\mathfrak{M}%
_{H}^{H},\otimes _{H},H).}$ For $U,V\in {_{H}^{H}\mathcal{YD}}$, the
structure morphisms are 
\begin{equation*}
\varphi _{2}(U,V):F(U)\otimes _{H}F(V)\rightarrow F(U\otimes V)\qquad \text{%
and} \qquad \varphi _{0}:H\rightarrow F(\Bbbk )
\end{equation*}%
which are defined, for every $u\in U,v\in V,h,k\in H$, by 
\begin{equation*}
\varphi _{2}(U,V)[(u\otimes h)\otimes _{H}(v\otimes k)]:=\left[ 
\begin{array}{c}
\omega ^{-1}\left( u_{-2}\otimes h_{1}\otimes v_{-2}k_{1}\right) \omega
(h_{2}\otimes v_{-1}\otimes k_{2}) \\ 
\omega ^{-1}((h_{3}\vartriangleright v_{0})_{-2}\otimes h_{4}\otimes
k_{3})\omega \left( u_{-1}\otimes (h_{3}\vartriangleright v_{0})_{-1}\otimes
h_{5}k_{4})\right) \\ 
\left( u_{0}\otimes (h_{3}\vartriangleright v_{0})_{0}\right) \otimes
h_{6}k_{5}%
\end{array}%
\right]
\end{equation*}%
and 
\begin{equation*}
\varphi _{0}(h):=1_{\Bbbk }\otimes h.
\end{equation*}%
Moreover 
\begin{equation*}
\varphi _{2}(U,V)^{-1}\left( \left( u\otimes v\right) \otimes k\right)
=\omega ^{-1}\left( u_{-1}\otimes v_{-1}\otimes k_{1}\right) \left(
u_{0}\otimes 1_{H}\right) \otimes _{H}\left( v_{0}\otimes k_{2}\right) .
\label{form:fi2meno1}\end{equation*}
\end{lemma}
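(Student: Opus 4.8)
The plan is to realize $\varphi_{2}(U,V)$ as a composite of the two natural isomorphisms already produced in Lemmas \ref{lem: xiScha} and \ref{lem:a4H}, so that most of the structural claims come for free. Concretely, for $U,V\in{_{H}^{H}\mathcal{YD}}$ I would set
\begin{equation*}
\varphi_{2}(U,V):=\alpha_{U,V}\circ\xi_{U,F(V)}\colon F(U)\otimes_{H}F(V)\longrightarrow U\otimes F(V)\longrightarrow F(U\otimes V),
\end{equation*}
where $\xi_{U,F(V)}$ is the map of Lemma \ref{lem: xiScha} taken with $"M"=F(V)$ (which lies in ${_{H}^{H}\mathfrak{M}_{H}^{H}}$ by Lemma \ref{Restr F}) and $\alpha_{U,V}$ is the map of Lemma \ref{lem:a4H}. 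Since $U\in{_{H}^{H}\mathcal{YD}}$, part $2)$ of Lemma \ref{lem: xiScha} gives that $\xi_{U,F(V)}$ is a natural isomorphism in ${_{H}^{H}\mathfrak{M}_{H}^{H}}$, and part $2)$ of Lemma \ref{lem:a4H} gives the same for $\alpha_{U,V}$. Hence $\varphi_{2}(U,V)$ is automatically a natural isomorphism in ${_{H}^{H}\mathfrak{M}_{H}^{H}}$; in particular it is well defined on the coequalizer $\otimes_{H}$ (this is inherited from $\xi$), it is a morphism in the correct category, and it is invertible, with $\varphi_{2}(U,V)^{-1}=\xi_{U,F(V)}^{-1}\circ\alpha_{U,V}^{-1}$. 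All of this is obtained without any new computation.

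Next I would read off the explicit formulas. Substituting the left $H$-comodule and left $H$-module structures of $F(V)$ from Lemma \ref{Restr F} (in particular \eqref{strutt mod left}) into the formula for $\xi_{U,F(V)}$ and then applying $\alpha_{U,V}$, repeated use of the $3$-cocycle condition \eqref{eq:3-cocycle} collapses the Sweedler indices to the stated expression for $\varphi_{2}(U,V)$; the factor $\omega(u_{-1}\otimes\cdots)$ in the displayed formula is precisely the one contributed by $\alpha_{U,V}$. The inverse is even quicker: since $\alpha_{U,V}=\left({^{H}}a_{U,V,H}\right)^{-1}$ and $\xi_{U,F(V)}^{-1}(u\otimes m)=(u\otimes 1_{H})\otimes_{H}m$, one gets $\varphi_{2}(U,V)^{-1}=\xi_{U,F(V)}^{-1}\circ{^{H}}a_{U,V,H}$ directly, reproducing the claimed formula. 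For the unit datum, $\varphi_{0}\colon H\to F(\Bbbk)=\Bbbk\otimes H$, $h\mapsto 1_{\Bbbk}\otimes h$, I would check by hand that it is an isomorphism in ${_{H}^{H}\mathfrak{M}_{H}^{H}}$: because $\Bbbk$ carries the trivial Yetter--Drinfeld structure, $F(\Bbbk)$ is canonically isomorphic to $H$ as a right dual quasi-Hopf $H$-bicomodule and $\varphi_{0}$ is exactly this identification, so the verification reduces to the unitality \eqref{eq:qusi-unitairity cocycle} of $\omega$ and to \eqref{unitYD}.

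It then remains to verify the coherence axioms \eqref{mon funct1}, \eqref{mon funct2} and \eqref{mon funct3}. The two unit triangles \eqref{mon funct2} and \eqref{mon funct3} follow from the explicit formula for $\varphi_{2}$ together with the unitality of $\omega$ and of the action $\vartriangleright$, after cancelling the factors involving $1_{H}$. The hexagon \eqref{mon funct1} is the main obstacle. Here I would rewrite each of the four occurrences of $\varphi_{2}$ as $\alpha\circ\xi$ and insert the explicit associativity constraints: the one on the source $({_{H}^{H}\mathfrak{M}_{H}^{H}},\otimes_{H},H)$ from Lemma \ref{lem:m4hmonoidal} (which carries the factor $\omega^{-1}(u_{-1}\otimes v_{-1}\otimes w_{-1})\cdots\omega(u_{1}\otimes v_{1}\otimes w_{1})$) and the one on $({_{H}^{H}\mathcal{YD}},\otimes,\Bbbk)$ inherited from ${^{H}}a$. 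Using the naturality of $\alpha_{-,-}$ and $\xi_{-,-}$ to slide these constraints past the structure maps, the identity reduces to a purely $\omega$-theoretic statement that is resolved by several applications of the $3$-cocycle condition \eqref{eq:3-cocycle}. The delicate point is the bookkeeping of Sweedler indices across the coequalizer $\otimes_{H}$; everything else is formal.
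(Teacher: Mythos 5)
Your proposal is correct and follows the paper's own proof almost verbatim: the paper likewise defines $\varphi_{2}(U,V)=\alpha_{U,V}\circ\xi_{U,F(V)}$, invokes Lemmas \ref{lem: xiScha} and \ref{lem:a4H} to conclude it is a well-defined natural isomorphism in ${_{H}^{H}\mathfrak{M}_{H}^{H}}$ with inverse $\xi_{U,F(V)}^{-1}\circ\alpha_{U,V}^{-1}$ satisfying the displayed formula, and treats $\varphi_{0}$ as the unit-constraint identification checked to be $H$-bilinear. The only organizational difference is in the hexagon \eqref{mon funct1}: instead of expanding $\varphi_{2}$ directly and sliding constraints by naturality as you suggest, the paper restates the axiom in terms of the inverses $\varphi_{2}^{-1}$ (which have the simple form inserting $1_{H}$) and observes that, all maps being right $H$-linear, it suffices to evaluate on elements of the form $(u\otimes(v\otimes w))\otimes 1_{H}$ --- a reduction you may want to adopt, since it turns the Sweedler-index bookkeeping you anticipate into a genuinely short computation.
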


\begin{proof}
Let us check that $\varphi _{0}$ is a morphism in ${{_{H}^{H}\mathfrak{M}%
_{H}^{H}}}$. Since $\varphi _{0}=l_{H}^{-1}:H\rightarrow \mathbb{\Bbbk
\otimes }H$, i.e. the inverse of the left unit constraint in ${{^{H}%
\mathfrak{M}^{H}}}$, then $\varphi _{0}$ is in ${{^{H}\mathfrak{M}^{H}}}$
and it is invertible. It is easy to check it is $H$-bilinear in ${{^{H}%
\mathfrak{M}^{H}}}$.

Let us consider now $\varphi _{2}(U,V).$

By Lemma \ref{lem: xiScha}, for all $U,V\in {_{H}^{H}\mathcal{YD}}${,} the
map $\xi _{U,F\left( V\right) }:F\left( U\right) \otimes _{H}F\left(
V\right) \rightarrow U\otimes F\left( V\right) ,$ 
is a natural isomorphism in ${{_{H}^{H}\mathfrak{M}_{H}^{H}}}$. 
By Lemma \ref{lem:a4H}, $\alpha _{U,V}:U\otimes F\left( V\right) \rightarrow
F\left( U\otimes V\right) $ is a natural isomorphism in ${_{H}^{H}\mathfrak{M%
}_{H}^{H}}$, where $U\otimes F\left( V\right) $ has the structure described
in Lemma \ref{lem: xiScha} for $M=F\left( V\right) .$

Thus $\alpha _{U,V}\xi _{U,F\left( V\right) }:F\left(
U\right) \otimes _{H}F\left( V\right) \rightarrow F\left( U\otimes V\right) $
is a natural isomorphism in ${_{H}^{H}\mathfrak{M}_{H}^{H}.}$ A direct
computation shows that $\varphi _{2}(U,V)=\alpha _{U,V}\xi _{U,F\left(
V\right) }$ and hence $\varphi _{2}(U,V)$ is a well-defined isomorphism in ${%
_{H}^{H}\mathfrak{M}_{H}^{H}}$. Moreover $\varphi _{2}(U,V)^{-1}=\xi
_{U,F\left( V\right) }^{-1}\alpha _{U,V}^{-1}$ fulfills \eqref{form:fi2meno1}.

In order to check the commutativity of the diagram (\ref{mon funct1}) it
suffices to prove the following equality:%
\begin{equation*}
\lbrack \varphi _{2}^{-1}(U,V)\otimes _{H}F(W)]\varphi _{2}^{-1}(U\otimes
V,W)F(a_{U,V,W}^{-1})=a_{F(U),F(V),F(W)}^{-1}[F(U)\otimes _{H}\varphi
_{2}^{-1}(V,W)]\varphi _{2}^{-1}(U,V\otimes W)
\end{equation*}

Since these maps are right $H$-linear, it suffices to check this equality on
elements of the form $(u\otimes (v\otimes w))\otimes 1_{H},$ where $u\in
U,v\in V,w\in W$. This computation and the ones of \eqref{mon funct2} and %
\eqref{mon funct3} are straightforward.
\end{proof}

We now compute explicitly the braiding induced on ${_{H}^{H}\mathfrak{M}%
_{H}^{H}}$ through the functor $F$ in Lemma \ref{lem:
equiv-m4h-tens} in case $F$ comes out to be an equivalence i.e. when $H$ has a preantipode.

\begin{lemma}
Let $(H,m,u,\Delta ,\varepsilon ,\omega ,S)$ be a dual quasi-bialgebra with
a preantipode. Through the monoidal equivalence $(F,G)$ we have that $({{%
_{H}^{H}\mathfrak{M}_{H}^{H},\otimes _{H},H)}}$ becomes a pre-braided
monoidal category, with braiding defined as follows:%
\begin{equation*}
c_{M,N}(m\otimes _{H}n)=\omega (m_{-2}\otimes \tau (n_{0})_{-1}\otimes
n_{1})(m_{-1}\vartriangleright \tau (n_{0})_{0}\otimes _{H}m_{0})\cdot n_{2},
\end{equation*}%
where $M,N\in {{_{H}^{H}\mathfrak{M}_{H}^{H}}}$ and $m\in M,n\in N.$
\end{lemma}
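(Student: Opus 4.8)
The plan is to obtain the pre-braiding on ${_{H}^{H}\mathfrak{M}_{H}^{H}}$ by transporting, along the monoidal equivalence $(F,\varphi_2,\varphi_0)$ of Lemma \ref{lem: equiv-m4h-tens} and Proposition \ref{restr equi}, the pre-braiding of $({_{H}^{H}\mathcal{YD}},\otimes,\Bbbk)$ recalled after Definition \ref{def: YD}, whose braiding is $c_{U,V}(u\otimes v)=(u_{-1}\vartriangleright v)\otimes u_0$, see \eqref{braiding YD}. Since a strong monoidal equivalence carries a pre-braiding on the source to a pre-braiding on the target making the functor pre-braided, the existence and the coherence (the hexagon-type axioms) of $c_{M,N}$ require no separate verification: this is transport of structure, and only the explicit formula has to be matched. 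Imposing that $F$ be pre-braided forces, on objects in the image, $c_{F(U),F(V)}=\varphi_2(V,U)^{-1}\circ F(c_{U,V})\circ\varphi_2(U,V)$, and then naturality of the braiding together with the counit isomorphisms $\epsilon_M\colon FG(M)\to M$ of Remark \ref{adjoint} gives, for arbitrary $M,N\in{_{H}^{H}\mathfrak{M}_{H}^{H}}$,
\[
c_{M,N}=(\epsilon_N\otimes_H\epsilon_M)\circ\varphi_2(GN,GM)^{-1}\circ F(c_{GM,GN})\circ\varphi_2(GM,GN)\circ(\epsilon_M^{-1}\otimes_H\epsilon_N^{-1}).
\]

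Next I would make each factor explicit and compose. Here $G(M)=M^{coH}$ carries the $H$-coaction restricted from $\rho_M^l$ and the $H$-action $h\vartriangleright x=\tau(hx)$, where $\tau$ is the map of Lemma \ref{lem: tau}; moreover $\epsilon_M^{-1}(m)=\tau(m_0)\otimes m_1$, which is inverse to $\epsilon_M(x\otimes h)=xh$ by \eqref{inv eps} and \eqref{Tau mh simple}. Thus applying $\epsilon_M^{-1}\otimes_H\epsilon_N^{-1}$ sends $m\otimes_H n$ to $(\tau(m_0)\otimes m_1)\otimes_H(\tau(n_0)\otimes n_1)$. I would then feed this into the explicit formula for $\varphi_2(GM,GN)$ from Lemma \ref{lem: equiv-m4h-tens}, apply $F(c_{GM,GN})=c_{GM,GN}\otimes H$ (which braids the two comodule factors via \eqref{braiding YD} and the action $h\vartriangleright x=\tau(hx)$), undo $\varphi_2(GN,GM)$ through its explicit inverse displayed in Lemma \ref{lem: equiv-m4h-tens}, and finally apply $\epsilon_N\otimes_H\epsilon_M$, i.e. multiply the surviving $H$-legs back into the two tensor factors of $N\otimes_H M$ (whose monoidal structure is that of Lemma \ref{lem:m4hmonoidal}).

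The main obstacle is the bookkeeping in this composite: it produces a product of several copies of $\omega^{\pm1}$ attached to various Sweedler legs of $m_1$, $n_1$ and of $\tau(m_0),\tau(n_0)$, which must be collapsed to the single reassociator $\omega(m_{-2}\otimes\tau(n_0)_{-1}\otimes n_1)$ of the claim. I expect to discharge these factors by repeated use of the $3$-cocycle identity \eqref{eq:3-cocycle}, of quasi-associativity \eqref{eq:quasi-associativity}, and of the Yetter--Drinfeld compatibility \eqref{Comp YD}, while the interaction between $\tau$ and the preantipode — through \eqref{deftau}, the relations \eqref{inv eps} and \eqref{Tau mh simple}, and the preantipode axioms \eqref{col 1 S}, \eqref{col 2 S}, \eqref{fond S} together with their consequence \eqref{form: omegasbis} — is what reduces the remaining terms to the stated form $(m_{-1}\vartriangleright\tau(n_0)_0\otimes_H m_0)\cdot n_2$. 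As elsewhere in this paper, once the Sweedler indices are aligned the identities apply termwise, so the difficulty is organizational rather than conceptual.
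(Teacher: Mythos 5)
Your proposal matches the paper's proof in all essentials: the paper also defines $\lambda_{U,V}:=\varphi_2(V,U)^{-1}\circ F(c_{U,V})\circ\varphi_2(U,V)$ and obtains the braiding on ${_{H}^{H}\mathfrak{M}_{H}^{H}}$ as $(\epsilon_N\otimes_H\epsilon_M)\circ\lambda_{G(M),G(N)}\circ(\epsilon_M^{-1}\otimes_H\epsilon_N^{-1})$, then computes this composite explicitly. The only difference is organizational: the paper tames the $\omega$-bookkeeping by first evaluating $\lambda_{U,V}$ on elements of the form $(u\otimes h)\otimes_H(v\otimes 1_H)$, where \eqref{ass YD} collapses the reassociator factors, and then extends by right $H$-linearity, finishing with the $\tau$-identities \eqref{col sx eps} and \eqref{inv eps} rather than invoking the preantipode axioms directly.
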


\begin{proof}
First of all, for any $U,V\in {_{H}^{H}\mathcal{YD}}$, let us consider the
following composition:%
\begin{equation*}
\lambda _{U,V}:=\left( F\left( U\right) \otimes _{H}F\left( V\right) \overset%
{\varphi _{2}(U,V)}{\longrightarrow }F(U\otimes V)\overset{F(c_{U,V})}{%
\longrightarrow }F(V\otimes U)\overset{\varphi _{2}^{-1}(V,U)}{%
\longrightarrow }F\left( V\right) \otimes _{H}F\left( U\right) \right) .
\end{equation*}%
This map is right $H$-linear, so, if we compute%
\begin{eqnarray*}
&&\lambda _{U,V}[(u\otimes h)\otimes _{H}(v\otimes 1_{H})] \\
&=&\left[ 
\begin{array}{c}
\omega ^{-1}\left( u_{-4}\otimes h_{1}\otimes v_{-1}\right) \omega \left(
u_{-3}\otimes (h_{2}\vartriangleright v_{0})_{-1}\otimes h_{3})\right) \\ 
\omega ^{-1}\left( \left( u_{-2}\vartriangleright (h_{2}\vartriangleright
v_{0})_{0}\right) _{-1}\otimes u_{-1}\otimes h_{4}\right) \left( \left(
u_{-2}\vartriangleright (h_{2}\vartriangleright v_{0})_{0}\right)
_{0}\otimes 1_{H}\right) \otimes _{H}u_{0}\otimes h_{5}%
\end{array}%
\right] \\
&\overset{(\ref{ass YD})}{=}&((u_{-1}h_{1})\vartriangleright v\otimes
1_{H})\otimes _{H}(u_{0}\otimes h_{2}),
\end{eqnarray*}
we obtain%
\begin{eqnarray*}
&&\lambda _{U,V}[(u\otimes h)\otimes _{H}(v\otimes k)] \\
&=&\lambda _{U,V}[(u\otimes h)\otimes _{H}(v\otimes 1_{H})\cdot k] \\
&=&\omega (u_{-1}h_{1}\otimes v_{-1}\otimes k_{1})\lambda
_{U,V}[[(u_{0}\otimes h_{2})\otimes _{H}(v_{0}\otimes 1_{H})]\cdot
k_{2}]\omega ^{-1}(h_{3}\otimes 1_{H}\otimes k_{3}) \\
&=&\omega (u_{-1}h_{1}\otimes v_{-1}\otimes k_{1})\lambda
_{U,V}[[(u_{0}\otimes h_{2})\otimes _{H}(v_{0}\otimes 1_{H})]\cdot k_{2}] \\
&=&\omega (u_{-1}h_{1}\otimes v_{-1}\otimes k_{1})\lambda
_{U,V}[(u_{0}\otimes h_{2})\otimes _{H}(v_{0}\otimes 1_{H})]\cdot k_{2} \\
&=&\omega (u_{-2}h_{1}\otimes v_{-1}\otimes
k_{1})[(u_{-1}h_{2})\vartriangleright v_{0}\otimes 1_{H})\otimes
_{H}(u_{0}\otimes h_{3})]\cdot k_{2}.
\end{eqnarray*}

Now, using the map $\lambda _{U,V}$, we construct the braiding of ${{_{H}^{H}%
\mathfrak{M}_{H}^{H}}}$ in this way: 
\begin{equation*}
M\otimes _{H}N\overset{\epsilon _{M}^{-1}\otimes _{H}\epsilon _{N}^{-1}}{%
\longrightarrow }FG(M)\otimes _{H}FG(N)\overset{\lambda _{G(M),G(N)}}{%
\longrightarrow }FG(N)\otimes _{H}FG(M)\overset{\epsilon _{N}\otimes
_{H}\epsilon _{M}}{\longrightarrow }N\otimes _{H}M.
\end{equation*}

Therefore
\begin{eqnarray*}
&&(\epsilon _{N}\otimes _{H}\epsilon _{M})\lambda _{G(M),G(N)}(\epsilon
_{M}^{-1}\otimes _{H}\epsilon _{N}^{-1})(m\otimes _{H}n) \\
&=&(\epsilon _{N}\otimes _{H}\epsilon _{M})\lambda _{G(M),G(N)}\left\{ [\tau
(m_{0})\otimes m_{1}]\otimes _{H}[\tau (n_{0})\otimes n_{1}]\right\} \\
&=&\left[ 
\begin{array}{c}
\omega (\tau (m_{0})_{-2}m_{1}\otimes \tau (n_{0})_{-1}\otimes n_{1}) \\ 
(\epsilon _{N}\otimes _{H}\epsilon _{M})\left\{ [(\tau
(m_{0})_{-1}m_{2})\vartriangleright \tau (n_{0})_{0}\otimes 1_{H})\otimes
_{H}(\tau (m_{0})_{0}\otimes m_{3})]\cdot n_{2}\right\}%
\end{array}%
\right] \\
&\overset{(\ref{col sx eps})}{=}&\left[ 
\begin{array}{c}
\omega (m_{-2}\otimes \tau (n_{0})_{-1}\otimes n_{1}) \\ 
(\epsilon _{N}\otimes _{H}\epsilon _{M})[(m_{-1}\vartriangleright \tau
(n_{0})_{0}\otimes 1_{H})\otimes _{H}(\tau (m_{0})\otimes m_{1})]\cdot n_{2}%
\end{array}%
\right] \\
&=&\omega (m_{-2}\otimes \tau (n_{0})_{-1}\otimes
n_{1})[(m_{-1}\vartriangleright \tau (n_{0})_{0}\otimes _{H}\tau
(m_{0})m_{1}]\cdot n_{2} \\
&\overset{(\ref{inv eps})}{=}&\omega (m_{-2}\otimes \tau (n_{0})_{-1}\otimes
n_{1})[(m_{-1}\vartriangleright \tau (n_{0})_{0}\otimes _{H}m_{0}]\cdot
n_{2}.
\end{eqnarray*}
\end{proof}

Next aim is to prove that the equivalence between the categories ${_{H}^{H}%
\mathfrak{M}_{H}^{H}}$ and ${_{H}^{H}\mathcal{YD}}$ becomes monoidal if we
equip ${_{H}^{H}\mathfrak{M}_{H}^{H}}$ with the tensor product $\square _{H}$
and unit $H.$

\begin{claim}
\label{constr cot}Let $(H,m,u,\Delta ,\varepsilon ,\omega )$ be a dual
quasi-bialgebra. Note that, since $H$ is an ordinary coalgebra, we have that 
$\left( {{^{H}\mathfrak{M}^{H},\square _{H},H,b,r,l}}\right) $ is a monoidal
category with constraints defined, for all $L,M,N\in {{{^{H}\mathfrak{M}^{H}}%
}}${,} by%
\begin{equation*}
b_{L,M,N}:(L\square _{H}M)\square _{H}N\rightarrow L\square _{H}(M\square
_{H}N):(l\square _{H}m)\square _{H}n\mapsto l\square _{H}(m\square _{H}n),
\end{equation*}%
\begin{eqnarray*}
r_{M} &:&M\square _{H}H\longrightarrow M:m\square _{H}h\mapsto m\varepsilon
_{H}(h), \\
l_{M} &:&H\square _{H}M\longrightarrow M:h\square _{H}m\mapsto \varepsilon
_{H}(h)m.
\end{eqnarray*}%
where, for sake of brevity we just wrote $m\square _{H}n$ in place of the
more precise $\sum_{i}m^{i}\square _{H}n^{i}$.

We want to endow ${{_{H}^{H}\mathfrak{M}_{H}^{H}}}$ with a monoidal
structure, following the dual version of \cite{hausser and nill} (see also 
\cite[Definition 3.2]{Schauenburg-HopfMod}). The definition of the claimed
structure is given in such a way that the forgetful functor ${{_{H}^{H}%
\mathfrak{M}_{H}^{H}\rightarrow {^{H}\mathfrak{M}^{H}}}}$ is a strict
monoidal functor. Hence the constraints are induced by the ones of ${{^{H}%
\mathfrak{M}^{H}}}$ (i.e. $b_{L,M,N}$,$l_{M}$ and $r_{M}$), and the tensor
product is given by $M\square _{H}N$ with structures%
\begin{eqnarray*}
\rho _{M\square _{H}N}^{l}(m\square _{H}n) &=&m_{-1}\otimes (m_{0}\square
_{H}n), \\
\rho _{M\square _{H}N}^{r}(m\square _{H}n) &=&(m\square _{H}n_{0})\otimes
n_{1}, \\
\mu _{M\square _{H}N}^{l}\left[ h\otimes (m\square _{H}n)\right] &=&h\cdot
(m\square _{H}n)=h_{1}m\square _{H}h_{2}n, \\
\mu _{M\square _{H}N}^{r}\left[ (m\square _{H}n)\otimes h\right]
&=&(m\square _{H}n)\cdot h=mh_{1}\square _{H}nh_{2}.
\end{eqnarray*}

The unit of the category is $H$ endowed with the following structures: 
\begin{eqnarray*}
\rho _{H}^{l}(h) &=&h_{1}\otimes h_{2},\qquad \rho _{H}^{r}(h)=h_{1}\otimes
h_{2}, \\
h\cdot l &=&hl,\qquad l\cdot h=lh.
\end{eqnarray*}
\end{claim}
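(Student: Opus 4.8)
The plan is to obtain the asserted structure as the canonical lift, dual to the Hausser--Nill construction \cite{hausser and nill} (see also \cite[Definition 3.2]{Schauenburg-HopfMod}), of the monoidal category $({^{H}\mathfrak{M}^{H}},\square_{H},H,b,r,l)$ recalled above to its category ${_{H}^{H}\mathfrak{M}_{H}^{H}}$ of $H$-bimodule objects in ${^{H}\mathfrak{M}^{H}}$ (where $H$ is the algebra of Remark~\ref{rem: H alg}). Since the forgetful functor $\mathcal{U}\colon{_{H}^{H}\mathfrak{M}_{H}^{H}}\to{^{H}\mathfrak{M}^{H}}$ is prescribed to be strict monoidal, the underlying bicomodule of the tensor product of $M,N$ must be $M\square_{H}N$ and that of the unit must be $H$; hence the entire content of the statement reduces to three verifications: that the diagonal actions promote $M\square_{H}N$ and $H$ to objects of ${_{H}^{H}\mathfrak{M}_{H}^{H}}$, and that the constraints $b,l,r$ are $H$-bilinear. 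The Pentagon and Triangle Axioms are then automatic, since they already hold in $({^{H}\mathfrak{M}^{H}},\square_{H},H)$ and $\mathcal{U}$ is faithful and carries the constraints of ${_{H}^{H}\mathfrak{M}_{H}^{H}}$ to those of ${^{H}\mathfrak{M}^{H}}$.

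First I would verify that the diagonal actions are well defined on the cotensor product. For $\mu_{M\square_{H}N}^{r}$ one must check that $mh_{1}\square_{H}nh_{2}$ still lies in the equalizer defining $\square_{H}$; this holds because the right $H$-actions on $M$ and on $N$ are morphisms in ${^{H}\mathfrak{M}^{H}}$, hence $H$-bicolinear for the codiagonal coactions, so that $\rho_{M}^{r}(mh_{1})=(m_{0}\cdot h_{1})\otimes m_{1}h_{2}$ and symmetrically for $N$. Feeding these into the gluing relation $m_{0}\otimes m_{1}\otimes n=m\otimes n_{-1}\otimes n_{0}$ that cuts out $M\square_{H}N$ and applying coassociativity of $\Delta$ to the acting element shows the relation is preserved; the argument for $\mu_{M\square_{H}N}^{l}$ is entirely symmetric, and the compatibility of $\mu^{l}$ with $\mu^{r}$ reduces to that of $M$ and $N$ separately.

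The step I expect to be the real obstacle is the quasi-associativity of the diagonal action, i.e.\ the right-module axiom $\mu^{r}\circ(\mu^{r}\otimes H)=\mu^{r}\circ\big((M\square_{H}N)\otimes m\big)\circ{^{H}}a^{H}_{M\square_{H}N,H,H}$ inside ${^{H}\mathfrak{M}^{H}}$, together with its left-handed counterpart. Expanding the left-hand side as $(mh_{1})k_{1}\square_{H}(nh_{2})k_{2}$ and applying the $\omega$-twisted module axioms of $M$ and of $N$ produces cocycle factors indexed by $m_{1}$ (from $M$) and by $n_{-1}$ (from $N$), while on the right-hand side the constraint ${^{H}}a^{H}$ contributes $\omega^{-1}(m_{-1}\otimes h_{1}\otimes k_{1})$ from the left coaction of $M\square_{H}N$ and $\omega(n_{1}\otimes h_{3}\otimes k_{3})$ from its right coaction. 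The identity closes precisely because on the cotensor product the glued legs coincide, $m_{1}=n_{-1}$, so the two interior factors can be amalgamated into a single one indexed by the glued leg and then matched against the remaining factors by the $3$-cocycle condition \eqref{eq:3-cocycle}; this is where \eqref{eq:3-cocycle} genuinely enters, exactly as in the dual Hausser--Nill setting.

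Finally I would record that $H$, with $\rho_{H}^{l}=\rho_{H}^{r}=\Delta$ and actions given by $m$, is the regular bimodule object and hence lies in ${_{H}^{H}\mathfrak{M}_{H}^{H}}$ by Remark~\ref{rem: H alg}, and that the constraints are $H$-bilinear: for $b$ this is immediate, since on underlying elements it is the identity reassociation of the cotensor; for $l_{M}$ and $r_{M}$ it follows from counitality of $\Delta$, contraction of the unit leg against $\varepsilon_{H}$ commuting with the diagonal action. Assembling these verifications exhibits $({_{H}^{H}\mathfrak{M}_{H}^{H}},\square_{H},H,b,r,l)$ as a monoidal category with $\mathcal{U}$ strict monoidal; equivalently, the whole statement is the translation to the present setting of the dual of \cite{hausser and nill}.
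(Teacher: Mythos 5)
Your proposal is correct and takes essentially the route the paper intends: the paper states this claim as a construction, citing the dual version of \cite{hausser and nill} and \cite[Definition 3.2]{Schauenburg-HopfMod} without writing out the verifications, and your check-list (well-definedness of the diagonal (co)actions on $M\square _{H}N$, the module axioms in ${^{H}\mathfrak{M}^{H}}$, membership of $H$ as the regular bimodule, $H$-bilinearity of $b,l,r$, with Pentagon and Triangle inherited through the faithful strict monoidal forgetful functor) is exactly what is implicitly being asserted.

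One correction of attribution: the quasi-associativity of the diagonal action does \emph{not} close by the $3$-cocycle condition \eqref{eq:3-cocycle}. Expanding $(mh_{1})k_{1}\square _{H}(nh_{2})k_{2}$ by the $\omega$-twisted module axioms of $M$ and of $N$ produces four factors; the two outer ones, $\omega ^{-1}\left( m_{-1}\otimes h_{1}\otimes k_{1}\right) $ and $\omega \left( n_{1}\otimes h_{4}\otimes k_{4}\right) $, already match what ${^{H}}a^{H}_{M\square _{H}N,H,H}$ contributes on the other side (since $\rho ^{l}$ of the cotensor product reads off $m_{-1}$ and $\rho ^{r}$ reads off $n_{1}$), while the two interior ones become, after the gluing relation $m_{0}\otimes m_{1}\otimes n=m\otimes n_{-1}\otimes n_{0}$ identifies the glued legs, the convolution product $\omega \ast \omega ^{-1}$ evaluated on consecutive coproduct legs, which collapses to $\varepsilon \otimes \varepsilon \otimes \varepsilon $ by convolution invertibility alone. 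So the whole lift to $\square _{H}$ needs only invertibility of $\omega $, the gluing relation, and the fact that $m_{H}$ is a coalgebra map; the cocycle condition is already ``used up'' in making ${^{H}\mathfrak{M}^{H}}$ monoidal and $H$ an algebra in it, and it does not re-enter here. This is harmless for your argument (the factors you propose to ``match by \eqref{eq:3-cocycle}'' in fact match identically, with nothing left over), but the mechanism should be stated correctly.
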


The following result is similar to 2) in Lemma \ref{lem: xiScha}.

\begin{lemma}
\label{lem: xiSchaCOT}Let $(H,m,u,\Delta ,\varepsilon ,\omega )$ be a dual
quasi-bialgebra. For all $V\in {_{H}^{H}\mathcal{YD}}$ and $M\in {{_{H}^{H}%
\mathfrak{M}_{H}^{H}}}${,} the map 
\begin{equation*}
\beta _{V,M}:F(V)\square _{H}M\longrightarrow V\otimes M:(v\otimes h)\square
_{H}m\mapsto v\varepsilon (h)\otimes m
\end{equation*}%
is a natural isomorphism in ${{_{H}^{H}\mathfrak{M}_{H}^{H}}}$ where $%
V\otimes M$ has the structures as in Lemma \ref{lem: xiScha}. The inverse of 
$\beta _{V,M}$ is given by 
\begin{equation*}
\beta _{V,M}^{-1}:V\otimes M\longrightarrow (V\otimes H)\square
_{H}M:v\otimes m\mapsto \left( v\otimes m_{-1}\right) \square _{H}m_{0}.
\end{equation*}
\end{lemma}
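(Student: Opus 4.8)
The plan is to follow the pattern of part~2) of Lemma~\ref{lem: xiScha}, with the cotensor product $\square_H$ in $({^H}\mathfrak{M}^H,\otimes,\Bbbk)$ playing the role of $\otimes_H$. Recall that $F(V)\square_H M$ is the equalizer of $F(V)\otimes\rho^l_M$ and ${^H}a{^H}_{F(V),H,M}\circ(\rho^r_{F(V)}\otimes M)$, that $\rho^r_{F(V)}(v\otimes h)=(v\otimes h_1)\otimes h_2$, and that, since $V\in{_H^H}\mathcal{YD}$, the object $F(V)$ carries the left $H$-action \eqref{strutt mod left} of Lemma~\ref{Restr F}; hence $F(V)\square_H M$ is an object of ${_H^H}\mathfrak{M}_H^H$ through the structures of Claim~\ref{constr cot}.

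First I would settle well-definedness and bijectivity. The map $\beta_{V,M}$ is the restriction to the subobject $F(V)\square_H M$ of the $\Bbbk$-linear map $F(V)\otimes M\to V\otimes M$, $(v\otimes h)\otimes m\mapsto v\varepsilon(h)\otimes m$, hence is automatically well defined. For $\beta_{V,M}^{-1}$ I must verify that $(v\otimes m_{-1})\otimes m_0$ actually lies in the equalizer: its image under $F(V)\otimes\rho^l_M$ is handled by coassociativity of $\rho^l_M$, while the leg involving ${^H}a{^H}_{F(V),H,M}$ produces reassociator factors that, after reshuffling by the $3$-cocycle identity \eqref{eq:3-cocycle}, match the first leg (this is the dual of the balancedness check for $\xi'$ in Lemma~\ref{lem: xiScha}). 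The equality $\beta_{V,M}\circ\beta_{V,M}^{-1}=\mathrm{Id}$ is then immediate from $\varepsilon(m_{-1})m_0=m$. For $\beta_{V,M}^{-1}\circ\beta_{V,M}=\mathrm{Id}$ I would apply $\varepsilon_H$ to the $H$-tensorand sitting inside $F(V)$ in the defining equalizer equation of a general element $\sum(v\otimes h)\square_H m$; in the classical case this yields at once $\sum(v\varepsilon(h)\otimes m_{-1})\square_H m_0=\sum(v\otimes h)\square_H m$, and in the present setting the surviving reassociators become trivial by the unital normalization \eqref{eq:qusi-unitairity cocycle}.

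Next I would check that $\beta_{V,M}$ is a morphism in ${_H^H}\mathfrak{M}_H^H$; equivalently, that its inverse $\beta_{V,M}^{-1}$ is. Since the structure maps of $F(V)\square_H M$ listed in Claim~\ref{constr cot} are the restrictions of the outer coactions and diagonal actions on $F(V)\otimes M$ (the left coaction and left action coming from $F(V)$, the right coaction from $M$, both actions diagonal) and the inclusion $F(V)\square_H M\hookrightarrow F(V)\otimes M$ preserves these, it suffices to compute the outer maps on $\beta_{V,M}^{-1}(v\otimes m)=(v\otimes m_{-1})\otimes m_0$ and match them with $\beta_{V,M}^{-1}$ applied to the corresponding structure on $V\otimes M$ from Lemma~\ref{lem: xiScha}. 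With this device, left and right $H$-colinearity reduce to coassociativity of $\rho^l_M$ and to the commuting of the two coactions of the bicomodule $M$, while right $H$-linearity reduces to the right module axiom of $M$ together with one use of \eqref{eq:3-cocycle}. Naturality in both variables is automatic, since $\beta_{V,M}^{-1}$ is assembled only from $\rho^l_M$ and the counit.

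The main obstacle is left $H$-linearity. Here the outer left action on $F(V)\otimes M$ is the diagonal one, $h\cdot\big((v\otimes m_{-1})\otimes m_0\big)=\big(h_1\cdot(v\otimes m_{-1})\big)\otimes h_2 m_0$, with $h_1\cdot(v\otimes m_{-1})$ given by the Yetter--Drinfeld induced action \eqref{strutt mod left} on $F(V)$, whereas on $V\otimes M$ the action is the intricate expression of part~2) of Lemma~\ref{lem: xiScha} built from $\vartriangleright$. Matching the two forces the same kind of long reassociator bookkeeping as in Lemma~\ref{lem:a4H}: I would expand \eqref{strutt mod left}, repeatedly use the compatibility \eqref{Comp YD} to trade the coactions of the terms $h\vartriangleright(-)$ for coactions of $h$ and of $(-)$, invoke the action axiom \eqref{ass YD}, and reshuffle the resulting $\omega^{\pm1}$ factors by \eqref{eq:3-cocycle} until the two sides coincide. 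As in Lemma~\ref{lem: xiScha}, one first reduces the general case by right $H$-linearity, which keeps this final computation within reach.
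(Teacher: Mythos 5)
Your overall strategy (well-definedness, mutual inverses obtained by applying the counit to the defining relation of the cotensor product, then the structure and naturality checks) is the same as the paper's: the identity you extract by applying $\varepsilon_H$ to the equalizer relation is exactly the paper's key formula \eqref{form: cot}, namely $\left( v\otimes h\right) \otimes m=\left( v\varepsilon \left( h\right) \otimes m_{-1}\right) \otimes m_{0}$, from which the paper declares all the verifications straightforward. However, there is a concrete misconception running through your write-up: you describe $F(V)\square _{H}M$ as the equalizer of $F(V)\otimes \rho _{M}^{l}$ and ${^{H}}a{^{H}}_{F(V),H,M}\circ (\rho _{F(V)}^{r}\otimes M)$, and you repeatedly expect reassociator factors to appear and to be disposed of via \eqref{eq:3-cocycle} or the unitality \eqref{eq:qusi-unitairity cocycle}. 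This is not the structure in play. By Claim \ref{constr cot}, $\square _{H}$ is the \emph{ordinary} cotensor product over $H$ regarded as an ordinary coassociative coalgebra: the equalizer of $\rho _{F(V)}^{r}\otimes M$ and $F(V)\otimes \rho _{M}^{l}$ in vector spaces, with trivial constraints, so no $\omega $ ever enters. (Indeed the twisted version is not even available: $\Delta :H\rightarrow H\otimes H$ is not a bicomodule map for the codiagonal coactions, so $H$ is not a coalgebra in $({^{H}\mathfrak{M}^{H}},\otimes ,{^{H}}a{^{H}})$.) This matters: the element $\left( v\otimes m_{-1}\right) \otimes m_{0}$ does \emph{not} satisfy the $\omega $-twisted condition you state --- the two reassociator factors it produces have unrelated arguments and no cocycle reshuffling cancels them --- so your membership check for $\beta _{V,M}^{-1}$, as described, would fail. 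With the correct (ordinary) equalizer, membership is immediate from coassociativity of $\rho _{M}^{l}$, and \eqref{form: cot} follows at once by applying $V\otimes \varepsilon _{H}\otimes H\otimes M$ to the equalizer relation.

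The same misconception inflates your ``main obstacle''. With the structures of Claim \ref{constr cot} (left coaction and left action through the factor $F(V)$, right coaction through $M$, both actions diagonal), left $H$-linearity of $\beta _{V,M}^{-1}$ needs none of \eqref{Comp YD}, \eqref{ass YD}, \eqref{eq:3-cocycle}: expanding $h_{1}\cdot (v\otimes m_{-1})\square _{H}h_{2}m_{0}$ via \eqref{strutt mod left}, and expanding $\beta _{V,M}^{-1}\bigl( \mu _{V\otimes M}^{l}(h\otimes (v\otimes m))\bigr) $ using only the left colinearity of $\mu _{M}^{l}$ (i.e. $\rho _{M}^{l}(hm)=h_{1}m_{-1}\otimes h_{2}m_{0}$, which holds since $M\in {_{H}^{H}\mathfrak{M}_{H}^{H}}$) and coassociativity, one obtains \emph{literally the same expression} on both sides; the structures of Lemma \ref{lem: xiScha} were designed precisely so that $m_{-1}$ plays the role of the $H$-tensorand. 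Likewise right $H$-linearity follows from \eqref{strutt mod right} and the colinearity of $\mu _{M}^{r}$, with no use of the cocycle condition. So the lemma really is ``straightforward'' --- but only once $\square _{H}$ is identified correctly; as written, your proof verifies membership in, and compatibility with, the wrong subspace.
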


\begin{proof}
The proof is straightforward and is based on the fact that $\left( v\otimes
h\right) \square _{H}m\in (V\otimes H)\square _{H}M$ implies 
\begin{equation}
\left( v\otimes h\right) \otimes m=\left( v\varepsilon \left( h\right)
\otimes m_{-1}\right) \otimes m_{0}.  \label{form: cot}
\end{equation}

\end{proof}

\begin{lemma}
\label{lem: F monoidal cot}(cf. \cite[Proposition 3.6]{Schauenburg-HopfMod})
Let $(H,m,u,\Delta ,\varepsilon ,\omega )$ be a dual quasi-bialgebra. The
functor $F:{_{H}^{H}\mathcal{YD}}\rightarrow {_{H}^{H}\mathfrak{M}_{H}^{H}}$
defines a monoidal functor $F:({_{H}^{H}\mathcal{YD}},\otimes ,\Bbbk
)\rightarrow ({_{H}^{H}\mathfrak{M}_{H}^{H},\square }_{H},H).$ For $U,V\in {%
_{H}^{H}\mathcal{YD}}$, the structure morphisms are 
\begin{equation*}
\psi _{2}(U,V):F(U)\square _{H}F(V)\rightarrow F(U\otimes V)\qquad \text{and}%
\qquad \psi _{0}:H\rightarrow F(\Bbbk )
\end{equation*}%
which are defined, for every $u\in U,v\in V,h,k\in H$, by 
\begin{equation}
\psi _{2}(U,V)[(u\otimes h)\otimes (v\otimes k)]:=\omega (u_{-1}\otimes
v_{-1}\otimes k_{1})u_{0}\varepsilon (h)\otimes v_{0}\otimes k_{2}
\label{form: psi2}
\end{equation}%
and 
\begin{equation*}
\psi _{0}(h):=1_{\Bbbk }\otimes h.
\end{equation*}

Moreover 
\begin{equation}
\psi _{2}(U,V)^{-1}\left( \left( u\otimes v\right) \otimes h\right) =\omega
^{-1}(u_{-1}\otimes v_{-2}\otimes h_{1})(u_{0}\otimes v_{-1}h_{2})\otimes
(v_{0}\otimes h_{3}).  \label{form: psi2-1}
\end{equation}
\end{lemma}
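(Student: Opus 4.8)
The plan is to follow the strategy of Lemma~\ref{lem: equiv-m4h-tens}, realizing both structure morphisms as composites of isomorphisms already at our disposal, with the cotensor isomorphism $\beta$ of Lemma~\ref{lem: xiSchaCOT} playing the role that $\xi$ played there.

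First I would dispatch $\psi_0$. Since $\psi_0=l_H^{-1}:H\to\Bbbk\otimes H$ is the inverse of the left unit constraint of $({^{H}\mathfrak{M}^{H}},\otimes,\Bbbk)$, it lies in ${^{H}\mathfrak{M}^{H}}$ and is invertible; checking that it is $H$-bilinear is immediate, exactly as for $\varphi_0$ in Lemma~\ref{lem: equiv-m4h-tens}, so $\psi_0$ is an isomorphism in ${_{H}^{H}\mathfrak{M}_{H}^{H}}$. Next, for $\psi_2(U,V)$, I would exhibit it as the composite
\[
F(U)\square_H F(V)\overset{\beta_{U,F(V)}}{\longrightarrow}U\otimes F(V)\overset{\alpha_{U,V}}{\longrightarrow}F(U\otimes V),
\]
where $\beta_{U,F(V)}$ is the isomorphism of Lemma~\ref{lem: xiSchaCOT} with $M=F(V)$ and $\alpha_{U,V}$ is the isomorphism of Lemma~\ref{lem:a4H}(2). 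Both are natural isomorphisms in ${_{H}^{H}\mathfrak{M}_{H}^{H}}$ for the same structure on $U\otimes F(V)$, namely the one of Lemma~\ref{lem: xiScha}, so their composite is a natural isomorphism in ${_{H}^{H}\mathfrak{M}_{H}^{H}}$. A short computation, first applying $\beta_{U,F(V)}((u\otimes h)\square_H(v\otimes k))=u\varepsilon(h)\otimes(v\otimes k)$ and then the formula for $\alpha_{U,V}$, shows that $\alpha_{U,V}\circ\beta_{U,F(V)}$ agrees with \eqref{form: psi2}; dually, composing the inverses $\alpha_{U,V}^{-1}={^{H}}a_{U,V,H}$ and $\beta_{U,F(V)}^{-1}$ and simplifying by coassociativity yields \eqref{form: psi2-1}. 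In particular $\psi_2(U,V)$ is well defined and invertible in ${_{H}^{H}\mathfrak{M}_{H}^{H}}$.

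It then remains to verify the coherence diagram \eqref{mon funct1} and the unit axioms \eqref{mon funct2}, \eqref{mon funct3}. As in Lemma~\ref{lem: equiv-m4h-tens}, I would rewrite \eqref{mon funct1} as the equality of the two composites built from $\psi_2^{-1}$, $F(a_{U,V,W}^{-1})$ and $b_{F(U),F(V),F(W)}^{-1}$, where $b$ is the associativity constraint of $({_{H}^{H}\mathfrak{M}_{H}^{H}},\square_H,H)$ from \ref{constr cot}. Since every map occurring is right $H$-linear, it suffices to test the identity on elements of the form $(u\otimes(v\otimes w))\otimes 1_H$; the unit axioms are then checked directly on elements by unwinding the definitions of $\psi_2$, $\psi_0$ and the constraints $l,r$ of the cotensor structure.

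The main obstacle is precisely this coherence computation: although the reduction to special elements is routine, the surviving identity still requires repeated use of the $3$-cocycle condition \eqref{eq:3-cocycle} and of the Yetter--Drinfeld compatibility \eqref{Comp YD}, so the delicate point is the bookkeeping of the Sweedler indices. What keeps it manageable is the presence of the counit $\varepsilon$ in \eqref{form: psi2}, which collapses the $H$-factor of the first tensorand; this makes $\psi_2$ (and hence the whole verification in the $\square_H$ setting) noticeably simpler than its $\otimes_H$ counterpart in Lemma~\ref{lem: equiv-m4h-tens}.
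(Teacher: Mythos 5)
Your proposal is correct and follows essentially the same route as the paper's own proof: $\psi_0$ is handled exactly as $\varphi_0$ in Lemma~\ref{lem: equiv-m4h-tens}, $\psi_2(U,V)$ is realized as the composite $\alpha_{U,V}\circ\beta_{U,F(V)}$ of the isomorphisms from Lemma~\ref{lem:a4H} and Lemma~\ref{lem: xiSchaCOT}, and the coherence diagram \eqref{mon funct1} is verified on inverses, reducing by right $H$-linearity to elements of the form $(u\otimes(v\otimes w))\otimes 1_H$. No gaps; the computations you defer (the explicit formulas for $\psi_2$, $\psi_2^{-1}$ and the coherence and unit checks) are precisely the ones the paper also treats as direct verifications.
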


\begin{proof}
Since $\psi _{0}=\varphi _{0}$ as in Lemma \ref{lem: equiv-m4h-tens}, we
already know that $\psi _{0}$ is an isomorphism in ${_{H}^{H}\mathfrak{M}%
_{H}^{H}}$. Let us deal with $\psi _{2}(U,V)$. By Lemma \ref{lem:a4H}, the map 
$\alpha _{U,V}:U\otimes F\left( V\right) \rightarrow F\left( U\otimes
V\right) $ is a natural isomorphism in ${_{H}^{H}\mathfrak{M}_{H}^{H}}$,
where $U\otimes F\left( V\right) $ has the structure described in Lemma \ref%
{lem: xiScha} for $M=F\left( V\right) .$ 
By Lemma \ref{lem: xiSchaCOT}, $\beta _{U,F(V)}=\beta :F(U)\square
_{H}F(V)\longrightarrow U\otimes F(V)$ is a natural isomorphism in ${_{H}^{H}%
\mathfrak{M}_{H}^{H}}$, where $U\otimes F\left( V\right) $ has the structure
described in Lemma \ref{lem: xiScha} for $M=F\left( V\right) .$ Hence it
makes sense to consider the composition $\psi _{2}(U,V):=\alpha _{U,V}\circ
\beta _{U,V\otimes H}$. Then $\psi _{2}(U,V)$ fulfills (\ref{form: psi2}). It
is clear that $\psi _{2}(U,V):F(U)\square _{H}F(V)\rightarrow F\left(
U\otimes V\right) $ is a natural isomorphism in ${_{H}^{H}\mathfrak{M}%
_{H}^{H}}$ with inverse given by $\psi _{2}(U,V)^{-1}:=\beta _{U,V\otimes
H}^{-1}\circ \alpha _{U,V}^{-1}.$ Moreover $\psi _{2}(U,V)^{-1}$ satisfies (\ref%
{form: psi2-1}).

In order to check the commutativity of the diagram (\ref{mon funct1}) it
suffices to prove the following equality:%
\begin{eqnarray*}
&&(\psi _{2}(U,V)^{-1}\otimes F(W))\psi _{2}(U\otimes
V,W)^{-1}F(a_{U,V,W}^{-1})[(u\otimes (v\otimes w))\otimes h] \\
&=&b_{F(U),F(V).F(W)}^{-1}[F(U)\otimes \psi _{2}(V,W)^{-1}]\psi
_{2}(U,V\otimes W)^{-1}[(u\otimes (v\otimes w))\otimes h].
\end{eqnarray*}%
By right $H$-linearity, it suffices to check the displayed equality for $%
h=1_{H}$. The proof of this fact and of (\ref{mon funct2}) and (\ref{mon
funct3}) is straightforward.
\end{proof}

If $H$ has a preantipode, the functor $F$ of Lemma \ref{lem: F monoidal cot}
is an equivalence. As a consequence, its adjoint $G$ is monoidal too. For
future reference we include here its explicit monoidal structure.

\begin{lemma}
\label{lem: G monoidal} Let $(H,m,u,\Delta ,\varepsilon ,\omega ,S)$ be a
dual quasi-bialgebra with a preantipode. The right adjoint $G:{_{H}^{H}%
\mathfrak{M}_{H}^{H}\rightarrow {_{H}^{H}\mathcal{YD}}}$ of the functor $F$,
defines a monoidal functor $G:({_{H}^{H}\mathfrak{M}_{H}^{H},\square }%
_{H},H)\rightarrow ({_{H}^{H}\mathcal{YD}},\otimes ,\Bbbk ).$ For $M,N\in {%
_{H}^{H}\mathfrak{M}_{H}^{H}\,,}$ the structure morphisms are%
\begin{equation*}
\psi _{2}^{G}(M,N):G(M)\otimes G(N)\rightarrow G(M{\square }_{H}N)\qquad 
\text{and}\qquad \psi _{0}^{G}:\mathbb{\Bbbk }\rightarrow G(H)
\end{equation*}%
which are defined, for every $m\in M,n\in N,k\in H,$ by 
\begin{equation*}
\psi _{2}^{G}(M,N)\left( m\otimes n\right) =mn_{-1}{\square }_{H}n_{0}\qquad 
\text{and}\qquad \psi _{0}^{G}(k):=k1_{H}.
\end{equation*}%
Moreover, for all $m\in M,n\in N,$ 
\begin{equation*}
\psi _{2}^{G}(M,N)^{-1}\left( m\square _{H}n\right) =\tau \left( m\right)
\otimes \tau \left( n\right) .
\end{equation*}
\end{lemma}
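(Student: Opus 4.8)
The plan is to obtain the monoidal structure on $G$ essentially for free from the general principle that a quasi-inverse of a strong monoidal equivalence is again strong monoidal, and then to identify the resulting structure morphisms by an explicit computation. Since $H$ has a preantipode, Proposition \ref{restr equi} guarantees that $(F,G)$ is an equivalence, while Lemma \ref{lem: F monoidal cot} shows that $F:({_{H}^{H}\mathcal{YD}},\otimes ,\Bbbk )\rightarrow ({_{H}^{H}\mathfrak{M}_{H}^{H},\square }_{H},H)$ is strong monoidal with invertible structure morphisms $\psi _{2},\psi _{0}$. As $F$ is the left adjoint (Remark \ref{adjoint}), doctrinal adjunction makes its right adjoint $G$ lax monoidal with unit and counit becoming monoidal natural transformations; the structure morphisms of $G$ are the mates of $\psi _{2}^{-1}$ and $\psi _{0}^{-1}$, and they are invertible because those of $F$ are. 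Thus $G$ is in fact strong monoidal and it only remains to compute these mates.

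First I would write the mate of $\psi _{2}$ as the composite
\begin{equation*}
\psi _{2}^{G}(M,N)=G(\epsilon _{M}\square _{H}\epsilon _{N})\circ G\big(\psi _{2}(G(M),G(N))^{-1}\big)\circ \eta _{G(M)\otimes G(N)}
\end{equation*}
and evaluate it on $m\otimes n$ with $m\in M^{coH}$, $n\in N^{coH}$. Using $\eta _{N}(n)=n\otimes 1_{H}$ from Remark \ref{adjoint}, the formula \eqref{form: psi2-1} for $\psi _{2}^{-1}$, and the fact that $\omega ^{-1}$ collapses to $\varepsilon \otimes \varepsilon $ whenever a $1_{H}$ occurs (by \eqref{eq:qusi-unitairity cocycle}) together with the counit axiom for the left coaction, one finds
\begin{equation*}
G\big(\psi _{2}(G(M),G(N))^{-1}\big)\,\eta (m\otimes n)=(m\otimes n_{-1})\otimes (n_{0}\otimes 1_{H}).
\end{equation*}
Applying $\epsilon _{M}\square _{H}\epsilon _{N}$ with $\epsilon _{M}(x\otimes h)=xh$ then yields $mn_{-1}\square _{H}n_{0}$, which is the asserted formula for $\psi _{2}^{G}(M,N)$; being $G$ of a morphism, the result automatically lands in $(M\square _{H}N)^{coH}$, so no separate well-definedness check is needed. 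The same recipe applied to $\psi _{0}$ gives $\psi _{0}^{G}(k)=k1_{H}$, using only $\psi _{0}(h)=1_{\Bbbk }\otimes h$ and $1_{H}\in H^{coH}$.

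Finally I would confirm the inverse formula. Since $\psi _{2}^{G}(M,N)$ is already known to be invertible, it suffices to check that $m\square _{H}n\mapsto \tau (m)\otimes \tau (n)$ is a one-sided inverse. Composing it after $\psi _{2}^{G}$ and using $\tau (xh)=x\varepsilon (h)$ for $x\in M^{coH}$ (which is \eqref{Tau mh simple}) gives $\tau (mn_{-1})\otimes \tau (n_{0})=m\varepsilon (n_{-1})\otimes \tau (n_{0})=m\otimes \tau (\varepsilon (n_{-1})n_{0})=m\otimes \tau (n)=m\otimes n$, where the last step uses that $\tau $ restricts to the identity on coinvariants (take $h=1_{H}$ in \eqref{Tau mh simple}); hence the displayed map is $\psi _{2}^{G}(M,N)^{-1}$. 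I expect the only delicate point to be invoking doctrinal adjunction with the correct variances, so that $F$ being strong monoidal and a left adjoint does force $G$ to be strong monoidal; once that is in place, every coherence axiom for $(G,\psi _{2}^{G},\psi _{0}^{G})$ is automatic and no pentagon or triangle needs to be verified by hand.
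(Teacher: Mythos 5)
Your proposal is correct and takes essentially the same route as the paper: the paper invokes \cite[Proposition 1.4]{AMS-Amonoidal} (the same quasi-inverse/mate principle you phrase as doctrinal adjunction, valid here precisely because $(F,G)$ is an equivalence), obtains the identical composites $\psi _{2}^{G}(M,N)=G\left( \epsilon _{M}\square _{H}\epsilon _{N}\right) \circ G\left( \psi _{2}(GM,GN)^{-1}\right) \circ \eta _{GM\otimes GN}$ and $\psi _{0}^{G}=G\left( \psi _{0}^{-1}\right) \circ \eta _{\Bbbk }$, and identifies them by the same direct computation you carry out. Your check of the inverse formula via a one-sided-inverse argument using \eqref{Tau mh simple} is a harmless variant of the paper's composite expression for $\psi _{2}^{G}(M,N)^{-1}$.
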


\begin{proof}
Apply \cite[Proposition 1.4]{AMS-Amonoidal} to the functor $F$. Then $G$ is
monoidal with structure morphisms 
\begin{eqnarray*}
\psi _{2}^{G}(M,N) &:&=G\left( \epsilon _{M}{\square }_{H}\epsilon
_{M}\right) \circ G\left( \psi _{2}(GM,GN)^{-1}\right) \circ \eta
_{GM\otimes GN}, \\
\psi _{0}^{G} &:&=G\left( \psi _{0}^{-1}\right) \circ \eta _{\Bbbk }
\end{eqnarray*}%
A direct computation shows that they are the desired maps.


The inverse of $\psi _{2}^{G}(M,N)$ can be computed by%
\begin{equation*}
\psi _{2}^{G}(M,N)^{-1}:=\eta _{GM\otimes GN}^{-1}\circ G\left( \psi
_{2}(GM,GN)\right) \circ G\left( \epsilon _{M}^{-1}{\square }_{H}\epsilon
_{M}^{-1}\right)
\end{equation*}%
%
%
%
%
%
%
%
%
%
%
%
%
%
%
%
%
%
%
%
%
%
%
%
%
%
%
%
%
\end{proof}

\begin{remark}
\label{k} Consider the composition%
\begin{equation*}
\kappa =\kappa (U,V):=\psi _{2}(U,V)^{-1}\circ \varphi _{2}(U,V):(U\otimes
H)\otimes _{H}(V\otimes H)\longrightarrow (U\otimes H)\square _{H}(V\otimes
H).
\end{equation*}%
We have 
\begin{eqnarray*}
&&\kappa (U,V)\left[ (u\otimes h)\otimes _{H}(v\otimes k)\right] \\
&=&\psi _{2}(U,V)^{-1}\varphi _{2}(U,V)\left[ (u\otimes h)\otimes
_{H}(v\otimes k)\right] \\
&=&\left[ 
\begin{array}{c}
\omega ^{-1}\left( u_{-2}\otimes h_{1}\otimes v_{-2}k_{1}\right) \omega
(h_{2}\otimes v_{-1}\otimes k_{2}) \\ 
\omega ^{-1}((h_{3}\vartriangleright v_{0})_{-2}\otimes h_{4}\otimes
k_{3})\omega \left( u_{-1}\otimes (h_{3}\vartriangleright v_{0})_{-1}\otimes
h_{5}k_{4})\right) \\ 
\psi _{2}(U,V)^{-1}[\left( u_{0}\otimes (h_{3}\vartriangleright
v_{0})_{0}\right) \otimes (h_{6}k_{5})]%
\end{array}%
\right] \\
&=&\left[ 
\begin{array}{c}
\omega ^{-1}\left( u_{-2}\otimes h_{1}\otimes v_{-2}k_{1}\right) \omega
(h_{2}\otimes v_{-1}\otimes k_{2}) \\ 
\omega ^{-1}((h_{3}\vartriangleright v_{0})_{-2}\otimes h_{4}\otimes
k_{3})\omega \left( u_{-1}\otimes (h_{3}\vartriangleright v_{0})_{-1}\otimes
h_{5}k_{4})\right) \\ 
\omega ^{-1}(u_{0-1}\otimes (h_{3}\vartriangleright v_{0})_{0-2}\otimes
(h_{6}k_{5})_{1})\\(u_{00}\otimes (h_{3}\vartriangleright
v_{0})_{0-1}(h_{6}k_{5})_{2})\square _{H}((h_{3}\vartriangleright
v_{0})_{00}\otimes (h_{6}k_{5})_{3})%
\end{array}%
\right] \\
&=&\left[ 
\begin{array}{c}
\omega ^{-1}\left( u_{-3}\otimes h_{1}\otimes v_{-2}k_{1}\right) \omega
(h_{2}\otimes v_{-1}\otimes k_{2}) \\ 
\omega ^{-1}((h_{3}\vartriangleright v_{0})_{-4}\otimes h_{4}\otimes
k_{3})\omega (u_{-2}\otimes (h_{3}\vartriangleright v_{0})_{-3}\otimes
h_{5}k_{4}) \\ 
\omega ^{-1}(u_{-1}\otimes (h_{3}\vartriangleright v_{0})_{-2}\otimes
h_{6}k_{5})(u_{0}\otimes (h_{3}\vartriangleright
v_{0})_{-1}(h_{7}k_{6}))\square _{H}((h_{3}\vartriangleright
v_{0})_{0}\otimes h_{8}k_{7})%
\end{array}%
\right] \\
&=&\left[ 
\begin{array}{c}
\omega ^{-1}\left( u_{-1}\otimes h_{1}\otimes v_{-2}k_{1}\right) \omega
(h_{2}\otimes v_{-1}\otimes k_{2}) \\ 
\omega ^{-1}((h_{3}\vartriangleright v_{0})_{-2}\otimes h_{4}\otimes k_{3})
\\ 
(u_{0}\otimes (h_{3}\vartriangleright v_{0})_{-1}(h_{5}k_{4}))\square
_{H}((h_{3}\vartriangleright v_{0})_{0}\otimes h_{6}k_{5})%
\end{array}%
\right] \\
&\overset{(\ref{eq:quasi-associativity})}{=}&\left[ 
\begin{array}{c}
\omega ^{-1}\left( u_{-1}\otimes h_{1}\otimes v_{-2}k_{1}\right) \omega
(h_{2}\otimes v_{-1}\otimes k_{2}) \\ 
(u_{0}\otimes ((h_{3}\vartriangleright v_{0})_{-2}h_{4})k_{3})) \\ 
\omega ^{-1}((h_{3}\vartriangleright v_{0})_{-1}\otimes h_{5}\otimes
k_{4})\square _{H}((h_{3}\vartriangleright v_{0})_{0}\otimes h_{6}k_{5})%
\end{array}%
\right] \\
&=&\left[ 
\begin{array}{c}
\omega ^{-1}\left( u_{-1}\otimes h_{1}\otimes v_{-2}k_{1}\right) \omega
(h_{2}\otimes v_{-1}\otimes k_{2}) \\ 
(u_{0}\otimes ((h_{3}\vartriangleright v_{0})_{-1}h_{4})k_{3}))\square
_{H}((h_{3}\vartriangleright v_{0})_{0}\otimes h_{5})\cdot k_{4})%
\end{array}%
\right] \\
&\overset{(\ref{Comp YD})}{=}&\left[ 
\begin{array}{c}
\omega ^{-1}\left( u_{-1}\otimes h_{1}\otimes v_{-3}k_{1}\right) \omega
(h_{2}\otimes v_{-2}\otimes k_{2}) \\ 
(u_{0}\otimes ((h_{3}v_{-1})k_{3}))\square _{H}((h_{4}\vartriangleright
v_{0})\otimes h_{5})\cdot k_{4})%
\end{array}%
\right] \\
&\overset{(\ref{eq:quasi-associativity})}{=}&\left[ 
\begin{array}{c}
\omega ^{-1}\left( u_{-1}\otimes h_{1}\otimes v_{-3}k_{1}\right) \\ 
(u_{0}\otimes (h_{2}(v_{-2}k_{2})) \\ 
\omega (h_{3}\otimes v_{-1}\otimes k_{3})\square
_{H}((h_{4}\vartriangleright v_{0})\otimes h_{5})\cdot k_{4})%
\end{array}%
\right] \\
&\overset{(\ref{strutt mod left})}{=}&\omega ^{-1}\left( u_{-1}\otimes
h_{1}\otimes v_{-2}k_{1}\right) (u_{0}\otimes (h_{2}(v_{-1}k_{2}))\square
_{H}(h_{3}\cdot (v_{0}\otimes k_{3})) \\
&=&(u_{0}\otimes h_{1})\cdot (v_{-1}k_{1})\square _{H}h_{3}\cdot
(v_{0}\otimes k_{3}) \\
&=&(u\otimes h)_{0}\cdot (v\otimes k)_{-1}\square _{H}(u\otimes h)_{1}\cdot
(v\otimes k)_{0}.
\end{eqnarray*}%
so that%
\begin{equation*}
\kappa (U,V)\left[ (u\otimes h)\otimes _{H}(v\otimes k)\right] =(u\otimes
h)_{0}\cdot (v\otimes k)_{-1}\square _{H}(u\otimes h)_{1}\cdot (v\otimes
k)_{0}.
\end{equation*}%
Thus, for $M,N\in {{_{H}^{H}\mathfrak{M}_{H}^{H},}}$, using that the
counit $\epsilon $ is in ${{_{H}^{H}\mathfrak{M}_{H}^{H}}}$, one gets%
\begin{equation*}
\left[ \left( \epsilon _{M}\square _{H}\epsilon _{N}\right) \circ \kappa
(M^{coH},N^{coH})\circ \left( \epsilon _{M}^{-1}\otimes _{H}\epsilon
_{N}^{-1}\right) \right] \left( m\otimes _{H}n\right) =m_{0}n_{-1}\square
_{H}m_{1}n_{0}.
\end{equation*}%
%
%
%

We can also compute $\kappa (U,V)^{-1}:=\varphi _{2}(U,V)^{-1}\circ \psi
_{2}(U,V).$ We have:%
\begin{equation*}
\kappa (U,V)^{-1}((u\otimes h)\square _{H}(v\otimes k))=(u\varepsilon
(h)\otimes 1_{H})\otimes _{H}(v\otimes k).
\end{equation*}%
%
%
%
%
%
%
%
%
%
%
%
%
%
\end{remark}

We are now able to provide a monoidal equivalence between $({{{_{H}^{H}%
\mathfrak{M}_{H}^{H}}}},\otimes _{H},H)$ and $({_{H}^{H}\mathfrak{M}%
_{H}^{H},\square }_{H},H).$ This result is similar to \cite[Corollary 6.1]%
{Schauenburg-YD}.

\begin{lemma}
\label{equi tensH, cot}Let $(H,m,u,\Delta ,\varepsilon ,\omega ,S)$ be a
dual quasi-bialgebra with a preantipode. The identity functor on ${{_{H}^{H}%
\mathfrak{M}_{H}^{H}}}$ defines a monoidal functor $E:({{{_{H}^{H}\mathfrak{M%
}_{H}^{H}}}},\otimes _{H},H)\rightarrow ({_{H}^{H}\mathfrak{M}%
_{H}^{H},\square }_{H},H).$ For $M,N\in {{{_{H}^{H}\mathfrak{M}_{H}^{H}}}}$,
the structure morphisms are%
\begin{equation*}
\vartheta _{2}(M,N):E(M)\square _{H}E(V)\rightarrow E(M\otimes _{H}N)\qquad%
\text{and}\qquad \vartheta _{0}:H\rightarrow E(H)=H
\end{equation*}%
which are defined, for every $m\in M,n\in N,h\in H$, by 
\begin{equation*}
\vartheta _{2}(M,N)(m\square _{H}n):=\tau (m)\otimes _{H}n\qquad \text{and}%
\qquad \vartheta _{0}(h):=h.
\end{equation*}

Moreover 
\begin{eqnarray}
\vartheta _{2}(M,N)^{-1}\left( m\otimes _{H}n\right) &=&m_{0}n_{-1}\square
_{H}m_{1}n_{0}  \label{form:thetamen1},\\
\vartheta _{2}(FU,FV)&=&\varphi _{2}(U,V)^{-1}\circ \psi _{2}(U,V).
\label{form:tetaphipsi}
\end{eqnarray}
\end{lemma}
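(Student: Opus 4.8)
The plan is to read off $E$ from the fact, already essentially recorded in Remark~\ref{k}, that the single functor $F$ carries two monoidal structures: $\varphi_2$ for $\otimes_H$ (Lemma~\ref{lem: equiv-m4h-tens}) and $\psi_2$ for $\square_H$ (Lemma~\ref{lem: F monoidal cot}). Their comparison $\kappa=\psi_2^{-1}\circ\varphi_2$ and its inverse $\kappa^{-1}=\varphi_2^{-1}\circ\psi_2$ are the germ of the structure morphisms of the identity functor; transporting them across the counit $\epsilon$ of the equivalence $(F,G)$ produces $\vartheta_2$ on every object. Concretely I would set
\[
\vartheta_2(M,N)^{-1}:=(\epsilon_M\square_H\epsilon_N)\circ\kappa(M^{coH},N^{coH})\circ(\epsilon_M^{-1}\otimes_H\epsilon_N^{-1}),
\]
so that $\vartheta_2(M,N)$ is the corresponding transport of $\kappa^{-1}$.

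The explicit formulas then fall out of Remark~\ref{k} with no further computation. That remark evaluates the displayed composite to $m\otimes_H n\mapsto m_0 n_{-1}\square_H m_1 n_0$, which is exactly \eqref{form:thetamen1}; in particular the assignment is well defined and lands in $M\square_H N$. Dually, substituting the formula $\kappa(U,V)^{-1}((u\otimes h)\square_H(v\otimes k))=(u\varepsilon(h)\otimes 1_H)\otimes_H(v\otimes k)$ of Remark~\ref{k} together with $\epsilon_M^{-1}(m)=\tau(m_0)\otimes m_1$ (Remark~\ref{adjoint} and \eqref{inv eps}) gives $\vartheta_2(M,N)(m\square_H n)=\tau(m)\otimes_H n$. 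Since $\kappa$ and $\kappa^{-1}$ are mutually inverse isomorphisms in ${_{H}^{H}\mathfrak{M}_{H}^{H}}$ (both $\varphi_2,\psi_2$ being isomorphisms there) and the counit components $\epsilon_M,\epsilon_N$ are isomorphisms in ${_{H}^{H}\mathfrak{M}_{H}^{H}}$ by Proposition~\ref{restr equi}, the two maps $\vartheta_2(M,N)$ and $\vartheta_2(M,N)^{-1}$ are mutually inverse isomorphisms in ${_{H}^{H}\mathfrak{M}_{H}^{H}}$; naturality in $M,N$ is inherited from that of $\varphi_2,\psi_2$ and of $\epsilon$, and $\vartheta_0=\mathrm{Id}_H$ is trivially an isomorphism.

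The substantive point is coherence, i.e. the hexagon \eqref{mon funct1} and the unit triangles \eqref{mon funct2}, \eqref{mon funct3} for $(E,\vartheta_2,\vartheta_0)$, where the source associativity is the $a$ of Lemma~\ref{lem:m4hmonoidal} and the target associativity is the $b$ of $\square_H$. Since $F$ is essentially surjective (being an equivalence when $H$ has a preantipode) and every map in sight is natural, I would test these diagrams on objects in the image of $F$. There \eqref{form:tetaphipsi} applies, $\vartheta_2(FU,FV)=\varphi_2(U,V)^{-1}\circ\psi_2(U,V)$, and, after reducing the mixed terms $FU\otimes_H FV$ to genuine $F$-objects by naturality along $\varphi_2$, every instance of $\vartheta_2$ splits as $\varphi_2^{-1}\circ\psi_2$. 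Substituting this splitting, the $\vartheta$-hexagon separates into the $\psi_2$-hexagon (which converts $b$ into the tensor associativity of ${_{H}^{H}\mathcal{YD}}$, by Lemma~\ref{lem: F monoidal cot}) followed by the $\varphi_2$-hexagon (which converts that associativity into $a$, by Lemma~\ref{lem: equiv-m4h-tens}), the two ${_{H}^{H}\mathcal{YD}}$-associativities cancelling; the unit triangles reduce the same way. The only care needed is the bookkeeping ensuring that $b$ enters solely through $\psi_2$ and $a$ solely through $\varphi_2$, so that the concatenation is exact; granting this, the coherence of $\vartheta$ is a formal consequence of the two monoidal-functor lemmas and no new calculation is required.
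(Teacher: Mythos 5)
Your construction of $\vartheta _{2}$ is exactly the paper's: the paper also defines
$\vartheta _{2}(M,N):=\left( \epsilon _{M}\otimes _{H}\epsilon _{N}\right) \circ \kappa (M^{coH},N^{coH})^{-1}\circ \left( \epsilon _{M}^{-1}\square _{H}\epsilon _{N}^{-1}\right)$
and reads both explicit formulas off Remark \ref{k} together with \eqref{inv eps}, so up to that point the two arguments coincide, including the verification that $\vartheta _{2}$ and $\vartheta _{2}^{-1}$ are mutually inverse and natural. Where you genuinely diverge is coherence: the paper disposes of \eqref{mon funct1}--\eqref{mon funct3} by a direct check (left "straightforward" and suppressed), whereas you reduce to objects in the image of $F$ via essential surjectivity and naturality, split every occurrence of $\vartheta _{2}$ as $\varphi _{2}^{-1}\circ \psi _{2}$ (conjugating the mixed instances $\vartheta _{2}(FU\otimes _{H}FV,FW)$ and $\vartheta _{2}(FU,FV\otimes _{H}FW)$ by $\varphi _{2}$), and let the hexagon of Lemma \ref{lem: equiv-m4h-tens} and the hexagon of Lemma \ref{lem: F monoidal cot} cancel against each other, the two ${_{H}^{H}\mathcal{YD}}$-associators $F(a_{U,V,W})$ annihilating in the middle. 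This is a valid and more conceptual route -- it amounts to saying that the monoidal structure on the identity functor is the composite of $(F,\psi _{2},\psi _{0})$ with an inverse of the monoidal equivalence $(F,\varphi _{2},\varphi _{0})$, transported along $\epsilon $ -- and it buys you coherence with no new element computation; the cost is that it genuinely uses the equivalence (harmless here, since the preantipode is assumed anyway for $\tau $ to exist).

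There is, however, one real loose end: your coherence argument consumes \eqref{form:tetaphipsi}, but \eqref{form:tetaphipsi} is itself one of the assertions of the lemma, and your proposal never derives it. What your definition gives at image objects is
$\vartheta _{2}(FU,FV)=\left( \epsilon _{FU}\otimes _{H}\epsilon _{FV}\right) \circ \varphi _{2}(GFU,GFV)^{-1}\circ \psi _{2}(GFU,GFV)\circ \left( \epsilon _{FU}^{-1}\square _{H}\epsilon _{FV}^{-1}\right)$,
i.e. $\varphi _{2}^{-1}\circ \psi _{2}$ evaluated at $(GFU,GFV)$ and conjugated by counits -- not literally $\varphi _{2}(U,V)^{-1}\circ \psi _{2}(U,V)$. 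To identify the two you must use that $\epsilon _{FU}=F(\eta _{U})^{-1}$ (triangle identity of the adjunction, $\eta $ being invertible by Remark \ref{adjoint}) together with naturality of $\varphi _{2}$ and $\psi _{2}$ along $\eta _{U}\otimes \eta _{V}$; this is precisely the computation with which the paper closes its proof. The step is routine and all its ingredients are already in your setup, but as written your proof both omits one of the lemma's claims and rests its coherence argument on that unproved claim, so you should insert this short naturality computation before the reduction-to-image-objects step.
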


\begin{proof}
Using the map $\kappa $ of Remark \ref{k}, for each $M$, $N\in {{_{H}^{H}%
\mathfrak{M}_{H}^{H}}}$, we set%
\begin{equation*}
\vartheta _{2}(M,N):=\left( \epsilon _{M}\otimes _{H}\epsilon _{N}\right)
\circ \kappa (M^{coH},N^{coH})^{-1}\circ \left( \epsilon _{M}^{-1}\square
_{H}\epsilon _{N}^{-1}\right) .
\end{equation*}%
Clearly, by Remark \ref{k}, $\vartheta _{2}(M,N)^{-1}$ fulfills (\ref%
{form:thetamen1}). Moreover, using \eqref{inv eps}, one gets 
\begin{eqnarray*}
&&\vartheta _{2}(M,N)(m\square _{H}n) =\tau (m)\otimes _{H}n.
\end{eqnarray*}

It is straightforward to check that $\vartheta _{2}^{-1}$ makes commutative
the diagram (\ref{mon funct1}) and that \eqref{mon funct2} and 
\eqref{mon
funct3} hold.
Let us check that \eqref{form:tetaphipsi} holds:
\begin{eqnarray*}
&&\vartheta _{2}(FU,FV)=\left( \epsilon _{FU}\otimes _{H}\epsilon
_{FV}\right) \circ \kappa (GFU,GFV)^{-1}\circ \left( \epsilon
_{FU}^{-1}\square _{H}\epsilon _{FV}^{-1}\right) \\
&=&\left( \epsilon _{FU}\otimes _{H}\epsilon _{FV}\right) \circ \varphi
_{2}(GFU,GFV)^{-1}\circ \psi _{2}(GFU,GFV)\circ \left( \epsilon
_{FU}^{-1}\square _{H}\epsilon _{FV}^{-1}\right) \\
&=&\left[ 
\begin{array}{c}
\left( \epsilon _{FU}\otimes _{H}\epsilon _{FV}\right) \circ \varphi
_{2}(GFU,GFV)^{-1}\circ F\left( \eta _{U}\otimes \eta _{V}\right) \\ 
F\left( \eta _{U}^{-1}\otimes \eta _{V}^{-1}\right) \circ \psi
_{2}(GFU,GFV)\circ \left( \epsilon _{FU}^{-1}\square _{H}\epsilon
_{FV}^{-1}\right)%
\end{array}%
\right] \\
&=&\left[ 
\begin{array}{c}
\left( \epsilon _{FU}\otimes _{H}\epsilon _{FV}\right) \circ \left( F\eta
_{U}\otimes F\eta _{V}\right) \circ \varphi _{2}(U,V)^{-1} \\ 
\psi _{2}(U,V)\circ \left( F\eta _{U}^{-1}\otimes F\eta _{V}^{-1}\right)
\circ \left( \epsilon _{FU}^{-1}\square _{H}\epsilon _{FV}^{-1}\right)%
\end{array}%
\right] =\varphi _{2}(U,V)^{-1}\circ \psi _{2}(U,V).
\end{eqnarray*}
\end{proof}

The following result is similar to \cite[Proposition 3.11]%
{Schauenburg-HopfMod}.

\begin{corollary}
Let $(H,m,u,\Delta ,\varepsilon ,\omega )$ be a dual quasi-bialgebra. The
identity functor on ${{_{H}^{H}\mathfrak{M}_{H}^{H}}}$ defines a monoidal
functor $\Xi :({_{H}^{H}\mathfrak{M}_{H}^{H},\square }_{H},H)\rightarrow ({{{%
_{H}^{H}\mathfrak{M}_{H}^{H}}}},\otimes _{H},H).$ For $M,N\in {{{_{H}^{H}%
\mathfrak{M}_{H}^{H}}}}$, the structure morphisms are 
\begin{equation*}
\gamma _{2}(M,N):\Xi (M)\otimes _{H}\Xi (V)\rightarrow \Xi (M\square
_{H}N)\qquad\text{and}\qquad \gamma _{0}:H\rightarrow \Xi (H)
\end{equation*}%
which are defined by $\gamma _{2}(M,N):=\vartheta _{2}^{-1}(M,N)$ and $%
\gamma _{0}:=\vartheta _{0}^{-1}$ using Lemma \ref{equi tensH, cot}.
\end{corollary}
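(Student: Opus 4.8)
The plan is to recognise $\Xi$ as the monoidal inverse of the strong monoidal functor $E$ of Lemma \ref{equi tensH, cot}: since the underlying functor of $E$ is the identity of ${_{H}^{H}\mathfrak{M}_{H}^{H}}$, one expects the identity functor equipped with the inverse structure constraints $\gamma_2:=\vartheta_2^{-1}$ and $\gamma_0:=\vartheta_0^{-1}$ to be strong monoidal in the reverse direction. Concretely I would first record the explicit formulas $\gamma_2(M,N)(m\otimes_H n)=m_0 n_{-1}\square_H m_1 n_0$, coming from \eqref{form:thetamen1}, and $\gamma_0=\mathrm{Id}_H$, coming from $\vartheta_0=\mathrm{Id}_H$. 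The key observation is that neither involves the preantipode: in Lemma \ref{equi tensH, cot} the map $S$ entered only through $\vartheta_2$ itself, via $\tau$, and never through its inverse. Thus $\gamma_2$, $\gamma_0$, and all the identities they must satisfy, are polynomial in $m,\Delta,\omega,\omega^{-1}$ and in the actions and coactions of $M,N$, so they make sense for an arbitrary dual quasi-bialgebra; the plan is therefore to verify the monoidal-functor axioms directly in this generality, rather than to quote Lemma \ref{equi tensH, cot} (which presupposes $S$).

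Next I would check that $\gamma_2(M,N)$ is a well-defined morphism in ${_{H}^{H}\mathfrak{M}_{H}^{H}}$ --- that it descends from the coequalizer defining $\otimes_H$, takes values in the equalizer defining $\square_H$, and is $H$-bilinear and $H$-bicolinear --- using only the bimodule--bicomodule compatibilities of $M,N$ together with the $3$-cocycle condition \eqref{eq:3-cocycle} and quasi-associativity \eqref{eq:quasi-associativity}; the map $\gamma_0=\mathrm{Id}_H$ is trivially such. The coherence hexagon \eqref{mon funct1} and the unit triangles \eqref{mon funct2}, \eqref{mon funct3} for $(\Xi,\gamma_2,\gamma_0)$ are then, by right $H$-linearity, each reducible to an equality on elements of the form $(u\otimes(v\otimes w))\otimes 1_H$; the resulting computation is precisely the one already performed on the inverse maps inside the proof of Lemma \ref{equi tensH, cot}, which at no stage used $S$, so it persists verbatim for any dual quasi-bialgebra.

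The main obstacle is the invertibility of $\gamma_2(M,N)$ without a preantipode, since the candidate inverse supplied by Lemma \ref{equi tensH, cot}, namely $m\square_H n\mapsto\tau(m)\otimes_H n$, is built from $\tau$ and hence from $S$. On objects of the form $F(U),F(V)$ this difficulty disappears: by \eqref{form:tetaphipsi} one has $\gamma_2(FU,FV)=\vartheta_2(FU,FV)^{-1}=\psi_2(U,V)^{-1}\circ\varphi_2(U,V)=\kappa(U,V)$, and the inverse $\kappa(U,V)^{-1}=\varphi_2(U,V)^{-1}\circ\psi_2(U,V)$ is computed explicitly in Remark \ref{k} with no occurrence of the preantipode, both $\varphi_2$ (Lemma \ref{lem: equiv-m4h-tens}) and $\psi_2$ (Lemma \ref{lem: F monoidal cot}) being isomorphisms available for every dual quasi-bialgebra. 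For arbitrary $M,N$ I would construct $\gamma_2(M,N)^{-1}$ by the same bicomodule-to-module straightening that yields $\kappa^{-1}$, its well-definedness again resting only on \eqref{eq:3-cocycle} and \eqref{eq:quasi-associativity}; this is exactly the antipode-free phenomenon of \cite[Proposition 3.11]{Schauenburg-HopfMod}, to which the statement is analogous. Producing this inverse for general Hopf bicomodules is the technical heart of the argument, and once it is in place the proof is complete.
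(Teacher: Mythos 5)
Your plan breaks down at exactly the point you call the ``technical heart'': for a general dual quasi-bialgebra the map $\gamma_2(M,N)$ is simply not invertible, so no ``bicomodule-to-module straightening'' can produce an inverse. Concretely, take $H=\Bbbk\mathbb{N}=\Bbbk[x]$ with $x$ grouplike and trivial reassociator; by Lemma \ref{lem:gruppo} this $H$ has no preantipode. Let $M=N=H\otimes H\in{_{H}^{H}\mathfrak{M}_{H}^{H}}$ with the outer actions $h(a\otimes b)k=ha\otimes bk$ and the codiagonal coactions. Then
\begin{equation*}
\gamma_2(M,N)\bigl((x^a\otimes x^b)\otimes_H(x^c\otimes x^d)\bigr)=(x^a\otimes x^{b+c+d})\,\square_H\,(x^{a+b+c}\otimes x^d),
\end{equation*}
and the class of $(x^a\otimes x^b)\otimes_H(x^c\otimes x^d)$ in $M\otimes_H N$ depends only on $(a,\,b+c,\,d)$. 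Hence in total degree $k$ the source of $\gamma_2$ has dimension $\binom{k+2}{2}$, while the corresponding component $M_k\otimes N_k$ of $M\square_H N$ has dimension $(k+1)^2$; already for $k=1$ the element $(x\otimes 1)\otimes(1\otimes x)\in M\square_H N$ is not in the image. So $\gamma_2$ fails to be surjective, and with the paper's definition of monoidal functor (invertible structure morphisms) the $S$-free statement you set out to prove is false. The reason your $\kappa^{-1}$ argument works in Remark \ref{k} is that $F(U)=U\otimes H$ carries a free right $H$-action whose last tensor leg can be reset to $1_H$; by Theorem \ref{Teoequidual} every object of ${_{H}^{H}\mathfrak{M}_{H}^{H}}$ is of this form precisely when $H$ has a preantipode, so this mechanism cannot be transported to arbitrary $M,N$. (Your appeal to \cite[Proposition 3.11]{Schauenburg-HopfMod} as an ``antipode-free phenomenon'' is also misplaced: that result concerns quasi-Hopf algebras, i.e. it has an antipode available.)

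The corollary must therefore be read with the hypotheses of Lemma \ref{equi tensH, cot}, that is, in the presence of a preantipode: its structure morphisms are \emph{defined} as $\vartheta_2^{-1}$ and $\vartheta_0^{-1}$, which presuppose that lemma, and the absence of $S$ from the displayed hypothesis is an oversight of the statement rather than an invitation to prove it in full generality. Once $S$ is assumed, the paper's proof is one line, and it is precisely the route you sketch in your first paragraph and then abandon: $E$ is the identity functor equipped with invertible structure morphisms $(\vartheta_2,\vartheta_0)$, so by \cite[Proposition 1.4]{AMS-Amonoidal} its inverse (again the identity functor) is monoidal with structure morphisms $\gamma_2=\vartheta_2^{-1}$ and $\gamma_0=\vartheta_0^{-1}$; nothing remains to be verified. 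What does survive of your plan without $S$ is only the lax part: $\gamma_2$, $\gamma_0$ and the coherence diagrams involve no preantipode, so $(\Xi,\gamma_2,\gamma_0)$ can at best be a lax monoidal functor for a general dual quasi-bialgebra, never a (strong) monoidal one.
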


\begin{proof}
It follows by \cite[Proposition 1.4]{AMS-Amonoidal}.
\end{proof}

Next, we include a technical result that will be used in section \ref{C4}.

\begin{lemma}
\label{lax mon}Let $(\mathcal{M}$,$\otimes ,\mathbf{1})$ be a monoidal
category which is abelian.

\begin{enumerate}
\item Let $A$ be an algebra in $\mathcal{M}$. Assume that the tensor
functors are additive and right exact (see \cite[Theorem 1.12]{AMS-Hoch}).
Then the forgetful functor 
\begin{equation*}
D:({{_{A}}}\mathcal{M}{{_{A}}}\,,\otimes _{A},A)\longrightarrow (\mathcal{M}%
,\otimes ,\mathbf{1})
\end{equation*}%
is a lax monoidal functor with structure morphisms 
\begin{equation*}
\zeta _{2}(M,N):D(M)\otimes D(N)\rightarrow D(M\otimes _{A}N)\qquad\text{and}%
\qquad \zeta _{0}:\mathbf{1}\rightarrow D(A),
\end{equation*}%
where $\zeta _{2}$ is the canonical epimorphism and $\zeta _{0}$ is the
unity of $A.$

\item Let $C$ be a coalgebra in $\mathcal{M}$. Assume that the tensor
functors are additive and left exact. Then the forgetful functor 
\begin{equation*}
D:(^{C}\mathcal{M}^{C},\square _{C},C)\longrightarrow (\mathcal{M},\otimes ,%
\mathbf{1})
\end{equation*}%
is a colax monoidal functor with structure morphisms 
\begin{equation*}
\zeta _{2}(M,N):D(M\square _{C}N)\rightarrow D(M)\otimes D(N)\qquad\text{and}%
\qquad \zeta _{0}:D(C\mathbb{)}\rightarrow \mathbf{1},
\end{equation*}%
where $\zeta _{2}$ is the canonical monomorphism and $\zeta _{0}$ is the
counit of $C.$
\end{enumerate}
\end{lemma}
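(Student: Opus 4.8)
The plan is to establish part~(1) directly and to obtain part~(2) by formal dualization. For part~(1), I would first recall the coequalizer presentation of $\otimes_A$ from Section~\ref{C1}: for $M,N\in{}_A\mathcal{M}_A$ the object $M\otimes_A N$ carries the canonical epimorphism ${}_{A}\chi_{M,N}\colon M\otimes N\to M\otimes_A N$, and I set $\zeta_2(M,N):={}_{A}\chi_{D(M),D(N)}$ together with $\zeta_0:=u\colon\mathbf{1}\to A=D(A)$, the unit of the algebra. Naturality of $\zeta_2$ in both variables is immediate from the universal property of the coequalizer, because for bimodule maps $f\colon M\to M'$ and $g\colon N\to N'$ the morphism $f\otimes_A g$ is by construction the unique arrow induced on coequalizers by $f\otimes g$.

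The only substantive point is the commutativity of the hexagon \eqref{mon funct1} and of the unit triangles \eqref{mon funct2}--\eqref{mon funct3}. Here I would exploit that the constraints of the monoidal category $({}_A\mathcal{M}_A,\otimes_A,A)$ constructed in \cite[1.11]{AMS-Hoch} (see also \cite[Theorem 1.12]{AMS-Hoch}) are, by construction, the descents of $a,l,r$ of $\mathcal{M}$ along the canonical projections. Indeed, $\otimes$ being additive and right exact, it preserves the defining coequalizers, so ${}_{A}\chi_{M,N}\otimes P$ is again epic and the associativity constraint $a^{\otimes_A}_{M,N,P}$ is the unique morphism with
\[
a^{\otimes_A}_{M,N,P}\circ{}_{A}\chi_{M\otimes_A N,P}\circ\bigl({}_{A}\chi_{M,N}\otimes P\bigr)={}_{A}\chi_{M,N\otimes_A P}\circ\bigl(M\otimes{}_{A}\chi_{N,P}\bigr)\circ a_{M,N,P};
\]
this identity is exactly \eqref{mon funct1} for $F=D$, so the hexagon commutes by the very definition of the bimodule monoidal structure. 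The triangles are handled in the same way: by definition of $l^{\otimes_A}$ one has $l^{\otimes_A}_M\circ{}_{A}\chi_{A,M}=\mu^l_M$, whence $l^{\otimes_A}_M\circ\zeta_2(A,M)\circ(\zeta_0\otimes M)=\mu^l_M\circ(u\otimes M)=l_M$ by the module unit axiom, which is \eqref{mon funct2}; \eqref{mon funct3} follows symmetrically from $r^{\otimes_A}$ and $\mu^r_M$.

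Part~(2) is the formal dual of part~(1): replacing the algebra $A$ by the coalgebra $C$, the coequalizers defining $\otimes_A$ by the equalizers defining $\square_C$, right exactness by left exactness, and epimorphisms by monomorphisms, I would take $\zeta_2(M,N):={}_{C}\varsigma_{M,N}\colon M\square_C N\to M\otimes N$ and $\zeta_0:=\varepsilon\colon C\to\mathbf{1}$. Reversing all arrows in the argument above, the colax coherence diagrams reduce to the compatibility of the bicomodule constraints $b,l,r$ of $({}^C\mathcal{M}^C,\square_C,C)$ with those of $\mathcal{M}$; left exactness guarantees that the canonical monomorphisms stay monic after tensoring, so the required identities are obtained by cancelling monomorphisms.

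I expect the only genuine obstacle to be conceptual rather than computational: one must extract from \cite{AMS-Hoch} the precise fact that the constraints of $({}_A\mathcal{M}_A,\otimes_A,A)$ are the factorizations of the constraints of $\mathcal{M}$ through the canonical epimorphisms (and dually for bicomodules). Once this is isolated, each axiom of a lax, respectively colax, monoidal functor collapses to cancelling an epimorphism, respectively a monomorphism, with no explicit computation required.
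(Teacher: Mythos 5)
Your proposal is correct and follows essentially the same route as the paper: both identify $\zeta_2$ with the canonical epimorphism ${}_A\chi$ and $\zeta_0$ with the unit, derive the hexagon \eqref{mon funct1} directly from the fact (taken from \cite[1.11]{AMS-Hoch}) that the constraints of $({}_A\mathcal{M}_A,\otimes_A,A)$ are the descents of those of $\mathcal{M}$ along these epimorphisms, verify the unit triangles via $l^{\otimes_A}_M\circ\zeta_2(A,M)=\mu^l_M$ together with the module unit axiom, and dispose of part (2) by formal dualization. No gaps.
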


\begin{proof}
1) From \cite[1.11]{AMS-Hoch}, for all $M,N,S\in {{_{A}}}\mathcal{M}{{_{A}}}%
, $ we deduce 
\begin{eqnarray*}
&&D\left( ^{A}a_{M,N,S}^{A}\right) \circ \zeta _{2}(M\otimes _{A}N,S)\circ 
\left[ \zeta _{2}(M,N)\otimes D\left( S\right) \right] \\
&=&\zeta _{2}(M,N\otimes _{A}S)\circ \left[ D\left( M\right) \otimes \zeta
_{2}(N,S)\right] \circ a_{M,N,S}.
\end{eqnarray*}%
Moreover, for all $M\in {{_{A}}}\mathcal{M}{{_{A}}},$ we have%
\begin{eqnarray*}
D\left( ^{A}l_{M}^{A}\right) \circ \zeta _{2}(A,M)\circ \left[ \zeta
_{0}\otimes D\left( M\right) \right] &=&\text{ }^{A}l_{M}^{A}\circ \zeta
_{2}(A,M)\circ \left( \zeta _{0}\otimes M\right) \\
&=&\mu _{M}^{l}\circ \left( u_{A}\otimes M\right) =l_{M}.
\end{eqnarray*}%
Similarly $D\left( ^{A}r_{M}^{A}\right) \circ \zeta _{2}(M,A)\circ \left[
D\left( M\right) \otimes \zeta _{0}\right] =r_{M}.$ We have so proved that $%
D $ is a lax monoidal functor.

2) It follows by dual arguments.
\end{proof}

\section{The main results: bosonization \label{C4}}

\begin{claim}
Let $H$ be a Hopf algebra, let $A$ be a bialgebra and let $\sigma
:H\rightarrow A$ and $\pi :A\rightarrow H$ be morphisms of bialgebras such
that $\pi \sigma =\mathbb{I}_{H}.$ In this case $A$ is called a bialgebra
with projection onto $H$ and $A\in {_{H}^{H}\mathfrak{M}_{H}^{H}}$ through 
\begin{gather*}
\rho ^{r}(a)=a_{1}\otimes \pi (a_{2}),\qquad \rho ^{l}(a)=\pi (a_{1})\otimes
a_{2}, \\
\mu ^{r}(a\otimes h)=a\sigma (h),\qquad \mu ^{l}(h\otimes a)=\sigma (h)a.
\end{gather*}%
Define now the map $\tau :A\rightarrow A:a\longmapsto a_{1}\sigma
S(a_{2}). $ It can be proved that $\mathrm{Im}\tau =A^{coH}=:R$ and, when $%
H$ is the coradical of $A$, that $R$ is connected. Indeed it is well-known
that $R$ becomes a connected bialgebra in the pre-braided monoidal category $%
_{H}^{H}\mathcal{YD}$ of Yetter-Drinfeld modules over $H$ (cf. \cite{Radford}%
).

Now, from the fact that $(F,G)$ is an equivalence we know that $\epsilon
_{A}:R\otimes H\rightarrow A$ is an isomorphism. Conversely, it can be
proved that, given  a Hopf algebra $H$ and a braided bialgebra $R$ in $_{H}^{H}\mathcal{YD}$, we can endow $R\otimes H$ with a bialgebra structure and
define two bialgebras morphisms $\sigma $ and $\pi $ such that $\pi \sigma =%
\mathrm{Id}_{H},$ see (\cite{Radford}). This bialgebra is called \emph{%
Radford-Majid Bosonization }(or Radford biproduct) and permits to classify
different kinds of bialgebras as "compositions" (crossed product) of
different objects in the same category.
\end{claim}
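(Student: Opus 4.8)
The plan is to reduce every assertion to the classical Radford--Majid theory \cite{Radford}, verifying the structure maps by hand and invoking the fundamental theorem for Hopf modules where needed. First I would check that the four maps $\rho^{r},\rho^{l},\mu^{r},\mu^{l}$ make $A$ an object of ${_{H}^{H}\mathfrak{M}_{H}^{H}}$. The coactions $\rho^{r}(a)=a_{1}\otimes\pi(a_{2})$ and $\rho^{l}(a)=\pi(a_{1})\otimes a_{2}$ are coassociative and counital because $\pi$ is a coalgebra map; the actions $\mu^{r}(a\otimes h)=a\sigma(h)$ and $\mu^{l}(h\otimes a)=\sigma(h)a$ are associative and unital because $\sigma$ is an algebra map; and the remaining bimodule/bicomodule compatibilities follow from $\sigma$ and $\pi$ being morphisms of bialgebras together with $\pi\sigma=\mathbb{I}_{H}$. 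Here, since $H$ is an ordinary Hopf algebra and $A$ an ordinary bialgebra, the reassociator is trivial, so this is the classical (strict) setting.

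Next I would analyse $\tau(a)=a_{1}\sigma S(a_{2})$. Using that $S$ is the antipode of $H$ and that $\pi$ is a coalgebra map with $\pi\sigma=\mathbb{I}_{H}$, a short Sweedler computation shows $\rho^{r}(\tau(a))=\tau(a)\otimes 1_{H}$, i.e. $\tau(a)\in A^{coH}$, and that $\tau$ restricts to the identity on $A^{coH}$; hence $\tau$ is an idempotent with $\mathrm{Im}\,\tau=A^{coH}=:R$. For connectedness, when $H=A_{0}$ is the coradical I would argue along the coradical filtration: since the filtration is a coalgebra filtration compatible with the projection, one gets $R\cap A_{0}=\Bbbk 1_{A}$, so $R_{0}=\Bbbk 1_{A}$ and $R$ is connected (cf. \cite{AS-Lifting}).

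To equip $R$ with its braided bialgebra structure I would transport the data of $A$ along the inclusion $R\hookrightarrow A$ and the projection $\tau$. The left $H$-comodule structure is the restriction of $\rho^{l}$; the $H$-action is the adjoint-type action $h\vartriangleright r=\sigma(h_{1})\,r\,\sigma S(h_{2})$; the product of $R$ is inherited from $A$ (it is already closed on coinvariants), and the coproduct is $\tau^{\otimes 2}\circ\Delta_{A}$ suitably modified. One then checks the Yetter--Drinfeld axioms \eqref{ass YD}, \eqref{unitYD} and \eqref{Comp YD}, and that $(R,\vartriangleright,\rho^{l})$ is a bialgebra in the pre-braided category $({_{H}^{H}\mathcal{YD}},\otimes,\Bbbk)$; I would cite \cite{Radford} for the complete verification. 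The isomorphism $\epsilon_{A}\colon R\otimes H\to A$, $\epsilon_{A}(r\otimes h)=r\sigma(h)$, is then exactly the counit of the adjunction $(F,G)$ with $F=(-)\otimes H$: since $H$ is a Hopf algebra the classical fundamental theorem says $(F,G)$ is an equivalence, so $\epsilon_{A}$ is an isomorphism. For the converse I would start from a braided bialgebra $R$ in ${_{H}^{H}\mathcal{YD}}$, define on $R\otimes H$ the smash product and smash coproduct, set $\sigma:=u_{R}\otimes H$ and $\pi:=\varepsilon_{R}\otimes H$, and verify directly that these are bialgebra maps with $\pi\sigma=\mathrm{Id}_{H}$.

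The main obstacle is the braided-bialgebra step: proving that the transported coproduct on $R$ is an algebra morphism \emph{with respect to the braiding} $c$ of ${_{H}^{H}\mathcal{YD}}$ (rather than the flip) forces one to track the interaction of the action $\vartriangleright$ and the coaction $\rho^{l}$ carefully, and it is precisely this compatibility that encodes the Yetter--Drinfeld condition. This is where all the genuine work of \cite{Radford} is concentrated; once it is in place, the identification of $A$ with the bosonization $R\#H$ is immediate from $\epsilon_{A}$ being an isomorphism of bialgebras.
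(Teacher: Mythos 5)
Your proposal is correct and follows essentially the route the paper intends: this claim is a recollection of the classical Radford--Majid theory, which the paper does not prove but simply cites (\cite{Radford}), and your verifications (the idempotent $\tau$ with image $A^{coH}$, the adjoint-type action $h\vartriangleright r=\sigma(h_{1})r\sigma S(h_{2})$, the fundamental theorem for Hopf modules making $\epsilon_{A}$ an isomorphism, and the smash (co)product for the converse) are exactly the standard ones from that source. One small point: as written, $\tau(a)=a_{1}\sigma S(a_{2})$ does not typecheck, since $S$ is defined on $H$; both the statement and your text should read $\tau(a)=a_{1}\sigma S\pi(a_{2})$, as your own Sweedler computation implicitly assumes.
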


The main aim of this section is to extend the results above to the setting
of dual quasi-bialgebras.

\begin{theorem}
\label{teo: boso}Let $\left( H,m_{H},u_{H},\Delta _{H},\varepsilon
_{H},\omega _{H}\right) $ be a dual quasi-bialgebra.  

Let $(R,\mu
_{R},\rho _{R},\Delta _{R},\varepsilon _{R},m_{R},u_{R})$ be a bialgebra in $%
_{H}^{H}\mathcal{YD}$ and use the following notations%
\begin{eqnarray*}
h\vartriangleright r &:&=\mu _{R}\left( h\otimes r\right) ,\qquad
r_{-1}\otimes r_{0}:=\rho _{R}\left( r\right) , \\
r\cdot _{R}s &:&=m_{R}\left( r\otimes s\right) ,\qquad 1_{R}:=u_{R}\left(
1_{\Bbbk }\right) , \\
r^{1}\otimes r^{2} &:&=\Delta _{R}\left( r\right) .
\end{eqnarray*}

Let us consider on $B:=F\left( R\right) =R\otimes H$ the following
structures:%
\begin{eqnarray*}
m_{B}[(r\otimes h)\otimes (s\otimes k)] &=&\left[ 
\begin{array}{c}
\omega _{H}^{-1}(r_{-2}\otimes h_{1}\otimes s_{-2}k_{1})\omega
_{H}(h_{2}\otimes s_{-1}\otimes k_{2}) \\ 
\omega _{H}^{-1}[(h_{3}\vartriangleright s_{0})_{-2}\otimes h_{4}\otimes
k_{3}]\omega _{H}(r_{-1}\otimes (h_{3}\vartriangleright s_{0})_{-1}\otimes
h_{5}k_{4}) \\ 
r_{0}\cdot _{R}(h_{3}\vartriangleright s_{0})_{0}\otimes h_{6}k_{5}%
\end{array}%
\right] \\
u_{B}(k) &=&k1_{R}\otimes 1_{H} \\
\Delta _{B}(r\otimes h) &=&\omega _{H}^{-1}(r_{-1}^{1}\otimes
r_{-2}^{2}\otimes h_{1})r_{0}^{1}\otimes r_{-1}^{2}h_{2}\otimes
r_{0}^{2}\otimes h_{3} \\
\varepsilon _{B}(r\otimes h) &=&\varepsilon _{R}(r)\varepsilon _{H}(h) \\
\omega _{B}((r\otimes h)\otimes (s\otimes k)\otimes (t\otimes l))
&=&\varepsilon _{R}(r)\varepsilon _{R}(s)\varepsilon _{R}(t)\omega
_{H}(h\otimes k\otimes l).
\end{eqnarray*}%
Then $(B,\Delta _{B},\varepsilon _{B},m_{B},u_{B},\omega _{B})$ is a dual
quasi-bialgebra.
\end{theorem}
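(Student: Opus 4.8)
The plan is to realize $B=F(R)$ as the transport of the bialgebra $R$ in the pre-braided category $({}_H^H\mathcal{YD},\otimes,\Bbbk)$ through the monoidal functor $F$, reading the multiplication off the $\otimes_H$-structure of ${}_H^H\mathfrak{M}_H^H$ and the comultiplication off the cotensor $\square_H$-structure, and then verifying the five dual quasi-bialgebra axioms. Since no preantipode is assumed here, I may only use Lemma \ref{lem: equiv-m4h-tens}, Lemma \ref{lem: F monoidal cot}, Lemma \ref{lax mon}, Claim \ref{constr cot} and the preantipode-free part of Remark \ref{k}; in particular I can invoke neither the equivalence $(F,G)$ nor the functor $E$.

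For the algebra structure I argue as follows. As $R$ is a monoid in $({}_H^H\mathcal{YD},\otimes,\Bbbk)$ and $F$ is strong monoidal into $({}_H^H\mathfrak{M}_H^H,\otimes_H,H)$ (Lemma \ref{lem: equiv-m4h-tens}), $F(R)$ is a monoid there with multiplication $F(m_R)\circ\varphi_2(R,R)$ and unit $F(u_R)\circ\varphi_0$; composing with the lax monoidal forgetful functor $D\colon({}_H^H\mathfrak{M}_H^H,\otimes_H,H)\to({}^H\mathfrak{M}^H,\otimes,\Bbbk)$ of Lemma \ref{lax mon}(1) produces an algebra $(B,m_B,u_B)$ in $({}^H\mathfrak{M}^H,\otimes,\Bbbk)$, and substituting the formula for $\varphi_2$ shows these are exactly the maps in the statement, with $1_B=1_R\otimes 1_H$. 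Because the unit constraints of $({}^H\mathfrak{M}^H,\otimes,\Bbbk)$ are the trivial ones, the unit axiom is precisely \eqref{eq:quasi-leftunitarirty}--\eqref{eq:quasi-rightunitarity}. To pass from the algebra associativity $m_B(m_B\otimes B)=m_B(B\otimes m_B)\,{}^Ha^H_{B,B,B}$ to quasi-associativity \eqref{eq:quasi-associativity}, I first record that $\pi\colon B\to H,\ \pi(r\otimes h)=\varepsilon_R(r)h$, is a coalgebra map and that the bicomodule structure of $B=F(R)$ is $\pi$-induced, i.e. $\rho^l_B(b)=\pi(b_1)\otimes b_2$ and $\rho^r_B(b)=b_1\otimes\pi(b_2)$; both reduce, through the counit axioms of $R$, the colinearity of $\varepsilon_R$ and the unitality of $\omega_H$, to routine Sweedler manipulations. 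Consequently ${}^Ha^H_{B,B,B}$ acts on $B^{\otimes 3}$ as $\ast$-convolution by $\omega_B:=\omega_H(\pi\otimes\pi\otimes\pi)$, so the algebra associativity rewrites as $m_B(B\otimes m_B)\ast\omega_B=\omega_B\ast m_B(m_B\otimes B)$, which is \eqref{eq:quasi-associativity}. The remaining demands on $\omega_B$ — convolution invertibility with inverse $\omega_H^{-1}(\pi\otimes\pi\otimes\pi)$, the cocycle identity \eqref{eq:3-cocycle}, and the unitality \eqref{eq:qusi-unitairity cocycle} — follow by pulling back the corresponding properties of $\omega_H$ along the coalgebra maps $\pi^{\otimes 3}$ and $\pi^{\otimes 4}$, using $\pi(1_B)=1_H$.

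Dually, $R$ is a comonoid in $({}_H^H\mathcal{YD},\otimes,\Bbbk)$ and $F$ is strong monoidal into $({}_H^H\mathfrak{M}_H^H,\square_H,H)$ (Lemma \ref{lem: F monoidal cot}), so $F(R)$ is a comonoid there with comultiplication $\psi_2(R,R)^{-1}\circ F(\Delta_R)$; applying the colax monoidal forgetful functor of Lemma \ref{lax mon}(2) yields the coalgebra $(B,\Delta_B,\varepsilon_B)$, and substituting \eqref{form: psi2-1} reproduces the $\Delta_B$ and $\varepsilon_B$ of the statement. Here coassociativity is \emph{honest}: the associativity constraint of the cotensor product is the canonical strict identification $b$ of Claim \ref{constr cot}, so $\Delta_B$ factors through the iterated cotensor $B\square_H B\square_H B$, on which the two bracketings coincide and $(\Delta_B\otimes B)\Delta_B=(B\otimes\Delta_B)\Delta_B$; counitality is immediate from $\psi_0$.

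It remains to verify the \emph{compatibility} axiom, that $m_B$ and $u_B$ are morphisms of coalgebras; I expect this to be the main obstacle. The multiplication has been built from the $\otimes_H$-structure while the comultiplication comes from the $\square_H$-structure, and — lacking a preantipode — I cannot identify the two through the equivalence $E$. The bridge available is the preantipode-free coherence isomorphism $\kappa(U,V)=\psi_2(U,V)^{-1}\circ\varphi_2(U,V)\colon F(U)\otimes_H F(V)\to F(U)\square_H F(V)$ of Remark \ref{k}, whose explicit value $\kappa[(u\otimes h)\otimes_H(v\otimes k)]=(u\otimes h)_0\cdot(v\otimes k)_{-1}\square_H(u\otimes h)_1\cdot(v\otimes k)_0$ relates the $\otimes_H$-square and the $\square_H$-square of $B$. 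Feeding this identity, together with the bialgebra axioms of $R$ in the pre-braided category $({}_H^H\mathcal{YD},\otimes,\Bbbk,c)$ — in particular the multiplicativity of $\Delta_R$ expressed through the braiding \eqref{braiding YD} and the Yetter--Drinfeld compatibility \eqref{Comp YD} — into the explicit formulas for $m_B$ and $\Delta_B$, one checks $\Delta_B m_B=(m_B\otimes m_B)\Delta_{B\otimes B}$, $\varepsilon_B m_B=\varepsilon_B\otimes\varepsilon_B$, and that $u_B$ is a coalgebra map. This last computation, which unwinds the braiding of ${}_H^H\mathcal{YD}$ into the comodule structure carried by the $H$-tensorands, is where the bulk of the work lies; the other four axioms reduce cleanly to the cited monoidal-functor lemmas.
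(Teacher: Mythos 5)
Up to the compatibility axiom, your argument is the paper's own: the paper also obtains $(B,m_B',u_B')$ by transporting the algebra $R$ through the monoidal functor of Lemma \ref{lem: equiv-m4h-tens}, obtains $(B,\overline{\Delta}_B,\overline{\varepsilon}_B)$ from the monoidal functor of Lemma \ref{lem: F monoidal cot}, uses the $\pi$-induced form of the bicomodule structure $\pi(x_1)\otimes x_2\otimes \pi(x_3)=x_{-1}\otimes x_0\otimes x_1$ (its (\ref{form: assB})) together with $\omega_B=\omega_H(\pi\otimes\pi\otimes\pi)$ (its (\ref{form: Omegapi})) to turn associativity in $\otimes_H$ into (\ref{eq:quasi-associativity}), and pulls the cocycle, unitality and invertibility conditions for $\omega_B$ back along the morphism $\pi$ via (\ref{form: piunit}). (That you phrase the associativity step through the lax monoidal forgetful functor of Lemma \ref{lax mon} while the paper evaluates the constraint $a_{B,B,B}$ directly is cosmetic; the paper itself uses Lemma \ref{lax mon} this way in Theorem \ref{teo: projection}.)

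The gap is in the step you yourself single out as the main one: the proof that $m_B$ is comultiplicative. The map $\kappa=\psi_2^{-1}\circ\varphi_2$ of Remark \ref{k} is not the bridge that does this job. Writing $m_B=m_B'\circ\chi$ and $\Delta_B=j\circ\overline{\Delta}_B$ (with $\chi$ the canonical epimorphism onto $B\otimes_HB$ and $j$ the canonical monomorphism from $B\square_HB$), the identity $\Delta_Bm_B=(m_B\otimes m_B)\Delta_{B\otimes B}$ amounts to $\overline{\Delta}_B\circ m_B'=\zeta'$, where $\zeta'(x\otimes_Hy)=x_1\cdot_By_1\square_Hx_2\cdot_By_2$ is built from $m_B$ and $\Delta_B$ themselves; by contrast $\kappa(x\otimes_Hy)=x_0\cdot y_{-1}\square_Hx_1\cdot y_0$ involves only the $H$-(co)actions and never splits the $R$-tensorand through $\Delta_R$, so the two maps differ in general (they agree, e.g., when $R=\Bbbk$). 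Moreover, before $\zeta'$ even exists, two nonformal facts must be proved which your sketch never addresses: that $(m_B\otimes m_B)\Delta_{B\otimes B}$ is $H$-balanced, so that it descends to a map $\zeta$ on $B\otimes_HB$, and that the image of $\zeta$ lies in $B\square_HB$, so that $\zeta$ corestricts to $\zeta'$. Only after these reductions does the bialgebra axiom $\Delta_Rm_R=(m_R\otimes m_R)\Delta_{R\otimes R}$ of $R$ bring the problem to the identity $F\left((m_R\otimes m_R)\Delta_{R\otimes R}\right)=\psi_2(R,R)\circ\zeta'\circ\varphi_2(R,R)^{-1}$, which the paper then verifies, using right $H$-linearity to restrict to elements $(r\otimes s)\otimes 1_H$, by a long computation with the braided comultiplication (\ref{form:DeltaRotR}). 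Your phrase ``feeding $\kappa$ and the bialgebra axioms into the explicit formulas'' names none of these reductions and asserts, rather than performs, the computation that is the heart of the theorem; as written, the decisive axiom remains unproven.
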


\begin{proof}
Recall that, by Lemma \ref{lem: equiv-m4h-tens}, the functor $F:{_{H}^{H}%
\mathcal{YD}}\rightarrow {_{H}^{H}\mathfrak{M}_{H}^{H}}$ \ defines a
monoidal functor $F:({_{H}^{H}\mathcal{YD}},\otimes ,\Bbbk )\rightarrow ({%
_{H}^{H}\mathfrak{M}_{H}^{H},\otimes _{H},H)}$ where, for $U,V\in {_{H}^{H}%
\mathcal{YD}}$, the structure morphisms are given by $\varphi _{2}(U,V),\varphi
_{0}.$ By \cite[Proposition 1.5]{AMS-Amonoidal}, we have that $\left(
B,m_{B}^{\prime },u_{B}^{\prime }\right) $ is an algebra in $({_{H}^{H}%
\mathfrak{M}_{H}^{H},\otimes _{H},H)}$ where%
\begin{equation*}
m_{B}^{\prime }:=F\left( m_{R}\right) \circ \varphi _{2}(R,R),\qquad
u_{B}^{\prime }:=F\left( u_{R}\right) \circ \varphi _{0}.
\end{equation*}%
Explicitly we have%
\begin{eqnarray*}
m_{B}^{\prime }\left( \left( r\otimes h\right) \otimes _{H}\left( s\otimes
k\right) \right) &=&\left[ 
\begin{array}{c}
\omega _{H}^{-1}\left( r_{-2}\otimes h_{1}\otimes s_{-2}k_{1}\right) \omega
_{H}(h_{2}\otimes s_{-1}\otimes k_{2}) \\ 
\omega _{H}^{-1}((h_{3}\vartriangleright s_{0})_{-2}\otimes h_{4}\otimes
k_{3})\omega _{H}\left( r_{-1}\otimes (h_{3}\vartriangleright
s_{0})_{-1}\otimes h_{5}k_{4})\right) \\ 
r_{0}\cdot _{R}(h_{3}\vartriangleright s_{0})_{0}\otimes h_{6}k_{5}%
\end{array}%
\right] \\
&=&m_{B}[(r\otimes h)\otimes (s\otimes k)],
\end{eqnarray*}%
\begin{equation*}
u_{B}^{\prime }\left( h\right) =u_{R}\left( 1_{\Bbbk }\right) \otimes
h=1_{R}\otimes h.
\end{equation*}%
Since $m_{B}^{\prime }$ is associative in $({_{H}^{H}\mathfrak{M}%
_{H}^{H},\otimes _{H},H)}$, we have that 
\begin{equation*}
m_{B}^{\prime }\circ \left( m_{B}^{\prime }\otimes _{H}B\right)
=m_{B}^{\prime }\circ \left( B\otimes _{H}m_{B}^{\prime }\right) \circ
a_{B,B,B}
\end{equation*}%
where $a_{B,B,B}$ is the one defined in Lemma \ref{lem:m4hmonoidal}. Let $%
\pi :B\rightarrow H$ be defined by $\pi \left( r\otimes h\right)
:=\varepsilon _{R}\left( r\right) h.$ Then%
\begin{equation}
\omega _{H}\left( \pi \otimes \pi \otimes \pi \right) =\omega _{B}.
\label{form: Omegapi}
\end{equation}%
One easily gets that%
\begin{equation}
\pi \left( x_{1}\right) \otimes x_{2}\otimes \pi \left( x_{3}\right)
=x_{-1}\otimes x_{0}\otimes x_{1},\text{ for all }x\in B\text{.}
\label{form: assB}
\end{equation}

Let $x,y,z\in B$, then 
\begin{eqnarray*}
m_{B}^{\prime }\left( m_{B}^{\prime }\otimes _{H}B\right) \left( \left(
x\otimes _{H}y\right) \otimes _{H}z\right) &=&m_{B}\left( m_{B}\otimes
B\right) \left( \left( x\otimes y\right) \otimes z\right) 
\end{eqnarray*}%
and 
\begin{eqnarray*}
&&m_{B}^{\prime }\left( B\otimes _{H}m_{B}^{\prime }\right) a_{B,B,B}\left(
\left( x\otimes _{H}y\right) \otimes _{H}z\right) \\
&=&\omega _{H}^{-1}\left( x_{-1}\otimes y_{-1}\otimes z_{-1}\right)
m_{B}^{\prime }\left( B\otimes _{H}m_{B}^{\prime }\right) \left(
x_{0}\otimes _{H}\left( y_{0}\otimes _{H}z_{0}\right) \right) \omega
_{H}\left( x_{1}\otimes y_{1}\otimes z_{1}\right) \\
&=&\omega _{H}^{-1}\left( x_{-1}\otimes y_{-1}\otimes z_{-1}\right)
m_{B}\left( B\otimes m_{B}\right) \left( x_{0}\otimes \left( y_{0}\otimes
z_{0}\right) \right) \omega _{H}\left( x_{1}\otimes y_{1}\otimes z_{1}\right)
\\
&\overset{(\ref{form: assB})}{=}&\omega _{H}^{-1}\left( \pi \left(
x_{1}\right) \otimes \pi \left( y_{1}\right) \otimes \pi \left( z_{1}\right)
\right) m_{B}\left( B\otimes m_{B}\right) \left( x_{2}\otimes \left(
y_{2}\otimes z_{2}\right) \right) \omega _{H}\left( \pi \left( x_{3}\right)
\otimes \pi \left( y_{3}\right) \otimes \pi \left( z_{3}\right) \right) \\
&\overset{(\ref{form: Omegapi})}{=}&\omega _{B}^{-1}\left( x_{1}\otimes
y_{1}\otimes z_{1}\right) m_{B}\left( B\otimes m_{B}\right) \left(
x_{2}\otimes \left( y_{2}\otimes z_{2}\right) \right) \omega _{B}\left(
x_{3}\otimes y_{3}\otimes z_{3}\right) \\
&=&\left[ \omega _{B}^{-1}\ast \left[ m_{B}\left( B\otimes m_{B}\right) %
\right] \ast \omega _{B}\right] \left( \left( x\otimes y\right) \otimes
z\right)
\end{eqnarray*}%
so that $m_{B}\left( m_{B}\otimes B\right) =\omega _{B}^{-1}\ast \left[
m_{B}\left( B\otimes m_{B}\right) \right] \ast \omega _{B}$.

Since $m_{B}^{\prime }$ is unitary in $({_{H}^{H}\mathfrak{M}%
_{H}^{H},\otimes _{H},H)}$, we have that $m_{B}^{\prime }\left(
u_{B}^{\prime }\otimes _{H}B\right) =l_{B}$. From this equality, we get 
$m_{B}\left( u_{B}\otimes B\right) =l_{B}.$ Similarly $m_{B}\left( B\otimes
u_{B}\right) =r_{B}.$ Let us recall that, by Lemma \ref{lem: F monoidal cot}%
, the functor $F:{_{H}^{H}\mathcal{YD}}\rightarrow {_{H}^{H}\mathfrak{M}%
_{H}^{H}}$ \ defines a monoidal functor $F:({_{H}^{H}\mathcal{YD}},\otimes
,\Bbbk )\rightarrow ({_{H}^{H}\mathfrak{M}_{H}^{H},\square }_{H},H)$ , with
structure morphisms $\psi _{2}(U,V),\psi _{0},$ with $U,V\in {_{H}^{H}%
\mathcal{YD}}.$ By \cite[Proposition 1.5]{AMS-Amonoidal}, we have that $%
\left( B,\overline{\Delta }_{B},\overline{\varepsilon }_{B}\right) $ is a
coalgebra in $({_{H}^{H}\mathfrak{M}_{H}^{H},\square _{H},H)}$ where%
\begin{equation*}
\overline{\Delta }_{B}:=\psi _{2}(R,R)^{-1}\circ F\left( \Delta _{R}\right)
,\qquad \overline{\varepsilon }_{B}:=\psi _{0}^{-1}\circ F\left( \varepsilon
_{R}\right) .
\end{equation*}%
Explicitly we have%
\begin{eqnarray*}
\overline{\Delta }_{B}\left( r\otimes h\right) &=&\psi _{2}(R,R)^{-1}\left(
\left( r^{1}\otimes r^{2}\right) \otimes h\right) \\
&=&\omega ^{-1}(r_{-1}^{1}\otimes r_{-2}^{2}\otimes h_{1})(r_{0}^{1}\otimes
r_{-1}^{2}h_{2})\square _{H}(r_{0}^{2}\otimes h_{3}) \\
&=&\Delta _{B}(r\otimes h),
\end{eqnarray*}%
and%
\begin{equation*}
\overline{\varepsilon }_{B}\left( r\otimes h\right) =\psi _{0}^{-1}\left(
\varepsilon _{R}\left( r\right) \otimes h\right) =\varepsilon _{R}\left(
r\right) h.
\end{equation*}%
From the fact that $\left( B,\overline{\Delta }_{B},\overline{\varepsilon }%
_{B}\right) $ is a coalgebra in $({_{H}^{H}\mathfrak{M}_{H}^{H},\square
_{H},H)}$ one easily gets that $\left( B,\Delta _{B},\varepsilon _{B}\right) 
$ is an ordinary coalgebra.

It is straightforward to prove that $\pi $ is multiplicative,
comultiplicative, counitary and unitary i.e. 
\begin{equation}
\pi m_{B}=m_{H}\left( \pi \otimes \pi \right) ,\text{\quad }\left( \pi
\otimes \pi \right) \Delta _{B}=\Delta _{H}\pi ,\text{\quad }\varepsilon
_{B}=\varepsilon _{H}\pi ,\text{\quad }\pi u_{B}=u_{H}.  \label{form: piunit}
\end{equation}%
Using these equalities plus (\ref{form: Omegapi}), one easily gets that the
cocycle and unitary conditions for $\omega _{B}$ follow from the ones of $%
\omega _{H}$.

Now we want to prove that $m_{B}$ is a morphism of coalgebras. It is
counitary as%
\begin{equation*}
\varepsilon _{B}m_{B}\overset{(\ref{form: piunit})}{=}\varepsilon _{H}\pi
m_{B}\overset{(\ref{form: piunit})}{=}\varepsilon _{H}m_{H}\left( \pi
\otimes \pi \right) =m_{\Bbbk }\left( \varepsilon _{H}\otimes \varepsilon
_{H}\right) \left( \pi \otimes \pi \right) \overset{(\ref{form: piunit})}{=}%
m_{\Bbbk }\left( \varepsilon _{B}\otimes \varepsilon _{B}\right) .
\end{equation*}%
Hence we just have to prove that%
\begin{equation*}
\Delta _{B}[(r\otimes h)\cdot _{B}(s\otimes k)]=(r\otimes h)_{1}\cdot
_{B}(s\otimes k)_{1}\otimes (r\otimes h)_{2}\cdot _{B}(s\otimes k)_{2},
\end{equation*}%
where $x\cdot _{B}y:=m_{B}\left( x\otimes y\right) $ and $x_{1}\otimes
x_{2}:=\Delta _{B}\left( x\right) ,$ for all $x,y\in B.$ Equivalently we
will prove that%
\begin{equation*}
\Delta _{B}m_{B}=\left( m_{B}\otimes m_{B}\right) \Delta _{B\otimes B}.
\end{equation*}

Since ${_{H}^{H}\mathcal{YD}}$ is a pre-braided monoidal category and $%
(R,\Delta _{R},\varepsilon _{R})$ is a coalgebra in this category, then we
can define two morphisms $\Delta _{R\otimes R}$ and $\varepsilon _{R\otimes
R}$ in ${_{H}^{H}\mathcal{YD}}$ such that $(R\otimes R,\Delta _{R\otimes
R},\varepsilon _{R\otimes R})$ is a coalgebra in ${_{H}^{H}\mathcal{YD}}$
too. We have:%
\begin{eqnarray*}
\Delta _{R\otimes R} &:&=a_{R,R,R\otimes R}^{-1}\circ (R\otimes
a_{R,R,R})\circ (R\otimes \left( c_{R,R}\otimes R\right) )\circ (R\otimes
a_{R,R,R}^{-1})\circ a_{R,R,R\otimes R}\circ (\Delta _{R}\otimes \Delta
_{R}), \\
\varepsilon _{R\otimes R} &:&=\varepsilon _{R}\otimes \varepsilon _{R}.
\end{eqnarray*}%
Explicitly we obtain%
\begin{eqnarray}
\label{form:DeltaRotR}\Delta _{R\otimes R}(r\otimes s) &=&\left[ 
\begin{array}{c}
\omega ^{-1}(r_{-2}^{1}\otimes r_{-5}^{2}\otimes s_{-2}^{1}s_{-4}^{2})\omega
(r_{-4}^{2}\otimes s_{-1}^{1}\otimes s_{-3}^{2}) \\ 
\omega ^{-1}[(r_{-3}^{2}\vartriangleright s_{0}^{1})_{-2}\otimes
r_{-2}^{2}\otimes s_{-2}^{2}) \\ 
\omega (r_{-1}^{1}\otimes (r_{-3}^{2}\vartriangleright
s_{0}^{1})_{-1}\otimes r_{-1}^{2}s_{-1}^{2}) \\ 
\left[ r_{0}^{1}\otimes (r_{-3}^{2}\vartriangleright s_{0}^{1})_{0}\right]
\otimes \left( r_{0}^{2}\otimes s_{0}^{2}\right)%
\end{array}%
\right] , \\
\notag\varepsilon _{R\otimes R}(r\otimes s) &:&=\varepsilon _{R}\left( r\right)
\varepsilon _{R}\left( s\right) .
\end{eqnarray}%
Consider the canonical maps%
\begin{equation*}
j_{M,N} :M\square _{H}N\rightarrow M\otimes N\qquad\text{and}\qquad \chi
_{M,N} :M\otimes N\rightarrow M\otimes _{H}N,
\end{equation*}%
for all $M,N\in {_{H}^{H}\mathfrak{M}_{H}^{H}}$. Set 
\begin{eqnarray*}
\widehat{\Delta _{R}m_{R}} &:&=j_{F\left( R\right) ,F\left( R\right) }\circ
\psi _{2}\left( R,R\right) ^{-1}\circ F\left( \Delta _{R}m_{R}\right) \circ
\varphi _{2}\left( R,R\right) \circ \chi _{F\left( R\right) ,F\left(
R\right) }, \\
\widehat{\Delta _{R}} &:&=j_{F\left( R\right) ,F\left( R\right) }\circ \psi
_{2}\left( R,R\right) ^{-1}\circ F\left( \Delta _{R}\right) , \\
\widehat{m_{R}} &:&=F\left( m_{R}\right) \circ \varphi _{2}\left( R,R\right)
\circ \chi _{F\left( R\right) ,F\left( R\right) }, \\
\widehat{\left( m_{R}\otimes m_{R}\right) \Delta _{R\otimes R}}
&:&=j_{F\left( R\right) ,F\left( R\right) }\circ \psi _{2}\left( R,R\right)
^{-1}\circ F\left( \left( m_{R}\otimes m_{R}\right) \Delta _{R\otimes
R}\right) \circ \varphi _{2}\left( R,R\right) \circ \chi _{F\left( R\right)
,F\left( R\right) }.
\end{eqnarray*}

We have%
\begin{eqnarray*}
\widehat{\Delta _{R}m_{R}} &=&j_{F\left( R\right) ,F\left( R\right) }\circ
\psi _{2}\left( R,R\right) ^{-1}\circ F\left( \Delta _{R}\right) \circ
F\left( m_{R}\right) \circ \varphi _{2}\left( R,R\right) \circ \chi
_{F\left( R\right) ,F\left( R\right) } \\
&=&\widehat{\Delta _{R}}\circ \widehat{m_{R}}.
\end{eqnarray*}%
Moreover%
\begin{eqnarray*}
\widehat{\Delta _{R}} &=&j_{F\left( R\right) ,F\left( R\right) }\circ 
\overline{\Delta }_{B}=\Delta _{B}, \\
\widehat{m_{R}} &=&m_{B}^{\prime }\circ \chi _{F\left( R\right) ,F\left(
R\right) }=m_{B},
\end{eqnarray*}%
so that, since $\left( m_{R}\otimes m_{R}\right) \Delta _{R\otimes R}=\Delta
_{R}m_{R},$ we obtain 
\begin{equation}
\widehat{\left( m_{R}\otimes m_{R}\right) \Delta _{R\otimes R}}=\widehat{%
\Delta _{R}m_{R}}=\Delta _{B}m_{B}.  \label{prima m coal}
\end{equation}%
It remains to prove that 
\begin{equation}
\widehat{\left( m_{R}\otimes m_{R}\right) \Delta _{R\otimes R}}=\left(
m_{B}\otimes m_{B}\right) \Delta _{B\otimes B}.  \label{form:starDavid}
\end{equation}

First, one checks that $\left( m_{B}\otimes m_{B}\right) \Delta _{B\otimes
B} $ is $H$-balanced. 
Hence there is a unique map $\zeta :B\otimes _{H}B\rightarrow B\otimes B$
such that 
\begin{equation*}
\zeta \circ \chi _{F\left( R\right) ,F\left( R\right) }=\left( m_{B}\otimes
m_{B}\right) \Delta _{B\otimes B}.
\end{equation*}%
Our aim is to prove that \eqref{form:starDavid} holds %
i.e. that%
\begin{equation*}
j_{F\left( R\right) ,F\left( R\right) }\circ \psi _{2}\left( R,R\right)
^{-1}\circ F\left( \left( m_{R}\otimes m_{R}\right) \Delta _{R\otimes
R}\right) \circ \varphi _{2}\left( R,R\right) \circ \chi _{F\left( R\right)
,F\left( R\right) }=\zeta \circ \chi _{F\left( R\right) ,F\left( R\right) }.
\end{equation*}%
Since $\chi _{F\left( R\right) ,F\left( R\right) }$ is an epimorphism, the
latter displayed equality is equivalent to%
\begin{equation}
j_{F\left( R\right) ,F\left( R\right) }\circ \psi _{2}\left( R,R\right)
^{-1}\circ F\left( \left( m_{R}\otimes m_{R}\right) \Delta _{R\otimes
R}\right) =\zeta \circ \varphi _{2}\left( R,R\right) ^{-1}.
\label{form: vale}
\end{equation}%
Now%
\begin{eqnarray*}
\zeta \left( x\otimes _{H}y\right) &=&\zeta \circ \chi _{F\left( R\right)
,F\left( R\right) }\left( x\otimes y\right) =\left( m_{B}\otimes
m_{B}\right) \Delta _{B\otimes B}\left( x\otimes y\right) \\
&=&x_{1}\cdot _{B}y_{1}\otimes x_{2}\cdot _{B}y_{2}.
\end{eqnarray*}%
%
%
%
%
%
%
%
%
%
%
One proves that $\zeta \left( x\otimes _{H}y\right) \in B\square _{H}B.$
Then there is a unique map $\zeta ^{\prime }:B\otimes _{H}B\rightarrow
B\square _{H}B$ such that $j_{F\left( R\right) ,F\left( R\right) }\circ
\zeta ^{\prime }=\zeta $. Hence (\ref{form: vale}) is equivalent to%
\begin{equation*}
j_{F\left( R\right) ,F\left( R\right) }\circ \psi _{2}\left( R,R\right)
^{-1}\circ F\left( \left( m_{R}\otimes m_{R}\right) \Delta _{R\otimes
R}\right) =j_{F\left( R\right) ,F\left( R\right) }\circ \zeta ^{\prime
}\circ \varphi _{2}\left( R,R\right) ^{-1}
\end{equation*}%
i.e. to%
\begin{equation}
F\left( \left( m_{R}\otimes m_{R}\right) \Delta _{R\otimes R}\right) =\psi
_{2}\left( R,R\right) \circ \zeta ^{\prime }\circ \varphi _{2}\left(
R,R\right) ^{-1}.  \label{form: vale2}
\end{equation}%
By construction%
\begin{equation*}
\zeta ^{\prime }\left( x\otimes _{H}y\right) =x_{1}\cdot _{B}y_{1}\square
_{H}x_{2}\cdot _{B}y_{2}.
\end{equation*}%

It is straightforward to prove that $\zeta ^{\prime }$ is right $H$-linear.
Thus it suffices to check that (\ref{form: vale2}) holds on elements of the
form $\left( r\otimes s\right) \otimes 1_{H}.$ Thus, for $r,s\in R,h\in H$%
\begin{eqnarray*}
&&\left[ \psi _{2}\left( R,R\right) \circ \zeta ^{\prime }\circ \varphi
_{2}\left( R,R\right) ^{-1}\right] \left( \left( r\otimes s\right) \otimes
1_{H}\right) \\
&=&\psi _{2}\left( R,R\right) \left[ \left( r\otimes 1_{H}\right) _{1}\cdot
_{B}\left( s\otimes 1_{H}\right) _{1}\square _{H}\left( r\otimes
1_{H}\right) _{2}\cdot _{B}\left( s\otimes 1_{H}\right) _{2}\right] \\
&=&\psi _{2}\left( R,R\right) \left[ \left( r^{1}\otimes r_{-1}^{2}\right)
\cdot _{B}\left( s^{1}\otimes s_{-1}^{2}\right) \otimes \left(
r_{0}^{2}\otimes 1_{H}\right) \cdot _{B}\left( s_{0}^{2}\otimes 1_{H}\right) %
\right] \\
&=&\psi _{2}\left( R,R\right) \left[ \left( r^{1}\otimes r_{-1}^{2}\right)
\cdot _{B}\left( s^{1}\otimes s_{-1}^{2}\right) \otimes \left(
r_{0}^{2}\cdot _{R}s_{0}^{2}\otimes 1_{H}\right) \right] \\
&=&\left[ 
\begin{array}{c}
\omega _{H}^{-1}(\left( r^{1}\right) _{-2}\otimes \left( r_{-1}^{2}\right)
_{1}\otimes \left( s^{1}\right) _{-2}\left( s_{-1}^{2}\right) _{1})\omega
_{H}(\left( r_{-1}^{2}\right) _{2}\otimes \left( s^{1}\right) _{-1}\otimes
\left( s_{-1}^{2}\right) _{2}) \\ 
\omega _{H}^{-1}[(\left( r_{-1}^{2}\right) _{3}\vartriangleright \left(
s^{1}\right) _{0})_{-2}\otimes \left( r_{-1}^{2}\right) _{4}\otimes \left(
s_{-1}^{2}\right) _{3}]\\\omega _{H}(\left( r^{1}\right) _{-1}\otimes (\left(
r_{-1}^{2}\right) _{3}\vartriangleright \left( s^{1}\right)
_{0})_{-1}\otimes \left( r_{-1}^{2}\right) _{5}\left( s_{-1}^{2}\right) _{4})
\\ 
\psi _{2}\left( R,R\right) \left[ \left( r^{1}\right) _{0}\cdot _{R}(\left(
r_{-1}^{2}\right) _{3}\vartriangleright \left( s^{1}\right) _{0})_{0}\otimes
\left( r_{-1}^{2}\right) _{6}\left( s_{-1}^{2}\right) _{5}\otimes \left(
r_{0}^{2}\cdot _{R}s_{0}^{2}\otimes 1_{H}\right) \right]%
\end{array}%
\right] \\
&=&\left[ 
\begin{array}{c}
\omega _{H}^{-1}(\left( r^{1}\right) _{-2}\otimes \left( r_{-1}^{2}\right)
_{1}\otimes \left( s^{1}\right) _{-2}\left( s_{-1}^{2}\right) _{1})\omega
_{H}(\left( r_{-1}^{2}\right) _{2}\otimes \left( s^{1}\right) _{-1}\otimes
\left( s_{-1}^{2}\right) _{2}) \\ 
\omega _{H}^{-1}[(\left( r_{-1}^{2}\right) _{3}\vartriangleright \left(
s^{1}\right) _{0})_{-2}\otimes \left( r_{-1}^{2}\right) _{4}\otimes \left(
s_{-1}^{2}\right) _{3}]\\\omega _{H}(\left( r^{1}\right) _{-1}\otimes (\left(
r_{-1}^{2}\right) _{3}\vartriangleright \left( s^{1}\right)
_{0})_{-1}\otimes \left( r_{-1}^{2}\right) _{5}\left( s_{-1}^{2}\right) _{4})
\\ 
\left[ \left( r^{1}\right) _{0}\cdot _{R}(\left( r_{-1}^{2}\right)
_{3}\vartriangleright \left( s^{1}\right) _{0})_{0}\otimes \left(
r_{0}^{2}\cdot _{R}s_{0}^{2}\otimes 1_{H}\right) \right]%
\end{array}%
\right] \\
&=&\left[ 
\begin{array}{c}
\omega _{H}^{-1}(r_{-2}^{1}\otimes r_{-5}^{2}\otimes
s_{-2}^{1}s_{-4}^{2})\omega _{H}(r_{-4}^{2}\otimes s_{-1}^{1}\otimes
s_{-3}^{2}) \\ 
\omega _{H}^{-1}[(r_{-3}^{2}\vartriangleright s_{0}^{1})_{-2}\otimes
r_{-2}^{2}\otimes s_{-2}^{2}] \\ 
\omega _{H}(r_{-1}^{1}\otimes (r_{-3}^{2}\vartriangleright
s_{0}^{1})_{-1}\otimes r_{-1}^{2}s_{-1}^{2}) \\ 
\left[ r_{0}^{1}\cdot _{R}(r_{-3}^{2}\vartriangleright s_{0}^{1})_{0}\otimes
\left( r_{0}^{2}\cdot _{R}s_{0}^{2}\otimes 1_{H}\right) \right]%
\end{array}%
\right] \\
&=&\left[ \left( m_{R}\otimes m_{R}\right) \Delta _{R\otimes R}\left(
r\otimes s\right) \right] \otimes 1_{H} \\
&=&F\left( \left( m_{R}\otimes m_{R}\right) \Delta _{R\otimes R}\right)
\left( \left( r\otimes s\right) \otimes 1_{H}\right) .
\end{eqnarray*}%
Hence we have proved that (\ref{form: vale2}) holds and hence (\ref%
{form:starDavid}) is fulfilled. Thus, from (\ref{prima m coal}), we can
conclude that $m_{B}$ is a coalgebra morphism. Finally, it is easy to prove
that $u_{B}$ is a coalgebra map. 
\end{proof}

\begin{definition}
With hypotheses and notations as in Theorem \ref{teo: boso}, the bialgebra $%
B $ will be called the \emph{bosonization of }$R$\emph{\ by }$H$ and denoted
by $R\#H$.
\end{definition}

\begin{definition}
Let $(H,m,u,\Delta ,\varepsilon ,\omega )$ and $(A,m_{A},u_{A},\Delta
_{A},\varepsilon _{A},\omega _{A})$ be dual quasi-bialgebras, and suppose
there exist morphisms of dual quasi-bialgebras 
\begin{equation*}
\sigma :H\rightarrow A\qquad \text{and}\qquad \pi :A\rightarrow H
\end{equation*}%
such that $\pi \sigma =\mathrm{Id}_{H}$. Then $(A,H,\sigma ,\pi )$ is called
a \emph{dual quasi-bialgebra with a projection onto }$H.$
\end{definition}

\begin{proposition}
Keep the hypotheses and notations of Theorem \ref{teo: boso}. Then $%
(R\#H,H,\sigma ,\pi )$ is a dual quasi-bialgebra with projection onto $H$
where%
\begin{equation*}
\sigma :H\rightarrow R\#H,\sigma \left( h\right) :=1_{R}\#h,\qquad \pi
:R\#H\rightarrow H,\pi \left( r\#h\right) :=\varepsilon _{R}\left( r\right)
h.
\end{equation*}
\end{proposition}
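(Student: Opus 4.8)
The plan is to verify the three defining requirements in turn: that $\pi$ is a morphism of dual quasi-bialgebras, that $\sigma$ is a morphism of dual quasi-bialgebras, and that $\pi\circ\sigma=\mathrm{Id}_H$. The last is immediate, since $\pi(\sigma(h))=\pi(1_R\otimes h)=\varepsilon_R(1_R)h=h$, using $\varepsilon_R(1_R)=1_\Bbbk$. I expect essentially no serious obstacle, because nearly all of the identities needed for $\pi$ have already been recorded inside the proof of Theorem \ref{teo: boso}.

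Indeed, for $\pi$ the multiplicativity $\pi m_B=m_H(\pi\otimes\pi)$, the comultiplicativity $(\pi\otimes\pi)\Delta_B=\Delta_H\pi$, the counit identity $\varepsilon_B=\varepsilon_H\pi$ and the unit identity $\pi u_B=u_H$ are exactly the four relations collected in \eqref{form: piunit}; the reassociator condition $\omega_H(\pi\otimes\pi\otimes\pi)=\omega_B$ is precisely \eqref{form: Omegapi}. Together these say that $\pi$ is a morphism of dual quasi-bialgebras, so this half of the statement needs only a reference to the computations already performed.

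For $\sigma$ I would argue directly, exploiting that $1_R$ is grouplike with trivial Yetter-Drinfeld structure. Since $u_R$ is a morphism in ${}_H^H\mathcal{YD}$ and a coalgebra map, one has $\rho_R(1_R)=1_H\otimes 1_R$, $h\vartriangleright 1_R=\varepsilon_H(h)1_R$, $\Delta_R(1_R)=1_R\otimes 1_R$ and $\varepsilon_R(1_R)=1_\Bbbk$. Feeding $r=1_R$ into the coproduct formula of Theorem \ref{teo: boso} and using the unitality \eqref{eq:qusi-unitairity cocycle} of $\omega_H$ collapses $\Delta_B(1_R\otimes h)$ to $1_R\otimes h_1\otimes 1_R\otimes h_2=(\sigma\otimes\sigma)\Delta_H(h)$, while $\varepsilon_B\sigma=\varepsilon_H$ and $\omega_B(\sigma\otimes\sigma\otimes\sigma)=\omega_H$ are read off instantly from the formulas for $\varepsilon_B$ and $\omega_B$, and $\sigma u_H=u_B$ holds since both send $1_\Bbbk$ to $1_R\otimes 1_H$. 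The only identity that is not a one-line substitution is the multiplicativity $m_B(\sigma\otimes\sigma)=\sigma m_H$, and this is the step I would flag as the genuine (though mild) point of the proof. The cleanest route is to observe that $\sigma=u_B^\prime$ is the unit of the algebra $(B,m_B^\prime,u_B^\prime)$ in $({}_H^H\mathfrak{M}_H^H,\otimes_H,H)$ constructed in Theorem \ref{teo: boso}, and that the unit of any algebra is an algebra map out of the unit object $H$; since the unit-object multiplication is $m_H$ and $m_B=m_B^\prime\circ\chi_{F(R),F(R)}$, this gives $m_B(\sigma(h)\otimes\sigma(k))=1_R\otimes hk=\sigma(hk)$ at once. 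Alternatively one may substitute $r=s=1_R$ directly into the formula for $m_B$, where the trivial action and coaction on $1_R$ force every reassociator factor down to a counit and leave $1_R\cdot_R 1_R\otimes hk$. Either way the five morphism conditions hold, completing the verification that $\sigma$ is a morphism of dual quasi-bialgebras and hence that $(R\#H,H,\sigma,\pi)$ is a dual quasi-bialgebra with projection onto $H$.
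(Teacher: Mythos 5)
Your proposal is correct and follows essentially the same route as the paper: the paper likewise verifies $\pi$ by citing the identities $\omega_H(\pi\otimes\pi\otimes\pi)=\omega_B$ and $\pi m_B=m_H(\pi\otimes\pi)$, $(\pi\otimes\pi)\Delta_B=\Delta_H\pi$, $\varepsilon_B=\varepsilon_H\pi$, $\pi u_B=u_H$ already established in the proof of the bosonization theorem, checks $\pi\sigma=\mathrm{Id}_H$ by the same one-line computation, and dismisses the verification for $\sigma$ as straightforward. Your explicit treatment of $\sigma$ --- exploiting the trivial Yetter--Drinfeld and coalgebra structure of $1_R$, with the multiplicativity handled either by direct substitution or by recognizing $\sigma=u_B'$ as the unit of the algebra $(B,m_B',u_B')$ in $({_{H}^{H}\mathfrak{M}_{H}^{H}},\otimes_H,H)$ --- simply supplies the details the paper omits.
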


\begin{proof}
The proof that $\sigma$ is a morphism of dual quasi-bialgebras is
straightforward.

%
The map $\pi $ is a morphism of dual quasi-bialgebras in view of (\ref{form:
Omegapi}) and (\ref{form: piunit}). Finally, we have $\pi \sigma \left(
h\right) =\pi \left( 1_{R}\#h\right) =\varepsilon _{R}\left( 1_{R}\right)
h=h $.
\end{proof}

Next aim is to characterize dual quasi-bialgebras with a projection onto a
dual quasi-bialgebra with a preantipode as bosonizations.

\begin{lemma}
\label{A in M tre H}Let $\left( A,m_{A},u_{A},\Delta _{A},\varepsilon
_{A},\omega _{A}\right) $ and $\left( H,m_{H},u_{H},\Delta _{H},\varepsilon
_{H},\omega _{H}\right) $ be dual quasi-bialgebras such that $(A,H,\sigma
,\pi )$ is a dual quasi-bialgebra with a projection onto $H$. Then $A$ is an
object in ${_{H}^{H}\mathfrak{M}_{H}^{H}}\ $through 
\begin{align*}
\rho _{A}^{r}(a)& =a_{1}\otimes \pi (a_{2}),\qquad \rho _{A}^{l}(a)=\pi
(a_{1})\otimes a_{2}, \\
\mu _{A}^{r}(a\otimes h)& =a\sigma (h),\qquad \mu _{A}^{l}\left( h\otimes
a\right) =\sigma \left( h\right) a.
\end{align*}
\end{lemma}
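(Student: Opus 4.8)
The plan is to check, structure by structure, that the quadruple $(\rho^{l}_{A},\rho^{r}_{A},\mu^{l}_{A},\mu^{r}_{A})$ makes $A$ an $H$-bimodule in the monoidal category $({}^{H}\mathfrak{M}^{H},\otimes,\Bbbk,{}^{H}a{}^{H},l,r)$, recalling from Remark~\ref{rem: H alg} that $H$ is an algebra in ${}^{H}\mathfrak{M}^{H}$ and that ${}^{H}_{H}\mathfrak{M}^{H}_{H}$ consists of such bimodules. The only inputs will be that $\sigma$ and $\pi$ are morphisms of dual quasi-bialgebras with $\pi\sigma=\mathrm{Id}_{H}$, together with the dual quasi-bialgebra axioms of $A$.

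First I would verify that $(A,\rho^{l}_{A},\rho^{r}_{A})\in{}^{H}\mathfrak{M}^{H}$. Coassociativity of $\rho^{r}_{A}(a)=a_{1}\otimes\pi(a_{2})$ follows from coassociativity of $\Delta_{A}$ and $(\pi\otimes\pi)\Delta_{A}=\Delta_{H}\pi$; its counit property is $\varepsilon_{H}\pi=\varepsilon_{A}$; and the same for $\rho^{l}_{A}$. The bicomodule compatibility $(\rho^{l}_{A}\otimes H)\rho^{r}_{A}=(H\otimes\rho^{r}_{A})\rho^{l}_{A}$ is once more coassociativity of $\Delta_{A}$ transported through $\pi$. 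Next I would show that $\mu^{r}_{A}$ and $\mu^{l}_{A}$ are morphisms in ${}^{H}\mathfrak{M}^{H}$, where $A\otimes H$ carries the codiagonal bicomodule structure. Since $m_{A}$ is a coalgebra map we have $\Delta_{A}(a\sigma(h))=a_{1}\sigma(h_{1})\otimes a_{2}\sigma(h_{2})$, and then $\pi$ multiplicative with $\pi\sigma=\mathrm{Id}_{H}$ gives $\rho^{r}_{A}(a\sigma(h))=a_{1}\sigma(h_{1})\otimes\pi(a_{2})h_{2}$, which is exactly $\mu^{r}_{A}$ applied factorwise to the codiagonal coaction; the remaining three colinearities are identical in spirit.

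The core of the argument is the twisted associativity and unitality of the two actions and their mutual compatibility, all relative to the constraint ${}^{H}a{}^{H}$. Spelling out the right-module axiom $\mu^{r}_{A}\circ(\mu^{r}_{A}\otimes H)=\mu^{r}_{A}\circ(A\otimes m_{H})\circ{}^{H}a{}^{H}_{A,H,H}$ on $(a\otimes h)\otimes k$ turns the left-hand side into $(a\sigma(h))\sigma(k)$ and the right-hand side into
\begin{equation*}
\omega_{H}^{-1}(\pi(a_{1})\otimes h_{1}\otimes k_{1})\,a_{2}\sigma(h_{2}k_{2})\,\omega_{H}(\pi(a_{3})\otimes h_{3}\otimes k_{3}).
\end{equation*}
Because $\sigma$ is multiplicative, $a_{2}\sigma(h_{2}k_{2})=a_{2}\bigl(\sigma(h_{2})\sigma(k_{2})\bigr)$, so this is precisely what quasi-associativity \eqref{eq:quasi-associativity} of $m_{A}$ yields for the triple $a\otimes\sigma(h)\otimes\sigma(k)$, provided one knows $\omega_{A}(a\otimes\sigma(h)\otimes\sigma(k))=\omega_{H}(\pi(a)\otimes h\otimes k)$. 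The left-module axiom and the bimodule compatibility reduce in exactly the same way to \eqref{eq:quasi-associativity} applied to $\sigma(h)\otimes\sigma(k)\otimes a$ and to $\sigma(h)\otimes a\otimes\sigma(k)$, each needing the corresponding specialisation of the reassociator; unitality is just $\sigma(1_{H})=1_{A}$.

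The single identity feeding all three reductions is $\omega_{A}=\omega_{H}\circ(\pi\otimes\pi\otimes\pi)$, and the point I want to emphasise is that this costs nothing: it is literally the condition $\omega_{H}(\pi\otimes\pi\otimes\pi)=\omega_{A}$ built into $\pi$ being a morphism of dual quasi-bialgebras, and together with $\pi\sigma=\mathrm{Id}_{H}$ it instantly produces the three specialisations above (e.g. $\omega_{A}(a\otimes\sigma(h)\otimes\sigma(k))=\omega_{H}(\pi(a)\otimes\pi\sigma(h)\otimes\pi\sigma(k))=\omega_{H}(\pi(a)\otimes h\otimes k)$). Consequently there is no genuine conceptual obstacle; the only real work is the index bookkeeping that matches ${}^{H}a{}^{H}$ against \eqref{eq:quasi-associativity}, and I expect the mixed bimodule axiom $\mu^{r}_{A}(\mu^{l}_{A}\otimes H)=\mu^{l}_{A}(H\otimes\mu^{r}_{A})\,{}^{H}a{}^{H}_{H,A,H}$ to be the most laborious, though still routine, of these verifications.
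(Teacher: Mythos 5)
Your proposal is correct: the paper itself leaves this lemma as a straightforward verification, and your argument is precisely the intended one, reducing all the bicomodule, colinearity, and (twisted) module axioms to the dual quasi-bialgebra axioms of $A$ via the key identity $\omega_{A}=\omega_{H}\circ(\pi\otimes\pi\otimes\pi)$ combined with $\pi\sigma=\mathrm{Id}_{H}$ and the (co)multiplicativity of $\sigma$ and $\pi$. The same identity is the one the paper invokes when it uses this lemma in Theorem \ref{teo: projection}, so there is no discrepancy in approach.
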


\begin{proof}
It is straightforward. %
\end{proof}

\begin{theorem}
\label{teo: projection}Let $\left( A,m_{A},u_{A},\Delta _{A},\varepsilon
_{A},\omega _{A}\right) $ and $\left( H,m_{H},u_{H},\Delta _{H},\varepsilon
_{H},\omega _{H}\right) $ be dual quasi-bialgebras such that $(A,H,\sigma
,\pi )$ is a dual quasi-bialgebra with projection onto $H$. Assume that $H$
has a preantipode $S$. For all $a,b\in A,$ we set $a_{1}\otimes
a_{2}:=\Delta _{A}\left( a\right) $ and $ab=m_{A}\left( a\otimes b\right) $.
Then, for all $a\in A$ we have%
\begin{equation*}
\tau (a):=\omega _{A}[a_{1}\otimes \sigma S\pi (a_{3})_{1}\otimes
a_{4}]a_{2}\sigma S\pi (a_{3})_{2}
\end{equation*}%
and $R:=G\left( A\right) $ is a bialgebra $\left( \left( R,\mu _{R},\rho
_{R}\right) ,m_{R},u_{R},\Delta _{R},\varepsilon _{R},\omega _{R}\right) $
in ${_{H}^{H}\mathcal{YD}}$ where, for all $r,s\in R,h\in H,k\in \Bbbk ,\,$%
we have%
\begin{gather*}
h\vartriangleright r:=\mu _{R}\left( h\otimes r\right) :=\tau \left[ \sigma
\left( h\right) r\right] ,\qquad r_{-1}\otimes r_{0}:=\rho _{R}\left(
r\right) :=\pi \left( r_{1}\right) \otimes r_{2}, \\
m_{R}\left( r\otimes s\right) :=rs,\qquad u_{R}\left( k\right) :=k1_{A}, \\
r^{1}\otimes r^{2}:=\Delta _{R}\left( r\right) :=\tau \left( r_{1}\right)
\otimes \tau \left( r_{2}\right) ,\qquad \varepsilon _{R}\left( r\right)
:=\varepsilon _{A}\left( r\right) .
\end{gather*}%
Moreover there is a dual quasi-bialgebra isomorphism $\epsilon
_{A}:R\#H\rightarrow A$ given by 
\begin{equation*}
\epsilon _{A}\left( r\otimes h\right) =r\sigma \left( h\right) ,\qquad
\epsilon _{A}^{-1}\left( a\right) =\tau \left( a_{1}\right) \otimes \pi
\left( a_{2}\right) .
\end{equation*}
\end{theorem}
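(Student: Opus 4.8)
The plan is to obtain $R$ and all its structure maps by transporting the (co)algebra structure of $A$ through the monoidal equivalence built in Section~\ref{C3}, and to recognise $\epsilon_A$ as the counit isomorphism of the pair $(F,G)$. I would begin with the formula for $\tau$. By Lemma~\ref{A in M tre H}, $A$ is an object of ${_H^H\mathfrak{M}_H^H}$, so Lemma~\ref{lem: tau} applies and yields $\tau(a)=\omega[a_{-1}\otimes S(a_1)_1\otimes a_2]\,a_0S(a_1)_2$. Feeding in the coactions of Lemma~\ref{A in M tre H}, namely $a_{-1}\otimes a_0=\pi(a_1)\otimes a_2$ and $a_0\otimes a_1\otimes a_2=a_2\otimes\pi(a_3)\otimes\pi(a_4)$, the right action $a\cdot h=a\sigma(h)$, and the facts that $\pi\sigma=\mathrm{Id}_H$ and $\omega_A=\omega_H(\pi\otimes\pi\otimes\pi)$ (the latter because $\pi$ is a morphism of dual quasi-bialgebras), one rewrites $\tau$ as $\omega_A[a_1\otimes\sigma S\pi(a_3)_1\otimes a_4]\,a_2\,\sigma S\pi(a_3)_2$, as claimed.

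The conceptual heart is that $A$ is a \emph{bialgebra} relative to the two monoidal structures of Section~\ref{C3}. First, $m_A$ is $H$-balanced: this is precisely the quasi-associativity \eqref{eq:quasi-associativity} read through the constraint of ${^H\mathfrak{M}^H}$, so $m_A$ descends to a morphism $A\otimes_H A\to A$ which, together with the unit $\sigma\colon H\to A$, makes $A$ an algebra in $({_H^H\mathfrak{M}_H^H},\otimes_H,H)$; associativity for the constraint of Lemma~\ref{lem:m4hmonoidal} again reduces to quasi-associativity, the $\omega$-factors cancelling as in Theorem~\ref{teo: boso}. Dually, $\Delta_A$ corestricts to $A\square_H A$ and makes $A$ a coalgebra in $({_H^H\mathfrak{M}_H^H},\square_H,H)$ with counit $\pi$, and the two structures are compatible once $\square_H$ is identified with $\otimes_H$ via the monoidal equivalence $E$ of Lemma~\ref{equi tensH, cot}. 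Since $F$ is monoidal and an equivalence for both tensor products (Lemmas~\ref{lem: equiv-m4h-tens} and~\ref{lem: F monoidal cot}), its quasi-inverse $G$ is monoidal for both, so applying $G$ transports this bialgebra to a bialgebra in $({_H^H\mathcal{YD}},\otimes,\Bbbk)$ supported on $R=G(A)=A^{coH}$. The explicit formulas are then read off from the structure maps of $G$: the Yetter--Drinfeld data $h\vartriangleright r=\tau(\sigma(h)r)$ and $\rho_R(r)=\pi(r_1)\otimes r_2$ are those of the functor $G$, the product is $m_R(r\otimes s)=rs$, and Lemma~\ref{lem: G monoidal} (via $\psi_2^G(M,N)^{-1}(m\square_H n)=\tau(m)\otimes\tau(n)$) gives $\Delta_R(r)=\tau(r_1)\otimes\tau(r_2)$ and $\varepsilon_R=\varepsilon_A$.

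Finally I would identify $R\#H$ with $A$. By Proposition~\ref{restr equi} the counit $\epsilon_A\colon FG(A)\to A$, $\epsilon_A(r\otimes h)=r\sigma(h)$, is an isomorphism in ${_H^H\mathfrak{M}_H^H}$ with inverse $a\mapsto\tau(a_1)\otimes\pi(a_2)$ (Remark~\ref{adjoint}), and by construction $FG(A)=F(R)=R\#H$ as a vector space. It remains to check that $\epsilon_A$ is a morphism of dual quasi-bialgebras. Because $F$ is monoidal for $\otimes_H$ and $\square_H$, the bosonization product and coproduct of Theorem~\ref{teo: boso} are $m_{R\#H}=F(m_R)\circ\varphi_2(R,R)$ and $\Delta_{R\#H}=\psi_2(R,R)^{-1}\circ F(\Delta_R)$; monoidal naturality of $\epsilon$ then forces $\epsilon_A$ to intertwine these with $m_A$ and $\Delta_A$, so $\epsilon_A$ is simultaneously an algebra and a coalgebra map (compatibility with units and counits being immediate). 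For the reassociator one uses that $r\in A^{coH}$ forces $\pi(r)=\varepsilon_A(r)1_H$, whence $\pi\epsilon_A(r\otimes h)=\varepsilon_R(r)h$; combined with $\omega_A=\omega_H(\pi\otimes\pi\otimes\pi)$ this gives $\omega_A(\epsilon_A\otimes\epsilon_A\otimes\epsilon_A)=\omega_{R\#H}$ at once, the easiest of the three compatibilities.

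The main obstacle is the middle step: establishing the bialgebra compatibility between the $\otimes_H$-algebra and $\square_H$-coalgebra structures on $A$ and verifying that $G$ carries it to the braided-bialgebra axioms of ${_H^H\mathcal{YD}}$, i.e. that $\Delta_R$ is multiplicative for the braided product on $R\otimes R$. This is the genuinely non-formal point, dual to the verification that $m_B$ is comultiplicative in Theorem~\ref{teo: boso}; everything else is either transport of structure along the monoidal equivalence or a direct substitution of the formulas of Lemma~\ref{A in M tre H}.
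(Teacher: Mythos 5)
Your overall architecture matches the paper's: derive the formula for $\tau$ from Lemma~\ref{lem: tau} via Lemma~\ref{A in M tre H}, realise $A$ as an algebra in $({_H^H\mathfrak{M}_H^H},\otimes_H,H)$ and a coalgebra in $({_H^H\mathfrak{M}_H^H},\square_H,H)$, transport both structures to $R=G(A)$ through the monoidal functors of Lemmas~\ref{lem: G monoidal} and~\ref{equi tensH, cot}, and recognise $\epsilon_A$ as the equivalence's counit. However, there is a genuine gap, and it is exactly the point you flag as ``the main obstacle'' and then leave unresolved: you never prove that $\Delta_R$ is multiplicative for the braided product on $R\otimes R$, i.e.\ that $(m_R\otimes m_R)\Delta_{R\otimes R}=\Delta_R m_R$. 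Transport of structure does not give this for free: $G$ monoidal carries the $\otimes_H$-algebra $A$ to an algebra in ${_H^H\mathcal{YD}}$ and the $\square_H$-coalgebra $A$ to a coalgebra in ${_H^H\mathcal{YD}}$, but there is no formal ``bialgebra across two different tensor products'' axiom on $A$ that a monoidal functor could transport into the braided compatibility; making that precise is exactly as hard as the missing computation. Worse, your ordering makes the gap fatal to the rest of the argument: you need $R$ to be a bialgebra in ${_H^H\mathcal{YD}}$ \emph{before} Theorem~\ref{teo: boso} can be invoked to produce $R\#H$, so the final step (``$\epsilon_A$ is a morphism of dual quasi-bialgebras $R\#H\to A$'') has no target object until the compatibility is settled.

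The paper closes this gap by running the argument in the opposite order, which is the real trick of the proof. It does not first prove $R$ is a braided bialgebra; instead it transports the \emph{entire} dual quasi-bialgebra structure of $A$ onto $R\otimes H$ along the vector-space isomorphism $\epsilon_A$, so that $B:=F(R)$ with the transported maps $m_{F(R)},u_{F(R)},\Delta_{F(R)},\varepsilon_{F(R)},\omega_{F(R)}$ is automatically a dual quasi-bialgebra (being isomorphic to $A$). Long explicit computations (routed through Lemma~\ref{lax mon} and the structure morphisms $\varphi_2,\psi_2,\vartheta_2$) then identify these transported maps with the bosonization formulas of Theorem~\ref{teo: boso} expressed in terms of $m_R,\Delta_R$, etc. Only at the very end is the braided compatibility of $R$ deduced, essentially for free: since $\Delta_B$ is known to be multiplicative, one writes
\begin{equation*}
\Delta _{B}[(r\otimes 1_{H})\cdot _{B}(s\otimes 1_{H})]=(r\otimes
1_{H})_{1}\cdot _{B}(s\otimes 1_{H})_{1}\otimes (r\otimes 1_{H})_{2}\cdot
_{B}(s\otimes 1_{H})_{2}
\end{equation*}
and applies $R\otimes \varepsilon _{H}\otimes R\otimes \varepsilon _{H}$ to both sides, which yields precisely $(m_{R}\otimes m_{R})\Delta _{R\otimes R}=\Delta _{R}m_{R}$ with $\Delta_{R\otimes R}$ as in \eqref{form:DeltaRotR}. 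So the isomorphism with $A$ is the \emph{input} to the braided bialgebra axiom, not its consequence. To repair your proposal you would either have to adopt this reversed order, or prove the compatibility on $R$ by a direct computation analogous to (and roughly as long as) the verification that $m_B$ is comultiplicative in Theorem~\ref{teo: boso} — neither of which your sketch supplies.
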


\begin{proof}
We have%
\begin{equation*}
\rho _{A}^{r}\left( a_{1}\right) \otimes a_{2}=a_{1}\otimes \pi
(a_{2})\otimes a_{3}=a_{1}\otimes \rho _{A}^{l}(a_{2})
\end{equation*}%
so that $\Delta _{A}\left( a\right) \in A{{\square _{H}A}}$ for all $a\in A$%
. Let $\overline{\Delta }_{A}:A\rightarrow A{{\square _{H}A}}$ be the
corestriction of $\Delta _{A}$ to $A{{\square _{H}A}}$. Using that $\omega
_{H}=\omega _{A}\left( \pi \otimes \pi \otimes \pi \right) ,$ we obtain%
\begin{equation*}
m_{A}\circ \left( A\otimes \mu _{A}^{l}\right) \circ {^{H}}%
a_{A,H,A}^{H}=m_{A}\circ \left( \mu _{A}^{r}\otimes A\right) .
\end{equation*}%
Denote by $\chi _{X,Y}:X\otimes Y\rightarrow X\otimes _{H}Y$ the canonical
projection, for all $X,Y$ objects in ${_{H}^{H}\mathfrak{M}_{H}^{H}.}$

Since $\left( A\otimes _{H}A,\chi _{A,A}\right) $ is the coequalizer of $%
\left( \left( A\otimes \mu _{A}^{l}\right) {^{H}}a_{A,H,A}^{H},\left( \mu
_{A}^{r}\otimes A\right) \right) ,$ we get that $m_{A}$ quotient to a map $%
m_{A}^{\prime }:A\otimes _{H}A\rightarrow A$ such that $m_{A}^{\prime }\circ
\chi _{A,A}=m_{A}.$ Consider the canonical map $\vartheta _{2}(M,N):M\square
_{H}N\rightarrow M\otimes _{H}N$ of Lemma \ref{equi tensH, cot} defined by $%
\vartheta _{2}(M,N)(m\square _{H}n):=\tau (m)\otimes _{H}n$ and let $%
\overline{m}_{A}:=m_{A}^{\prime }\circ \vartheta _{2}(A,A)$. Then 
\begin{equation*}
\overline{m}_{A}\left( a\square _{H}b\right) =m_{A}^{\prime }\left( \tau
(a)\otimes _{H}b\right) =\tau (a)b.
\end{equation*}%
Note that, by Lemma \ref{lem: tau}, the map $\tau :A\rightarrow A^{coH}$ is
defined, for all $a\in A$, by 
\begin{eqnarray*}
\tau (a) &=&\omega _{H}[a_{-1}\otimes S(a_{1})_{1}\otimes
a_{2}]a_{0}S(a_{1})_{2} \\
&=&\omega _{H}[\pi \left( a_{1}\right) \otimes S\pi (a_{3})_{1}\otimes \pi
\left( a_{4}\right) ]a_{2}\sigma \left[ S\pi (a_{3})_{2}\right] \\
&=&\omega _{H}[\pi \left( a_{1}\right) \otimes \pi \sigma \left[ S\pi
(a_{3})_{1}\right] \otimes \pi \left( a_{4}\right) ]a_{2}\sigma \left[ S\pi
(a_{3})_{2}\right] \\
&=&\omega _{A}[a_{1}\otimes \sigma \left[ S\pi (a_{3})_{1}\right] \otimes
a_{4}]a_{2}\sigma \left[ S\pi (a_{3})_{2}\right] \\
&=&\omega _{A}[a_{1}\otimes \sigma S\pi (a_{3})_{1}\otimes a_{4}]a_{2}\sigma
S\pi (a_{3})_{2}.
\end{eqnarray*}%
It is straightforward to prove that $\left( A,\overline{\Delta }_{A},%
\overline{\varepsilon }_{A}:=\pi \right) $ is a coalgebra in $({{_{H}^{H}%
\mathfrak{M}_{H}^{H},\square _{H},H).}}$

One checks that $\left( A,m_{A}^{\prime },\sigma \right) $ is an algebra in $%
({{_{H}^{H}\mathfrak{M}_{H}^{H},\otimes }}_{H}{,H).}$

Now, by applying \cite[Proposition 1.5]{AMS-Amonoidal} to the monoidal
functor $E:({{{_{H}^{H}\mathfrak{M}_{H}^{H}}}},\otimes _{H},H)\rightarrow ({%
_{H}^{H}\mathfrak{M}_{H}^{H},\square }_{H},H)$ of Lemma \ref{equi tensH, cot}
we have that $\left( E\left( A\right) ,m_{E\left( A\right) },u_{E\left(
A\right) }\right) $ is an algebra in $({_{H}^{H}\mathfrak{M}_{H}^{H},\square 
}_{H},H)$ where%
\begin{equation*}
m_{E\left( A\right) }=E\left( m_{A}^{\prime }\right) \circ \vartheta
_{2}(A,A)\qquad \text{and}\qquad u_{E\left( A\right) }=E\left( \sigma
\right) \circ \vartheta _{0}.
\end{equation*}%
It is clear that $\left( E\left( A\right) ,m_{E\left( A\right) },u_{E\left(
A\right) }\right) =(A,\overline{m}_{A},\overline{u}_{A}=\sigma ).$ Thus $%
\left( A,\overline{m}_{A},\overline{u}_{A}\right) $ is an algebra in $({{%
_{H}^{H}\mathfrak{M}_{H}^{H},\square _{H}},H).}$ %
%

Now, we apply \cite[Proposition 1.5]{AMS-Amonoidal} to the functor $G:{%
_{H}^{H}\mathfrak{M}_{H}^{H}}\rightarrow {_{H}^{H}\mathcal{YD}}$ of Lemma %
\ref{lem: G monoidal}. Set $R:=G\left( A\right) =A^{coH}$. Then $R$ is both
an algebra and a coalgebra in ${_{H}^{H}\mathcal{YD}}$ through%
\begin{eqnarray*}
m_{R} &:&=G\left( \overline{m}_{A}\right) \circ \psi _{2}^{G}(A,A),\qquad
u_{R}:=G\left( \overline{u}_{A}\right) \circ \psi _{0}^{G}, \\
\Delta _{R} &:&=\psi _{2}^{G}(A,A)^{-1}\circ G\left( \overline{\Delta }%
_{A}\right) ,\qquad \varepsilon _{R}:=\left( \psi _{0}^{G}\right) ^{-1}\circ
G\left( \overline{\varepsilon }_{A}\right) .
\end{eqnarray*}%
Explicitly, for all $r,s\in R,k\in \Bbbk $%
\begin{equation*}
m_{R}\left( r\otimes s\right) =\tau \left( rs_{-1}\right) s_{0}\overset{(\ref%
{Tau mh simple})}{=}r\varepsilon _{H}\left( s_{-1}\right) s_{0}=rs,
\end{equation*}%
\begin{equation*}
u_{R}\left( k\right) =G\left( \overline{u}_{A}\right) \psi _{0}^{G}\left(
k\right) =\overline{u}_{A}\left( k1_{H}\right) =k\sigma \left( 1_{H}\right)
=k1_{A},
\end{equation*}%
\begin{equation*}
\Delta _{R}\left( r\right) =\tau \left( r_{1}\right) \otimes \tau \left(
r_{2}\right) ,
\end{equation*}%
\begin{equation*}
\varepsilon _{R}\left( r\right) =\left( \psi _{0}^{G}\right) ^{-1}G\left( 
\overline{\varepsilon }_{A}\right) \left( r\right) =\left( \psi
_{0}^{G}\right) ^{-1}\pi \left( r\right) =\pi \left( r\right) =\varepsilon
_{A}\left( r_{1}\right) \pi \left( r_{2}\right) =\varepsilon _{A}\left(
r_{0}\right) r_{1}=\varepsilon _{A}\left( r\right) 1_{H}.
\end{equation*}%
We will use the following notations for all $r,s\in R$,%
\begin{equation*}
r\cdot _{R}s:=m_{R}\left( r\otimes s\right) ,\qquad 1_{R}:=u_{R}\left(
1_{\Bbbk }\right) .
\end{equation*}%
Now, by \cite[Corollary 1.7]{AMS-Amonoidal}, we have that $\epsilon
_{A}:FG\left( A\right) \rightarrow A$ is an algebra and a coalgebra
isomorphism in $({_{H}^{H}\mathfrak{M}_{H}^{H},\square }_{H},H)$. Let us
write the algebra and coalgebra structure of $FG\left( A\right) =R\otimes H.$
By construction (\cite[Proposition 1.5]{AMS-Amonoidal}), we have%
\begin{eqnarray*}
\overline{m}_{F\left( R\right) } &:&=F\left( m_{R}\right) \circ \psi
_{2}(R,R):F\left( R\right) \square _{H}F\left( R\right) \rightarrow F\left(
R\right) , \\
\overline{u}_{F\left( R\right) } &:&=F\left( u_{R}\right) \circ \psi
_{0}:H\rightarrow F\left( R\right) , \\
\overline{\Delta }_{F\left( R\right) } &:&=\psi _{2}(R,R)^{-1}\circ F\left(
\Delta _{R}\right) :F\left( R\right) \rightarrow F\left( R\right) \square
_{H}F\left( R\right) , \\
\overline{\varepsilon }_{F\left( R\right) } &:&=\psi _{0}^{-1}\circ F\left(
\varepsilon _{R}\right) :F\left( R\right) \rightarrow H.
\end{eqnarray*}%
Explicitly we have 
\begin{equation*}
\overline{m}_{F\left( R\right) }\left( \left( r\otimes h\right) \square
_{H}\left( s\otimes k\right) \right) =\omega (r_{-1}\otimes s_{-1}\otimes
k_{1})r_{0}\varepsilon (h)\cdot _{R}s_{0}\otimes k_{2},
\end{equation*}%
\begin{equation*}
\overline{u}_{F\left( R\right) }\left( h\right) =F\left( u_{R}\right) \psi
_{0}\left( h\right) =1_{R}\otimes h,
\end{equation*}%
\begin{equation*}
\overline{\Delta }_{F\left( R\right) }\left( r\otimes h\right) =\omega
^{-1}(r_{-1}^{1}\otimes r_{-2}^{2}\otimes h_{1})(r_{0}^{1}\otimes
r_{-1}^{2}h_{2})\square _{H}(r_{0}^{2}\otimes h_{3}),
\end{equation*}%
\begin{equation*}
\overline{\varepsilon }_{F\left( R\right) }\left( r\otimes h\right) =\psi
_{0}^{-1}F\left( \varepsilon _{R}\right) \left( r\otimes h\right) =\psi
_{0}^{-1}\left( \varepsilon _{R}\left( r\right) \otimes h\right)
=\varepsilon _{R}\left( r\right) h.
\end{equation*}%
In view of \ref{constr cot}, the forgetful functor $({{_{H}^{H}\mathfrak{M}%
_{H}^{H},\square }}_{H},H){\rightarrow }({{^{H}\mathfrak{M}^{H},\square }}%
_{H},H)$ is a strict monoidal functor. Being $\epsilon _{A}:(F\left(
R\right) ,\overline{\Delta }_{F\left( R\right) },\overline{\varepsilon }%
_{F\left( R\right) })\rightarrow (A,\overline{\Delta }_{A},\overline{%
\varepsilon }_{A}=\pi )$ a coalgebra morphism in $({{_{H}^{H}\mathfrak{M}%
_{H}^{H},\square _{H},H)}}$, we have that $\epsilon _{A}:(F\left( R\right) ,%
\overline{\Delta }_{F\left( R\right) },\overline{\varepsilon }_{F\left(
R\right) })\rightarrow (A,\overline{\Delta }_{A},\overline{\varepsilon }%
_{A}=\pi )$ is a coalgebra morphism in $({{^{H}\mathfrak{M}^{H},\square }}%
_{H},H).$ Apply Lemma \ref{lax mon} to the case $(\mathcal{M},\otimes ,%
\mathbf{1})=({{\mathfrak{M}}},\otimes ,\mathbf{\Bbbk })$ and $C=H$. Let $%
j_{X,Y}:X\square _{H}Y\rightarrow X\otimes Y$ be the canonical map. Then $%
\epsilon _{A}:(F\left( R\right) ,j_{F\left( R\right) ,F\left( R\right)
}\circ \overline{\Delta }_{F\left( R\right) },\varepsilon _{H}\circ 
\overline{\varepsilon }_{F\left( R\right) })\rightarrow (A,j_{A,A}\circ 
\overline{\Delta }_{A},\varepsilon _{H}\circ \overline{\varepsilon }_{A})$
is a coalgebra morphism in $({{\mathfrak{M}}},\otimes ,\mathbf{\Bbbk })$. In
other words it is an ordinary coalgebra morphism. Note that $(A,j_{A,A}\circ 
\overline{\Delta }_{A},\varepsilon _{H}\circ \overline{\varepsilon }%
_{A})=(A,\Delta _{A},\varepsilon _{A}).$ Set $(\Delta _{F\left( R\right)
},\varepsilon _{F\left( R\right) }):=(j_{F\left( R\right) ,F\left( R\right)
}\circ \overline{\Delta }_{F\left( R\right) },\varepsilon _{H}\circ 
\overline{\varepsilon }_{F\left( R\right) }).$ Let us compute explicitly
these maps. We have%
\begin{eqnarray*}
\Delta _{F\left( R\right) }\left( r\otimes h\right) &=&\left( j_{F\left(
R\right) ,F\left( R\right) }\circ \overline{\Delta }_{F\left( R\right)
}\right) \left( r\otimes h\right) =\omega ^{-1}(r_{-1}^{1}\otimes
r_{-2}^{2}\otimes h_{1})(r_{0}^{1}\otimes r_{-1}^{2}h_{2})\otimes
(r_{0}^{2}\otimes h_{3}), \\
\varepsilon _{F\left( R\right) }\left( r\otimes h\right) &=&\left(
\varepsilon _{H}\circ \overline{\varepsilon }_{F\left( R\right) }\right)
\left( r\otimes h\right) =\varepsilon _{R}\left( r\right) \varepsilon
_{H}\left( h\right) .
\end{eqnarray*}%
Thus $\epsilon _{A}:(F\left( R\right) ,\Delta _{F\left( R\right)
},\varepsilon _{F\left( R\right) })\rightarrow (A,\Delta _{A},\varepsilon
_{A})$ is an ordinary coalgebra morphism.

Being $\epsilon _{A}:(F\left( R\right) ,\overline{m}_{F\left( R\right) },%
\overline{u}_{F\left( R\right) })\rightarrow (A,\overline{m}_{A},\overline{u}%
_{A}=\sigma )$ an algebra morphism in $({{_{H}^{H}\mathfrak{M}%
_{H}^{H},\square }}_{H},H{)}$, then, in view of Lemma \ref{equi tensH, cot}, 
\begin{equation*}
\epsilon _{A}:(F\left( R\right) ,\Xi \left( \overline{m}_{F\left( R\right)
}\right) \circ \gamma _{2}(F\left( R\right) ,F\left( R\right) ),\Xi \left( 
\overline{u}_{F\left( R\right) }\right) \circ \gamma _{0})\rightarrow (A,\Xi
\left( \overline{m}_{A}\right) \circ \gamma _{2}(A,A),\Xi \left( \overline{u}%
_{A}\right) \circ \gamma _{0})
\end{equation*}%
is an algebra morphism in $({{_{H}^{H}\mathfrak{M}_{H}^{H},\otimes }}_{H},H)$%
. Note that%
\begin{eqnarray*}
\Xi \left( \overline{m}_{A}\right) \circ \gamma _{2}(A,A) &=&\overline{m}%
_{A}\circ \vartheta _{2}^{-1}(A,A)=m_{A}^{\prime }, \\
\Xi \left( \overline{u}_{A}\right) \circ \gamma _{0} &=&\overline{u}%
_{A}=\sigma
\end{eqnarray*}%
so that 
\begin{equation*}
(A,\Xi \left( \overline{m}_{A}\right) \circ \gamma _{2}(A,A),\Xi \left( 
\overline{u}_{A}\right) \circ \gamma _{0})=\left( A,m_{A}^{\prime },\sigma
\right) .
\end{equation*}%
Set $\left( m_{F\left( R\right) }^{\prime },u_{F\left( R\right) }^{\prime
}\right) :=\left( \Xi \left( \overline{m}_{F\left( R\right) }\right) \circ
\gamma _{2}(F\left( R\right) ,F\left( R\right) ),\Xi \left( \overline{u}%
_{F\left( R\right) }\right) \circ \gamma _{0}\right) .$ We have%
\begin{eqnarray*}
&&m_{F\left( R\right) }^{\prime }\left( \left( r\otimes h\right) \otimes
_{H}\left( s\otimes k\right) \right) \\
&=&\left[ \Xi \left( \overline{m}_{F\left( R\right) }\right) \circ \gamma
_{2}(F\left( R\right) ,F\left( R\right) )\right] \left( \left( r\otimes
h\right) \otimes _{H}\left( s\otimes k\right) \right) \\
&=&\overline{m}_{F\left( R\right) }\left[ \left( r\otimes h\right)
_{0}\left( s\otimes k\right) _{-1}\otimes _{H}\left( r\otimes h\right)
_{1}\left( s\otimes k\right) _{0}\right] \\
&=&\overline{m}_{F\left( R\right) }\left[ \left( r\otimes h_{1}\right)
\left( s_{-1}k_{1}\right) \otimes _{H}h_{2}\left( s_{0}\otimes k_{2}\right) %
\right] \\
&=&\omega ^{-1}\left[ r_{-1}\otimes h_{1}\otimes s_{-2}k_{1}\right] 
\overline{m}_{F\left( R\right) }\left[ r_{0}\otimes \left[ h_{2}\left(
s_{-1}k_{2}\right) \right] \otimes _{H}h_{3}\left( s_{0}\otimes k_{3}\right) %
\right] \\
&=&\left[ 
\begin{array}{c}
\omega ^{-1}\left[ r_{-1}\otimes h_{1}\otimes s_{-3}k_{1}\right] \omega
(h_{3}\otimes s_{-1}\otimes k_{3})\omega ^{-1}((h_{4}\vartriangleright
s_{0})_{-1}\otimes h_{5}\otimes k_{4}) \\ 
\overline{m}_{F\left( R\right) }\left[ r_{0}\otimes \left[ h_{2}\left(
s_{-2}k_{2}\right) \right] \otimes _{H}[(h_{4}\vartriangleright
s_{0})_{0}\otimes h_{6}k_{5}\right]%
\end{array}%
\right] \\
&=&\left[ 
\begin{array}{c}
\omega ^{-1}\left[ r_{-2}\otimes h_{1}\otimes s_{-3}k_{1}\right] \omega
(h_{3}\otimes s_{-1}\otimes k_{3})\omega ^{-1}((h_{4}\vartriangleright
s_{0})_{-2}\otimes h_{5}\otimes k_{4}) \\ 
\omega (r_{-1}\otimes (h_{4}\vartriangleright s_{0})_{-1}\otimes
h_{6}k_{5})r_{0}\varepsilon _{H}\left[ h_{2}\left( s_{-2}k_{2}\right) \right]
\cdot _{R}(h_{4}\vartriangleright s_{0})_{0}\otimes h_{7}k_{6}%
\end{array}%
\right] \\
&=&\left[ 
\begin{array}{c}
\omega ^{-1}\left[ r_{-2}\otimes h_{1}\otimes s_{-2}k_{1}\right] \omega
(h_{2}\otimes s_{-1}\otimes k_{2}) \\ 
\omega ^{-1}((h_{3}\vartriangleright s_{0})_{-2}\otimes h_{4}\otimes
k_{3})\omega (r_{-1}\otimes (h_{3}\vartriangleright s_{0})_{-1}\otimes
h_{5}k_{4}) \\ 
r_{0}\cdot _{R}(h_{3}\vartriangleright s_{0})_{0}\otimes h_{6}k_{5}%
\end{array}%
\right]
\end{eqnarray*}%
so that%
\begin{eqnarray*}
&&m_{F\left( R\right) }^{\prime }\left( \left( r\otimes h\right) \otimes
_{H}\left( s\otimes k\right) \right) \\
&=&\left[ 
\begin{array}{c}
\omega ^{-1}\left[ r_{-2}\otimes h_{1}\otimes s_{-2}k_{1}\right] \omega
(h_{2}\otimes s_{-1}\otimes k_{2}) \\ 
\omega ^{-1}((h_{3}\vartriangleright s_{0})_{-2}\otimes h_{4}\otimes
k_{3})\omega (r_{-1}\otimes (h_{3}\vartriangleright s_{0})_{-1}\otimes
h_{5}k_{4}) \\ 
r_{0}\cdot _{R}(h_{3}\vartriangleright s_{0})_{0}\otimes h_{6}k_{5}%
\end{array}%
\right] .
\end{eqnarray*}%
Moreover%
\begin{equation*}
u_{F\left( R\right) }^{\prime }\left( h\right) =\left[ \Xi \left( \overline{u%
}_{F\left( R\right) }\right) \circ \gamma _{0}\right] \left( h\right) =%
\overline{u}_{F\left( R\right) }\left( h\right) =1_{R}\otimes h.
\end{equation*}%
Apply Lemma \ref{lax mon} to the case $(\mathcal{M},\otimes ,\mathbf{1})=({{%
^{H}\mathfrak{M}^{H}}},\otimes ,\mathbf{\Bbbk })$ and $A=H$. Then 
\begin{equation*}
\epsilon _{A}:(F\left( R\right) ,m_{F\left( R\right) }^{\prime }\circ \chi
_{F\left( R\right) ,F\left( R\right) },u_{F\left( R\right) }^{\prime }\circ
u_{H})\rightarrow \left( A,m_{A}^{\prime }\circ \chi _{A,A},\sigma \circ
u_{H}\right)
\end{equation*}%
is an algebra homomorphism in $({{^{H}\mathfrak{M}^{H}}},\otimes ,\mathbf{%
\Bbbk })$. Note that $\left( A,m_{A}^{\prime }\circ \chi _{A,A},\sigma \circ
u_{H}\right) =\left( A,m_{A},u_{A}\right) $. Moreover, if we set $%
(m_{F\left( R\right) },u_{F\left( R\right) }):=(m_{F\left( R\right)
}^{\prime }\circ \chi _{F\left( R\right) ,F\left( R\right) },u_{F\left(
R\right) }^{\prime }\circ u_{H})$, we get 
\begin{eqnarray*}
&&m_{F\left( R\right) }\left( \left( r\otimes h\right) \otimes \left(
s\otimes k\right) \right) \\
&=&\left[ 
\begin{array}{c}
\omega ^{-1}\left[ r_{-2}\otimes h_{1}\otimes s_{-2}k_{1}\right] \omega
(h_{2}\otimes s_{-1}\otimes k_{2}) \\ 
\omega ^{-1}((h_{3}\vartriangleright s_{0})_{-2}\otimes h_{4}\otimes
k_{3})\omega (r_{-1}\otimes (h_{3}\vartriangleright s_{0})_{-1}\otimes
h_{5}k_{4}) \\ 
r_{0}\cdot _{R}(h_{3}\vartriangleright s_{0})_{0}\otimes h_{6}k_{5}%
\end{array}%
\right] .
\end{eqnarray*}%
Moreover%
\begin{equation*}
u_{F\left( R\right) }\left( k\right) =1_{R}\otimes k.
\end{equation*}%
Thus $\epsilon _{A}:(F\left( R\right) ,m_{F\left( R\right) },u_{F\left(
R\right) })\rightarrow \left( A,m_{A},u_{A}\right) $ is an algebra
isomorphism in $({{^{H}\mathfrak{M}^{H}}},\otimes ,\mathbf{\Bbbk })$ and $%
\epsilon _{A}:(F\left( R\right) ,\Delta _{F\left( R\right) },\varepsilon
_{F\left( R\right) })\rightarrow (A,\Delta _{A},\varepsilon _{A})$ is an
ordinary coalgebra isomorphism. Thus 
\begin{eqnarray*}
m_{A}\circ \left( \epsilon _{A}\otimes \epsilon _{A}\right) &=&\epsilon
_{A}\circ m_{F\left( R\right) },\qquad \epsilon _{A}\circ u_{F\left(
R\right) }=u_{A}, \\
\left( \epsilon _{A}\otimes \epsilon _{A}\right) \circ \Delta _{F\left(
R\right) } &=&\Delta _{A}\circ \epsilon _{A},\qquad \varepsilon _{A}\circ
\epsilon _{A}=\varepsilon _{F\left( R\right) },
\end{eqnarray*}%
so that $m_{F\left( R\right) },u_{F\left( R\right) },\Delta _{F\left(
R\right) },\varepsilon _{F\left( R\right) }$ are exactly the morphisms
induced by $m_{A},u_{A},\Delta _{A},\varepsilon _{A}$ via the vector space
isomorphism $\epsilon _{A}:F\left( R\right) \rightarrow A.$ Let $\omega
_{F\left( R\right) }$ be the map induced by $\omega _{A}$ via the vector
space isomorphism $\epsilon _{A}$ i.e.%
\begin{equation*}
\omega _{F\left( R\right) }:=\omega _{A}\circ \left( \epsilon _{A}\otimes
\epsilon _{A}\otimes \epsilon _{A}\right) :F\left( R\right) \otimes F\left(
R\right) \otimes F\left( R\right) \rightarrow \Bbbk .
\end{equation*}%
Then $\epsilon _{A}:\left( F\left( R\right) ,\Delta _{F\left( R\right)
},\varepsilon _{F\left( R\right) },m_{F\left( R\right) },u_{F\left( R\right)
},\omega _{F\left( R\right) }\right) \rightarrow \left( A,m_{A},u_{A},\Delta
_{A},\varepsilon _{A},\omega _{A}\right) $ is clearly an isomorphism of dual
quasi-bialgebras. Since, for all $r\in R,$ we have $\pi \left( r\right)
=\varepsilon _{A}\left( r_{1}\right) \pi \left( r_{2}\right) =\varepsilon
_{A}\left( r\right) 1_{H}$, then, for $r,s,t\in R,h,k,l\in H$, we get%
\begin{eqnarray*}
&&\omega _{F\left( R\right) }\left[ \left( r\otimes h\right) \otimes \left(
s\otimes k\right) \otimes \left( t\otimes l\right) \right] =\omega
_{A}\left( r\sigma \left( h\right) \otimes s\sigma \left( k\right) \otimes
t\sigma \left( l\right) \right) \\
&=&\omega _{H}\left[ \pi \left( r\sigma \left( h\right) \right) \otimes \pi
\left( s\sigma \left( k\right) \right) \otimes \pi \left( t\sigma \left(
l\right) \right) \right] =\omega _{H}\left[ \pi \left( r\right) h\otimes \pi
\left( s\right) k\otimes \pi \left( t\right) l\right] \\
&=&\omega _{H}\left[ \varepsilon _{A}\left( r\right) h\otimes \varepsilon
_{A}\left( s\right) k\otimes \varepsilon _{A}\left( t\right) l\right]
=\varepsilon _{A}\left( r\right) \varepsilon _{A}\left( t\right) \varepsilon
_{A}\left( s\right) \omega _{H}\left( h\otimes k\otimes l\right)
\end{eqnarray*}%
so that%
\begin{equation*}
\omega _{F\left( R\right) }\left[ \left( r\otimes h\right) \otimes \left(
s\otimes k\right) \otimes \left( t\otimes l\right) \right] =\varepsilon
_{A}\left( r\right) \varepsilon _{A}\left( t\right) \varepsilon _{A}\left(
s\right) \omega _{H}\left( h\otimes k\otimes l\right) .
\end{equation*}%
Note that $\left( F\left( R\right) ,\Delta _{F\left( R\right) },\varepsilon
_{F\left( R\right) },m_{F\left( R\right) },u_{F\left( R\right) },\omega
_{F\left( R\right) }\right) =R\#H$ once proved that $\left(
R,m_{R},u_{R},\Delta _{R},\varepsilon _{R}\right) $ is a bialgebra in the
monoidal category $\left( {_{H}^{H}\mathcal{YD}},\otimes ,\Bbbk \right) .$
It remains to prove that $m_{R}$ and $u_{R}$ are coalgebra maps. Since ${%
_{H}^{H}\mathcal{YD}}$ is a pre-braided monoidal category and $(R,\Delta
_{R},\varepsilon _{R})$ is a coalgebra in this category, then we can define
two morphisms $\Delta _{R\otimes R}$ and $\varepsilon _{R\otimes R}$ in ${%
_{H}^{H}\mathcal{YD}}$ such that $(R\otimes R,\Delta _{R\otimes
R},\varepsilon _{R\otimes R})$ is a coalgebra in ${_{H}^{H}\mathcal{YD}}$
too. We have%
\begin{eqnarray*}
\Delta _{R\otimes R} &:&=a_{R,R,R\otimes R}^{-1}\circ (R\otimes
a_{R,R,R})\circ (R\otimes c_{R,R}\otimes R)\circ (R\otimes
a_{R,R,R}^{-1})\circ a_{R,R,R\otimes R}\circ (\Delta _{R}\otimes \Delta
_{R}), \\
\varepsilon _{R\otimes R} &:&=\varepsilon _{R}\otimes \varepsilon _{R}.
\end{eqnarray*}%
Explicitly $\Delta _{R\otimes R}$ satisfies \eqref{form:DeltaRotR}.
In order to prove that $m_{R}$ is a morphism of coalgebras in ${_{H}^{H}%
\mathcal{YD}}$, we have to check the following equality%
\begin{equation*}
(m_{R}\otimes m_{R})\Delta _{R\otimes R}=\Delta _{R}m_{R}.
\end{equation*}%
Since we already obtained that $B:=F\left( R\right) $ is a dual
quasi-bialgebra, we know that 
\begin{equation*}
\Delta _{B}[(r\otimes 1_{H})\cdot _{B}(s\otimes 1_{H})]=(r\otimes
1_{H})_{1}\cdot _{B}(s\otimes 1_{H})_{1}\otimes (r\otimes 1_{H})_{2}\cdot
_{B}(s\otimes 1_{H})_{2}.
\end{equation*}%
By applying $R\otimes \varepsilon _{H}\otimes R\otimes \varepsilon _{H}$ on
both sides we get:%
\begin{eqnarray*}
&&(r\cdot _{R}s)^{1}\otimes (r\cdot _{R}s)^{2} \\
&=&\left[ 
\begin{array}{c}
\omega ^{-1}((r^{1})_{-2}\otimes (r_{-1}^{2})_{1}\otimes \left( s^{1}\right)
_{-2}\left( s_{-1}^{2}\right) _{1}) \\ 
\omega (\left( r_{-1}^{2}\right) _{2}\otimes \left( s^{1}\right)
_{-1}\otimes \left( s_{-1}^{2}\right) _{2}) \\ 
\omega ^{-1}[(\left( r_{-1}^{2}\right) _{3}\vartriangleright \left(
s^{1}\right) _{0})_{-2}\otimes \left( r_{-1}^{2}\right) _{4}\otimes \left(
s_{-1}^{2}\right) _{3}] \\ 
\omega (\left( r^{1}\right) _{-1}\otimes (\left( r_{-1}^{2}\right)
_{3}\vartriangleright \left( s^{1}\right) _{0})_{-1}\otimes \left(
r_{-1}^{2}\right) _{5}\left( s_{-1}^{2}\right) _{4}) \\ 
\left( r^{1}\right) _{0}\cdot _{R}(\left( r_{-1}^{2}\right)
_{3}\vartriangleright \left( s^{1}\right) _{0})_{0}%
\end{array}%
\right] \otimes (r_{0}^{2}\cdot _{R}s_{0}^{2}) \\
&=&\left[ 
\begin{array}{c}
\omega ^{-1}(r_{-2}^{1}\otimes r_{-5}^{2}\otimes s_{-2}^{1}s_{-4}^{2})\omega
(r_{-4}^{2}\otimes s_{-1}^{1}\otimes s_{-3}^{2}) \\ 
\omega ^{-1}[(r_{-3}^{2}\vartriangleright s_{0}^{1})_{-2}\otimes
r_{-2}^{2}\otimes s_{-2}^{2}] \\ 
\omega (r_{-1}^{1}\otimes (r_{-3}^{2}\vartriangleright
s_{0}^{1})_{-1}\otimes r_{-1}^{2}s_{-1}^{2}) \\ 
r_{0}^{1}\cdot _{R}(r_{-3}^{2}\vartriangleright s_{0}^{1})_{0}\otimes
(r_{0}^{2}\cdot _{R}s_{0}^{2})%
\end{array}%
\right] \\
&=&(m_{R}\otimes m_{R})\Delta _{R\otimes R}(r\otimes s).
\end{eqnarray*}%
The compatibility of $m_{R}$ with $\varepsilon _{R}$ and the fact 
that $u_{R}$ is a coalgebra morphism can be easily proved.%
\end{proof}

\section{Applications\label{C5}}

Here we collect some applications of the results of the previous sections.

\subsection{The associated graded coalgebra}

\begin{example}
Let $(A,m_{A},u_{A},\Delta _{A},\varepsilon _{A},\omega _{A})$ be a dual
quasi-bialgebra with the dual Chevalley property i.e. such that the
coradical $H$ of $A$ is a dual quasi-subbialgebra of $A$. Since $A$ is an
ordinary coalgebra, we can consider the associated graded coalgebra 
\begin{equation*}
\mathrm{gr}A:=\bigoplus\limits_{n\in \mathbb{N}}\mathrm{gr}^{n}A\qquad \text{%
where }\mathrm{gr}^{n}A:=\frac{A_{n}}{A_{n-1}}.
\end{equation*}%
Here $A_{-1}:=\{0\}$ and, for all $n\geq 0,$ $A_{n}$ is the $n$th term of
the coradical filtration of $A$. The coalgebra structure of $\mathrm{gr}A$
is given as follows. The $n$th graded component of the counit is the map $%
\varepsilon _{\mathrm{gr}A}^{n}:A_{n}/A_{n-1}\rightarrow \Bbbk $ defined by
setting%
\begin{equation*}
\varepsilon _{\mathrm{gr}A}^{n}(x+A_{n-1})=\delta _{n,0}\varepsilon _{A}(x).
\end{equation*}%
The $n$th graded component of comultiplication is the map 
\begin{equation*}
\Delta _{\mathrm{gr}A}^{n}:\mathrm{gr}^{a+b}A\rightarrow
\bigoplus\limits_{a+b=n,a,b\geq 0}\mathrm{gr}^{a}A\otimes \mathrm{gr}^{b}A
\end{equation*}%
defined as the diagonal map of the family $(\Delta _{\mathrm{gr}%
A}^{a,b})_{a+b=n,a,b\geq 0}$ where 
\begin{equation*}
\Delta _{\mathrm{gr}A}^{a,b}:\mathrm{gr}^{a+b}A\rightarrow \mathrm{gr}%
^{a}A\otimes \mathrm{gr}^{b}A,\Delta _{\mathrm{gr}%
A}^{a,b}(x+A_{a+b-1})=(x_{1}+A_{a-1})\otimes (x_{2}+A_{b-1}).
\end{equation*}
\end{example}

\begin{proposition}
Let $A$ be a dual quasi-bialgebra with the dual Chevalley property. Then 
\begin{equation*}
\left( \mathrm{gr}A,m_{\mathrm{gr}A},u_{\mathrm{gr}A},\Delta _{\mathrm{gr}%
A},\varepsilon _{\mathrm{gr}A},\omega _{\mathrm{gr}A}\right)
\end{equation*}
is a dual quasi-bialgebra where the graded components of the structure maps
are given by the maps%
\begin{gather*}
m_{\mathrm{gr}A}^{a,b}:\mathrm{gr}^{a}A\otimes \mathrm{gr}^{b}A\rightarrow 
\mathrm{gr}^{a+b}A,\qquad u_{\mathrm{gr}A}^{n}:\Bbbk \rightarrow \mathrm{gr}%
^{n}A, \\
\Delta _{\mathrm{gr}A}^{a,b}:\mathrm{gr}^{a+b}A\rightarrow \mathrm{gr}%
^{a}A\otimes \mathrm{gr}^{b}A,\qquad \varepsilon _{\mathrm{gr}A}^{n}:\mathrm{%
gr}^{n}A\rightarrow \Bbbk , \\
\omega _{\mathrm{gr}A}^{a,b,c}:\mathrm{gr}^{a}A\otimes \mathrm{gr}%
^{b}A\otimes \mathrm{gr}^{c}A\rightarrow \Bbbk,
\end{gather*}%
defined by%
\begin{gather*}
m_{\mathrm{gr}A}^{a,b}\left[ (x+A_{a-1})\otimes (y+A_{b-1})\right]
:=xy+A_{a+b-1},\qquad u_{\mathrm{gr}A}^{n}\left( k\right) :=\delta
_{n,0}1_{A}+A_{-1}=\delta _{n,0}1_{A}, \\
\Delta _{\mathrm{gr}A}^{a,b}(x+A_{a+b-1}):=(x_{1}+A_{a-1})\otimes
(x_{2}+A_{b-1}),\qquad \varepsilon _{\mathrm{gr}A}^{n}(x+A_{n-1}):=\delta
_{n,0}\varepsilon _{A}(x), \\
\omega _{\mathrm{gr}A}^{a,b,c}[(x+A_{a-1})\otimes (y+A_{b-1})\otimes
(z+A_{c-1})]:=\delta _{a,0}\delta _{b,0}\delta _{c,0}\omega _{A}(x\otimes
y\otimes z).
\end{gather*}%
Here $\delta _{i,j}$ denotes the Kronecker delta.
\end{proposition}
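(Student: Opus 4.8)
The plan is to exhibit every structure map of $\mathrm{gr}A$ as the associated graded of the corresponding structure map of $A$, and then to transport the dual quasi-bialgebra axioms from $A$ to $\mathrm{gr}A$ using that the associated-graded construction is a strong monoidal functor. Concretely, write $\mathrm{gr}\colon\mathfrak{F}\to\mathfrak{G}$ for the functor sending a vector space equipped with an exhaustive, bounded-below filtration to its associated graded $\bigoplus_n V_n/V_{n-1}$. Over a field this functor is strong monoidal: the canonical map $\mathrm{gr}(V)\otimes\mathrm{gr}(W)\to\mathrm{gr}(V\otimes W)$ is an isomorphism, natural in $V$ and $W$. The coradical filtration of $A$ is exhaustive and bounded below ($A_{-1}=0$), so $\mathrm{gr}$ applies to it, and since $\mathrm{gr}$ preserves tensor products it carries algebras to algebras, coalgebras to coalgebras, and, crucially, preserves convolution, because $f\ast g=m(f\otimes g)\Delta$ is assembled solely from $\otimes$, $m$ and $\Delta$. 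Hence, once the structure maps $m_A,u_A,\Delta_A,\varepsilon_A,\omega_A,\omega_A^{-1}$ are seen to be filtered, each of the axioms \eqref{eq:3-cocycle}, \eqref{eq:qusi-unitairity cocycle}, \eqref{eq:quasi-associativity}, \eqref{eq:quasi-leftunitarirty} and \eqref{eq:quasi-rightunitarity}, being an equality of maps built from these operations and convolution, goes over verbatim to the corresponding axiom for $\mathrm{gr}A$.

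The only input that is not purely formal is that these structure maps are indeed filtered, and the heart of the matter is multiplicativity of the coradical filtration, namely $A_aA_b\subseteq A_{a+b}$ for all $a,b$. This is precisely where the dual Chevalley hypothesis enters: since $H=A_0$ is a dual quasi-subbialgebra, the coradical $A_0$ is closed under $m_A$, and then the classical wedge computation (the inclusion $(U\wedge V)(X\wedge Y)\subseteq(UX)\wedge(VY)$, valid because $m_A$ is a coalgebra map, combined with $A_n=A_0\wedge A_{n-1}$ and induction on $a+b$) yields $A_aA_b\subseteq A_{a+b}$. I would emphasise that this argument uses only that $m_A$ is a coalgebra map and that $A_0$ is multiplicatively closed, and never invokes associativity, so it survives the passage to the quasi setting unchanged. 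The remaining compatibilities are immediate: $u_A(\Bbbk)\subseteq A_0$, the defining property $\Delta_A(A_n)\subseteq\sum_{a+b=n}A_a\otimes A_b$ of the coradical filtration, and the fact that $\omega_A,\omega_A^{-1}\colon A^{\otimes3}\to\Bbbk$ are automatically filtered once $\Bbbk$ carries the filtration concentrated in degree $0$.

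It then remains to identify the graded maps with the formulas in the statement. Since $m_A$ is filtered, $\mathrm{gr}(m_A)$ acts by $(x+A_{a-1})\otimes(y+A_{b-1})\mapsto xy+A_{a+b-1}$, i.e.\ $m_{\mathrm{gr}A}^{a,b}$; likewise $\mathrm{gr}(u_A)=u_{\mathrm{gr}A}$, while $\mathrm{gr}(\Delta_A)=\Delta_{\mathrm{gr}A}$ and $\mathrm{gr}(\varepsilon_A)=\varepsilon_{\mathrm{gr}A}$ recover the coalgebra part already described in the preceding Example. For the reassociator, with $\Bbbk$ filtered so that $\mathrm{gr}^0\Bbbk=\Bbbk$ and $\mathrm{gr}^n\Bbbk=0$ for $n>0$, the map $\mathrm{gr}(\omega_A)$ vanishes on $\mathrm{gr}^a A\otimes\mathrm{gr}^b A\otimes\mathrm{gr}^c A$ unless $a+b+c=0$ and equals $\omega_A$ on $A_0^{\otimes3}=H^{\otimes3}$ in degree $(0,0,0)$, which is exactly $\omega_{\mathrm{gr}A}^{a,b,c}=\delta_{a,0}\delta_{b,0}\delta_{c,0}\,\omega_A$. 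Finally $\mathrm{gr}(\omega_A^{-1})$ is the convolution inverse of $\omega_{\mathrm{gr}A}$ because $\mathrm{gr}$ preserves convolution; equivalently, one may note that $\omega_{\mathrm{gr}A}=\omega_H\circ(\pi_0^{\otimes3})$ for the algebra and coalgebra projection $\pi_0\colon\mathrm{gr}A\to\mathrm{gr}^0A=H$ onto degree zero, whose convolution inverse is $\omega_H^{-1}\circ(\pi_0^{\otimes3})$, so that the cocycle and unitality conditions \eqref{eq:3-cocycle} and \eqref{eq:qusi-unitairity cocycle} pull back along $\pi_0$ from those of $H$.

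The main obstacle I expect is the multiplicativity $A_aA_b\subseteq A_{a+b}$ of the coradical filtration: it is the single step that genuinely requires the dual Chevalley property, and it demands checking that the classical wedge argument persists for a merely quasi-associative $m_A$. Once that lemma is established, the rest reduces to the formal machinery of the monoidal functor $\mathrm{gr}$ together with a bookkeeping identification of graded components, with no further quasi-bialgebra computation required.
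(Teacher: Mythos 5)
Your overall strategy coincides with the paper's: the only substantive input is that the dual Chevalley property makes the coradical filtration an algebra filtration ($A_aA_b\subseteq A_{a+b}$), after which the dual quasi-bialgebra axioms for $\mathrm{gr}A$ are routine transports of those of $A$. The paper disposes of this second part by declaring it "analogous to the classical case" and "straightforward" (citing \cite[Lemma 5.2.8]{Montgomery} for the filtration lemma), while you organize it through the strong monoidality of the associated-graded functor and the observation that $\omega_{\mathrm{gr}A}=\omega_H\circ\pi_0^{\otimes 3}$ pulls back convolution invertibility, the cocycle condition and unitality along the degree-zero coalgebra projection $\pi_0$. That packaging is correct and, if anything, tidier than the paper's.

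However, the one step you yourself single out as "the heart of the matter" is justified by a false lemma. The inclusion $(U\wedge V)(X\wedge Y)\subseteq(UX)\wedge(VY)$ does not hold, already for ordinary bialgebras: taking $U=V=X=Y=A_0$ in $A=\Bbbk[x]$ with $x$ primitive, it would give $A_1A_1\subseteq A_0\wedge A_0=A_1$, yet $x^2\notin A_1$; the problem is that the cross terms such as $(U\otimes A)(A\otimes Y)$ arising in $\Delta(a)\Delta(b)$ are not controlled. The correct argument, which is what \cite[Lemma 5.2.8]{Montgomery} actually does, is an induction on $a+b$ using the finer property $\Delta(A_a)\subseteq\sum_{i+j=a}A_i\otimes A_j$ of the coradical filtration: since $m_A$ is a coalgebra map,
\begin{equation*}
\Delta(A_aA_b)\subseteq\Big(\sum_{i}A_i\otimes A_{a-i}\Big)\Big(\sum_{j}A_j\otimes A_{b-j}\Big)\subseteq\sum_{i,j}A_iA_j\otimes A_{a-i}A_{b-j}\subseteq A_0\otimes A+A\otimes A_{a+b-1},
\end{equation*}
where the term with $(i,j)=(0,0)$ lands in $A_0\otimes A$ because $A_0A_0\subseteq A_0$ (this is exactly where dual Chevalley enters), and every other term lands in $A\otimes A_{a+b-1}$ by the induction hypothesis; hence $A_aA_b\subseteq A_0\wedge A_{a+b-1}=A_{a+b}$. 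Note that this repaired argument still uses only that $m_A$ is a coalgebra map and never its associativity, so your main structural point --- that the lemma and with it the whole proof survive in the quasi setting --- is correct; but the wedge inclusion you cite must be replaced by the induction above for the proof to be valid.
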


\begin{proof}
The proof of the facts that $m_{\mathrm{gr}A}$ and $u_{\mathrm{gr}A}$ are
well-defined, are coalgebra maps and that $m_{\mathrm{gr}A}$ is unitary is
analogous to the classical case, and depend on the fact that the coradical
filtration is an algebra filtration. This can be proved mimicking \cite[Lemma
5.2.8]{Montgomery}. The cocycle condition and the quasi-associativity of $m_{%
\mathrm{gr}A}$ are straightforward.

\end{proof}

\begin{proposition}
\label{pro: grA}Let $A$ be a dual quasi-bialgebra with the dual Chevalley
property and coradical $H$. Then $\left( \mathrm{gr}A,H,\sigma ,\pi \right) $
is a dual quasi-bialgebra with projection onto $H,$ where%
\begin{equation*}
\sigma :H\longrightarrow \mathrm{gr}A:h\longmapsto h+A_{-1},
\end{equation*}%
\begin{equation*}
\pi :\mathrm{gr}A\longrightarrow H:a+A_{n-1}\longmapsto \delta _{n,0}a,\text{
for all }a\in A_{n}.
\end{equation*}
\end{proposition}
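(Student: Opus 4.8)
The plan is to recognize $\sigma$ and $\pi$ as, respectively, the canonical inclusion of the degree-zero homogeneous component of $\mathrm{gr}A$ and the canonical projection onto it, and then to verify the four axioms of a morphism of dual quasi-bialgebras for each, exploiting that every structure map of $\mathrm{gr}A$ constructed in the previous proposition respects the grading. The starting observation is that, since $A_{-1}=\{0\}$ and the coradical $H$ equals $A_{0}$, one has $\mathrm{gr}^{0}A=A_{0}/A_{-1}=A_{0}=H$ as coalgebras; moreover, because $H$ is a dual quasi-subbialgebra of $A$ (dual Chevalley property) and the coradical filtration is an algebra filtration, the maps $m_{\mathrm{gr}A}^{0,0}$, $u_{\mathrm{gr}A}^{0}$, $\Delta _{\mathrm{gr}A}^{0,0}$, $\varepsilon _{\mathrm{gr}A}^{0}$ and $\omega _{\mathrm{gr}A}^{0,0,0}$ coincide, under this identification, with $m_{H},u_{H},\Delta _{H},\varepsilon _{H}$ and $\omega _{H}$. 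In particular $\sigma (h)=h+A_{-1}$ is precisely the inclusion $H=\mathrm{gr}^{0}A\hookrightarrow \mathrm{gr}A$ and $\pi (a+A_{n-1})=\delta _{n,0}a$ the projection $\mathrm{gr}A\twoheadrightarrow \mathrm{gr}^{0}A=H$.

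For $\sigma$ I would check that it is a coalgebra map (the degree-zero components of $\Delta _{\mathrm{gr}A}$ and $\varepsilon _{\mathrm{gr}A}$ are $\Delta _{H}$ and $\varepsilon _{H}$), that $\sigma (x)\sigma (y)=m_{\mathrm{gr}A}^{0,0}[(x+A_{-1})\otimes (y+A_{-1})]=xy+A_{-1}=\sigma (xy)$ using $A_{0}A_{0}\subseteq A_{0}$, that $\sigma u_{H}=u_{\mathrm{gr}A}$, and that $\omega _{\mathrm{gr}A}(\sigma \otimes \sigma \otimes \sigma )=\omega _{H}$. For $\pi$ the verifications are the mirror image and amount to grading bookkeeping: since $a,b,c\geq 0$ forces $a+b=0\Leftrightarrow a=b=0$ (and similarly for triples), applying $\pi$ after any structure map of $\mathrm{gr}A$ selects exactly the degree-zero part, which matches the corresponding structure map of $H$ applied after $\pi$. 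Concretely, $\pi m_{\mathrm{gr}A}^{a,b}=\delta _{a,0}\delta _{b,0}\,m_{H}$, $\pi u_{\mathrm{gr}A}=u_{H}$, $(\pi \otimes \pi )\Delta _{\mathrm{gr}A}=\Delta _{H}\pi$, $\varepsilon _{H}\pi =\varepsilon _{\mathrm{gr}A}$, and $\omega _{H}(\pi \otimes \pi \otimes \pi )=\omega _{\mathrm{gr}A}$, the latter being immediate from the Kronecker-delta form of $\omega _{\mathrm{gr}A}$. Comultiplicativity of $\pi$ is the standard fact that the projection onto the degree-zero component of a graded coalgebra is a coalgebra map, which here follows at once because $\Delta _{\mathrm{gr}A}(\mathrm{gr}^{n}A)\subseteq \bigoplus_{a+b=n}\mathrm{gr}^{a}A\otimes \mathrm{gr}^{b}A$ while $\varepsilon _{\mathrm{gr}A}$ vanishes in positive degrees.

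Finally, $\pi \sigma (h)=\pi (h+A_{-1})=h$ since $h\in A_{0}$, giving $\pi \sigma =\mathrm{Id}_{H}$. I expect no serious obstacle: once the identification $\mathrm{gr}^{0}A=H$ is in place and the previous proposition is invoked to know that $\mathrm{gr}A$ is a dual quasi-bialgebra with graded structure maps, all the checks collapse to routine degree counting. The only point requiring a word of care is well-definedness and compatibility on products, which rests on the coradical filtration being an algebra filtration (so that $A_{a}A_{b}\subseteq A_{a+b}$, in particular $A_{0}A_{0}\subseteq A_{0}$); this was already established in proving that $\mathrm{gr}A$ is a dual quasi-bialgebra.
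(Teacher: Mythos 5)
Your proof is correct and is exactly the direct verification that the paper leaves to the reader (its proof is simply ``It is straightforward''): identifying $\mathrm{gr}^{0}A=A_{0}=H$ via the dual Chevalley property, checking the four morphism axioms for $\sigma$ and $\pi$ by degree counting, and noting $\pi\sigma=\mathrm{Id}_{H}$. The two points you flag as needing care --- that $\omega_{A}$ restricts to $\omega_{H}$ on $H^{\otimes 3}$ (because the inclusion $H\hookrightarrow A$ is a morphism of dual quasi-bialgebras) and that the coradical filtration is an algebra filtration --- are indeed the only nontrivial inputs, and both were established before this proposition.
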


\begin{proof}
It is straightforward. %
\end{proof}

\begin{corollary}
\label{coro: grA}Let $A$ be a dual quasi-bialgebra with the dual Chevalley
property and coradical $H$. Assume that $H$ has a preantipode. Then there is
a bialgebra $R$ in ${_{H}^{H}\mathcal{YD}}$ such that $\mathrm{gr}A$ is
isomorphic to $R\#H$ a dual quasi-bialgebra.
\end{corollary}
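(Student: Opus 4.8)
The plan is to combine the two structural results already established in this section, applying them to $\mathrm{gr}A$ in place of a generic dual quasi-bialgebra with projection. First I would invoke Proposition \ref{pro: grA}, which guarantees that $\left( \mathrm{gr}A,H,\sigma ,\pi \right) $ is a dual quasi-bialgebra with projection onto $H$, where $\sigma $ and $\pi $ are the explicit degree-zero inclusion and projection maps described there. This step is where the dual Chevalley property is used: it ensures that $H$ sits inside $A$ (and hence in degree zero of $\mathrm{gr}A$) as a dual quasi-subbialgebra, which is precisely what makes $\sigma $ and $\pi $ morphisms of dual quasi-bialgebras. Granting Proposition \ref{pro: grA}, the statement is reduced to the characterization of dual quasi-bialgebras with projection as bosonizations.

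Next, since by hypothesis the coradical $H$ carries a preantipode $S$, all the hypotheses of Theorem \ref{teo: projection} are met once we take the object called $A$ in that theorem to be $\mathrm{gr}A$. Applying it, I would set $R:=G\left( \mathrm{gr}A\right) =\left( \mathrm{gr}A\right) ^{coH}$ and obtain that $R$ is a bialgebra in ${_{H}^{H}\mathcal{YD}}$, with the Yetter-Drinfeld, algebra and coalgebra structures given by the formulae of that theorem (the action $h\vartriangleright r=\tau \lbrack \sigma (h)r]$, the coaction $\rho _{R}(r)=\pi (r_{1})\otimes r_{2}$, the product inherited from $m_{\mathrm{gr}A}$, and the coproduct $\tau (r_{1})\otimes \tau (r_{2})$). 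Theorem \ref{teo: projection} furthermore supplies a dual quasi-bialgebra isomorphism $\epsilon _{\mathrm{gr}A}:R\#H\rightarrow \mathrm{gr}A$, namely the counit of the adjunction $(F,G)$ of Remark \ref{adjoint}, evaluated at $\mathrm{gr}A$. This immediately yields the desired isomorphism $\mathrm{gr}A\cong R\#H$ of dual quasi-bialgebras.

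Since all the genuine work has already been carried out in Proposition \ref{pro: grA} and Theorem \ref{teo: projection}, there is no real computational obstacle here: the proof is a matter of verifying that the hypotheses of Theorem \ref{teo: projection} are satisfied by $\mathrm{gr}A$, which amounts to checking that $\mathrm{gr}A$ and $H$ are dual quasi-bialgebras (known), that they form a projection datum (Proposition \ref{pro: grA}), and that $H$ has a preantipode (the standing assumption). The only point deserving a word of care is the bookkeeping of the identifications $R\#H=F(R)=FG(\mathrm{gr}A)$, so that the bosonization structure produced by Theorem \ref{teo: boso} coincides with the one transported along $\epsilon _{\mathrm{gr}A}$; this is exactly the content of Theorem \ref{teo: projection} and requires no further argument.
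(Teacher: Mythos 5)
Your proposal is correct and follows exactly the paper's own argument: apply Proposition \ref{pro: grA} to exhibit $(\mathrm{gr}A,H,\sigma,\pi)$ as a dual quasi-bialgebra with projection onto $H$, then invoke Theorem \ref{teo: projection} (using the preantipode hypothesis on $H$) to obtain the bialgebra $R=G(\mathrm{gr}A)$ in ${_{H}^{H}\mathcal{YD}}$ and the dual quasi-bialgebra isomorphism $\mathrm{gr}A\cong R\#H$. The paper's proof is precisely this two-step citation, so no further commentary is needed.
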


\begin{proof}
It follows by Proposition \ref{pro: grA} and Theorem \ref{teo: projection}.
\end{proof}

\begin{definition}
Following \cite[Definition, page 659]{AS-Lifting}, the bialgebra $R$ in ${%
_{H}^{H}\mathcal{YD}}$ of Corollary \ref{coro: grA}, is called the \emph{%
diagram} of $A$.
\end{definition}

\subsection{On pointed dual quasi-bialgebras}

We conclude this section considering the pointed case.

\begin{lemma}
\label{lem:gruppo}Let $G$ be a monoid and consider the monoid algebra $%
H:=\Bbbk G.$ Suppose there is a map $\omega \in (H\otimes H\otimes H)^{\ast
} $ such that $(H,\omega )$ is a dual quasi-bialgebra. Then $(H,\omega )$
has a preantipode $S$ if and only if $G$ is a group. In this case%
\begin{equation*}
S(g)=[\omega (g\otimes g^{-1}\otimes g)]^{-1}g^{-1}.
\end{equation*}
\end{lemma}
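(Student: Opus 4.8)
The plan is to use that $H=\Bbbk G$ has a basis of grouplike elements (the elements of $G$), so that the three defining identities of a preantipode, being $\Bbbk$-multilinear in $h$, need only be verified on $g\in G$, where $\Delta(g)=g\otimes g$ and $\varepsilon(g)=1$.

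First I would record the one structural fact that makes everything work. Since $\omega$ is convolution invertible in $(H\otimes H\otimes H)^{\ast}$ and $H\otimes H\otimes H$ has the grouplike basis $\{g\otimes k\otimes l\mid g,k,l\in G\}$, evaluating the identity $\omega\ast\omega^{-1}=\varepsilon_{H\otimes H\otimes H}$ on such a basis element yields $\omega(g\otimes k\otimes l)\,\omega^{-1}(g\otimes k\otimes l)=1$. Hence $\omega(g\otimes k\otimes l)\neq 0$ for all $g,k,l\in G$; in particular $\omega(g\otimes g^{-1}\otimes g)$ is invertible in $\Bbbk$ as soon as $g^{-1}$ exists, so the proposed formula for $S$ is well defined.

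For the converse (``$G$ a group $\Rightarrow$ preantipode''), I would define $S$ on the basis by $S(g):=[\omega(g\otimes g^{-1}\otimes g)]^{-1}g^{-1}$ and extend linearly, then verify \eqref{col 1 S}, \eqref{col 2 S} and \eqref{fond S} on each $g$. As $S(g)$ is a scalar multiple of the grouplike $g^{-1}$, one has $S(g)_1\otimes S(g)_2=[\omega(g\otimes g^{-1}\otimes g)]^{-1}g^{-1}\otimes g^{-1}$, so the left-hand side of \eqref{col 1 S} collapses to $[\omega(g\otimes g^{-1}\otimes g)]^{-1}(g^{-1}g)\otimes g^{-1}=1_H\otimes S(g)$, and symmetrically for \eqref{col 2 S}; finally \eqref{fond S} reads $[\omega(g\otimes g^{-1}\otimes g)]^{-1}\omega(g\otimes g^{-1}\otimes g)=1=\varepsilon(g)$. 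Multilinearity then promotes these to all of $H$.

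For the direct implication (``preantipode $\Rightarrow$ $G$ a group''), fix $g\in G$ and write $S(g)=\sum_{x\in G}\lambda_x x$ with finitely many $\lambda_x\neq0$. Since each $x$ is grouplike, \eqref{col 1 S} becomes $\sum_x\lambda_x(xg\otimes x)=\sum_x\lambda_x(1_H\otimes x)$; comparing coefficients in the basis $\{y\otimes x\}$ of $H\otimes H$ forces $xg=1_H$ whenever $\lambda_x\neq0$, and likewise \eqref{col 2 S} forces $gx=1_H$. Thus every $x$ in the support of $S(g)$ is a two-sided inverse of $g$, and by uniqueness of inverses in a monoid this support is contained in a single element $g^{-1}$, i.e. $S(g)=\lambda\,g^{-1}$ for some $\lambda\in\Bbbk$. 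Now \eqref{fond S} gives $\lambda\,\omega(g\otimes g^{-1}\otimes g)=\varepsilon(g)=1$, which in particular forbids $S(g)=0$ and hence guarantees that the inverse $g^{-1}$ actually exists; therefore $G$ is a group, and solving for $\lambda$ recovers $\lambda=[\omega(g\otimes g^{-1}\otimes g)]^{-1}$, i.e. the stated formula. The computations are entirely routine; the only point requiring care---and the crux of the argument---is the convolution-invertibility observation of the first step, which simultaneously makes the formula meaningful and supplies the scalar $\lambda$.
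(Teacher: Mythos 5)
Your proof is correct, but it follows a genuinely different route from the paper's. The paper deduces the forward implication from the heavy machinery of Theorem \ref{teo:cocom}: since $\Bbbk G$ is cocommutative, the preantipode forces $\Bbbk G$ to be an ordinary Hopf algebra with antipode $s(g)=S(g)_{1}\,\omega\left[g\otimes S(g)_{2}\otimes g\right]$, whence $G$ is a group (grouplikes of a Hopf algebra are invertible, $g^{-1}=s(g)$), and the formula for $S$ then falls out of the identity $S=\varepsilon S\ast s$ together with the fact that $s$ is an anti-coalgebra map; the converse is dismissed as trivial with a citation to an example in the earlier paper. You instead argue entirely at the level of the grouplike basis: writing $S(g)=\sum_{x}\lambda_{x}x$, conditions \eqref{col 1 S} and \eqref{col 2 S} force every $x$ in the support to be a two-sided inverse of $g$ by comparing coefficients, uniqueness of inverses in a monoid collapses the support to a single element, and \eqref{fond S} rules out $S(g)=0$, so the inverse exists; your converse is an explicit verification of the three axioms, underpinned by the observation that convolution invertibility of $\omega$ evaluated at grouplike triples gives $\omega(g\otimes k\otimes l)\neq 0$. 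What the paper's route buys is brevity given the machinery already in place, plus a conceptual link between $S$ and the genuine antipode $s$; what your route buys is self-containedness (no appeal to Theorem \ref{teo:cocom} or to the cited example), a fully explicit proof of the converse, and an argument that uses nothing beyond the definitions and the monoid-algebra basis, which makes the mechanism by which the preantipode encodes invertibility completely transparent.
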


\begin{proof}
Suppose that $S$ is a preantipode for $(H,\omega ).$ Since $H$ is a
cocommutative ordinary bialgebra, by Theorem \ref{teo:cocom}, we have that $%
\Bbbk G$ is an ordinary Hopf algebra, where the antipode is defined, for all 
$g\in G$, by 
\begin{equation*}
s\left( g\right) :=S\left( g\right) _{1}\omega \left[ g\otimes S\left(
g\right) _{2}\otimes g\right] .
\end{equation*}%
Moreover one has $S=\varepsilon S\ast s$. Now, since $\Bbbk G$ is a Hopf
algebra, one has that the set of grouplike elements in $\Bbbk G,$ namely $G$
itself, form a group, where $g^{-1}:=s(g),$ for all $g\in G.$

Now, since $s$ is an anti-coalgebra map, we have%
\begin{equation*}
S\left( g\right) _{1}\otimes S\left( g\right) _{2}=\varepsilon S(g)s\left(
g\right) _{1}\otimes s\left( g\right) _{2}=\varepsilon S(g)s\left( g\right)
\otimes s\left( g\right) =S(g)\otimes g^{-1}
\end{equation*}%
so that $s\left( g\right) =S\left( g\right) _{1}\omega \left[ g\otimes
S\left( g\right) _{2}\otimes g\right] =S(g)\omega \left( g\otimes
g^{-1}\otimes g\right) .$ Hence $S(g)=[\omega (g\otimes g^{-1}\otimes
g)]^{-1}g^{-1}.$

The other implication is trivial (see \cite[Example 3.14]{Ardi-Pava}).
\end{proof}

The motivation for the previous result is Corollary \ref{Scapp} below.

\begin{proposition}
\label{pro: G(A)}Let $(A,m,u,\Delta ,\varepsilon ,\omega )$ be a dual
quasi-bialgebra. Then the set of grouplike elements $\mathbb{G}\left(
A\right) $ of $A$ is a monoid and the monoid algebra $\Bbbk \mathbb{G}\left(
A\right) $ is a dual quasi-subbialgebra of $A$.
\end{proposition}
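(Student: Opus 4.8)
The plan is to first show that $\mathbb{G}(A)$ is closed under the multiplication $m$, then that this multiplication is associative on grouplike elements with two-sided unit $1_A$. For closedness I would use that $m$ and $u$ are coalgebra maps. Thus for $g,h\in\mathbb{G}(A)$ the element $gh:=m(g\otimes h)$ satisfies $\Delta(gh)=(m\otimes m)\Delta_{A\otimes A}(g\otimes h)=(m\otimes m)(g\otimes h\otimes g\otimes h)=gh\otimes gh$ and $\varepsilon(gh)=\varepsilon(g)\varepsilon(h)=1$, so $gh\in\mathbb{G}(A)$. Likewise $1_A=u(1_\Bbbk)$ is grouplike, and by \eqref{eq:quasi-leftunitarirty} and \eqref{eq:quasi-rightunitarity} it is a two-sided unit for $m$ on $\mathbb{G}(A)$.

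The crux of the monoid structure is that $\omega$ takes an invertible scalar value on every triple of grouplikes. Writing $\omega^{-1}$ for the convolution inverse of $\omega$ and using that $g\otimes h\otimes k$ is grouplike in $A\otimes A\otimes A$, one gets $\omega(g\otimes h\otimes k)\,\omega^{-1}(g\otimes h\otimes k)=(\omega\ast\omega^{-1})(g\otimes h\otimes k)=\varepsilon(g)\varepsilon(h)\varepsilon(k)=1$, whence $\omega(g\otimes h\otimes k)\in\Bbbk^{\times}$. Evaluating the quasi-associativity identity \eqref{eq:quasi-associativity} at $g\otimes h\otimes k$ collapses all Sweedler sums (the factors being grouplike) and gives $g(hk)\,\omega(g\otimes h\otimes k)=\omega(g\otimes h\otimes k)\,(gh)k$; cancelling the nonzero scalar yields $g(hk)=(gh)k$. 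Hence $\mathbb{G}(A)$ is a monoid.

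It then remains to realize the monoid algebra inside $A$. Since distinct grouplikes are linearly independent, the subspace $H:=\mathrm{span}_\Bbbk\mathbb{G}(A)$ is isomorphic to $\Bbbk\mathbb{G}(A)$ and the multiplication it inherits from $A$ is precisely the monoid multiplication just described. I would equip $H$ with the restrictions $m_H,u_H,\Delta_H,\varepsilon_H$ and $\omega_H:=\omega|_{H\otimes H\otimes H}$. That $H$ is a subcoalgebra is immediate from $\Delta(g)=g\otimes g$; $H$ is closed under $m$ and contains $1_A$ by the previous steps, so $m_H$ is a coalgebra map and $u_H$ a unit. For $\omega_H$ one checks it is convolution invertible with inverse $\omega^{-1}|_{H\otimes H\otimes H}$: as $H^{\otimes 3}$ is a subcoalgebra of $A^{\otimes 3}$, the identity $\omega\ast\omega^{-1}=\varepsilon\otimes\varepsilon\otimes\varepsilon$ restricts. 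The cocycle unitality \eqref{eq:qusi-unitairity cocycle}, the $3$-cocycle condition \eqref{eq:3-cocycle}, and the quasi-associativity and quasi-unitality \eqref{eq:quasi-associativity}, \eqref{eq:quasi-leftunitarirty}, \eqref{eq:quasi-rightunitarity} all descend from $A$ to $H$, since $H$ is stable under every operation involved. With these structures the inclusion $\alpha:H\hookrightarrow A$ satisfies $m_A(\alpha\otimes\alpha)=\alpha m_H$, $\alpha u_H=u_A$ and $\omega_A(\alpha\otimes\alpha\otimes\alpha)=\omega_H$ tautologically, so it is a morphism of dual quasi-bialgebras and $H$ is a dual quasi-subbialgebra.

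I expect the only genuinely non-formal ingredients to be the two appeals to the grouplike property of $g\otimes h\otimes k$ in $A^{\otimes 3}$: the evaluation of $\omega\ast\omega^{-1}$ showing $\omega(g\otimes h\otimes k)$ is a unit (which drives the associativity argument), and the restriction of the convolution inverse guaranteeing that $\omega_H$ is again a cocycle. Everything else is the routine observation that the defining identities, being equalities of maps out of tensor powers of $A$, restrict to the $m$- and $\Delta$-stable subspace $H$; the single standard coalgebra fact I must invoke rather than compute is the linear independence of grouplikes used to identify $\Bbbk\mathbb{G}(A)$ with $\mathrm{span}_\Bbbk\mathbb{G}(A)$.
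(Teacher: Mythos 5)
Your proof is correct and follows the route the paper intends: the paper dismisses this proposition with ``It is straightforward,'' and your argument --- closure of $\mathbb{G}(A)$ under $m$ via the coalgebra-map axiom, invertibility of $\omega(g\otimes h\otimes k)$ from convolution invertibility evaluated on the grouplike $g\otimes h\otimes k\in A^{\otimes 3}$, associativity by evaluating \eqref{eq:quasi-associativity} on grouplikes and cancelling the nonzero scalar, and restriction of all structure maps and axioms to the $m$- and $\Delta$-stable span of grouplikes --- supplies exactly the omitted details. Nothing is missing; the appeal to linear independence of distinct grouplikes, needed to identify $\Bbbk\mathbb{G}(A)$ with $\mathrm{span}_\Bbbk\mathbb{G}(A)\subseteq A$, is the one standard coalgebra fact to invoke, and your treatment of $\omega^{-1}|_{H^{\otimes 3}}$ as the convolution inverse of $\omega_H$ is the correct justification that the restricted reassociator is again a unital $3$-cocycle.
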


\begin{proof}
It is straightforward. 
%
%
%
\end{proof}

\begin{corollary}
\label{Azero}Let $(A,m,u,\Delta ,\varepsilon ,\omega )$ be a pointed dual
quasi-bialgebra. Then $A_{0}=\Bbbk \mathbb{G}\left( A\right) $ is a dual
quasi-subbialgebra of $A$.
\end{corollary}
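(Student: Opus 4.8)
The plan is to obtain this statement as an immediate consequence of the two results that immediately precede it, namely Remark \ref{rem: pointed} and Proposition \ref{pro: G(A)}. The whole content of the corollary is the identification of the coradical $A_{0}$ with the monoid algebra $\Bbbk \mathbb{G}\left( A\right) $, together with the already-established fact that the latter is a dual quasi-subbialgebra.

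First I would use the hypothesis that $A$ is pointed to pin down the coradical. By definition of pointed, every simple subcoalgebra of $A$ is one-dimensional, and by \cite[page 57]{Sweedler} the one-dimensional subcoalgebras are exactly the spaces $\Bbbk g$ with $g\in \mathbb{G}\left( A\right) $. Since the coradical is the sum of all simple subcoalgebras, this yields
\begin{equation*}
A_{0}=\sum_{g\in \mathbb{G}\left( A\right) }\Bbbk g=\Bbbk \mathbb{G}\left( A\right) ,
\end{equation*}
which is precisely the assertion recorded in Remark \ref{rem: pointed}. This is the step that uses pointedness, and it requires no computation beyond citing the standard description of the coradical of a pointed coalgebra.

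Next I would invoke Proposition \ref{pro: G(A)}, which holds for an arbitrary dual quasi-bialgebra and does not need the pointedness assumption: it tells us that $\mathbb{G}\left( A\right) $ is a monoid and that the monoid algebra $\Bbbk \mathbb{G}\left( A\right) $, equipped with the structure maps restricted from $A$, is a dual quasi-subbialgebra of $A$. Combining this with the identification $A_{0}=\Bbbk \mathbb{G}\left( A\right) $ from the previous paragraph gives at once that $A_{0}$ is a dual quasi-subbialgebra of $A$, which is the desired conclusion.

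I do not expect any genuine obstacle here, since all the substantive work has already been done: the verification that $\mathbb{G}\left( A\right) $ is closed under multiplication and that $\Bbbk \mathbb{G}\left( A\right) $ inherits the reassociator and the quasi-bialgebra axioms is carried out in Proposition \ref{pro: G(A)}, while the coalgebraic identification is classical. The only point worth stating explicitly is that the two descriptions of $A_{0}$ genuinely coincide as dual quasi-subbialgebras, i.e.\ that the dual quasi-bialgebra structure coming from Proposition \ref{pro: G(A)} agrees with the one $A_{0}$ carries as a subcoalgebra of $A$; this is immediate because both are obtained by restricting the structure maps $m,u,\Delta ,\varepsilon ,\omega $ of $A$ to the same subspace $\Bbbk \mathbb{G}\left( A\right) $.
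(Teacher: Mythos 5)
Your proof is correct and follows exactly the paper's own argument: the paper likewise combines Remark \ref{rem: pointed} (pointedness gives $A_{0}=\Bbbk \mathbb{G}\left( A\right) $) with Proposition \ref{pro: G(A)} ($\Bbbk \mathbb{G}\left( A\right) $ is a dual quasi-subbialgebra) to conclude. Your extra remark that the two structures on $A_{0}$ coincide, being both restrictions of the structure maps of $A$, is a harmless and accurate clarification.
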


\begin{proof}
By Remark \ref{rem: pointed}, $A_{0}=\Bbbk \mathbb{G}\left( A\right) .$ In
view of Proposition \ref{pro: G(A)}, we conclude.
\end{proof}

\begin{corollary}
\label{Scapp}Let $(A,m,u,\Delta ,\varepsilon ,\omega ,s,\alpha ,\beta )$ be
a pointed dual quasi-Hopf algebra. Then $\mathbb{G}\left( A\right) $ is a
group and $A_{0}=\Bbbk \mathbb{G}\left( A\right) $ is a dual quasi-Hopf
algebra with respect to the induced structures.
\end{corollary}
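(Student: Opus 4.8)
The plan is to reduce the statement to two earlier results: the group structure on $\mathbb{G}(A)$ will be extracted from the antipode data $(s,\alpha,\beta)$ of $A$, and the dual quasi-Hopf structure on $A_0$ will then be produced by combining Lemma \ref{lem:gruppo} with Theorem \ref{teo:cocom}. First I would record that, since $A$ is pointed, Corollary \ref{Azero} gives that $A_0=\Bbbk\mathbb{G}(A)$ is a dual quasi-subbialgebra of $A$, and that it is cocommutative: its elements lie in the span of the grouplikes, and for each grouplike $g$ the coproduct $\Delta(g)=g\otimes g$ is symmetric.

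The key step is to show that the monoid $\mathbb{G}(A)$ (which is a monoid by Proposition \ref{pro: G(A)}) is in fact a group. Fix $g\in\mathbb{G}(A)$. Since $g$ is grouplike, every iterated coproduct satisfies $\Delta^{(n)}(g)=g\otimes\cdots\otimes g$, so evaluating the dual quasi-Hopf axioms at $g$ collapses all Sweedler sums. Evaluating the cocycle axiom \eqref{ant 3} at $g$ yields $\alpha(g)\beta(g)\,\omega(g\otimes s(g)\otimes g)=\varepsilon(g)=1$, which forces $\alpha(g)\neq 0$ and $\beta(g)\neq 0$. Substituting into \eqref{ant 1} and \eqref{ant 2} gives $\beta(g)\,g\,s(g)=\beta(g)1_A$ and $\alpha(g)\,s(g)\,g=\alpha(g)1_A$; cancelling the nonzero scalars produces $g\,s(g)=1_A=s(g)\,g$. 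Because $s$ is a coalgebra anti-homomorphism, $\Delta(s(g))=s(g)\otimes s(g)$ and $\varepsilon(s(g))=\varepsilon(g)=1$, so $s(g)\in\mathbb{G}(A)$. Hence every $g$ admits a two-sided inverse $s(g)$ inside $\mathbb{G}(A)$, and $\mathbb{G}(A)$ is a group. I expect this to be the main obstacle of the proof: the subtle point is that it is precisely axiom \eqref{ant 3} which guarantees that the scalars $\alpha(g),\beta(g)$ do not vanish, so that they can legitimately be cancelled in \eqref{ant 1} and \eqref{ant 2}.

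Finally, knowing that $\mathbb{G}(A)$ is a group, Lemma \ref{lem:gruppo} endows $A_0=\Bbbk\mathbb{G}(A)$ with a preantipode. Since $A_0$ is cocommutative and carries a preantipode, Theorem \ref{teo:cocom} applies and equips $A_0$, with the dual quasi-bialgebra structure induced from $A$, with a dual quasi-Hopf algebra structure $(s,\alpha,\beta)$. This establishes both assertions of the corollary.
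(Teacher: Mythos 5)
Your proof is correct, but it runs in the opposite logical direction from the paper's. The paper first observes that $s(g)\in \mathbb{G}\left( A\right) $ for every grouplike $g$ (exactly as you do, via $s$ being a coalgebra anti-homomorphism), so that $s,\alpha ,\beta $ restrict to $A_{0}$; the axioms (\ref{ant 1})--(\ref{ant 3}) then hold on $A_{0}$ simply because they hold on $A$, which makes $A_{0}$ a dual quasi-Hopf algebra \emph{with the restricted triple}; only afterwards does it deduce that $\mathbb{G}\left( A\right) $ is a group, using that every dual quasi-Hopf algebra has a preantipode together with the \emph{nontrivial} direction of Lemma \ref{lem:gruppo}. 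You instead prove the group property first and by hand: evaluating (\ref{ant 3}) at a grouplike $g$ to get $\alpha (g)\beta (g)\omega (g\otimes s(g)\otimes g)=1$, hence $\alpha (g)\neq 0\neq \beta (g)$, and cancelling these scalars in (\ref{ant 1}) and (\ref{ant 2}) to obtain $g\,s(g)=1_{A}=s(g)\,g$ with $s(g)\in \mathbb{G}\left( A\right) $ (this tacitly uses that multiplication is associative on grouplikes, which is guaranteed by $\omega (g\otimes h\otimes k)\neq 0$ and is what underlies Proposition \ref{pro: G(A)}); you then recover a quasi-Hopf structure from the \emph{easy} direction of Lemma \ref{lem:gruppo} plus Theorem \ref{teo:cocom}. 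The only caveat is that the triple produced by Theorem \ref{teo:cocom} is $\alpha =\varepsilon $, $\beta =\varepsilon S$, which need not coincide with the restriction of $A$'s own $(s,\alpha ,\beta )$, whereas the statement's ``induced structures'' is read in the paper as precisely that restriction. This is not a genuine gap, because your argument already contains the missing sentence: having shown $s(g)\in \mathbb{G}\left( A\right) $, the restricted maps satisfy (\ref{ant 1})--(\ref{ant 3}) on $A_{0}$ because they satisfy them on $A$, which is the paper's entire verification. In exchange for being slightly longer, your route makes the group-theoretic content explicit and elementary, and it does not need the implication ``dual quasi-Hopf $\Rightarrow $ preantipode'' at all for the group part; the paper's route is shorter and delivers the literal induced structure directly.
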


\begin{proof}
Set $G:=\mathbb{G}\left( A\right) $. By Corollary \ref{Azero}, $A_{0}=\Bbbk
G $ is a dual quasi-subbialgebra of $A$. It remains to check that the
antipode on $A$ induces an antipode on $A_{0}.$ We have 
\begin{eqnarray*}
\Delta s(g) &=&s(g_{2})\otimes s(g_{1})=s(g)\otimes s(g), \\
\varepsilon s(g) &=&\varepsilon (g)=1,
\end{eqnarray*}

i.e. $s(g)\in G,$ for any $g\in G.$ Let $s_{0},\alpha _{0},\beta _{0},\omega
_{0},m_{0},u_{0},\Delta _{0},\varepsilon _{0}$ be the induced maps from $%
s,\alpha ,\beta ,\omega ,m,u,\Delta ,\varepsilon ,$ respectively. It is then
clear from the definition that $A_{0}$, with respect to these structures, is a dual quasi-Hopf
algebra. Since any dual quasi-Hopf algebra has a preantipode, by Lemma \ref%
{lem:gruppo}, $G$ is a group.
\end{proof}

Pointed dual quasi-Hopf algebras have been investigated also in \cite[page 2]%
{Huang-QuiverAppr} under the name of pointed Majid algebras. In view of
Corollary \ref{Scapp}, which seems to be implicitly assumed in \cite[page 2]%
{Huang-QuiverAppr}, we can apply Corollary \ref{coro: grA} to obtain the
following result.

\begin{theorem}
\label{teo:grpointed}Let $A$ be a pointed dual quasi-Hopf algebra. Then $%
\mathrm{gr}A$ is isomorphic to $R\#\Bbbk \mathbb{G}\left( A\right) $ as dual
quasi-bialgebra where $R$ is the diagram of $A$.
\end{theorem}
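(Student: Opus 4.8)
The plan is to assemble Theorem \ref{teo:grpointed} purely from the machinery already in place, so that the proof reduces to verifying that the hypotheses of Corollary \ref{coro: grA} are met in the pointed dual quasi-Hopf setting. First I would observe that a pointed dual quasi-Hopf algebra $A$ is in particular a pointed dual quasi-bialgebra, so by Corollary \ref{Azero} its coradical is $A_{0}=\Bbbk\mathbb{G}\left(A\right)$. The dual Chevalley property demands precisely that the coradical be a dual quasi-subbialgebra of $A$, and this is exactly what Corollary \ref{Azero} asserts. Thus $A$ is a dual quasi-bialgebra with the dual Chevalley property whose coradical is $H=\Bbbk\mathbb{G}\left(A\right)$.

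Next I would supply the remaining hypothesis of Corollary \ref{coro: grA}, namely that $H$ has a preantipode. This is where Corollary \ref{Scapp} enters: since $A$ is a pointed dual quasi-Hopf algebra, Corollary \ref{Scapp} tells us that $\mathbb{G}\left(A\right)$ is a group and that $A_{0}=\Bbbk\mathbb{G}\left(A\right)$ is itself a dual quasi-Hopf algebra with respect to the induced structures. By \cite[Theorem 3.10]{Ardi-Pava} every dual quasi-Hopf algebra has a preantipode, so $H=\Bbbk\mathbb{G}\left(A\right)$ has a preantipode. (Alternatively, one can invoke Lemma \ref{lem:gruppo} directly, since $\mathbb{G}\left(A\right)$ being a group gives the preantipode explicitly.)

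With both hypotheses in hand I would simply apply Corollary \ref{coro: grA}: there exists a bialgebra $R$ in ${_{H}^{H}\mathcal{YD}}$ such that $\mathrm{gr}A$ is isomorphic to $R\#H=R\#\Bbbk\mathbb{G}\left(A\right)$ as a dual quasi-bialgebra. By the Definition following Corollary \ref{coro: grA}, this $R$ is by definition the diagram of $A$, which matches the statement to be proved. The isomorphism is the map $\epsilon_{\mathrm{gr}A}$ furnished by Theorem \ref{teo: projection} applied to the projection data $\left(\mathrm{gr}A,H,\sigma,\pi\right)$ of Proposition \ref{pro: grA}.

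The step requiring the most care is confirming that the preantipode really lives on the correct object: Corollary \ref{coro: grA} needs a preantipode on the \emph{coradical} $H$ of $A$, whereas Corollary \ref{Scapp} produces a dual quasi-Hopf structure on $A_{0}$. The identification $A_{0}=H=\Bbbk\mathbb{G}\left(A\right)$ via Remark \ref{rem: pointed} and Corollary \ref{Azero} is what glues these together, and one should note that the induced reassociator $\omega_{0}$ on $A_{0}$ is the restriction of $\omega_{A}$, so the dual quasi-bialgebra structure used in Corollary \ref{Scapp} genuinely coincides with the one making $H$ a dual quasi-subbialgebra in the dual Chevalley hypothesis. Beyond this bookkeeping the argument is a direct chain of citations, so I expect no computational obstacle; the content is entirely in having set up Corollaries \ref{coro: grA} and \ref{Scapp} correctly beforehand.
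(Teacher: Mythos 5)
Your proposal is correct and takes essentially the same route as the paper: the paper's (very terse) argument is precisely to invoke Corollary \ref{Scapp} to get that $\mathbb{G}\left(A\right)$ is a group and that the coradical $A_{0}=\Bbbk \mathbb{G}\left(A\right)$ is a dual quasi-Hopf algebra (hence has a preantipode), and then to apply Corollary \ref{coro: grA}. Your additional bookkeeping --- identifying $A_{0}$ with the coradical $H$ via Remark \ref{rem: pointed} and Corollary \ref{Azero}, and checking that the induced structure on $A_{0}$ is the one appearing in the dual Chevalley hypothesis --- is exactly what the paper leaves implicit, so there is no gap.
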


\appendix

\section{The weak right center}

\begin{definition}
\cite[Section 1.5]{bulacu-YDquasi} Let $\left( \mathcal{M},\otimes ,\mathbf{1%
},a,l,r\right) $ be a monoidal category. The weak right center $\mathcal{W}%
_{r}\left( \mathcal{M}\right) $ of $\mathcal{M}$ is a category defined as
follows. An object in $\mathcal{W}_{r}\left( \mathcal{M}\right) $ is a pair $%
\left( V,c_{-,V}\right) ,$ where $V$ is an object of $\mathcal{M}$ and $%
c_{-,V}$ is a family of morphisms in $\mathcal{M}$, $c_{X,V}:X\otimes
V\rightarrow V\otimes X$, defined for any object $X$ in $\mathcal{M},$ which
is natural in the first entry, such that, for all $X,Y\in \mathcal{M}$ we
have%
\begin{equation}
a_{V,X,Y}^{-1}\circ c_{X\otimes Y,V}\circ a_{X,Y,V}^{-1}=\left(
c_{X,V}\otimes Y\right) \circ a_{X,V,Y}^{-1}\circ \left( X\otimes
c_{Y,V}\right)  \label{form:cweak1}
\end{equation}%
and such that $r_{V}\circ c_{\mathbf{1},V}=l_{V}.$ A morphism $f:\left(
V,c_{-,V}\right) \rightarrow \left( W,c_{-,W}\right) $ is a morphism $%
f:V\rightarrow W$ in $\mathcal{M}$ such that, for each $X\in \mathcal{M}$ we
have%
\begin{equation*}
\left( f\otimes X\right) \circ c_{X,V}=c_{X,W}\circ \left( X\otimes f\right)
.
\end{equation*}%
$\mathcal{W}_{r}\left( \mathcal{M}\right) $ becomes a monoidal category with
unit $\left( \mathbf{1},l^{-1}\circ r\right) $ and tensor product%
\begin{equation*}
\left( V,c_{-,V}\right) \otimes \left( W,c_{-,W}\right) =\left( V\otimes
W,c_{-,V\otimes W}\right)
\end{equation*}%
where, for all $X\in \mathcal{M},$ the morphism $c_{X,V\otimes W}:X\otimes
\left( V\otimes W\right) \rightarrow \left( V\otimes W\right) \otimes X$ is
defined by%
\begin{equation*}
c_{X,V\otimes W}:=a_{V,W,X}^{-1}\circ \left( V\otimes c_{X,W}\right) \circ
a_{V,X,W}\circ \left( c_{X,V}\otimes W\right) \circ a_{X,V,W}^{-1}.
\end{equation*}%
The constraints are the same of $\mathcal{M}$ viewed as morphisms in $%
\mathcal{W}_{r}\left( \mathcal{M}\right) $. Moreover the monoidal category $%
\mathcal{W}_{r}\left( \mathcal{M}\right) $ is pre-braided, with braiding 
\begin{equation*}
c_{\left( V,c_{-,V}\right) ,\left( W,c_{-,W}\right) }:\left(
V,c_{-,V}\right) \otimes \left( W,c_{-,W}\right) \rightarrow \left(
W,c_{-,W}\right) \otimes \left( V,c_{-,V}\right)
\end{equation*}%
given by $c_{V,W}.$
\end{definition}

\begin{theorem}
\label{teo:WeakCenter}Let $H$ be a dual quasi-bialgebra. The categories $%
\mathcal{W}_{r}\left( {^{H}\mathfrak{M}}\right) $ and ${_{H}^{H}\mathcal{YD}}
$ are isomorphic, where ${^{H}\mathfrak{M}}$ is regarded as a monoidal
category as in Section \ref{C1}.
\end{theorem}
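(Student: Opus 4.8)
The plan is to exhibit explicit, mutually inverse functors between the two categories, each acting as the identity on the underlying comodule and on underlying maps, and to check that the two packages of extra structure---the Yetter--Drinfeld action $\vartriangleright$ on one side, the natural family $c_{-,V}$ on the other---determine one another bijectively. Throughout, the left comodule structure $(V,\rho_V)$ is carried across untouched; only the passage $\vartriangleright\leftrightarrow c$ needs checking.

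In one direction I would define $\Phi:{}_{H}^{H}\mathcal{YD}\to\mathcal{W}_{r}({}^{H}\mathfrak{M})$ by sending $(V,\rho_V,\vartriangleright)$ to $(V,c_{-,V})$ with $c_{X,V}(x\otimes v):=(x_{-1}\vartriangleright v)\otimes x_0$ for $X\in{}^{H}\mathfrak{M}$, which is formula \eqref{braiding YD} read with a general comodule in the first slot. First I would verify that each $c_{X,V}$ is left $H$-colinear from $X\otimes V$ to $V\otimes X$: this is exactly the compatibility \eqref{Comp YD}. Naturality in $X$ is immediate, since for colinear $g\colon X\to X'$ one has $g(x)_{-1}\otimes g(x)_0=x_{-1}\otimes g(x_0)$, and only the coaction of $x$ and the action on $v$ enter the formula. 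For the hexagon \eqref{form:cweak1}: evaluating both sides on $(x\otimes y)\otimes v$, the twisted associators ${}^{H}a^{-1}$ contribute factors $\omega^{-1}$ whose legs are the coactions $x_{-1},y_{-1},v_{-1}$; thus the general identity is obtained from the special case $X=Y=H$ by substituting $h\rightsquigarrow x_{-1}$, $l\rightsquigarrow y_{-1}$ and tagging $x_0\otimes y_0$ along in the output. That special case is precisely the computation recorded in the Remark following the definition of ${}_{H}^{H}\mathcal{YD}$ in Section~\ref{C2}, where the hexagon is shown equivalent to \eqref{ass YD}, so I would invoke it. Finally, since $\Bbbk$ carries the trivial coaction, the unit condition $r_V\circ c_{\mathbf{1},V}=l_V$ reduces to $1_H\vartriangleright v=v$, i.e.\ \eqref{unitYD}. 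On morphisms $\Phi$ is the identity, the Yetter--Drinfeld condition $f(h\vartriangleright v)=h\vartriangleright f(v)$ being equivalent to $(f\otimes X)\circ c_{X,V}=c_{X,W}\circ(X\otimes f)$.

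Conversely I would define $\Psi:\mathcal{W}_{r}({}^{H}\mathfrak{M})\to{}_{H}^{H}\mathcal{YD}$ by keeping $V$ and setting $h\vartriangleright v:=(V\otimes\varepsilon_H)\,c_{H,V}(h\otimes v)$, where $H$ is a left comodule over itself via $\Delta$. The composite $\Psi\Phi=\mathrm{id}$ is immediate, as $(V\otimes\varepsilon_H)\bigl((h_1\vartriangleright v)\otimes h_2\bigr)=h\vartriangleright v$. On morphisms, taking $X=H$ in the weak-center condition and applying $W\otimes\varepsilon_H$ recovers $f(h\vartriangleright v)=h\vartriangleright f(v)$, so $\Psi$ too is the identity on underlying maps. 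The real content is the reconstruction required for $\Phi\Psi=\mathrm{id}$: one must show that an arbitrary weak-center structure satisfies $c_{H,V}(h\otimes v)=(h_1\vartriangleright v)\otimes h_2$, and more generally $c_{X,V}(x\otimes v)=(x_{-1}\vartriangleright v)\otimes x_0$. Here the strategy is to combine naturality with colinearity. The coaction $\rho_X$ is a colinear map $X\to H\otimes X^{\mathrm{triv}}$, so naturality expresses $c_{X,V}$ through $c_{H\otimes X^{\mathrm{triv}},V}$, which the hexagon \eqref{form:cweak1} rewrites in terms of $c_{H,V}$ and $c_{X^{\mathrm{triv}},V}$; and $c_{X^{\mathrm{triv}},V}$ is forced to be the flip by naturality against the colinear maps $\Bbbk\to X^{\mathrm{triv}}$ together with $r_V\circ c_{\Bbbk,V}=l_V$. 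Because $X^{\mathrm{triv}}$ has trivial coaction, every $\omega^{\pm1}$-leg attached to it collapses by the counital normalization \eqref{eq:qusi-unitairity cocycle}, and the hexagon simplifies to the asserted formula. Once this is in hand, the Yetter--Drinfeld axioms for the action produced by $\Psi$ follow by running the equivalences of the first direction in reverse, and $\Phi,\Psi$ are seen to be mutually inverse isomorphisms of categories.

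The hard part is exactly this last reconstruction, $\Phi\Psi=\mathrm{id}$: showing that the entire natural family $c_{-,V}$ is recovered from its single component $c_{H,V}$, equivalently from $\vartriangleright$. The bookkeeping with the twisted associators ${}^{H}a$ and with the trivial comodule $X^{\mathrm{triv}}$ is where the cocycle normalizations must be applied with care; it is the only step that goes beyond a formal dictionary between the two sets of axioms.
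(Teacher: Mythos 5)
Your proposal is correct and is essentially the paper's intended argument: the paper proves this theorem by citing the right-handed analogue in Balan's work, whose proof is exactly the dictionary you build — extend \eqref{braiding YD} to $c_{X,V}(x\otimes v)=(x_{-1}\vartriangleright v)\otimes x_{0}$, match the axioms (the Remark in Section \ref{C2} already records the equivalence between the hexagon at $X=Y=H$ and \eqref{ass YD}, as you invoke), and recover the whole family from its $H$-component. Your reconstruction of $c_{X,V}$ from $c_{H,V}$ via naturality along $\rho_{X}\colon X\to H\otimes X^{\mathrm{triv}}$, the flip on trivial comodules, and the collapse of the associators by the cocycle normalization is precisely the content hidden in that citation, so nothing is missing.
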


\begin{proof}
The proof is analogue to \cite[Theorem 3.5]{Balan}.

\end{proof}

\subsection{Example: the group algebra}

We now investigate the category of Yetter-Drinfeld modules over a particular
dual quasi-Hopf algebra.

Let $G$ be a group. Let $\theta :G\times G\times G\rightarrow \Bbbk ^{\ast
}:=\Bbbk \backslash \left\{ 0\right\} $ be a normalized $3$-cocycle on the
group $G$ in the sense of \cite[Example 2.3.2, page 54]{Maj2} i.e. a map
such that, for all $g,h,k,l\in H$ 
\begin{eqnarray*}
\theta \left( g,1_{G},h\right) &=&1 \\
\theta \left( h,k,l\right) \theta \left( g,hk,l\right) \theta \left(
g,h,k\right) &=&\theta \left( g,h,kl\right) \theta \left( gh,k,l\right) .
\end{eqnarray*}%
Then $\theta $ can be extended by linearity to a reassociator $\omega :\Bbbk
G\otimes \Bbbk G\otimes \Bbbk G\rightarrow \Bbbk $ making $\Bbbk G$ a dual
quasi-bialgebra with usual underlying algebra and coalgebra structures. This
dual quasi-bialgebra is denoted by $\Bbbk ^{\theta }G.$ Note that in
particular $\Bbbk ^{\theta }G$ is an ordinary bialgebra but with nontrivial
reassociator. In particular it is associative as an algebra. Let us
investigate the category ${_{\Bbbk ^{\theta }G}^{\Bbbk ^{\theta }G}\mathcal{%
YD}}$ of Yetter-Drinfeld module over $\Bbbk ^{\theta }G$.

\begin{definition}
Let $\theta :G\times G\times G\rightarrow \Bbbk ^{\ast }$ be a normalized $3$%
-cocycle on a group $G.$ The category of cocycle crossed left $G$-modules $%
\left( G,\theta \right) $\textrm{-}$\mathrm{Mod}$ is defined as follows. An
object in $\left( G,\theta \right) $\textrm{-}$\mathrm{Mod}$ is a pair $%
\left( V,\blacktriangleright \right) ,$ where $V=\oplus _{g\in G}V_{g}$ is a 
$G$-graded vector space endowed with a map $\blacktriangleright :G\times
V\rightarrow V$ such that, for all $g,h,l\in H$ and $v\in V,$ we have 
\begin{equation}
h\blacktriangleright V_{g}\in V_{hgh^{-1}},  \label{form:CrossedComp}
\end{equation}%
\begin{equation}
h\blacktriangleright \left( l\blacktriangleright v\right) =\frac{\theta
\left( hlgl^{-1}h^{-1},h,l\right) \theta \left( h,l,g\right) }{\theta \left(
h,lgl^{-1},l\right) }\left( hl\right) \blacktriangleright v,
\label{form:CrossedAss}
\end{equation}%
\begin{equation}
1_{H}\blacktriangleright v=v.  \label{form:CrossedUnit}
\end{equation}%
A morphism $f:\left( V,\blacktriangleright \right) \rightarrow \left(
V^{\prime },\blacktriangleright ^{\prime }\right) $ in $\left( G,\theta
\right) $\textrm{-}$\mathrm{Mod}$ is a morphism $f:V\rightarrow V^{\prime }$
of $G$-graded vector spaces such that, for all $h\in H,v\in V,$ we have $%
f(h\blacktriangleright v)=h\blacktriangleright ^{\prime }f(v).$
\end{definition}

The following result is inspired by \cite[Proposition 3.2]{Maj-QuantDouble}.

\begin{proposition}
Let $\theta :G\times G\times G\rightarrow \Bbbk ^{\ast }$ be a normalized $3$%
-cocycle on a group $G.$ Then the category ${_{\Bbbk ^{\theta }G}^{\Bbbk
^{\theta }G}\mathcal{YD}}$ is isomorphic to $\left( G,\theta \right) $%
\textrm{-}$\mathrm{Mod}$.
\end{proposition}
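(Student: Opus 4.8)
The plan is to exhibit an explicit isomorphism of categories $\Phi:{_{\Bbbk ^{\theta }G}^{\Bbbk ^{\theta }G}\mathcal{YD}}\to (G,\theta)\textrm{-}\mathrm{Mod}$ which is the identity on underlying vector spaces and on underlying linear maps, so that the entire content reduces to matching the structural axioms on each side. First I would recall that, since $H=\Bbbk ^{\theta }G$ carries the group-like coalgebra structure $\Delta(g)=g\otimes g$, $\varepsilon(g)=1$ for $g\in G$, a left $H$-comodule structure $\rho:V\to H\otimes V$ is exactly the same datum as a $G$-grading $V=\bigoplus_{g\in G}V_g$, where $V_g:=\{v\in V\mid \rho(v)=g\otimes v\}$; under this identification $v_{-1}\otimes v_0=g\otimes v$ for homogeneous $v\in V_g$. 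Moreover, since any $\Bbbk$-linear map out of $\Bbbk G$ is determined by its values on the basis $G$, the Yetter-Drinfeld action $\vartriangleright:H\otimes V\to V$ is completely determined by its restriction $\blacktriangleright:G\times V\to V$, $h\blacktriangleright v:=h\vartriangleright v$, and conversely any such $\blacktriangleright$ extends uniquely by linearity. This fixes the bijection on objects and reduces the proof to a dictionary between the three Yetter-Drinfeld axioms and the three defining conditions of $(G,\theta)\textrm{-}\mathrm{Mod}$.

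Next I would translate the axioms one at a time, using crucially that all Sweedler components of a grouplike element collapse ($h_1=h_2=\cdots=h$) and that $\omega$ restricted to $G^{\times 3}$ is exactly $\theta$ (with $\omega^{-1}=\theta^{-1}$ as a scalar). The unit axiom \eqref{unitYD} is literally \eqref{form:CrossedUnit}. For the comodule compatibility \eqref{Comp YD}, evaluating on $v\in V_g$ and $h\in G$ forces $(h\vartriangleright v)_{-1}\,h=h\,g$ in $\Bbbk G$, i.e. $h\vartriangleright v$ is homogeneous of degree $hgh^{-1}$; this is precisely \eqref{form:CrossedComp}. Granting \eqref{form:CrossedComp}, one identifies the degrees $(l\vartriangleright v)_{-1}=lgl^{-1}$ and $(h\vartriangleright(l\vartriangleright v))_{-1}=hlgl^{-1}h^{-1}$, and substituting these into \eqref{ass YD} collapses it to
\[
(hl)\vartriangleright v=\theta(h,l,g)^{-1}\,\theta(h,lgl^{-1},l)\,\theta(hlgl^{-1}h^{-1},h,l)^{-1}\,\bigl(h\vartriangleright(l\vartriangleright v)\bigr),
\]
which, after solving for $h\vartriangleright(l\vartriangleright v)$, is exactly \eqref{form:CrossedAss}. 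The same computations, read backwards and using linearity to pass from group elements to all of $H$, show that $\Psi$ sending $(V=\bigoplus V_g,\blacktriangleright)$ to the comodule-plus-action data produces a genuine object of ${_{\Bbbk ^{\theta }G}^{\Bbbk ^{\theta }G}\mathcal{YD}}$; hence the object classes correspond bijectively.

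Finally I would observe that a morphism $f$ in ${_{\Bbbk ^{\theta }G}^{\Bbbk ^{\theta }G}\mathcal{YD}}$ is by definition an $H$-comodule map commuting with $\vartriangleright$; under the dictionary above $H$-colinearity means $f$ preserves the $G$-grading, and commuting with $\vartriangleright$ means $f(h\blacktriangleright v)=h\blacktriangleright f(v)$ for all $h\in G$, which is exactly a morphism in $(G,\theta)\textrm{-}\mathrm{Mod}$. Since $\Phi$ and $\Psi$ act as the identity on underlying linear maps, they are strictly mutually inverse functors, yielding the claimed isomorphism of categories. I expect the only delicate point to be the bookkeeping of the comodule degrees of the iterated actions $l\vartriangleright v$ and $h\vartriangleright(l\vartriangleright v)$ in the reduction of \eqref{ass YD}: one must invoke the already-established compatibility \eqref{form:CrossedComp} to pin down these degrees before the $\theta$-factors can be read off, and keep careful track of which argument of each $\theta$ is a conjugate of $g$.
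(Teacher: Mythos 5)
Your proposal is correct and follows essentially the same route as the paper: both identify left $\Bbbk G$-comodules with $G$-graded vector spaces, define $\blacktriangleright$ by restricting $\vartriangleright$ to the basis $G$ (and conversely extend by linearity), and obtain strictly mutually inverse functors that are the identity on underlying vector spaces and maps. The axiom-by-axiom dictionary you work out --- in particular deducing \eqref{form:CrossedComp} from \eqref{Comp YD} first, and then using those degrees to collapse \eqref{ass YD} into \eqref{form:CrossedAss} with the correct $\theta$-factors --- is exactly the verification the paper's proof leaves implicit as ``easy to prove.''
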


\begin{proof}
Set $H:=\Bbbk ^{\theta }G$ and let $\left( V,\rho _{V},\vartriangleright
\right) \in {_{H}^{H}\mathcal{YD}}$. Then $(V,\rho _{V})$ is an object in ${%
^{\Bbbk G}\mathfrak{M}}$. Hence, see e.g. \cite[Example 1.6.7]{Montgomery},
we have that $V=\oplus _{g\in G}V_{g}$ where $V_{g}=\left\{ v\in V\mid \rho
_{V}\left( v\right) =g\otimes v\right\}$. Define the map $%
\blacktriangleright :G\times V\rightarrow V,$ by setting $%
g\blacktriangleright v:=g\vartriangleright v$. It is easy to prove that the
assignments%
\begin{equation*}
\left( V,\rho _{V},\vartriangleright \right) \mapsto \left( V=\oplus _{g\in
G}V_{g},\blacktriangleright \right) \qquad f\mapsto f
\end{equation*}%
define a functor $L:{{_{H}^{H}\mathcal{YD}}}\rightarrow \left( G,\theta
\right) $\textrm{-}$\mathrm{Mod}$. Conversely, let $\left( V=\oplus _{g\in
G}V_{g},\blacktriangleright \right) $ be an object in $\left( G,\theta
\right) $\textrm{-}$\mathrm{Mod}$. Then $\blacktriangleright $ can be
extended by linearity to a map $\vartriangleright :\Bbbk G\otimes
V\rightarrow V.$ Define $\rho _{V}:V\rightarrow \Bbbk G\otimes V,$ by
setting $\rho _{V}\left( v\right) =g\otimes v$ for all $v\in V_{g}.$
Therefore, the assignments%
\begin{equation*}
\left( V=\oplus _{g\in G}V_{g},\blacktriangleright \right) \mapsto \left(
V,\rho _{V},\vartriangleright \right) \qquad f\mapsto f
\end{equation*}
define a functor $R:\left( G,\theta \right) \text{-}\mathrm{Mod}\rightarrow {%
_{H}^{H}\mathcal{YD}}$. It is clear that $LR=\mathrm{Id}$ and $RL=\mathrm{Id}
$.

\end{proof}

\begin{claim}
As a consequence of the previous result, the pre-braided monoidal structure
on ${_{\Bbbk ^{\theta }G}^{\Bbbk ^{\theta }G}\mathcal{YD}}$ induces a
pre-braided monoidal structure on $\left( G,\theta \right) $\textrm{-}$%
\mathrm{Mod}$ as follows. The unit is $\Bbbk $ regarded as a $G$-graded
vector space whose homogeneous components are all zero excepted the one
corresponding to $1_{G}.$ Moreover $h\blacktriangleright k=\varepsilon
_{H}\left( h\right) k$ for all $h\in H,k\in \Bbbk $. The tensor product is
defined by 
\begin{equation*}
\left( V,\blacktriangleright \right) \otimes \left( W,\blacktriangleright
\right) =\left( V\otimes W,\blacktriangleright \right)
\end{equation*}%
where 
\begin{equation*}
\left( V\otimes W\right) _{g}=\oplus _{h\in H}(V_{h}\otimes W_{h^{-1}g})
\end{equation*}%
and, for all $v\in V_{g},w\in W_{l},$ we have%
\begin{equation*}
h\blacktriangleright \left( v\otimes w\right) =\frac{\theta \left(
hgh^{-1},hlh^{-1},h\right) \theta \left( h,g,l\right) }{\theta \left(
hgh^{-1},h,l\right) }\left( h\blacktriangleright v\right) \otimes \left(
h\blacktriangleright w\right) .
\end{equation*}%
The constraints are the same of ${^{H}\mathfrak{M}}$ viewed as morphisms in $%
{_{H}^{H}\mathcal{YD}}$. 

The braiding $c_{V,W}:V\otimes W\rightarrow
W\otimes V$ is given, for all $v\in V_{g},w\in W_{l},$ by 
\begin{equation*}
c_{V,W}\left( v\otimes w\right) =\left( g\blacktriangleright w\right)
\otimes v.
\end{equation*}
\end{claim}


\begin{thebibliography}{}

\bibitem[AS]{AS-Lifting} N. Andruskiewitsch, H.-J. Schneider, \emph{Lifting
of quantum linear spaces and pointed Hopf algebras of order p}$^{\emph{3}}$.
J. Algebra \textbf{209} (1998), no. 2, 658--691.

\bibitem[AMS1]{AMS-Hoch} A. Ardizzoni, C. Menini and D. Stefan, \emph{%
Hochschild Cohomology And `Smoothness' In Monoidal Categories}, J. Pure
Appl. Algebra, Vol. \textbf{208} (2007), 297-330.

\bibitem[AMS2]{AMS-Amonoidal} A. Ardizzoni, C. Menini and D. Stefan, \emph{A
Monoidal Approach to Splitting Morphisms of Bialgebras}, Trans. Amer. Math.
Soc., \textbf{359} (2007), 991-1044.

\bibitem[AP]{Ardi-Pava} A. Ardizzoni, A. Pavarin, \emph{Preantipodes for
Dual Quasi-Bialgebras}, Israel J. Math., to appear. (arXiv:submit/0160766)

\bibitem[Ba]{Balan} A. Balan, \emph{Yetter-Drinfeld modules and Galois
extensions over coquasi-Hopf algebras}. (English, Romanian summary)
Politehn. Univ. Bucharest Sci. Bull. Ser. A Appl. Math. Phys. \textbf{71}
(2009), no. 3, 43--60.

\bibitem[BC]{bulacu} D. Bulacu, S.Caenepeel, \emph{Integrals for (dual)
quasi-Hopf algebras. Applications. } J. Algebra \textbf{266} (2003), no. 2,
552--583.

\bibitem[BCP]{bulacu-YDquasi} D. Bulacu, S. Caenepeel, F. Panaite, \emph{%
Yetter-Drinfeld categories for quasi-Hopf algebras. }Comm. Algebra \textbf{34%
} (2006), no. 1, 1--35.

\bibitem[BN]{bulacu2} D. Bulacu, E.Nauwelaerts, \emph{Radford's biproduct
for quasi-Hopf algebras and bosonization}, J. Pure Appl. Algebra \textbf{174}
(2002), no\textbf{. }1, 1--42.



\bibitem[Dr]{Drinfeld} V. G. Drinfeld, \emph{Quasi-Hopf algebras}. (Russian) Algebra i Analiz 1 (1989), no. 6, 114--148; translation in
Leningrad Math. J. \textbf{1} (1990), no. 6, 1419--1457.

\bibitem[HN]{hausser and nill} F. Hausser and F. Nill, \emph{Integral theory
for quasi Hopf algebras,} preprint (arXiv:math/9904164v2)

\bibitem[Hu1]{Huang-QuiverAppr} H.-L. Huang, \emph{Quiver approaches to
quasi-Hopf algebras}. J. Math. Phys. \textbf{50} (2009), no. 4.


\bibitem[Ka]{Ka} C. Kassel, \emph{Quantum groups},Graduate Text in
Mathematics \textbf{155}, Springer, 1995.

\bibitem[LS]{LS} R. G. Larson, M. E. Sweedler, \emph{An associative
orthogonal bilinear form for Hopf algebras.} Amer. J. Math. \textbf{91} 1969
75--94.


\bibitem[Maj1]{Maj2} S. Majid, \emph{Foundations of quantum group theory},
Cambridge University Press, 1995

\bibitem[Maj2]{Maj-QuantDouble} S. Majid, \emph{Quantum double for
quasi-Hopf algebras.} Lett. Math. Phys. \textbf{45} (1998), no. 1, 1--9.

\bibitem[Maj3]{Majid-Tannaka} S. Majid, \emph{Tannaka-Kre\u{\i}n theorem for
quasi-Hopf algebras and other results.} Deformation theory and quantum
groups with applications to mathematical physics (Amherst, MA, 1990),
219--232, Contemp. Math. \textbf{134}, Amer. Math. Soc., Providence, RI,
1992.

\bibitem[Mo]{Montgomery} S. Montgomery, \emph{Hopf algebras and their
actions on rings.} CBMS Regional Conference Series in Mathematics \textbf{82}. Published for the Conference Board of the Mathematical Sciences,
Washington, DC; by the American Mathematical Society, Providence, RI, 1993.

\bibitem[Ra]{Radford} D. E. Radford,\emph{The structure of Hopf algebras
with a projection}. J. Algebra \textbf{92} (1985), no. 2, 322--347.


\bibitem[Sch1]{Schauenburg} P. Schauenburg, \emph{A generalization of Hopf
crossed products}. Comm. Algebra \textbf{27} (1999), no. 10, 4779--4801.

\bibitem[Sch2]{Schauenburg-YD} P. Schauenburg, \emph{Hopf modules and
Yetter-Drinfeld modules.} J. Algebra \textbf{169} (1994), no. 3, 874--890.

\bibitem[Sch3]{Schauenburg-HopfMod} P. Schauenburg, \emph{Hopf modules and
the double of a quasi-Hopf algebra.} Trans. Amer. Math. Soc. \textbf{354}
(2002), no. 8, 3349--3378.


\bibitem[Sch4]{Sch-TwoChar} P. Schauenburg, \emph{Two characterizations of
finite quasi-Hopf algebras.} J. Algebra\textbf{\ 273} (2004), no. 2,
538--550.

\bibitem[Sw]{Sweedler} M. E. Sweedler, \emph{Hopf algebras}. Mathematics
Lecture Note Series W. A. Benjamin, Inc., New York 1969.

\end{thebibliography}
\end{document}